\definecolor{ascolor}{rgb}{0.8,0,0}
\definecolor{mblcolor}{rgb}{0,.5,0.5}
\definecolor{daocolor}{rgb}{0.0,0,1.0}
\definecolor{pbbcolor}{rgb}{0.8,0,0}
\DeclareMathAccent{\maxvec}{\mathord}{letters}{"7E}
\DeclareMathOperator*{\argmin}{arg\,min}
\newtheorem{assumption}{Assumption}
\newtheorem{statement}{Statement}
\newtheorem{thm}{Theorem}[section]
\newtheorem{theorem}[thm]{Theorem}
\newtheorem{lemma}[thm]{Lemma}
\newtheorem{remark}[thm]{Remark}
\numberwithin{equation}{section}
\newlength{\dtildeheight}
\newlength{\dbarheight}
\newcommand{\weakto}{\rightharpoonup}
\newcommand{\eps}{\varepsilon}
\newcommand{\bbC}{\mathbb{C}}  % elasticity tensor
\newcommand{\bbR}{\mathbb{R}}  % The real numbers.
\newcommand{\bbZ}{\mathbb{Z}}  % The integers.
\newcommand{\calE}{\mathcal{E}} % Energy
\newcommand{\calL}{\mathcal{L}} % Lattice
\newcommand{\calR}{\mathcal{R}} % Interaction range
\newcommand{\calT}{\mathcal{T}} % Triangulations
\newcommand{\calU}{\mathcal{U}} % FEM space
\newcommand{\mF}{\mathsf{F}}
\newcommand{\mG}{\mathsf{G}}
\newcommand{\mS}{\mathsf{S}}
\newcommand{\<}{\langle}
\renewcommand{\>}{\rangle}
\renewcommand{\div}{{\rm div}}
\newcommand{\C}{{\rm C}}
\newcommand{\transpose}{{\!\top}}
\renewcommand{\a}{{\rm a}}
\renewcommand{\c}{{\rm c}}
\renewcommand{\o}{{\rm o}}
\renewcommand{\t}{{\rm t}}
\newcommand{\nn}{{\rm nn}}
\newcommand{\core}{{\rm core}}
\newcommand{\err}{{\rm err}}
\newcommand{\op}{{\rm op}}
\newcommand{\ex}{{\rm ex}}
\newcommand{\nabR}{{\nabR}}
\newcommand{\subsubset}{\subset\!\subset}
\newcommand{\supp}{{\rm supp}}
\newcommand{\vol}{{\rm vol}}
\def\Xint#1{\mathchoice
   {\XXint\displaystyle\textstyle{#1}}%
   {\XXint\textstyle\scriptstyle{#1}}%
   {\XXint\scriptstyle\scriptscriptstyle{#1}}%
   {\XXint\scriptscriptstyle\scriptscriptstyle{#1}}%
   \!\int}
\def\XXint#1#2#3{{\setbox0=\hbox{$#1{#2#3}{\int}$}
     \vcenter{\hbox{$#2#3$}}\kern-.5\wd0}}
\def\dashint{\Xint-}
\begin{document}
%%-----------------------------
%%      the top matter
%%-----------------------------
\title[Optimization-based AtC for point defects]{Analysis of an optimization-based atomistic-to-continuum coupling method for point defects}

% At most 5 thanks
%

\author{Derek Olson}\thanks{DO: University of Minnesota.  DO was supported by the Department of Defense (DoD) through the National Defense Science \& Engineering Graduate Fellowship (NDSEG) Program.(olso4056@umn.edu)}
\author{Alexander V. Shapeev}\thanks{AS: Skolkovo Institute of Science and Technology. AS was supported in part by the
AFOSR Award
FA9550-12-1-0187. (a.shapeev@skoltech.ru)}
\author{Pavel B. Bochev}\thanks{PB: Sandia National Laboratories,
Numerical Analysis and Applications, P.O. Box 5800, MS 1320,
Albuquerque, NM 87185-1320 (pbboche@sandia.gov). Sandia National Laboratories is a multi-program laboratory managed and operated by Sandia Corporation, a wholly owned subsidiary of Lockheed Martin Corporation, for the U.S. Department of Energy�s National Nuclear Security Administration under contract DE-AC04-94AL85000. This material is based upon work supported by the U.S. Department of Energy, Office of Science, Office of Advanced Scientific Computing Research (ASCR). Part of this research was carried under the auspices of the Collaboratory on Mathematics for Mesoscopic Modeling of Materials (CM4).}
\author{Mitchell Luskin}\thanks{ML: University of Minnesota. ML was supported in part by the NSF PIRE Grant OISE-0967140,
NSF Grant 1310835, AFOSR Award
FA9550-12-1-0187, and ARO MURI Award W911NF-14-1-0247.
(luskin@umn.edu)}
\date{\today}
\begin{abstract} We formulate and analyze an optimization-based Atomistic-to-Continuum (AtC) coupling method for problems with point defects. Near the defect core the method employs a potential-based atomistic model, which enables accurate simulation of the defect. Away from the core, where site energies become nearly independent of the lattice position,  the method
switches to a more efficient continuum model. The two models are merged by minimizing the mismatch of their states on an overlap region, subject to the atomistic and continuum force
balance equations acting independently in their domains. We prove that the optimization problem is well-posed and establish error estimates.
\end{abstract}
%
%\begin{resume} ... \end{resume}
%
\subjclass{65N99,65G99,73S10}
\keywords{atomistic-to-continuum coupling, atomic lattice, constrained optimization, point defect}
\maketitle
%%-----------------------------
%%      your text
%%-----------------------------
\section*{Introduction}
%%-----------------------------

Atomistic-to-continuum (AtC) coupling methods combine the accuracy of potential-based atomistic models of solids with the efficiency of coarse-grained continuum elasticity models by using the former only in small regions where the deformation of the material is highly variable such as near a crack tip or dislocation.  The past two decades have seen an abundance of interest in AtC methods both in the engineering community to enable predictive simulations of crystalline materials and in the mathematical community to understand the errors introduced by AtC approximations.  Of prime importance is the use of AtC methods to model material defects such dislocations and interacting point defects, which play roles in determining the elastic and plastic response of a material~\cite{phillips2001}.

A prototypical AtC method is an instance of heterogeneous domain decomposition in which different parts of the domain are treated by different mathematical models. In particular, AtC divides the domain into an \emph{atomistic} and \emph{continuum} parts and uses a discrete system involving non-local interactions between atoms on the former and a continuum model, such as hyperelastic continuum mechanics, on the latter.

Depending on how these two models are coupled, AtC methods can be broadly classified as  as either force or
energy-based~\cite{miller2009}.  Energy-based couplings define
a hybrid energy functional as a combination of atomistic and
continuum energy functionals, and this hybrid energy functional
is then minimized over a class of admissible deformations.
Force-based couplings instead derive atomistic and continuum
forces from the separate energies and then equilibrate them.
We refer to~\cite{miller2009, acta.atc} for a review of many
existing AtC methods.

The primary challenge in developing energy-based methods has
been the existence of ``ghost
forces''~\cite{ortiz1996quasicontinuum,acta.atc} near the
interface between the atomistic and continuum regions.
These ghost forces may lead to uncontrollable errors in
predicted strains, and to date, no method has been implemented
that completely eliminates these errors for general many-body
potentials and general interface geometry in two and three
dimensions.
%proposed to completely eliminate
%these errors for sufficiently general interface geometry in two
%and three dimensions, with the exception of a promising
%recently proposed idea that combines an undetermined
%coefficients method and an $\ell^1$ minimization
%\cite{OrtnerZhang2014}}.
Many force-based methods do not
suffer from the perils of ghost forces; however, for two and
three dimensions, establishing the stability of these methods in
the absence of an energy functional remains a difficult task.

Owing to the practical potential of AtC methods, their error analysis has recently attracted significant attention from mathematicians and engineers.
This analysis is well-developed in one dimension, see e.g.,~\cite{acta.atc}, for a thorough review, and analytic results have been obtained in two and three dimensions for
quasinonlocal (QNL) type
methods~\cite{ortner2013analysis,E2006,shimokawa,ortner2012construction,OrtnerShapeevZhang2013,Ortner2012, shapeev_2011}, blended
methods~\cite{xiao2004,koten_2011,li2012,blended2014,lu2013}, and the force-based method~\cite{lu2013} with various limitations.
%Sharp error estimates for the energy-based method
%of~\cite{shapeev_2011} have only been established in two
%dimensions assuming pair interactions with an additional {\em a
%priori} assumption on the magnitude of the true atomistic
%solution in~\cite{ortner2012construction}.
The analysis of the
QNL method of~\cite{shimokawa} and its subsequent
extensions~\cite{E2006,ortner2012construction,OrtnerShapeevZhang2013,shapeev_2011} has been primarily restricted to two dimensions and often involves a restriction on the interface between atomistic and continuum regions~\cite{E2006,OrtnerShapeevZhang2013} or a restriction on permissible interactions between atoms~\cite{shapeev_2011,ortner2012construction}.    The work~\cite{lu2013} is notable in that it has provided results valid in three dimensions but does so under the auspices of a regularity assumption on the atomistic solution.
Most
recently,~\cite{blended2014} has presented a complete analysis
valid in two and three dimensions of the blended quasicontinuum
energy (BQCE)~\cite{koten_2011,bqce12} and blended
quasicontinuum force (BQCF)~\cite{li2012,bqcf.cmame} methods
valid for general finite-range interactions with no geometrical
restrictions on the interface between atomistic and continuum
regions. A recent modification of a BQCE method was also
proposed and analyzed in \cite{OrtnerZhang2014bgfc}.

The purpose of this paper is to analyze an optimization-based AtC, introduced
in~\cite{olsonPro2013,olsonDev2013}, which couches the coupling
of the two models into a constrained minimization problem.
Specifically, a suitable measure of the mismatch between the
atomistic and continuum states, the ``mismatch energy,'' is
minimized over a common \emph{overlap} region, subject to the
atomistic and continuum force balance equations holding in
atomistic and continuum subdomains.  This differs substantially
from energy-based AtC methods such
as~\cite{ortiz1996quasicontinuum,shimokawa,koten_2011,Bauman_08_CM,Shapeev2011}
which minimize a hybrid combination of atomistic and continuum energies, whose purpose is to approximate the original atomistic energy.
Also, unlike the force-based, non-energy
methods~\cite{dobson_esaim,li2012,lu2013}, we do not directly
equilibrate forces but instead employ the force balance
equations as constraints in a minimization problem.

Our approach in the present work is related to non-standard optimization-based domain decomposition methods for partial differential equations (PDEs); see e.g., ~\cite{lions1998,Lions_00_JAM,gervasio_2001,discacciati2013} and the references therein.
In~\cite{olsonPro2013}, we analyzed an optimization-based
AtC formulation for a linear system with next-nearest neighbor
interactions using the $L^2$ norm of the difference between the
states as a cost functional, and in~\cite{olsonDev2013} we
formulated the approach for many dimensions with nonlinear
interactions and studied it numerically for a 1D chain of atoms
interacting through a Lennard-Jones potential.

A useful setting for studying the errors of various AtC
methods, and the setting we utilize in the present work, is a
single defect embedded in an infinite lattice.  We provide a
comprehensive analysis of the optimization-based AtC method in
$\mathbb{R}^d$ for $d \geq 2$ in the context of a
point\footnote{Aside from additional technicalities needed
to account for differences in a suitable reference
configuration and the decay of the elastic deformation fields
of a dislocation, our analysis can also include dislocations.}
defect located at the origin of an infinite lattice and
establish bounds on the error of the method in terms of two
parameters: the ``diameter'' of the defect core, $R_{\rm
core}$, and the size of the continuum region, $R_\c$.
Our results are comparable to the results for the BQCF method in~\cite{blended2014} in that the error of our method is dominated by the continuum error and truncation error committed respectively by using a continuum model in the continuum domain and by reducing an infinite dimensional problem to a finite dimensional one.  In contrast, the leading order error term established in~\cite{blended2014} for the BQCE method is of lower order and is a coupling error resulting from combining the different models. The coupling error can be minimized but never altogether removed~\cite{blended2014,koten_2011}.  Our analytical results have been numerically confirmed in~\cite{olsonDev2013} in one dimension; however, our analysis in the present is restricted to two and three dimensions.

This paper is organized as follows.	We begin by describing the atomistic defect problem in an infinite domain and formulate the associated AtC method in Section \ref{sec:problem}. In Section ~\ref{sec:error}, we prove that the AtC problem has a solution and subsequently estimate a broken norm error. These results rely on an essential norm equivalence property established in Section~\ref{sec:normEquiv}. The norm equivalence result generalizes a 1D linear result established in~\cite{olsonPro2013} and draws upon ideas from heterogeneous domain decomposition methods developed in~\cite{gervasio_2001}.  For the convenience of the readers, we summarize the key notation used throughout the paper in Appendix~\ref{app:notation}.

%%-----------------------------
\section{Problem Formulation}\label{sec:problem}
%%-----------------------------
We consider a point defect such as a vacancy, interstitial, or impurity located at the origin on the infinite lattice, $\mathbb{Z}^d$.  To formulate the AtC method, we will introduce a finite atomistic domain, $\Omega_\a$, surrounding the defect, and a finite continuum domain, $\Omega_\c$, which overlaps with $\Omega_\a$ in $\Omega_\o$.
Restriction of the atomistic energy to $\Omega_\a$ and application of the Cauchy-Born strain energy on $\Omega_\c$ yield notions of restricted atomistic and continuum energies. Minimizing the $H^1$-(semi-)norm of the mismatch between the atomistic and continuum states in $\Omega_\o$, subject to the corresponding Euler-Lagrange equations of these restricted energies in $\Omega_\a$ and $\Omega_\c$, respectively, completes the formulation of the AtC method.

%%-----------------------------
\subsection{Atomistic Model}\label{sec:atom}
%%-----------------------------
In this paper, we will model atoms located on the integer lattice $\bbZ^d$.  We assume the atoms interact via a classical interatomic potential, and the displacement of atoms from their reference configuration will be denoted by $u:\bbZ^d \to \mathbb{R}^d$. We require that atomistic energy can be written as a sum of site energies, $V_\xi$, associated to each lattice site $\xi \in \bbZ^d$.  This site energy is not unique, and there is great freedom in defining it, see e.g. \cite{tadmor2011}.  From the axiom of material frame indifference, $V_\xi$ is allowed to depend only upon interatomic distances.  Furthermore, we assume a finite cut-off radius in the reference configuration, $r_{\rm cut}$, so that  $V_\xi$ depends only on a subset of atoms in the closed ball of radius $r_{\rm cut}$ about $\xi$: $B_{r_{\rm cut}}(\xi)$.  In other words, the set of atoms that $V_\xi$ may depend upon, for an arbitrary $\xi\in \mathbb{Z}^d$, is given by $\xi+ \calR$ where
\[
\calR \subset \big\{\rho \in \mathbb{Z}^d : 0 < |\rho| \leq r_{\rm cut}\big\}.
\]
Note that we measure distance in the reference configuration rather than the deformed configuration.  An example interaction range is displayed in Figure~\ref{intrange} in two dimensions.
\begin{figure}[htp!]
\centering
\includegraphics[width=0.25\textwidth]{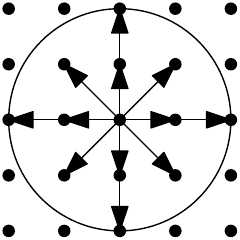}
\caption{A possible interaction range with $r_{\rm cut} = 2$ in $\mathbb{R}^2$. The set $\calR$ consists of all vectors drawn.}
\label{intrange}
\end{figure}

It is convenient to write differences between atoms' displacements using finite difference operators, $D_\rho$ for $\rho \in \calR$, defined by
\[
D_\rho u(\xi) := u(\xi + \rho) - u(\xi).
\]
The collection of finite differences for $\rho \in \calR$ yields a stencil in $(\mathbb{R}^d)^{\calR}$, which we denote by
\[
D u(\xi) :=
\big( D_\rho u(\xi)\big)_{ \rho \in \calR} .
%\big(D_\rho u(\xi)\big)_{\rho \in \calR}.
\]

Thus, formally, the site energy at $\xi$ is a mapping $(\mathbb{R}^d)^{\calR}\mapsto \mathbb{R}$,  which we denote by $V_\xi(Du)$. The atomistic energy is then given by
\begin{equation}\label{atEnergy}
\calE^\a(u) := \sum_{\xi \in \bbZ^d} V_\xi(Du).
\end{equation}

\begin{remark}
By allowing $V$ to depend upon the lattice site, $\xi$, we can include both point defects and dislocations in the analysis. For simplicity, we state our results for the case of point defects.  We refer to~\cite{Ehrlacher2013} for a discussion of how to define $V_\xi$ for various point defects such as vacancies or impurities and the case of dislocations.
\end{remark}

Admissible states of the atomic configuration correspond to  \textit{local minima} of~\eqref{atEnergy}. To define the relevant displacement spaces of lattice functions, we introduce a continuous representation of a discrete displacement via interpolation.
To that end, let $\mathcal{T}_\a$ be a partition of $\mathbb{Z}^d$ into simplices such that
(i) $\xi$ is a node of $\mathcal{T}_\a$ if and only if $\xi \in \bbZ^d$,
(ii) for each $\rho \in \mathbb{Z}^d$ and each $\tau \in \calT_\a$, $\rho + \tau \in \calT_\a$, and
(iii) if $\xi$ and $\eta$ are nodes of the same simplex $\tau\in\calT_\a$ then $\eta-\xi\in\calR$. (The last assumption states that the edges of $\calT_\a$ correspond to neighboring atoms.)
We refer to this as the atomistic triangulation; see Figure~\ref{atomistic_triangulation} for an example in two dimensions.
\begin{figure}[htp!]
\centering
\includegraphics[width=0.25\textwidth]{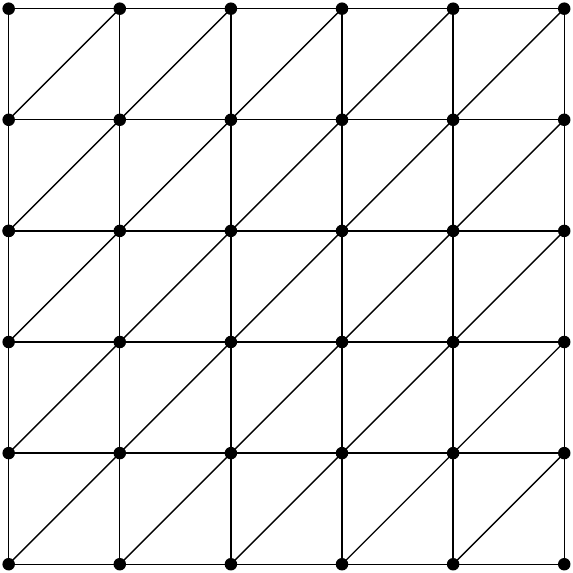}
\caption{An atomistic triangulation of $\mathbb{Z}^2$.}
\label{atomistic_triangulation}
\end{figure}

Let $\mathcal{P}^1(\calT_\a)$ be the standard finite element space of continuous piecewise linear functions with respect to $\mathcal{T}_\a$. The nodal interpolant,  $Iu \in \mathcal{P}^1(\calT_\a)$, of a lattice function $u$ is defined by setting
\[
Iu(\xi) = u(\xi) \quad \forall \xi \in \mathbb{Z}^d.
\]
Using this interpolant, we define the admissible space of displacements as
\begin{equation*}\label{admissDisplace0}
\calU := \left\{u:\mathbb{Z}^d \to \mathbb{R}^d :  \nabla I u \in L^2(\mathbb{R}^d)\right\},
\end{equation*}
and endow it with a semi-norm, $\|\nabla Iu\|_{L^2(\mathbb{R}^d)}$.

The kernel of this semi-norm is the space of constant functions, $\mathbb{R}^d$, and elements of the associated quotient space, $\bm{\calU} := \calU/\mathbb{R}^d$ are equivalence classes
\[
\bm{u} = \left\{v \in \calU : \exists \, c \in \mathbb{R}^d, v - u = c\right\}.
\]
In order to define the interpolation operator on equivalence classes, we define the space
\[
\dot{W}^{1,2}(\mathbb{R}^d) := \left\{f \in W^{1,2}_{\rm loc}(\mathbb{R}^d) : \nabla f \in L^2(\mathbb{R}^d)\right\}
\]
and its quotient space modulo constant functions,
\[
\bm{W}^{1,2}(\mathbb{R}^d) :=  \dot{W}^{1,2}(\mathbb{R}^d)/\mathbb{R}^d.
\]
Since the interpolation operator preserves constants, $I\bm{u} := \left\{Iu : u \in \bm{u}\right\}$ is a well-defined equivalence class. Consequently, the mapping $I:\bm{\calU} \to \bm{W}^{1,2}(\mathbb{R}^d)$ is well-defined, and $\|\nabla I u \|_{L^2(\mathbb{R}^d)}$ induces a norm on $\bm{\calU}$.
Because $\calE^\a(u)$ is invariant under shifts by constants, it is also well-defined on $\bm{\calU}$. As a result, we can state the atomistic problem as
\begin{equation}\label{globalAt}
\bm{u}^{\infty} = \argmin_{\bm{u} \in \bm{\calU}} \calE^\a(\bm{u}),
\end{equation}
where $\argmin$ represents the set of local minimizers and the superscript ``$\infty$'' is used throughout to indicate the \textit{exact atomistic} solution displacement field defined on the infinite lattice $\mathbb{Z}^d$.
Note that minimization over equivalence classes effectively enforces a boundary condition\footnote{This technique is also useful in establishing well-posedness results for linear elliptic systems on all of $\mathbb{R}^d$~\cite{suli2012}.
} $u(\xi) \sim const$ for $\xi \to \infty$.

We formulate and study our AtC method for approximating (\ref{globalAt}) under several hypotheses on the site energy $V_\xi$. First, we assume that the defect core is concentrated at the origin, i.e., outside of this core $V_\xi$ is independent of $\xi$. Succinctly,
\begin{assumption}
There exists $M > 0$ and $V:(\mathbb{R}^d)^{\calR} \to \mathbb{R}$ such that for all $|\xi| > M$, $V_\xi(Du) \equiv V(Du)$.
\end{assumption}
Second, since $\calE^\a({u})$ may be infinite at the reference configuration, $u \equiv 0$, we should instead  consider energy differences from the homogeneous lattice, $\mathbb{Z}^d$. In lieu of this,  without loss of generality, we ask that
\begin{assumption}\label{refassumption}
The site energy vanishes at the reference configuration, i.e., $V(0) = 0$.
\end{assumption}

Finally, we will make the following assumption concerning the regularity of $V_\xi$.
\begin{assumption}\label{siteassumption}
The site potential $V_\xi$ is ${\rm C}^4$ on all of $(\mathbb{R}^d)^{\calR}$.  Furthermore, for $k \in \left\{1,2,3,4\right\}$, there exists $M_{k}$ such that for all multiindices $\alpha, |\alpha| \leq k$
\begin{equation*}\label{siteBound}
|\partial^{\alpha} V_\xi(\bm{\rho})| \leq M_k \quad \forall \xi \in \mathbb{Z}^d, \, \bm{\rho} \in (\mathbb{R}^d)^{\calR},
\end{equation*}
where $\partial^\alpha$ represents the partial derivative.
\end{assumption}

Assumption \ref{siteassumption} allows us to avoid technicalities associated with handling potentials that are singular at the origin, such as the Lennard-Jones potential\footnote{A more realistic assumption would be to assume smoothness in a region of displacements in an energy well, which unduly complicates the analysis.}.
This assumption also implies that $\calE^\a$ is four times Fr\'echet differentiable on the space of displacements
\begin{equation*}\label{testDisplac}
\bm{\calU}_0 := \left\{\bm{u} \in \bm{\calU} :  \supp(\nabla I \bm{u}) \, \mbox{is compact}\right\},
\end{equation*}
from which it is easy to deduce the regularity of the atomistic energy.
\begin{theorem}
The atomistic energy $\calE^\a$ can be extended by continuity to $\bm{\calU}$ and is four times Fr\'echet differentiable on $\bm{\calU}$.
\end{theorem}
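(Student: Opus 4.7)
The plan is to split $\calE^\a$ into a finite ``core'' contribution and an infinite ``bulk'' contribution, renormalize the bulk using a second-order Taylor expansion at the origin, and then verify that the resulting formula defines a four-times Fr\'echet differentiable functional on all of $\bm{\calU}$ which agrees with the original $\calE^\a$ on the dense subspace $\bm{\calU}_0$. Concretely, using Assumption A I would write, for $u \in \bm{\calU}_0$,
\[
\calE^\a(u) = \sum_{|\xi| \leq M} V_\xi(Du(\xi)) + \sum_{|\xi| > M} V(Du(\xi)),
\]
expand $V(r) = V(0) + DV(0) \cdot r + \tilde V(r)$ with $\tilde V(r) := V(r) - V(0) - DV(0) \cdot r$, and exploit Assumption B (which kills $V(0)$) together with lattice translation invariance $\sum_{\xi \in \bbZ^d} D_\rho u(\xi) = 0$ for each $\rho \in \calR$ when $u$ has compact support (which kills the linear ``ghost'' term). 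Performing an analogous rewriting of the core sum, one obtains, on $\bm{\calU}_0$,
\[
\calE^\a(u) = \sum_{\xi \in \bbZ^d} \tilde V_\xi(Du(\xi)) + c_{\rm core} + \ell_{\rm core}(u),
\]
where $\tilde V_\xi(r) := V_\xi(r) - V_\xi(0) - DV_\xi(0) \cdot r$ reduces to $\tilde V$ for $|\xi| > M$, $c_{\rm core}$ is a finite constant, and $\ell_{\rm core}$ is a finite linear combination of core finite differences.

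To extend to all of $\bm{\calU}$, I would establish the standard discrete-continuous bound
\[
\sum_{\xi \in \bbZ^d} |Du(\xi)|^2 \leq C \|\nabla I u\|_{L^2(\bbR^d)}^2,
\]
which follows from writing $D_\rho u(\xi) = \int_0^1 \rho \cdot \nabla Iu(\xi + s\rho)\, ds$ and applying Cauchy--Schwarz. Combined with $|\tilde V_\xi(r)| \leq (M_2/2)|r|^2$ from Assumption C, this makes the renormalized bulk sum absolutely convergent on $\bm{\calU}$, while the finite core terms are manifestly continuous there. Taking the displayed identity as the definition of $\calE^\a$ on $\bm{\calU}$ then provides the desired continuous (in fact locally Lipschitz) extension.

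For Fr\'echet differentiability, I would define the candidate $k$-th differential at $u$ by
\[
d^k\calE^\a(u)[v_1, \ldots, v_k] := \sum_{\xi \in \bbZ^d} V^{(k)}_\xi(Du(\xi))\bigl[Dv_1(\xi), \ldots, Dv_k(\xi)\bigr]
\]
for $2 \leq k \leq 4$, with an analogous expression at $k=1$ augmented by $\ell_{\rm core}$. Assumption C supplies $|V^{(k)}_\xi| \leq M_k$ uniformly, so boundedness of this $k$-linear form reduces to controlling $\sum_\xi \prod_{i=1}^k |Dv_i(\xi)|$; I would bound $k-2$ of the factors in $\ell^\infty$ using the trivial inequality $\|Dv\|_{\ell^\infty(\bbZ^d)} \leq \|Dv\|_{\ell^2(\bbZ^d)}$ and apply Cauchy--Schwarz on the remaining two, which together with the discrete-continuous bound above yields $|d^k\calE^\a(u)[v_1,\ldots,v_k]| \leq C M_k \prod_i \|v_i\|_{\bm{\calU}}$. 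A parallel estimate on the Taylor remainder, using the bound on the $(k+1)$-th derivative for $k \leq 3$ and the continuity of $V^{(4)}_\xi$ for $k=4$, supplies the $o(\|h\|^k)$ approximation needed for Fr\'echet differentiability. The main obstacle lies in the renormalization: the raw linear sum $\sum_\xi DV_\xi(0) \cdot Du(\xi)$ is not absolutely convergent on $\bm{\calU}$, and one must carefully isolate the translation-invariant homogeneous part of the energy so that, after invoking $\sum_\xi D_\rho u(\xi) = 0$ on $\bm{\calU}_0$, only the finite non-translation-invariant piece from the core sites remains; everything that follows is then a routine combination of Taylor expansion, the $\ell^2$--$L^2$ norm equivalence, and the uniform bounds on $V^{(k)}_\xi$ furnished by Assumption C.
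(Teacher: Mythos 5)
Your argument is correct and is essentially the proof the paper defers to: the paper omits the proof, citing Theorem 2.3 of Ehrlacher--Ortner--Shapeev, and that proof proceeds exactly as you do --- renormalize by subtracting $V_\xi(0)+DV_\xi(0)\cdot Du$, kill the homogeneous linear term by telescoping on $\bm{\calU}_0$, sum the quadratic remainder via the bound $\|Du\|_{\ell^2}\lesssim\|\nabla Iu\|_{L^2}$ (the paper's own \eqref{equivNorm}), and control the $k$-linear forms with the uniform derivative bounds of Assumption~\ref{siteassumption}. The only point worth making explicit is that for the fourth Fr\'echet derivative you need uniform continuity of $V^{(4)}_\xi$ on the relevant set of stencils, which follows since $\{Du(\xi)\}_\xi$ accumulates only at $0$ and hence lies in a compact subset of $(\mathbb{R}^d)^{\calR}$.
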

We omit the proof, which is a minor modification of the proof of Theorem 2.3 of~\cite{Ehrlacher2013}.

The Euler-Lagrange equation corresponding to the local minimization problem~\eqref{globalAt} is
\begin{equation}\label{ElAt}
\langle \delta \calE^\a(\bm{u}^{\infty}), \bm{v}\rangle  =~ 0 \quad \forall \bm{v} \, \in \bm{\calU}_0.
\end{equation}
We make the following assumption regarding the local minima of (\ref{ElAt}).
\begin{assumption}\label{atCoercive}
There exists a local minimum, $\bm{u}^{\infty} \in \bm{\calU}$, of $\calE^\a(\bm{u})$ and a real number $\gamma_\a > 0$ such that
\begin{equation}\label{condition}
\gamma_\a\| \nabla I \bm{v}\|_{L^2(\mathbb{R}^d)}^2 \leq~ \langle \delta^2 \calE^\a(\bm{u}^{\infty}) \bm{v}, \bm{v}\rangle    \quad \forall \bm{v} \in \bm{\calU}_0.
\end{equation}
\end{assumption}
The condition~\eqref{condition} ensures that the atomistic solution is strongly stable and is critical for the analysis.

For point and line defects, solutions of~\eqref{ElAt} decay algebraically in their elastic far fields~\cite{Ehrlacher2013}.  We quantify the rates of decay using a smooth nodal interpolant of a lattice function, $u:\mathbb{Z}^d \to \mathbb{R}^d$, which we denote by $\tilde{I}u \in W^{3,\infty}_{\rm loc}(\mathbb{R}^d)$.  Its existence follows from~\cite[Lemma 2.1]{blended2014}, which we state below.  We refer to \cite{blended2014} for the proof.  A simplified, one-dimensional result can be found in~\cite{acta.atc}.

\begin{lemma}\label{smoothInterpolant}
There exists a unique operator $\tilde{I}: \calU \to {\rm C}^{2,1}(\mathbb{R}^d)$ such that for all
$\xi \in \mathbb{Z}^d$, (i) $\tilde{I}u$ is multiquintic (i.e., biquintic in the case $d=2$ and triquintic in the case $d=3$) in each cell $\xi + (0,1)^d$, (ii) $\tilde{I}u(\xi) = u(\xi)$, and (iii) for all multiindices $|\alpha| \leq 2$, $\partial^\alpha \tilde{I}u(\xi) = D^{\nn}_\alpha u(\xi)$ where $D^{\nn}_\alpha$ is defined by
\begin{align*}
D^{\nn,0}_i u(\xi) :=~& u(\xi), \\
D^{\nn,1}_i u(\xi) :=~& \frac{1}{2}(u(\xi + e_i) - u(\xi - e_i)) \quad (e_i \mbox{ is the $i$th standard basis vector}), \\
D^{\nn,2}_i u(\xi) :=~& u(\xi + e_i) -2u(\xi) + u(\xi - e_i), \\
D^{\nn}_\alpha u(\xi) :=~& D^{\nn,|\alpha_1|}_{1}\cdots D^{\nn,|\alpha_d|}_{d}u(\xi).
\end{align*}
Furthermore,
\begin{equation}\label{localEst}
\|\nabla^j \tilde{I} u\|_{L^2(\xi + (0,1)^d)} \lesssim~ \|D^j u\|_{\ell^2(\xi + \left\{-1,0,1,2\right\}^d)} \quad \mbox{for} \quad j = 1,2,3,\footnote{In this context, the modified Vinogradov notation $A \lesssim B$ means there is a constant $C$ such that $A \leq C B$ where $C$ may depend on the dimension $d$. After introducing the relevant approximation parameters for the AtC method, we will explicitly state what the constant $C$ is allowed to depend upon.}
\end{equation}
where
\begin{align*}
D^j u(\xi) = \big(D_{\rho_1}D_{\rho_2} \cdots D_{\rho_j}u(\xi)\big)_{(\rho_1, \rho_2, \ldots, \rho_j) \in \calR^j} \quad \mbox{and} \quad \|D^j u\|_{\ell^2(A)}^2 := \sum_{\xi \in A}\sup_{(\rho_1, \rho_2, \ldots, \rho_j) \in \calR^j}|D_{\rho_1}D_{\rho_2} \cdots D_{\rho_j}u(\xi)|^2.
\end{align*}
\end{lemma}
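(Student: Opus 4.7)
My plan is to construct $\tilde{I}u$ cell-by-cell as a tensor-product quintic Hermite interpolant and then establish the global regularity and the local estimate. On each closed unit cell $\tau_\xi := \xi + [0,1]^d$, I would define $\tilde{I}u|_{\tau_\xi}$ to be the unique multiquintic polynomial (a tensor product of univariate quintics in each coordinate) such that at every vertex $\eta \in \xi + \{0,1\}^d$ and every multiindex $\beta$ with components $\beta_i \in \{0,1,2\}$ one has $\partial^\beta(\tilde{I}u|_{\tau_\xi})(\eta) = D^{\nn}_\beta u(\eta)$. Existence and uniqueness reduce to the one-dimensional quintic Hermite problem (matching values and first two derivatives at the two endpoints of $[0,1]$ is unisolvent for polynomials of degree five), and the tensor-product structure then promotes this to $6^d$ linearly independent conditions on the $6^d$-dimensional space of multiquintics on $\tau_\xi$.

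Next I would verify global regularity. Since $D^{\nn}_\beta u(\eta)$ depends only on the vertex $\eta$ and not on the cell in which it is evaluated, neighbouring cells agree at their shared vertices in all derivatives $\partial^\beta$ with $\beta_i \leq 2$. The tensor-product Hermite structure forces the traces of $\tilde{I}u$ and of its normal derivatives of order at most two to match across every shared face, so $\tilde{I}u \in C^2(\bbR^d)$. Because each cellwise piece is a polynomial of fixed degree, $\nabla^3 \tilde{I}u$ is bounded uniformly, and hence $\tilde{I}u \in C^{2,1}(\bbR^d)$ with properties (i)--(iii) holding by construction. Uniqueness of the operator then follows immediately from unisolvence of the cellwise Hermite problem.

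To prove the local estimate \eqref{localEst}, I would transfer to the reference cell $[0,1]^d$, where $\tilde{I}u$ lives in the finite-dimensional space of multiquintics and is determined linearly by the Hermite data $\{D^{\nn}_\beta u(\eta) : \eta \in \xi + \{0,1\}^d,\ \beta_i \in \{0,1,2\}\}$. Each datum is a finite linear combination of nodal values $u(\zeta)$ for $\zeta \in \xi + \{-1,0,1,2\}^d$, so by equivalence of norms on this finite-dimensional space, $\|\nabla^j \tilde{I}u\|_{L^2(\tau_\xi)}$ is controlled by a suitable $\ell^2$-norm of those values. The essential refinement is to replace raw nodal values by $j$-th order finite differences. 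For this I would use that the interpolant reproduces polynomials of degree less than $j$ (since $D^{\nn,k}_i$ agrees with $\partial_i^k$ on polynomials of degree $\leq k$ in $x_i$), so $\tilde{I}u$ depends only on the equivalence class of $u$ modulo polynomials of degree $<j$. A Bramble--Hilbert style argument on the finite-dimensional quotient then yields the desired bound, after expanding each $D^{\nn}_\beta u(\eta)$ of order at least $j$ as a sum of $j$-fold compositions of unit differences $D_\rho$ whose stencil lies in $\xi + \{-1,0,1,2\}^d$.

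The hard part will be precisely this last replacement: going from an $\ell^2$-bound on raw nodal values to one on $j$-th order differences. The polynomial-reproduction step must be carried out carefully enough that the equivalence class computation respects the local stencil, and one must verify by combinatorial bookkeeping that every $D^{\nn}_\beta u(\eta)$ of total order at least $j$ can be written as a $j$-fold composition of nearest-neighbor differences without enlarging the stencil beyond $\xi + \{-1,0,1,2\}^d$. Once this is arranged, a standard finite-dimensional norm-equivalence argument closes the estimate with a constant depending only on $d$ and $j$.
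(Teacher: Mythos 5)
The paper does not prove this lemma itself; it defers to \cite[Lemma 2.1]{blended2014}, and the construction there is precisely the tensor-product quintic Hermite interpolant you describe (cellwise unisolvence from the univariate two-point quintic Hermite problem, $C^2$ matching across faces because the nodal data $D^{\nn}_\beta u(\eta)$ are vertex-based, and the local estimate via polynomial reproduction plus finite-dimensional norm equivalence). Your proposal is correct and follows essentially the same route; the only point worth making explicit in the last step is that the edge condition on $\calT_\a$ forces $\pm e_i \in \calR$, so the common kernel of the $j$-fold differences on the stencil $\xi+\{-1,0,1,2\}^d$ is exactly $\mathcal{P}_{j-1}$ and the discrete Deny--Lions inequality closes the argument.
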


The uniqueness assertion of Lemma~\ref{smoothInterpolant} and the condition that $\partial^\alpha \tilde{I}u(\xi) = D^{\nn}_\alpha u(\xi)$ for all $\xi \in \bbZ^d$ imply that for any constant vector field, $u(\xi) \equiv c \in \mathbb{R}^d$, $\tilde{I}u = c$.  Thus $\tilde{I}$ is well defined as an operator from $\bm{\calU}$ to $\bm{\calU}$ with $\tilde{I}\bm{u} := \left\{\tilde{I}u : u \in \bm{u}\right\}$. From~\eqref{localEst} and it easily follows that
\begin{equation*}\label{smoothEstimate}
\|\nabla \tilde{I}\bm{u}\|_{L^2(\mathbb{R}^d)} \lesssim~ \|\nabla I\bm{u}\|_{L^2(\mathbb{R}^d)}.
\end{equation*}

The following theorem provides a sharp estimate on the algebraic decay of the minimizers for point defects only.
\begin{theorem}[Regularity of a point defect]\label{decayThm}
The local minimum, $\bm{u}^{\infty}$, of~(\ref{globalAt}) satisfies
\begin{align}\label{decayEquation}
\big|\nabla^j \tilde{I}\bm{u}^{\infty}(x)\big| \lesssim~& |x|^{1-j-d} \quad \mbox{for} \, \, j = 1,2,3 \quad x \in \mathbb{R}^d,
\\ \label{decayEquation_no_tilde}
\big|\nabla I\bm{u}^{\infty}(x)\big| \lesssim~& |x|^{-d} \quad \mbox{for} \, \, x \in \mathbb{R}^d.
\end{align}
\end{theorem}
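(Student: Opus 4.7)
The plan is to recast the Euler--Lagrange equation (\ref{ElAt}) as a perturbation of a homogeneous-lattice problem and invert it via a lattice Green's function. By Assumption~A, only a bounded set of sites near the origin has $V_\xi\neq V$; I would introduce the translation-invariant energy $\calE^{\a,{\rm hom}}(\bm u):=\sum_{\xi\in\bbZ^d}V(D\bm u)$ with Hessian $H^{\rm hom}:=\delta^2\calE^{\a,{\rm hom}}(0)$, and rewrite (\ref{ElAt}) as
\[
H^{\rm hom}\,\bm u^\infty \;=\; F^{\rm def}\;+\;(H^{\rm hom}-H)\,\bm u^\infty\;-\;R[\bm u^\infty],
\]
where $H:=\delta^2\calE^\a(0)$, the force $F^{\rm def}:=-\delta\calE^\a(0)$ and the operator $H-H^{\rm hom}$ are both supported in $B_M$, and $R[\bm u]$ collects the at-least-quadratic Taylor remainder whose coefficients are $C^2$-bounded by Assumption~C, so pointwise $R[\bm u]=O(|D\bm u|^2)$.

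Next I would construct a lattice Green's function $G$ for $H^{\rm hom}$. Assumption~D together with the fact that $H-H^{\rm hom}$ is a finite-rank perturbation transfers coercivity from $H$ to $H^{\rm hom}$; then standard Fourier analysis on $\bbZ^d$ yields the Newtonian-type bound
\[
|\nabla^j\tilde I G(x)|\;\lesssim\;|x|^{2-d-j}\quad\text{for}\ j\ge 1,\ |x|\ge 1.
\]
Inverting gives the convolution representation $\bm u^\infty=G*\bigl(F^{\rm def}+(H^{\rm hom}-H)\bm u^\infty-R[\bm u^\infty]\bigr)$. The crucial sharpening, which produces the extra power $|x|^{-1}$ over the bare Green's function decay, comes from the observation that the translation invariance of $\calE^{\a,{\rm hom}}$ forces $\sum_{\xi}F^{\rm def}(\xi)\cdot c=0$ for every constant vector $c\in\bbR^d$: a point defect exerts no net force on the surrounding lattice. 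Consequently $G*F^{\rm def}$ is a dipole field and satisfies $|\nabla^j\tilde I(G*F^{\rm def})(x)|\lesssim |x|^{1-j-d}$, which is exactly the target rate \eqref{decayEquation}.

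For the remaining terms I would run a bootstrap. A starting algebraic decay rate $|\nabla^j\tilde I\bm u^\infty(x)|\lesssim |x|^{-\alpha_0}$ (some $\alpha_0>0$) follows from $\nabla I\bm u^\infty\in L^2(\bbR^d)$ combined with the convolution identity applied to the compactly supported part of the right-hand side. Since $R[\bm u^\infty]$ is quadratic in $D\bm u^\infty$, its decay rate is $2\alpha_k$ at the $k$th step, and convolution with $G$ then yields rate $\alpha_{k+1}=\min\{d,\,2\alpha_k+d-2\}$; iterating saturates at the linear rate dictated by the dipole estimate above, which proves \eqref{decayEquation} for $j=1,2,3$. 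Finally \eqref{decayEquation_no_tilde} follows by comparing the two interpolants: on each simplex $\tau\in\calT_\a$ containing a node $\xi$, $|\nabla I\bm u^\infty|_\tau\lesssim\sup_{\rho\in\calR}|D_\rho\bm u^\infty(\xi)|\lesssim\sup_{B(\xi,r_{\rm cut})}|\nabla\tilde I\bm u^\infty|$ by the mean value theorem applied to $\tilde I\bm u^\infty$.

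The main obstacle is the bootstrap step: because $R[\bm u^\infty]$ involves up to second and third differences of $\bm u^\infty$, one must iterate pointwise decay estimates on $\nabla^j\tilde I\bm u^\infty$ simultaneously for $j=1,2,3$, and the endpoint exponent $d$ requires careful weighted-norm control of the discrete convolution $G*$ (essentially a discrete Calder\'on--Zygmund estimate with algebraic weights). A secondary difficulty is the passage from the finite-rank perturbed operator $H$ to $H^{\rm hom}$, which must preserve coercivity uniformly; this is where Assumption~D is used in an essential way. The sharp dipole rate additionally depends crucially on the zero-total-force property specific to point defects and would fail for dislocations.
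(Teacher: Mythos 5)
Your proposal and the paper part ways at the very first step: the paper does not prove the decay rates at all, but imports the discrete estimates $|D^j \bm{u}^{\infty}(\xi)| \lesssim |\xi|^{1-j-d}$, $j=1,2,3$, from \cite[Theorem 3.1, Lemma 3.5]{Ehrlacher2013} and then converts them into \eqref{decayEquation}--\eqref{decayEquation_no_tilde} using the local interpolation estimate \eqref{localEst} (and its first-order analogue for $I$). What you sketch is, in essence, a reconstruction of the proof inside the cited reference: linearize about the homogeneous lattice, invert with the lattice Green's function, exploit the zero-net-force (divergence-form) structure of the defect force to gain one power of $|x|$, and bootstrap the quadratic remainder. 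That is the right architecture for proving the imported result, so the route is legitimate, but it is a much heavier undertaking than what the theorem's proof in this paper actually consists of, and as written it has real holes.

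Three concrete ones. First, the claim that coercivity transfers from $H=\delta^2\calE^\a(0)$ (or from $\delta^2\calE^\a(\bm{u}^{\infty})$, which is what Assumption~\ref{atCoercive} actually controls) to $H^{\rm hom}$ ``because $H-H^{\rm hom}$ is finite rank'' is false as a mechanism: a finite-rank perturbation neither preserves nor creates coercivity. The correct argument is a translation/localization one --- test $\delta^2\calE^\a(\bm{u}^{\infty})$ against functions supported far from the defect and use the decay of $\bm{u}^{\infty}$ --- which is \cite[Proposition 2.6]{Ehrlacher2013}, invoked by the paper in Lemma~\ref{PiLemma}. Second, the seed decay rate does not ``follow from $\nabla I\bm{u}^{\infty}\in L^2$ combined with the convolution identity''; upgrading square-integrability to a pointwise algebraic rate is itself a nontrivial step requiring Caccioppoli-type interior regularity estimates on dyadic annuli. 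Third, the bootstrap closes at the endpoint exponent only if you use that $R[\bm{u}^{\infty}]$, like $F^{\rm def}$, acts in divergence form (it pairs against test functions only through $D_\rho v$), so that the relevant convolution is of the quadratic \emph{stress} against $\nabla^2 G\sim|x|^{-d}$; treating $R$ merely as a pointwise $O(|D\bm{u}|^2)$ force and convolving with $G$ loses the order you need, and your recursion $\alpha_{k+1}=\min\{d,\,2\alpha_k+d-2\}$ reflects that ambiguity. Your derivation of \eqref{decayEquation_no_tilde} from \eqref{decayEquation} via finite differences of $\tilde{I}\bm{u}^{\infty}$ is fine and matches the paper's one-line argument.
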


\begin{proof}
Theorem 3.1 and Lemma 3.5 of~\cite{Ehrlacher2013} imply
\[
\left|D^j \bm{u}^{\infty}(\xi)\right| \lesssim~ \left|\xi\right|^{1-j-d} \quad \mbox{for} \, \, j = 1,2,3.
\]
This, along with the local estimate~\eqref{localEst} of $\tilde{I}$ implies~\eqref{decayEquation}.

An analogous local estimate,
\[
\|\nabla I u\|_{L^2(\xi + (0,1)^d)} \lesssim~ \|D u\|_{\ell^2(\xi)},
\]
implies \eqref{decayEquation_no_tilde}.
\end{proof}

Approximation of~\eqref{globalAt} by truncating the support of the admissible functions to a regular polygon or polyhedron $\Omega$ of diameter $N$ is the first step towards an AtC formulation of this problem. The resulting \emph{truncated} displacement space
\begin{equation*}\label{admissDisplaceFinite}
\bm{\calU}_{\Omega} := \left\{ \bm{u} \in \bm{\calU} : \supp(\nabla I \bm{u}) \subset \overline{\Omega} \right\}
\end{equation*}
is finite-dimensional and comprises all functions that are constant outside of $\Omega$.
Restriction of the optimization problem~\eqref{globalAt} to the space $\bm{\calU}_{\Omega}$ yields a finite dimensional atomistic problem
\begin{equation*}\label{truncatedAt}
\bm{u}_{\Omega} = \argmin_{\bm{\calU}_\Omega} \calE^\a(\bm{u}).
\end{equation*}
The corresponding Euler-Lagrange equation, seek $\bm{u}_{\Omega} \in \bm{\calU}_\Omega$ such that
\begin{equation}\label{truncatedAtEL}
\begin{split}
\langle \delta \calE^\a(\bm{u}_{\Omega}), \bm{v}\rangle  =~& 0 \qquad \forall \bm{v} \in \bm{\calU}_\Omega,
\end{split}
\end{equation}
is a finite-dimensional approximation of~\eqref{ElAt}.
The truncated problem \eqref{truncatedAtEL} provides an accurate and computationally feasible approximation for a single point defect~\cite{Ehrlacher2013}. However, its numerical solution quickly becomes intractable as the number of defects increases.

Thus, the next step in the AtC formulation is to replace \eqref{truncatedAtEL} with a local hyperelastic model in parts of the domain that are sufficiently far away from the defect core; at a minimum, we require  $V_\xi \equiv V$ in these regions. In such regions, the hyperelastic model is derived from the Cauchy-Born rule~\cite{born1954}, which defines a strain energy per unit volume according to
\begin{equation*}\label{cbRule}
W(\mG) := V\big((\mG\rho)_{\rho \in \calR}\big) \quad \mbox{for $\mG \in \mathbb{R}^{d \times d}$.}
\end{equation*}

Integration of the strain energy yields a continuum energy
\begin{equation}\label{cbEnergy}
\calE^\c(u) := \int_{\mathbb{R}^d}W(\nabla u(x))\, dx,
\end{equation}
which is defined for a suitable class of functions such as $W^{1,2}(\mathbb{R}^d)$.
We use the Cauchy-Born rule far from the defect core because in the absence of defects it
provides a second-order accurate approximation for smoothly decaying elastic fields~\cite{weinan2007cauchy,blanc2007}.
The advantage of the Cauchy-Born energy~\eqref{cbEnergy} over the atomistic energy~\eqref{atEnergy} is that local minima of the Cauchy-Born energy can efficiently be approximated by the finite element method on a coarser mesh than the atomistic mesh, $\mathcal{T}_\a$.

%%-----------------------------
\subsection{AtC Approximation}\label{sec:approximation}
%%-----------------------------

AtC methods use the more accurate but expensive atomistic model only in a small region surrounding the defect core and alternate to a more computationally efficient continuum model in the bulk of the domain where the lattice and site energy are homogeneous.  The challenge is to couple the models in a stable and accurate manner without creating spurious numerical artifacts.

To describe our AtC approach we consider a configuration comprised of a finite domain $\Omega$, a defect core $\Omega_{\rm core} \subset \Omega$, and atomistic and continuum subdomains $\Omega_\a, \Omega_\c \subset \Omega$.  The analysis of our AtC method requires several technical assumptions on these domains' relative sizes to one another.  For the formulation and understanding of the algorithm, it suffices to choose domains $\Omega_{\rm core} \subset \Omega_\a$ both containing the defect which have diameters of the same magnitude and a finite computational domain $\Omega \supset \Omega_\a$ whose diameter is much larger than that of $\Omega_\a$.   We then set $\Omega_\c := \Omega\backslash \Omega_\a$ and define the overlap region to be $\Omega_\o := \Omega_\a\backslash\Omega_{\rm core}$.

We now describe the specific domain requirements needed for the analysis of the algorithm.  The domains are defined by first selecting a domain $\Omega_{0}$ so that (i) it contains all $\xi$ for which $V_\xi \not\equiv V$; (ii) its boundary, $\partial\Omega_{0}$, is Lipschitz, and (iii) $\partial\Omega_{0}$ is a union of edges from $\calT_\a$.  The domains $\Omega_{\rm core}, \Omega_\a$, and $\Omega$ will be defined as multiples of $\Omega_0$ so $\Omega_0$ provides the essential shape of these domains. We choose integers $R_{\core}\geq 1$ and $\psi_\a \geq 4$ and set $\Omega_{\rm core} = R_{\core}\Omega_0$ and $\Omega_\a = \psi_\a\Omega_{\rm core}$ with the requirement that $(\psi_\a -1)r_{\rm core} \geq 4r_{\rm cut}$, where $r_{\rm core}$ is the radii of the largest circle centered at the origin contained in $\Omega_{\rm core}$.
Next, we select an integer $R_\Omega > R_{\rm core} \cdot \psi_\a$ and set $\Omega = R_\Omega \Omega_0$ whilst requiring that the radii of the largest circle centered at the origin contained in $\Omega$, denoted by $r_\c$, satisfies $r_\c/r_{\rm core} = r_{\rm core}^{\kappa}$ for some integer $\kappa \geq 1$.  The continuum domain is then defined by $\Omega_\c := \Omega \backslash \Omega_{\rm core}$. We also define the ``annular'' overlap region
$\Omega_\o := \Omega_\a\backslash\Omega_{\rm core}$ and an ``extended'' overlap region $\Omega_{\o, \ex} := (2\psi_\a\Omega_{\rm core}) \backslash \Omega_{\rm core}$.

The requirement that $(\psi_\a -1)r_{\rm core} \geq 4r_{\rm cut}$ can now be interpreted as requiring the overlap ``width'' to be twice the size of the interaction range of the site potential. The purpose of $\Omega_{\o, \ex}$ is to have a domain of definition common to both continuum functions defined on $\Omega_\c$ and atomistic functions defined on $\Omega_\a$ which extends just beyond $\Omega_\o$; it will be used explicitly only in the analysis of Section~\ref{sec:normEquiv}.  Finally, the requirement that $r_\c/r_{\rm core} = r_{\rm core}^{\kappa}$ for some integer $\kappa \geq 1$ can be interpreted as forcing the continuum domain to be much larger in size than the atomistic region, which should indeed be the case if we are to reap the benefits of an AtC method.  See Figure~\ref{domainFigure} for an illustration of the domain decomposition in two dimensions.

We also define the domain ``size'' parameters
\begin{equation*}
R_{\a} :=~ R_{\rm core} \cdot \psi_\a \quad \mbox{and} \quad R_{\c} := \frac{1}{2}\rm{Diam}(\Omega_\c),
\end{equation*}
 and let $r_\a$, and $r_\c$ be the radii of the largest circles inscribed in $\Omega_\a$, and  $\Omega$ respectively.\footnote{We define $r_\c$ as the inner radii of $\Omega$ since $\Omega_\c$ has a hole at the defect core and hence does not have an inscribed circle.}

\begin{figure}[htp!]
\centering
\includegraphics[width=0.5\textwidth]{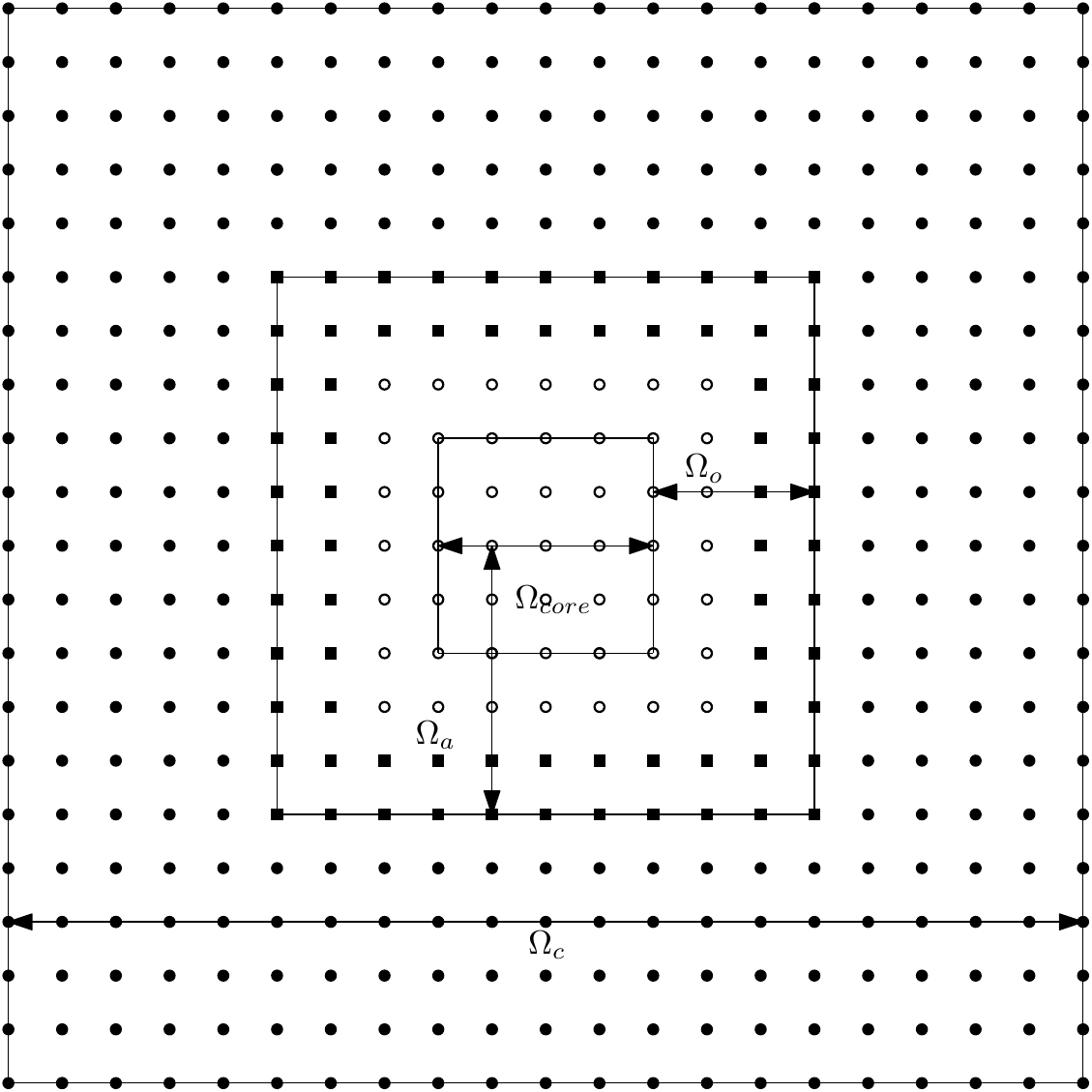}
\caption{An example AtC configuration in two dimensions. The set $\Omega_\a^{\circ\circ}$ is shown as open circles. The solid squares show $\partial_\a\calL_\a$ for the case $\calR = \left\{\pm e_1,  \pm e_2\right\}$.}
\label{domainFigure}
\end{figure}
The atomic lattices associated with the new domains are
\begin{equation*}\label{latticeDomains}
\calL_{\t} := \mathbb{Z}^d \cap \Omega_\t \quad \mbox{where} \quad \t = \a, \c, \o, \rm core,
\end{equation*}
and their atomistic interiors are
\begin{equation*}\label{atInterior}
\calL^\circ_{\t} := \left\{ \xi \in \calL_\t : \xi - \rho \in \calL_\t \quad \forall \rho \in \calR\right\}.
\end{equation*}
The atomistic interiors of the interiors are $\calL^{\circ\circ}_\t=\left(\calL_\t^{\circ}\right)^{\circ}$ while
the atomistic boundary of $\calL_{\t}$ is
\begin{equation*}\label{atBoundary}
\partial_{\a}\calL_{\t} := \calL_{\t} \backslash \calL^{\circ\circ}_\t.
\end{equation*}
See Figure~\ref{domainFigure} for an illustration of $\Omega_\a^{\circ\circ}$ (open circles) and $\partial_\a\calL_\a$ (solid squares) for the case $\calR = \left\{\pm e_1, \pm e_2\right\}$.

\begin{remark}
Throughout the paper we state results involving a parameter $R_{\rm core}^*$ such that if $R_{\rm core} \geq R_{\rm core}^*$,  then a solution to a specific problem defined on the domains constructed above will be guaranteed to exist. Because $R_{\c} \gg R_{\rm core}$ by virtue of $r_\c/r_{\rm core} = r_{\rm core}^{\kappa}$, this will automatically ensure that $R_{\c} \gg R_{\rm core}^*$ as well. These results always assume AtC domain configurations constructed according to the above guidelines.
Furthermore, when stating inequalities, we will use modified Vinogradov notation, $A \lesssim B$ in lieu of $A \leq C \cdot B$, where $C>0$ is a constant.  This constant may only depend upon $\Omega_0, d, R_{\rm core}^*, r_{\rm cut}$, $\psi_\a$, and an additional constant, $\beta$, introduced in Section~\ref{res:cont} as the minimum angle of a finite element mesh.
\end{remark}

%\subsubsection{Approximation parameters}
%\begin{remark}
%\dao{Derek:  I don't think this remark should be introduced so early, but should be placed later.  Consistency errors and stability estimates in this paper will depend upon the %parameters
% $R_{\rm core}, R_c$, and the mesh size function $h(x)$.  Alternatively, if $R_\c = \psi_\a^\kappa R_{\rm core}$, these estimates will depend on $h(x)$, $\kappa$ and $R_{\rm core}$. %One could then attempt to minimize these errors with respect to these parameters along the same lines as in ~\cite{olsonDev,bqce12,bqcf13}.
% However, we will not pursue this optimization and will instead make the minimum requirement that $h(x) \lesssim (|x|/R_{\rm core})^\upsilon$ for some $(d+2)/2 \geq \upsilon \geq 1$.}
%\end{remark}

\subsubsection{Restricted Atomistic Problem}
The basis for defining an atomistic problem restricted to $\Omega_\a$ are the Euler-Lagrange equations~\eqref{truncatedAtEL}.  By requiring $\bm{u}_{\Omega} \in \bm{\calU}_{\Omega}$, we are effectively imposing  Dirichlet boundary conditions (in the sense of equivalence classes) for the variational problem by requiring the function to be constant outside $\Omega$.  Accordingly, we will define a restricted atomistic problem by also specifying Dirichlet boundary conditions on $\partial_\a\calL_\a$.

The admissible displacement space for this problem is $\bm{\calU}^\a := \calU^\a / \mathbb{R}^d$
where
\begin{equation*}\label{admissibleAt}
\calU^\a := \left\{ u^\a: \calL_\a \to \mathbb{R}^d \right\}.
\end{equation*}
The elements of $\bm{\calU^\a}$ are equivalence classes, $\bm{u}^\a$, of lattice functions on $\calL_\a$ differing by a constant $c \in \mathbb{R}^d$. We again use $I$ to denote the piecewise linear interpolant of a lattice function on $\calL_\a$ and endow $\bm{\calU}^\a$ with the norm
 $\|\nabla I \bm{u}^\a\|_{L^2(\Omega_\a)}$.
We then define a restricted atomistic energy functional on $\bm{\calU}^\a$ via
\begin{equation*}\label{restrictedAtEnergy}
\tilde{\calE}^\a(\bm{u}^\a) := \sum_{\xi \in \calL_\a^{\circ\circ}}V_\xi(D\bm{u}^\a(\xi)).
\end{equation*}

We seek to minimize $\tilde{\calE}^\a(\bm{u}^\a)$ over $\bm{\calU}^\a$ subject to Dirichlet boundary conditions on $\partial_\a\calL_\a$.  The set of all possible boundary values is the quotient space
$\bm{\Lambda}^\a := \Lambda^\a / \mathbb{R}^d$, where
\begin{equation*}\label{virtualAt}
\Lambda^\a := \left\{\lambda_\a: \partial_\a\calL_\a \to \mathbb{R}^d\right\}.
\end{equation*}
Elements of $\bm{\Lambda}^\a$ are denoted again by $\lambda_\a$ (without boldface). Thus, the restricted atomistic problem reads
\begin{equation}\label{restrictedAtMin}
%\begin{split}
\bm{u}^\a =
%~&
\argmin_{\bm{\calU}^\a} \tilde{\calE}^\a(\bm{w}^\a) \quad
\text{subject to}\quad \bm{u}^\a =
%~&
\lambda_\a \quad \mbox{on} \quad \partial_\a\calL_\a.
%\end{split}
\end{equation}
We refer to $\lambda_\a$ as a \textit{virtual atomistic control} using the terminology of~\cite{gervasio_2001}.  They are virtual  because $\partial_\a\calL_\a$ is an artificial rather than a physical boundary. They are controls because by varying  $\lambda_\a$ we can vary, i.e. ``control,'' the solutions of \eqref{restrictedAtMin}.

The Euler-Lagrange equation for~\eqref{restrictedAtMin} is seek $\bm{u}^\a\in  \bm{\calU}^\a$ such that
\begin{equation}\label{restrictedAtEL}
\begin{split}
\langle \delta \tilde{\calE}^\a(\bm{u}^\a), \bm{v}^\a\rangle  =~& 0 \quad \forall \bm{v}^\a \in \bm{\calU}^\a_0,  \\
  \bm{u}^\a =~& \lambda_\a \quad \mbox{on} \quad \partial_\a\calL_\a ,
\end{split}
\end{equation}
where the space of atomistic test functions,
$\bm{\calU}^\a_0 := \calU^\a_0 / \mathbb{R}^d$, is the quotient space of
\begin{equation*}\label{restrictedAtTestNoDot}
\calU^\a_0 := \left\{u^\a \in \calU^\a : \exists \, c \in \mathbb{R}^d, \, u^\a|_{\partial_\a\calL_\a} = c  \right\}.
\end{equation*}
After extending $\bm{v}^\a \in \bm{\calU}^\a_0$ by a constant to a function defined
on all of $\mathbb{R}^d$, \cite[(2.5) in Lemma 2.1]{Ehrlacher2013} implies
\begin{equation}\label{equivNorm}
\sum_{\xi \in \calL_\a^{\circ\circ}}\sup_{\rho \in \calR} |D_\rho \bm{v}^\a|^2 \lesssim \|\nabla I\bm{v}^\a\|_{L^2(\Omega_\a)}^2 \quad \forall \bm{v}^\a \in \bm{\calU}^\a_0.
\end{equation}
The following result is then a direct consequence of Assumption~\ref{siteassumption} and~\eqref{equivNorm}.
\begin{theorem}\label{atLippy}
The restricted energy functional $\tilde{\calE}^\a$ is four times Fr\'echet differentiable on $\bm{\calU}^\a$, and each derivative is uniformly bounded in the parameter $R_{\rm core}$.  In particular, $\delta^2\tilde{\calE}^\a$ is Lipschitz continuous on $\bm{\calU}^\a$ with Lipschitz bound independent of $R_{\rm core}$.
\end{theorem}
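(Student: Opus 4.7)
The plan is to exploit the finite-sum structure of $\tilde{\calE}^\a$ together with the uniform pointwise bounds on derivatives of $V_\xi$ provided by Assumption~\ref{siteassumption}, and then convert the resulting site-wise bounds into $H^1$-type bounds via the norm-equivalence estimate~\eqref{equivNorm}. Since $\tilde{\calE}^\a$ is a finite sum of compositions of the $C^4$ mappings $V_\xi$ with the linear, bounded finite-difference operator $D$, it is four times Fr\'echet differentiable on $\bm{\calU}^\a$ by the chain rule, and
\[
\langle \delta^k \tilde{\calE}^\a(\bm{u}^\a), \bm{v}_1^\a, \ldots, \bm{v}_k^\a \rangle = \sum_{\xi \in \calL_\a^{\circ\circ}} \partial^k V_\xi\big(D\bm{u}^\a(\xi)\big)\big[D\bm{v}_1^\a(\xi), \ldots, D\bm{v}_k^\a(\xi)\big], \quad k = 1, \ldots, 4.
\]

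Next I would derive the $R_{\rm core}$-independent bound on the operator norm of $\delta^k \tilde{\calE}^\a(\bm{u}^\a)$ for $k \geq 2$. Assumption~\ref{siteassumption} bounds each summand by $C(M_k,|\calR|)\prod_{i=1}^k \sup_{\rho \in \calR}|D_\rho \bm{v}_i^\a(\xi)|$. Applying the generalized H\"older inequality with equal exponents $p_i = k$ and then using $\|\cdot\|_{\ell^k} \leq \|\cdot\|_{\ell^2}$ on the counting-measure space $\calL_\a^{\circ\circ}$ (valid since $k \geq 2$) yields
\[
\Big|\langle \delta^k \tilde{\calE}^\a(\bm{u}^\a), \bm{v}_1^\a, \ldots, \bm{v}_k^\a \rangle\Big| \lesssim \prod_{i=1}^k \Big(\sum_{\xi \in \calL_\a^{\circ\circ}} \sup_{\rho \in \calR} |D_\rho \bm{v}_i^\a(\xi)|^2\Big)^{1/2}.
\]
The estimate~\eqref{equivNorm} then converts each factor into $\|\nabla I \bm{v}_i^\a\|_{L^2(\Omega_\a)}$, with a constant depending only on $M_k$, $|\calR|$, and the constant in~\eqref{equivNorm}, hence independent of $R_{\rm core}$. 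The first derivative is handled analogously after writing $\delta\tilde{\calE}^\a(\bm{u}^\a) = \delta\tilde{\calE}^\a(\bm{0}) + \int_0^1 \delta^2\tilde{\calE}^\a(t\bm{u}^\a)\bm{u}^\a \, dt$ and applying the $k=2$ bound to the integrand.

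For Lipschitz continuity of $\delta^2\tilde{\calE}^\a$, I would apply the fundamental theorem of calculus in a Banach space,
\[
\langle \big(\delta^2\tilde{\calE}^\a(\bm{u}_1^\a) - \delta^2\tilde{\calE}^\a(\bm{u}_2^\a)\big), \bm{v}^\a, \bm{w}^\a\rangle = \int_0^1 \langle \delta^3\tilde{\calE}^\a(\bm{u}_t^\a), \bm{u}_1^\a - \bm{u}_2^\a, \bm{v}^\a, \bm{w}^\a\rangle\, dt,
\]
with $\bm{u}_t^\a := \bm{u}_2^\a + t(\bm{u}_1^\a - \bm{u}_2^\a)$, and then invoke the uniform bound on $\delta^3\tilde{\calE}^\a$ established above to conclude
\[
\|\delta^2\tilde{\calE}^\a(\bm{u}_1^\a) - \delta^2\tilde{\calE}^\a(\bm{u}_2^\a)\|_{\mathrm{op}} \lesssim \|\nabla I(\bm{u}_1^\a - \bm{u}_2^\a)\|_{L^2(\Omega_\a)}.
\]

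The main obstacle is ensuring the bounds are genuinely uniform in $R_{\rm core}$: the pointwise bound on each $V_\xi$ is trivial from Assumption~\ref{siteassumption}, but the cardinality $|\calL_\a^{\circ\circ}|$ grows like $R_{\rm core}^d$, so a naive Cauchy--Schwarz on the site sum would leak a factor $R_{\rm core}^{d/2}$. The norm-equivalence~\eqref{equivNorm} is precisely what is needed to absorb this growth into $\|\nabla I \bm{v}^\a\|_{L^2(\Omega_\a)}$, and the fact that each test-function argument contributes its own factor in the multilinear form is what makes the generalized H\"older step succeed for all $k \geq 2$.
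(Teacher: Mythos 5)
Your proposal is correct and is essentially the paper's own argument: the paper gives no proof beyond the remark that the theorem is ``a direct consequence of Assumption~\ref{siteassumption} and~\eqref{equivNorm},'' and your finite-sum/chain-rule computation, the generalized H\"older step with equal exponents, and the conversion to $\|\nabla I\bm{v}^\a\|_{L^2(\Omega_\a)}$ via~\eqref{equivNorm} is precisely the natural way to fill that remark in, including the correct observation that each test-function slot must absorb its own share of the site sum to avoid leaking a factor $|\calL_\a^{\circ\circ}|^{1/2}\sim R_{\rm core}^{d/2}$. The only loose ends are minor and shared with the paper itself: \eqref{equivNorm} is stated only for $\bm{v}^\a\in\bm{\calU}^\a_0$, but the underlying estimate is local and extends to all of $\bm{\calU}^\a$ when the sum runs over $\calL_\a^{\circ\circ}$; and your $k=1$ reduction still leaves the constant term $\delta\tilde{\calE}^\a(\bm{0})$ to be bounded, a point the paper's one-line proof also glosses over and which is immaterial downstream since only the derivatives of order $k\ge 2$ and the Lipschitz bound on $\delta^2\tilde{\calE}^\a$ are ever invoked.
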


Given the exact solution $\bm{u}^{\infty}$, we will later require solving~\eqref{restrictedAtEL} where we take $\lambda_\a = \bm{u}^{\infty}|_{\partial_\a\calL_\a}$.  To do that, first set $\bm{u}_\a^{\infty} := \bm{u}^{\infty}|_{\calL_\a}$, and next note that elements of $\bm{\calU}^\a_0$ can be extended by a constant to functions defined on all of $\mathbb{Z}^d$, and this extension will belong to $\bm{\calU}_0$.  By identifying $\bm{v}^\a \in \bm{\calU}^\a_0$ as an element of $\bm{\calU}_0$, we have
\begin{equation*}\label{atLifting}
\big<\delta \tilde{\calE}^\a(\bm{u}^{\infty}_\a), \bm{v}^\a\big> = \left< \delta \calE^\a(\bm{u}^{\infty}), \bm{v}^\a\right> = 0.
\end{equation*}
The final equality holds since $\bm{u}^{\infty}$ solves the Euler Lagrange equations~\eqref{ElAt}.  Similarly, Assumption~\ref{atCoercive} implies
\begin{equation}\label{atLiftingStable}
\gamma_\a \|\nabla I\bm{v}^\a\|_{L^2(\Omega_\a)}^2 = \gamma_\a \|\nabla I\bm{v}^\a\|_{L^2(\mathbb{R}^d)}^2 \leq~ \left< \delta^2 \calE^\a(\bm{u}^{\infty}_\a)\bm{v}^\a, \bm{v}^\a\right> =~ \big< \delta^2 \tilde{\calE}^\a(\bm{u}^{\infty})\bm{v}^\a, \bm{v}^\a\big>
\end{equation}
Hence the solution to~\eqref{restrictedAtEL} for $\lambda_\a = \bm{u}^{\infty}|_{\partial_\a\calL_\a}$ is precisely $\bm{u}_\a^{\infty} := \bm{u}^{\infty}|_{\calL_\a}$.  To avoid unnecessary notation, we will often drop the subscript and just write $\bm{u}^{\infty}$ as the solution to this problem.

%%%%%%%%
%%%%%%%%
\subsubsection{Restricted Continuum}\label{res:cont}
%%%%%%%%
%%%%%%%%
We define the continuum subproblem analogously by using the Euler-Lagrange equations corresponding to minimizing the Cauchy-Born energy~\eqref{cbEnergy}.  In addition to the atomistic mesh, $\mathcal{T}_\a$, that covers $\Omega_\a$ and $\Omega_\c$, we introduce a continuum partition, $\mathcal{T}_h$, of $\Omega_\c$.  We use $\mathcal{T}_h$ to define the admissible continuum finite element displacement space. Let $\mathcal{N}_h$ be the nodes of $\calT_h$. We call a continuum mesh \emph{fully resolved} over a domain $U$ if for each $T \in \calT_h$ with $T \subset U$, we have $T \in \calT_\a$.  In other words, the continuum and atomistic mesh coincide over $U$. Further define
$$
h_T := {\rm{Diam}}(T), \quad\mbox{and}\quad
h(x) := \sup_{\left\{T \in \calT_h: x \in T\right\}}h_T.
$$
For example,  if $x$ is a vertex of a triangle, then $h(x)$ is the largest diameter of the triangles which share this vertex.  Our error estimates require the following assumptions on $\calT_h$.
\begin{assumption}\label{meshSize}
The continuum mesh, $\calT_h$, satisfies
\begin{description}
\item[O.1] The continuum mesh is fully resolved on $\Omega_{\o, \ex}$.
\item[O.2] Nodes in $\mathcal{N}_h$ are also nodes of $\calT_\a$.
\item[O.3] The elements $T\in \calT_h$ satisfy a minimum angle condition for some fixed $\beta > 0$.
\end{description}
\end{assumption}

We will also need the inner and outer continuum boundaries defined as
$$
\Gamma_{\rm core} := \partial \Omega_{\rm core}
\quad\mbox{and}\quad
\Gamma_{\c} := \partial \Omega_{\c} \backslash \Gamma_{\rm core},
$$
respectively.

Our analysis uses two families of interpolants. The first family comprises the standard piecewise linear interpolants
\begin{align*}
I_hu \in \mathcal{P}^1(\calT_h),
& \quad
I_hu(\zeta) = u(\zeta) \quad \forall \zeta \in \mathcal{N}_h,
\\
I u \in \mathcal{P}^1(\calT_\a),
& \quad
I u(\xi) = u(\xi) \quad \forall \xi \, \in \calL,
\end{align*}
defined on the finite element mesh $\mathcal{T}_h$ and the atomistic mesh $\mathcal{T}_\a$, respectively.
The second family comprises Scott-Zhang (quasi-)interpolants~\cite{scott1990, brenner2008}
$S_\a$, $S_{\a,n}$, and $S_{h,n}$. The first, $S_\a$, is defined on
 $\Omega_\c$ with the atomistic mesh, $\calT_\a$; the second, $S_{\a,n}$ is defined on a domain $\tilde{\Omega}_\a$ with a mesh $\tilde{\calT}_{\a,n} = \epsilon_n\calT_\a$ for some $\epsilon_n > 0$; and finally, $S_{h,n}$ is defined on a domain $\tilde{\Omega}_\c$ with mesh $\tilde{\calT}_{h,n} = \epsilon_n\calT_{h}$.
 (We refer to Section~\ref{reduction} for precise definition of these domains.)
 We recall that for a given domain $V$, a mesh partition $\calT$ and a function $f \in H^1(V)$,  the Scott-Zhang interpolant $S f$ has the following four properties~\cite[Chapter 4]{brenner2008}:
\begin{description}
\item[P.1] (Projection) $Sf = f$ for all $f \in \mathcal{P}^1(\calT)$.
\item[P.2] (Preservation of Homogeneous Boundary Conditions) If $f$ is constant on $\partial V$, then so is $Sf$.
\item[P.3] (Stability of semi-norm) $\|\nabla S f\|_{L^2(V)} \lesssim~ \|\nabla f\|_{L^2(V)}$ - the implied constant depending upon the shape regularity constant, or minimum angle of the mesh $\calT$.
\item[P.4] (Interpolation Error for $S$) $\|S f - f\|_{L^2(V)} \lesssim \max_{T \in \calT} {\rm Diam}(T) \|\nabla f\|_{L^2(V)}$.
\end{description}

The space of admissible continuum displacements is $\bm{\calU}^\c_h := \calU^\c_h / \mathbb{R}^d$, where
\begin{equation*}\label{adContNoDot}
\calU^\c_h := \left\{u^\c \in {\rm C}^0(\Omega_\c) : u^\c |_{T} \in \mathcal{P}^1(T) ~~ \forall T \in \mathcal{T}_h, \, \exists \, K \in \mathbb{R}^d, \, u^\c = K \ \, \mbox{on} \, \, \Gamma_{\c}  \right\}.
\end{equation*}
The norm on this space is $\| \nabla \bm{u}^\c\|_{L^2(\Omega_\c)}$. Similar to the definition of $\bm{\calU}_\Omega$, we require the elements of $\bm{\calU}^\c_h$ to be constant on the \emph{outer} continuum boundary $\Gamma_\c$, which enables their extension to infinity by a constant. We do not place such a requirement on the \emph{inner} continuum boundary because $\Gamma_{\rm core}$ is an artificial boundary.
There we will employ \textit{virtual continuum} boundary controls belonging to the space $\bm{\Lambda}^\c := \Lambda^\c / \mathbb{R}^d$ where
\begin{equation*}\label{virtualContNoDot}
\Lambda^\c := \left\{\lambda_\c :\mathcal{N}_h \cap \Gamma_{\rm core}  \to \mathbb{R}^d \right\}.
\end{equation*}
Since $\Gamma_{\rm core}$ represents a curve, we can define the piecewise linear interpolant of $\lambda_\c \in \Lambda_\c$ with respect to $\mathcal{N}_h \cap \Gamma_{\rm core}$ by $I\lambda_\c(\xi) = \lambda_\c(\xi)$ for all $\xi \in \mathcal{N}_h \cap \Gamma_{\rm core}$.  Again, if $\lambda_\c$ is constant, then $I\lambda_\c$ is as well so that this operator is well defined on $\bm{\Lambda}^\c$.  Henceforth, we will always identify elements of $\bm{\Lambda}^\c$ with their piecewise linear interpolant on $\Gamma_{\rm core}$ without explicitly using $I$.

The restricted continuum energy functional on $\bm{\calU}^\c_h$ is then
\begin{equation*}\label{restrictedContEnergy}
\tilde{\calE}^\c(\bm{u}^\c) := \int_{\Omega_\c} W(\nabla \bm{u}^\c(x))\, dx = \sum_{T \in \mathcal{T}_h} W(\nabla \bm{u}^\c(x))\left|T\right|,
\end{equation*}
where $|T|$ represents the volume of the simplex $T$. Given $\lambda_\c \in \bm{\Lambda}^\c$, we consider the following restricted continuum problem
\begin{equation}\label{restrictedContMin}
\bm{u}^\c = \argmin_{\bm{\calU}^\c_h} \tilde{\calE}^\c(\bm{w}^\c) \quad
\mbox{such that}\quad
\bm{u}^\c = \lambda_\c \quad \mbox{on} \quad \Gamma_{\rm core}.
\end{equation}
An appropriate space of test functions for~\eqref{restrictedContMin} is $\bm{\calU}^\c_{h,0} := \calU^\c_{h,0} / \mathbb{R}^d$, where
\begin{equation*}\label{contTestFunctions}
\calU^\c_{h,0} := \left\{u^\c \in \calU^\c_h : \exists \, K \in \mathbb{R}^d, u^\c|_{\Gamma_{\rm core}} = K\right\}.
\end{equation*}
We note that this space requires functions to be constant on both $\Gamma_{\rm core}$ and $\Gamma_\c$, but these constants may differ.

Thus, the Euler-Lagrange equation for~\eqref{restrictedContMin} is given by: seek $\bm{u}^\c \in \bm{\calU}^\c_h$ such that
\begin{equation}\label{restrictedContEL}
\begin{split}
\langle \delta\tilde{\calE}^\c(\bm{u}^\c), \bm{v}^\c\rangle =~& 0 \quad \forall \bm{v}^\c \in \bm{\calU}^\c_{h,0},  \\
  \bm{u}^\c =~& \lambda_\c \quad \mbox{on} \quad \Gamma_{\rm core}.
\end{split}
\end{equation}
The following lemma is an analogue of Lemma~\ref{atLippy}.
\begin{lemma}\label{LipCont}
The restricted continuum energy functional $\tilde{\calE}^\c$ is four times continuously Fr\'echet differentiable on $\bm{\calU}^\c_h$ with derivatives bounded uniformly in the parameter $R_\c$.  Moreover, $\delta^2\tilde{\calE}^\c$ is Lipschitz continuous with Lipschitz bound independent of $R_\c$.
\end{lemma}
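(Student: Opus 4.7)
The strategy is to transfer the $C^4$ regularity and bounded derivatives of $V$ (Assumption~\ref{siteassumption}) to the Cauchy--Born density $W(\mG) = V((\mG\rho)_{\rho\in\calR})$, and then to the energy $\tilde{\calE}^\c$ by differentiating under the integral. Since $W$ is the composition of $V$ with the fixed linear map $\mG \mapsto (\mG\rho)_{\rho\in\calR}$, Assumption~\ref{siteassumption} yields $W \in C^4(\mathbb{R}^{d\times d})$ with $|\partial^k W(\mG)| \lesssim M_k$ for $k = 1, 2, 3, 4$, uniformly in $\mG$. Writing $\tilde{\calE}^\c(\bm{u}^\c) = \int_{\Omega_\c} W(\nabla u^\c(x))\, dx$ and noting that $\nabla u^\c$ is piecewise constant, standard differentiation under the integral shows that $\tilde{\calE}^\c$ is four times Fr\'echet differentiable on $\bm{\calU}^\c_h$ with
\[
\langle \delta^k \tilde{\calE}^\c(\bm{u}^\c), \bm{v}_1 \otimes \cdots \otimes \bm{v}_k\rangle
= \int_{\Omega_\c} \partial^k W\bigl(\nabla u^\c(x)\bigr)\bigl[\nabla v_1(x), \ldots, \nabla v_k(x)\bigr]\, dx.
\]

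For the uniform bound on $\delta^2 \tilde{\calE}^\c$, the pointwise estimate $|\partial^2 W| \lesssim M_2$ and Cauchy--Schwarz give
\[
\bigl|\langle \delta^2 \tilde{\calE}^\c(\bm{u}^\c), \bm{v}_1 \otimes \bm{v}_2\rangle\bigr|
\lesssim \int_{\Omega_\c} |\nabla v_1|\,|\nabla v_2|\, dx
\lesssim \|\nabla v_1\|_{L^2(\Omega_\c)} \|\nabla v_2\|_{L^2(\Omega_\c)},
\]
with a constant depending only on $M_2$ and independent of both $\bm{u}^\c$ and $R_\c$. The analogous bounds for $k=3,4$ will follow by the same argument after absorbing the extra gradient factors with the inverse estimate discussed below.

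For Lipschitz continuity of $\delta^2 \tilde{\calE}^\c$, the fundamental theorem of calculus applied to $\partial^2 W$ together with $|\partial^3 W| \lesssim M_3$ gives $|\partial^2 W(\mG_1) - \partial^2 W(\mG_2)| \lesssim |\mG_1 - \mG_2|$. Substituting into the derivative formula and applying H\"older with exponents $(\infty, 2, 2)$ yields
\[
\bigl|\langle \delta^2 \tilde{\calE}^\c(\bm{u}^\c_1) - \delta^2 \tilde{\calE}^\c(\bm{u}^\c_2), \bm{v}_1 \otimes \bm{v}_2\rangle\bigr|
\lesssim \|\nabla(u^\c_1 - u^\c_2)\|_{L^\infty(\Omega_\c)} \|\nabla v_1\|_{L^2(\Omega_\c)} \|\nabla v_2\|_{L^2(\Omega_\c)}.
\]
The standard inverse estimate for piecewise linear $w$,
\[
\|\nabla w\|_{L^\infty(\Omega_\c)} \leq \bigl(\min_{T \in \calT_h} |T|\bigr)^{-1/2} \|\nabla w\|_{L^2(\Omega_\c)},
\]
then reduces the claim to showing that $\min_T |T|$ is bounded below independently of $R_\c$.

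The main obstacle is precisely this lower bound on element volumes. It is where Assumption~\ref{meshSize} enters: property O.1 forces $\calT_h$ to coincide with $\calT_\a$ throughout $\Omega_{\o,\ex}$, so the finest simplices in $\calT_h$ are atomistic unit cells whose volumes depend only on $d$; property O.2 and the minimum angle condition O.3 then prevent any further degeneracies elsewhere in $\Omega_\c$. Consequently $\min_T |T| \geq c_0 > 0$ with $c_0$ depending only on $\beta$ and $d$, and the Lipschitz constant of $\delta^2 \tilde{\calE}^\c$ is uniform in $R_\c$.
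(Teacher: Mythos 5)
Your argument is correct, and it is essentially the argument the paper has in mind: the paper gives no proof of this lemma at all, simply declaring it the analogue of Theorem~\ref{atLippy}, whose own (omitted) proof rests on transferring the uniform bounds of Assumption~\ref{siteassumption} through the chain rule and controlling the extra $L^\infty$ factors by the discrete analogue of your inverse estimate (there, $\ell^\infty\le\ell^2$ via~\eqref{equivNorm}). Your write-up correctly identifies that the only genuinely $R_\c$-sensitive ingredient is the inverse inequality $\|\nabla w\|_{L^\infty(\Omega_\c)}\lesssim\|\nabla w\|_{L^2(\Omega_\c)}$, and hence the lower bound on element volumes. One small remark: the cleanest justification for $\min_T|T|\ge c_0>0$ is \textbf{O.2} alone --- every vertex of every $T\in\calT_h$ is a lattice point, so any nondegenerate simplex has volume at least $1/d!$ --- rather than the slightly vaguer appeal to \textbf{O.1} and \textbf{O.3}. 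You might also add one line on continuity of $\delta^3\tilde{\calE}^\c$ and $\delta^4\tilde{\calE}^\c$ (the statement asserts $\tilde{\calE}^\c$ is ${\rm C}^4$, not just four times differentiable); this follows since the same inverse estimate upgrades $H^1$-convergence of $\nabla u^\c_n$ to $L^\infty$-convergence, and $\partial^4W$ is uniformly continuous on the resulting compact range.
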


\subsubsection{Continuum Error}
This section estimates the error between the restricted continuum solution and exact atomistic solution on $\Omega_\c$. We refer to this error as the \textit{continuum error}.
We will first define an operator which maps functions in $\bm{\calU}$ to functions in $\bm{\calU}^\c_h$.
Application of this operator to $\bm{u}^{\infty}$ yields a representation of the atomistic solution in
 $\bm{\calU}^\c_h$ which can be inserted into
  the variational equation~\eqref{restrictedContEL} to obtain the consistency error.

To this end, let $\eta$ be a smooth bump function equal to $1$ on $B_{3/4}(0)$ and vanishing off of $B_1(0)$.  Given $R>0$ and an annulus $A_R := B_R\backslash B_{3/4R}$, we follow~\cite{blended2014, Ehrlacher2013} to define an operator $T_R: \bm{\calU} \to \bm{\calU}_{\Omega}$  according to
\begin{equation*}\label{trunc_operator}
T_R\bm{u}(x) = \eta(x/R)\big(\textstyle{\tilde{I}\bm{u} - \dashint_{A_R}\tilde{I}\bm{u}\, dx}\big).
\end{equation*}
Above,  $\dashint_U f\, dx = \frac{1}{|U|}\int f\, dx$ is the average value of $f$.
We then set
\begin{equation*}\label{PiProjector}
\Pi_h\bm{u} = I_h\left(\left(T_{r_\c}\bm{u}\right)|_{\Omega_\c}\right).
\end{equation*}

We will use  $\Pi_h \bm{u}^{\infty}$ in~\eqref{restrictedContEL} to obtain the consistency error. The following lemma estimates the error of this operator   over $\Omega_\c$.
We note that the proof below is standard and is similar to, e.g., \cite[Lemma 2.1]{suli2012}.  Moreover, $r_{\rm core} \lesssim R_{\rm core} \lesssim r_{\rm core}$ and $r_{\c} \lesssim R_{\c} \lesssim r_{\c}$ so that estimates in terms of $R_{\rm core}$ and $R_{\c}$ can be phrased in terms of $r_{\rm core}$ and $r_\c$ and vice versa.

\begin{lemma}\label{PiError}
\begin{equation}\label{PiErrorEq}
\|\nabla \Pi_h\bm{u}^{\infty} - \nabla \tilde{I}\bm{u}^{\infty}\|_{L^2(\Omega_\c)} \lesssim~ R_{\rm core}^{-d/2-1} + R_\c^{-d/2}.
%\|h\nabla^2 \tilde{I}\bm{u}^{\infty}\|_{L^2(\Omega_\c)} + \|\nabla\tilde{I}\bm{u}^{\infty}\|_{L^2(\mathbb{R}^d\backslash B_{3r_\c/4}(0))}  + \|\nabla^2 \tilde{I}\bm{u}^{\infty}\|_{L^2(\Omega_\c)}
\end{equation}
\end{lemma}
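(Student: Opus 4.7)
The plan is to split $\Omega_\c$ into three subregions matching the geometry of the cutoff $\eta(\cdot/r_\c)$ used in defining $T_{r_\c}$, namely
\[
\Omega_\c^{\rm in} := \Omega_\c \cap B_{3r_\c/4},
\qquad
\Omega_\c^{\rm ann} := \Omega_\c \cap A_{r_\c},
\qquad
\Omega_\c^{\rm out} := \Omega_\c \setminus B_{r_\c},
\]
and to bound $\|\nabla\Pi_h\bm{u}^{\infty} - \nabla\tilde{I}\bm{u}^{\infty}\|_{L^2}$ on each one separately.

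On $\Omega_\c^{\rm out}$ the cutoff vanishes, so $T_{r_\c}\bm{u}^{\infty}\equiv 0$, hence $\Pi_h\bm{u}^{\infty}\equiv 0$, and the error collapses to $\|\nabla\tilde{I}\bm{u}^{\infty}\|_{L^2(\Omega_\c^{\rm out})}$. Invoking the decay bound $|\nabla\tilde{I}\bm{u}^{\infty}(x)|\lesssim |x|^{-d}$ from Theorem~\ref{decayThm} and integrating in polar coordinates yields the $R_\c^{-d/2}$ contribution.

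On $\Omega_\c^{\rm ann}$ I split the error by the triangle inequality into a truncation part $\|\nabla T_{r_\c}\bm{u}^{\infty}-\nabla\tilde{I}\bm{u}^{\infty}\|_{L^2(A_{r_\c})}$ and an interpolation part $\|\nabla(I_h-\mathrm{Id})T_{r_\c}\bm{u}^{\infty}\|_{L^2(A_{r_\c})}$. For the truncation part, the product rule gives
\[
\nabla T_{r_\c}\bm{u}^{\infty}-\nabla\tilde{I}\bm{u}^{\infty} = r_\c^{-1}(\nabla\eta)(x/r_\c)\bigl(\tilde{I}\bm{u}^{\infty} - \bar c\bigr) + \bigl(\eta(x/r_\c)-1\bigr)\nabla\tilde{I}\bm{u}^{\infty},
\]
with $\bar c := \dashint_{A_{r_\c}}\tilde{I}\bm{u}^{\infty}\,dx$. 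A Poincaré inequality on the annulus $A_{r_\c}$, whose width is $\sim r_\c$, produces $\|\tilde{I}\bm{u}^{\infty}-\bar c\|_{L^2(A_{r_\c})} \lesssim r_\c\|\nabla\tilde{I}\bm{u}^{\infty}\|_{L^2(A_{r_\c})} \lesssim r_\c\cdot r_\c^{-d/2}$, and combined with the $r_\c^{-1}$ prefactor this yields the required $R_\c^{-d/2}$ bound; the second summand is controlled directly by the same decay. The interpolation part is handled by the standard elementwise estimate on a shape-regular mesh (justified by Assumption~\ref{meshSize}\,O.3), using the $C^{2,1}$-regularity of $\tilde{I}\bm{u}^{\infty}$ from Lemma~\ref{smoothInterpolant} together with the decay of $\nabla^2\tilde{I}\bm{u}^{\infty}$.

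On $\Omega_\c^{\rm in}$, $\eta(x/r_\c)\equiv 1$, so $T_{r_\c}\bm{u}^{\infty}$ differs from $\tilde{I}\bm{u}^{\infty}$ only by the constant $\bar c$ and $\nabla\Pi_h\bm{u}^{\infty} = \nabla I_h\tilde{I}\bm{u}^{\infty}$. The elementwise interpolation estimate gives
\[
\|\nabla\Pi_h\bm{u}^{\infty}-\nabla\tilde{I}\bm{u}^{\infty}\|_{L^2(\Omega_\c^{\rm in})}^2 \lesssim \sum_{T\subset\Omega_\c^{\rm in}} h_T^2\|\nabla^2\tilde{I}\bm{u}^{\infty}\|_{L^2(T)}^2.
\]
The fully resolved piece $\Omega_{\o,\ex}$ (Assumption~\ref{meshSize}\,O.1) has $h_T\leq 1$, and the decay $|\nabla^2\tilde{I}\bm{u}^{\infty}(x)|\lesssim |x|^{-d-1}$ integrated over $|x|\gtrsim R_{\rm core}$ produces the leading $R_{\rm core}^{-d/2-1}$ term; the remaining far portion, where the mesh may be coarser but the function decays correspondingly faster, contributes a term dominated by $R_\c^{-d/2}$. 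Summing the three subregion estimates gives~(\ref{PiErrorEq}). The principal technical subtlety lies in the annular bound, where the factor $r_\c^{-1}$ produced by $\nabla\eta(\cdot/r_\c)$ must be absorbed exactly by the diameter-$r_\c$ Poincaré constant on $A_{r_\c}$; a naive sup-norm bound on $\tilde{I}\bm{u}^{\infty} - \bar c$ would destroy the sharp $R_\c^{-d/2}$ rate.
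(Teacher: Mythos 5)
Your proof is correct and is essentially the paper's argument reorganized: the paper splits by error type (truncation $T_{r_\c}\bm{u}^{\infty}-\tilde{I}\bm{u}^{\infty}$ plus interpolation $(I_h-\mathrm{Id})T_{r_\c}\bm{u}^{\infty}$) over all of $\Omega_\c$ and the regional structure emerges inside each estimate, whereas you split spatially first, but the key steps coincide — the product rule on the cutoff, the Poincar\'e inequality on $A_{r_\c}$ absorbing the $r_\c^{-1}$ from $\nabla\eta(\cdot/r_\c)$, the elementwise $h\,\nabla^2$ interpolation bound, and the decay rates of Theorem~\ref{decayThm}. Your closing remark about the Poincar\'e constant being exactly what cancels the $r_\c^{-1}$ prefactor correctly identifies the one genuinely sharp point of the proof.
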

\begin{proof}
Recalling the definition $\Pi_h = I_h T_{r_\c}$, we first estimate the error by
\begin{equation}\label{pi_trunc_estimate}
%\begin{split}
%&
\|\nabla I_h T_{r_\c} \bm{u}^{\infty} - \nabla \tilde{I} \bm{u}^{\infty}\|_{L^2(\Omega_\c)}
%\\&
~\leq~ \|\nabla I_h T_{r_\c}\bm{u}^{\infty} - \nabla T_{r_\c}\bm{u}^{\infty}\|_{L^2(\Omega_\c)} + \|\nabla T_{r_\c}\bm{u}^{\infty} - \nabla \tilde{I} \bm{u}^{\infty}\|_{L^2(\Omega_\c)}. %+  \|\nabla \tilde{I}\bm{u}^{\infty} - \nabla I \bm{u}^{\infty}\|_{L^2(\Omega_\c)}\\
%\end{split}
\end{equation}

We can easily estimate the second term:
\begin{equation*}\label{pi_trunc_esty}
\begin{split}
&\|\nabla T_{r_\c}\bm{u}^{\infty} - \nabla \tilde{I} \bm{u}^{\infty}\|_{L^2(\Omega_\c)}
\\
&\lesssim~ \big\|\textstyle{\frac{1}{r_\c}\nabla \eta(x/r_\c)\big(\tilde{I}\bm{u}^{\infty} - \dashint_{A_{r_\c}}\tilde{I}\bm{u}^{\infty}\, dx\big) + [\eta(x/r_\c)-1]\nabla\tilde{I}\bm{u}^{\infty}}\big\|_{L^2(\Omega_\c)}
\\
&\lesssim~  \frac{1}{r_\c}\big\|\textstyle{\nabla \eta(x/r_\c)\big(\tilde{I}\bm{u}^{\infty} - \dashint_{A_{r_\c}}\tilde{I}\bm{u}^{\infty}\, dx\big)}\big\|_{L^2(A_{r_\c})} + \|(\eta(x/r_\c)-1)\nabla\tilde{I}\bm{u}^{\infty}\|_{L^2(\mathbb{R}^d\backslash B_{3r_\c/4})}
\\
&\lesssim~  \|\nabla \tilde{I}\bm{u}^{\infty} \|_{L^2(A_{r_\c})}  + \|\nabla\tilde{I}\bm{u}^{\infty}\|_{L^2(\mathbb{R}^d\backslash B_{3r_\c/4})}
~\lesssim~ \|\nabla\tilde{I}\bm{u}^{\infty}\|_{L^2(\mathbb{R}^d\backslash B_{3r_\c/4})}.
\end{split}
\end{equation*}
In the second to last inequality, we have used the fact that $\nabla \eta(x/r_\c)$ vanishes off $A_{r_\c}$ and the Poincar\'e inequality. Employing the decay rates in Theorem~\ref{decayThm}, we obtain
\begin{equation}\label{pi_trunc_25}
\|\nabla T_{r_\c}\bm{u}^{\infty} - \nabla \tilde{I} \bm{u}^{\infty}\|_{L^2(\Omega_\c)} \lesssim~ R_\c^{-d/2}.
\end{equation}

Similarly, the first term of~\eqref{pi_trunc_estimate} can be estimated by first using standard finite element approximation results for smooth functions, the definition of $T_{r_\c}$, the fact that $h/r_\c \leq 1$, and the Poincar\'e inequality:
\begin{equation*}\label{pi_trunc_3}
\begin{array}{l}
\|\nabla I_h T_{r_\c}\bm{u}^{\infty} - \nabla T_{r_\c}\bm{u}^{\infty}\|_{L^2(\Omega_\c)}
\lesssim~ \|h \nabla^2   T_{r_\c}\bm{u}^{\infty}\|_{L^2(\Omega_\c)}
\\[1.5ex]
\qquad
\displaystyle
\lesssim~ \big\|\textstyle{h \nabla^2\big( \eta(x/r_\c)(\tilde{I}\bm{u}^{\infty} - \dashint_{A_{r_\c}}\tilde{I}\bm{u}^{\infty}\, dx)\big)}   \big\|_{L^2(\Omega_\c)}
\\[1.5ex]
\qquad
\displaystyle
=~ \frac{1}{r_\c}\big\|\textstyle{(h/r_\c)\nabla^2\eta(x/r_\c)(\tilde{I}\bm{u}^{\infty} - \dashint_{A_{r_\c}}\tilde{I}\bm{u}^{\infty}\, dx)}\big\|_{L^2(A_{r_\c})} +
\displaystyle \|\nabla \tilde{I}\bm{u}^{\infty} \nabla (\eta(x/r_\c)) \|_{L^2(A_{r_\c})}
\\[1.5ex]
\qquad \qquad
\displaystyle
+ \|h\eta(x/r_\c)\nabla^2 \tilde{I}\bm{u}^{\infty}\|_{L^2(\Omega_\c)}
\\[1.5ex]
\qquad
\displaystyle
\lesssim~ \| \nabla \tilde{I}\bm{u}^{\infty}\|_{L^2(A_{r_\c})} + \frac{1}{r_\c}\| h\nabla \tilde{I}\bm{u}^{\infty} \|_{L^2(A_{r_\c})} + \|h \nabla^2 \tilde{I}\bm{u}^{\infty}\|_{L^2(\Omega_\c)}
\\[1.5ex]
\qquad
\displaystyle
\lesssim~ \|\nabla\tilde{I}\bm{u}^{\infty}\|_{L^2(A_{r_\c})}  + \|h\nabla^2 \tilde{I}\bm{u}^{\infty}\|_{L^2(\Omega_\c)}.
\end{array}
\end{equation*}
A straightforward application of the regularity estimates in Theorem~\ref{decayThm} and the conditions on $h(x)$ in Assumption~\ref{meshSize} give
\begin{equation}\label{pi_trunc_35}
\|\nabla I_h T_{r_\c}\bm{u}^{\infty} - \nabla T_{r_\c}\bm{u}^{\infty}\|_{L^2(\Omega_\c)} \lesssim~R_\c^{-d/2} +  R_{\rm core}^{-d/2-1}.
\end{equation}
Combining~\eqref{pi_trunc_25} and~\eqref{pi_trunc_35} and keeping only the leading order terms yields~\eqref{PiErrorEq}.
\end{proof}

The following Lemma provides information about the stability of the Hessian of $\tilde{\calE}^\c$ evaluated at $\Pi_h \bm{u}^{\infty}$.
\begin{lemma}\label{PiLemma}
There exists $R_{\rm core}^* >0$ and $\gamma_\c > 0$ such that for all $R_{\rm core} \geq R_{\rm core}^*$ (and all continuum partitions $\mathcal{T}_h$ satisfying the requirements of Section~\ref{res:cont}),
\begin{align*}
\gamma_\c \|\nabla \bm{v}^\c\|^2_{L^2(\Omega_\c)} \leq~& \big<\delta^2\tilde{\calE}^\c(\Pi_h \bm{u}^{\infty})\bm{v}^\c,\bm{v}^\c\big> \quad \forall \bm{v}^\c \in \bm{\calU}^\c_{h,0}.
\end{align*}
\end{lemma}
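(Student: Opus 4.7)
The plan is to prove the stability bound by a two-stage perturbation argument. The first stage reduces stability of $\delta^2 \tilde{\calE}^\c$ at $\Pi_h \bm{u}^{\infty}$ to stability at the reference configuration $0$, using the Lipschitz continuity from Lemma~\ref{LipCont} together with the smallness of $\Pi_h \bm{u}^{\infty}$ in $W^{1,\infty}(\Omega_\c)$. The second stage establishes stability at $0$ by a Cauchy--Born argument that appeals to the atomistic stability in Assumption~\ref{atCoercive}. For the Lipschitz reduction, since $\tilde{\calE}^\c(\bm{u}^\c) = \int_{\Omega_\c} W(\nabla \bm{u}^\c)$, Lemma~\ref{LipCont} implies
\[
\bigl|\bigl\langle [\delta^2 \tilde{\calE}^\c(\Pi_h \bm{u}^{\infty}) - \delta^2 \tilde{\calE}^\c(0)] \bm{v}^\c, \bm{v}^\c \bigr\rangle\bigr| \lesssim \|\nabla \Pi_h \bm{u}^{\infty}\|_{L^\infty(\Omega_\c)} \|\nabla \bm{v}^\c\|_{L^2(\Omega_\c)}^2,
\]
so it suffices to show $\|\nabla \Pi_h \bm{u}^{\infty}\|_{L^\infty(\Omega_\c)} \to 0$ as $R_{\rm core} \to \infty$. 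By Theorem~\ref{decayThm}, $|\nabla \tilde{I} \bm{u}^{\infty}(x)| \lesssim |x|^{-d} \lesssim R_{\rm core}^{-d}$ on $\Omega_\c$; an $L^\infty$ analog of the proof of Lemma~\ref{PiError} (the smooth cutoff $T_{r_\c}$ only contributes lower-order terms on $A_{r_\c}$ via a Poincar\'e bound, and $I_h$ is $L^\infty$-stable on smooth inputs) transfers this decay to $\|\nabla \Pi_h \bm{u}^{\infty}\|_{L^\infty(\Omega_\c)}$. Choosing $R_{\rm core}^*$ sufficiently large then absorbs the perturbation into half of the stability constant at $0$.

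For the stability at $0$, given $\bm{v}^\c \in \bm{\calU}^\c_{h,0}$ I construct a lattice extension $\bm{v} \in \bm{\calU}_0$ by setting $\bm{v}(\xi) = \bm{v}^\c(\xi)$ for every $\xi \in \calL_\c$ (well-defined because the nodes of $\calT_h$ lie in $\mathbb{Z}^d$ by condition O.2 and $\bm{v}^\c$ is affine on each $T \in \calT_h$), and extending $\bm{v}$ constantly by the value of $\bm{v}^\c$ on $\Gamma_{\rm core}$ throughout $\calL_{\rm core}$ and by the value on $\Gamma_\c$ throughout $\mathbb{Z}^d \setminus \calL$. Since $\Gamma_{\rm core}$ and $\Gamma_\c$ are unions of edges of $\calT_\a$, the interpolant $I \bm{v}$ is globally continuous and satisfies $\nabla I \bm{v} \equiv \nabla \bm{v}^\c$ on $\Omega_\c$ and $\nabla I \bm{v} \equiv 0$ elsewhere. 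Applying Assumption~\ref{atCoercive} to $\bm{v}$ then yields
\[
\gamma_\a \|\nabla \bm{v}^\c\|^2_{L^2(\Omega_\c)} = \gamma_\a \|\nabla I \bm{v}\|^2_{L^2(\mathbb{R}^d)} \leq \bigl\langle \delta^2 \calE^\a(\bm{u}^{\infty}) \bm{v}, \bm{v} \bigr\rangle.
\]

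The last and hardest step is to show that this atomistic Hessian sum equals $\langle \delta^2 \tilde{\calE}^\c(0) \bm{v}^\c, \bm{v}^\c \rangle$ up to a negligible error. The defect-localization hypothesis ($V_\xi \equiv V$ for $|\xi| > M$) ensures that for $R_{\rm core}^* \geq M$ every site contributing nontrivially is homogeneous. The decay $|D\bm{u}^{\infty}(\xi)| \lesssim |\xi|^{-d}$ (Theorem~\ref{decayThm}) combined with Lipschitz continuity of $D^2 V$ (Assumption~\ref{siteassumption}) lets me replace $D^2 V(D\bm{u}^{\infty}(\xi))$ by $D^2 V(0)$ with a cumulative error controlled by $\sum_{|\xi|>R_{\rm core}^*}|\xi|^{-d}|D\bm{v}(\xi)|^2$, which is dominated by $R_{\rm core}^{-d}\|\nabla \bm{v}^\c\|_{L^2(\Omega_\c)}^2$. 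The Cauchy--Born identity $D^2 W(0)[\mathsf{G}, \mathsf{G}] = D^2 V(0)[(\mathsf{G}\rho)_{\rho \in \calR}, (\mathsf{G}\rho)_{\rho \in \calR}]$ then identifies the remaining sum with $\int_{\Omega_\c} D^2 W(0)[\nabla \bm{v}^\c, \nabla \bm{v}^\c]\, dx = \langle \delta^2 \tilde{\calE}^\c(0) \bm{v}^\c, \bm{v}^\c \rangle$ on each site $\xi$ whose interaction stencil $\xi + \calR$ is contained in a single element $T \in \calT_h$. The main obstacle is the treatment of the residual contributions from sites whose stencil straddles a face of $\calT_h$: these generate terms of order $r_{\rm cut}$ per unit of $(d{-}1)$-dimensional interface, and bounding them uniformly in the mesh requires exploiting the minimum-angle condition O.3 together with a trace-type argument that absorbs them into $\varepsilon \|\nabla \bm{v}^\c\|^2_{L^2(\Omega_\c)}$ for arbitrarily small $\varepsilon$.
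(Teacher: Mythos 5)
Your first stage (reducing stability at $\Pi_h \bm{u}^{\infty}$ to stability at $0$ via Lipschitz continuity of $\delta^2\tilde{\calE}^\c$ and the $L^\infty$ bound $\|\nabla \Pi_h\bm{u}^{\infty}\|_{L^\infty(\Omega_\c)} \lesssim \|\nabla\tilde{I}\bm{u}^{\infty}\|_{L^\infty(\Omega_\c)} \lesssim R_{\rm core}^{-d}$) is exactly the paper's argument, including the choice $\gamma_\c = \gamma_\a/2$ obtained by absorbing the perturbation for $R_{\rm core}$ large.

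The second stage contains a genuine gap. You propose to prove Cauchy--Born stability at $0$ by sampling $\bm{v}^\c$ on the lattice, applying Assumption~\ref{atCoercive}, and then identifying $\langle\delta^2\calE^\a(\bm{u}^{\infty})\bm{v},\bm{v}\rangle$ with $\langle\delta^2\tilde{\calE}^\c(0)\bm{v}^\c,\bm{v}^\c\rangle$ up to an error that can be made an arbitrarily small multiple of $\|\nabla\bm{v}^\c\|^2_{L^2(\Omega_\c)}$. That identification is false: for test functions that are merely piecewise affine, the homogeneous atomistic Hessian $\sum_\xi V''(0)[D\bm{v},D\bm{v}]$ and the Cauchy--Born Hessian $\int W''(0)[\nabla\bm{v}^\c,\nabla\bm{v}^\c]$ differ by $O(1)$ relative amounts whenever $\nabla\bm{v}^\c$ oscillates at the lattice scale (already visible for a 1D next-nearest-neighbour chain, where the two quadratic forms agree only in the long-wavelength limit). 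The ``straddling stencil'' contributions you defer to a trace argument are not a boundary-layer correction of relative size $r_{\rm cut}/h_T$: in the fully resolved region $\Omega_{\o,\ex}$ one has $h_T \sim 1$, so essentially every site straddles element faces and the discrepancy is a fixed fraction of the total; no choice of $\varepsilon$ absorbs it. (A secondary issue: $\nabla I\bm{v}$ does not equal $\nabla\bm{v}^\c$ on atomistic simplices that cross faces of $\calT_h$, so even your starting identity $\|\nabla I\bm{v}\|_{L^2(\mathbb{R}^d)} = \|\nabla\bm{v}^\c\|_{L^2(\Omega_\c)}$ only holds up to mesh-dependent constants.) The implication ``atomistic stability of the homogeneous lattice $\Rightarrow$ Cauchy--Born stability'' is a substantive result requiring a Fourier/long-wavelength argument, and the paper obtains it by citing \cite{theil2012} (Lemma 5.2 there), after first passing from $\delta^2\calE^\a(\bm{u}^{\infty})$ to $\delta^2\calE^\a_{\rm hom}(\bm{0})$ via \cite{Ehrlacher2013}; your proof needs to invoke such a result rather than attempt the direct comparison.
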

\begin{proof}
For $\bm{u}\in\bm{\calU}$ define
\[
\calE^\a_{\rm hom}(\bm{u}) := \sum_{\xi \in \mathbb{Z}^d} V(D\bm{u}).
\]
From~\cite[Proposition 2.6]{Ehrlacher2013} and Assumption~\ref{atCoercive}, we deduce that
\begin{equation*}\label{rabbit1}
\langle \delta^2\calE^\a_{\rm hom}(\bm{0})\bm{v}, \bm{v}\rangle  \geq \gamma_\a \|\nabla I\bm{v}\|_{L^2(\mathbb{R}^d)}^2  \quad \forall \bm{v} \in \bm{\calU}_0,
\end{equation*}
while \cite[Lemma 5.2]{theil2012}  implies
\[
\langle \delta^2\calE^\c(0)\bm{v}, \bm{v}\rangle  \geq \gamma_\a \|\nabla \bm{v}\|^2_{L^2(\mathbb{R}^d)} \quad \quad \forall v \in H^1_0(\mathbb{R}^d).
\]
Furthermore, extending $\bm{v}^\c\in\mathcal{\bm{U}}^\c_{h,0}$ by a constant to all of $\mathbb{R}^d$ yields

\begin{equation*}\label{cheeseNeg}
\begin{split}
%&
\langle \delta^2\tilde{\calE}^\c(\Pi_h \bm{u}^{\infty})\bm{v}^\c,\bm{v}^\c\rangle
%\\
&~=~ \langle \delta^2\calE^\c(\Pi_h \bm{u}^{\infty})\bm{v}^\c,\bm{v}^\c\rangle   -  \langle \delta^2\calE^\c(\bm{0})\bm{v}^\c, \bm{v}^\c\rangle  + \langle \delta^2\calE^\c(\bm{0})\bm{v}^\c, \bm{v}^\c\rangle  \\
&~\geq~ -\big|\textstyle{\langle \delta^2\tilde{\calE}^\c(\Pi_h \bm{u}^{\infty})\bm{v}^\c,\bm{v}^\c\rangle  - \langle \delta^2\calE^\c(\bm{0})\bm{v}^\c, \bm{v}^\c\rangle }\big|
+ \langle \delta^2\calE^\c(\bm{0})\bm{v}^\c, \bm{v}^\c\rangle  \\
&~\geq~ -\big|\textstyle{\langle \delta^2\tilde{\calE}^\c(\Pi_h \bm{u}^{\infty})\bm{v}^\c,\bm{v}^\c\rangle  - \langle \delta^2\calE^\c(\bm{0})\bm{v}^\c, \bm{v}^\c\rangle }\big| + \gamma_\a\|\nabla \bm{v}^\c\|^2_{L^2(\Omega_\c)}
%&~\gtrsim~ -\|\nabla \Pi_h \bm{u}^{\infty}\|_{L^\infty(\Omega_\c)}\cdot \|\nabla \bm{v}^\c\|^2_{L^2(\Omega_\c)} + \gamma_\a\|\nabla \bm{v}^\c\|^2_{L^2(\Omega_\c)}.
\end{split}
\end{equation*}
if and only if
\begin{equation}\label{cheese}
\begin{split}
\langle \delta^2\tilde{\calE}^\c(\Pi_h \bm{u}^{\infty})\bm{v}^\c,\bm{v}^\c\rangle - \gamma_\a\|\nabla \bm{v}^\c\|^2_{L^2(\Omega_\c)} \geq~& -\big|\textstyle{\langle \delta^2\tilde{\calE}^\c(\Pi_h \bm{u}^{\infty})\bm{v}^\c,\bm{v}^\c\rangle  - \langle \delta^2\calE^\c(\bm{0})\bm{v}^\c, \bm{v}^\c\rangle }\big| \\
\gtrsim~& -\|\nabla \Pi_h \bm{u}^{\infty}\|_{L^\infty(\Omega_\c)}\cdot \|\nabla \bm{v}^\c\|^2_{L^2(\Omega_\c)},
\end{split}
\end{equation}
the final bound being a consequence of the Lipschitz continuity of $W$.

Next,
\begin{align*}
\|\nabla \Pi_h \bm{u}^{\infty}\|_{L^\infty(\Omega_\c)}
&~\leq~ \|\nabla T_{r_\c} \bm{u}^{\infty}\|_{L^\infty(\Omega_\c)} \\
&~=~  \big\|\nabla  \big[\eta(x/r_\c)\big(\textstyle{\tilde{I}\bm{u} - \dashint_{A_{r_\c}}\tilde{I}\bm{u}\, dx}\big) \big]\big\|_{L^\infty(\Omega_\c)} \\
&~=~  \big\|\nabla  (\eta(x/r_\c)) \big(\textstyle{\tilde{I}\bm{u} - \dashint_{A_{r_\c}}\tilde{I}\bm{u}\, dx}\big)  +  \eta(x/r_\c) \nabla \big(\textstyle{\tilde{I}\bm{u} - \dashint_{A_{r_\c}}\tilde{I}\bm{u}\, dx}\big) \big\|_{L^\infty(\Omega_\c)} \\
&~\leq~ \big\|\nabla  (\eta(x/r_\c)) \big(\textstyle{\tilde{I}\bm{u} - \dashint_{A_{r_\c}}\tilde{I}\bm{u}\, dx}\big)\big\|_{L^\infty(A_{r_\c})}  +  \big\|\eta(x/r_\c) \nabla \big(\textstyle{\tilde{I}\bm{u} - \dashint_{A_{r_\c}}\tilde{I}\bm{u}\, dx}\big) \big\|_{L^\infty(\Omega_\c)} \\
&~\lesssim~ \frac{1}{r_\c} \big\|\big(\textstyle{\tilde{I}\bm{u} - \dashint_{A_{r_\c}}\tilde{I}\bm{u}\, dx}\big)\big\|_{L^\infty(A_{r_\c})} + \|\nabla \tilde{I}\bm{u}\|_{L^\infty(\Omega_\c)} \\
&~\lesssim~ \|\nabla \tilde{I}\bm{u}\|_{L^\infty(A_{r_\c})} + \|\nabla \tilde{I}\bm{u}\|_{L^\infty(\Omega_\c)} \\
&~\lesssim~ \|\nabla \tilde{I}\bm{u}\|_{L^\infty(\Omega_\c)}.
\end{align*}
Using this result in~\eqref{cheese} together with~\eqref{decayEquation} yields
\begin{align*}
\langle \delta^2\tilde{\calE}^\c(\Pi_h \bm{u}^{\infty})\bm{v}^\c,\bm{v}^\c\rangle - \gamma_\a\|\nabla \bm{v}^\c\|^2_{L^2(\Omega_\c)}
&~\gtrsim~ -\|\nabla \tilde{I} \bm{u}^{\infty}\|_{L^\infty(\Omega_\c)}\|\nabla \bm{v}^\c\|^2_{L^2(\Omega_\c)} \\
&~\gtrsim~ - (R_{\rm core})^{-d}\|\nabla \bm{v}^\c\|^2_{L^2(\Omega_\c)}.
\end{align*}
Denoting the implied constant in the inequality by $C > 0$, this can be written as
\[
\langle \delta^2\tilde{\calE}^\c(\Pi_h \bm{u}^{\infty})\bm{v}^\c,\bm{v}^\c\rangle \geq \big(-C(R_{\rm core})^{-d} + \gamma_\a\big)\|\nabla \bm{v}^\c\|^2_{L^2(\Omega_\c)}.
\]
Choosing $R_{\rm core}^*$ such that $-C(R_{\rm core}^*)^{-d} + \gamma_\a \geq \gamma_\a/2$ completes the proof with $\gamma_\c := \gamma_\a/2$.

\end{proof}

For the proof of existence of a solution to the restricted continuum problem, we rely on the following quantitative version of the inverse function theorem~\cite{acta.atc,ortnerInverse}.

%Before stating the existence theorem for the continuum problem along with a corresponding estimate, we state a quantitative version of the inverse function theorem which will be used in the proof and also later for proving the existence of a solution to our optimization based AtC method.

\begin{theorem}[Inverse Function Theorem]\label{inverseFunctionTheorem}
Let $X$ and $Y$ be Banach spaces with $f:X \to Y$ a continuously differentiable function on an open set $U$ containing $x_0$. Let $y_0 = f(x_0)$ with $\|y_0\|_Y < \eta$.  Furthermore, suppose that $\delta f(x_0)$ is invertible and such that $\|\delta f(x_0)^{-1}\|_{\calL(Y,X)} < \sigma$, $B_{2\eta\sigma}(x_0) \subset U$, $\delta f$ is Lipschitz continuous on $B_{2\eta\sigma}(x_0)$ with Lipschitz constant $L$, and $2L\eta\sigma^2 < 1$. Then there exists a unique continuously differentiable function $g:B_{\eta}(y_0) \to B_{2\eta\sigma}(x_0)$ such that
\[
g(y_0) = x_0 \quad \mbox{and} \quad f(g(y)) = y \quad \forall y \in B_\eta(y_0) \,.
\]
In particular, there exists $\bar{x} = g(0) \in X$ such that $f(\bar{x}) = 0$ and
\begin{align*}
\|g(y_0) - g(0)\|_X = \|x_0 - \bar{x}\|_{X} <~& 2\eta\sigma. \\
\end{align*}
\end{theorem}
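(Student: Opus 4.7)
The plan is to prove this quantitative inverse function theorem through a Banach fixed point argument, which is the classical route for such results. For each $y \in B_\eta(y_0)$, I would define the map $T_y : B_{2\eta\sigma}(x_0) \to X$ by
\[
T_y(x) := x - \delta f(x_0)^{-1}\bigl( f(x) - y \bigr),
\]
whose fixed points in $B_{2\eta\sigma}(x_0)$ are precisely the solutions of $f(x) = y$ in that ball. The construction of $g$ then amounts to showing that $T_y$ is a self-map and a strict contraction on $B_{2\eta\sigma}(x_0)$, whose unique fixed point is defined to be $g(y)$.

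First I would verify the self-map property. Writing
\[
T_y(x) - x_0 = \delta f(x_0)^{-1}\bigl[ \delta f(x_0)(x-x_0) - (f(x)-f(x_0)) \bigr] - \delta f(x_0)^{-1}(y_0 - y),
\]
the Lipschitz bound on $\delta f$ on $B_{2\eta\sigma}(x_0)$ gives, via the standard Taylor remainder estimate, $\|f(x)-f(x_0) - \delta f(x_0)(x-x_0)\| \le \tfrac{L}{2}\|x-x_0\|^2$. Combined with $\|\delta f(x_0)^{-1}\| < \sigma$, $\|y-y_0\| < \eta$, and $\|x-x_0\| \le 2\eta\sigma$, this produces
\[
\|T_y(x) - x_0\| \le \sigma \cdot \tfrac{L}{2}(2\eta\sigma)^2 + \sigma\eta = 2L\eta^2\sigma^3 + \eta\sigma \le 2\eta\sigma,
\]
where the last inequality uses exactly the hypothesis $2L\eta\sigma^2 \le 1$. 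Next, for the contraction property, I would write
\[
T_y(x_1) - T_y(x_2) = \delta f(x_0)^{-1} \!\int_0^1 \!\bigl[\delta f(x_0) - \delta f(x_2 + t(x_1-x_2))\bigr](x_1 - x_2)\,dt,
\]
and Lipschitz continuity of $\delta f$ together with $\|x_0 - (x_2 + t(x_1-x_2))\| \le 2\eta\sigma$ yields the contraction constant $2L\eta\sigma^2 < 1$. Banach's fixed point theorem then supplies a unique $g(y) \in B_{2\eta\sigma}(x_0)$ with $T_y(g(y)) = g(y)$, hence $f(g(y)) = y$; in particular $g(y_0) = x_0$ since $x_0$ is itself a fixed point of $T_{y_0}$, and $\|g(0) - g(y_0)\| \le 2\eta\sigma$ because both points lie in $B_{2\eta\sigma}(x_0)$.

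It remains to establish continuous differentiability of $g$. I would first observe that for every $x \in B_{2\eta\sigma}(x_0)$, the operator $\delta f(x)$ is invertible: writing $\delta f(x) = \delta f(x_0)\bigl[ I + \delta f(x_0)^{-1}(\delta f(x) - \delta f(x_0))\bigr]$ and using the Lipschitz bound together with $2L\eta\sigma^2 < 1$, the bracketed operator is invertible by Neumann series. Continuity of $g$ follows from the uniform contraction estimate applied to two values $y_1, y_2 \in B_\eta(y_0)$, and then differentiability with $\delta g(y) = \delta f(g(y))^{-1}$ is obtained in the standard way from the identity $f(g(y+k)) - f(g(y)) = k$ combined with a first-order expansion of $f$, which gives $\|g(y+k) - g(y) - \delta f(g(y))^{-1} k\| = o(\|k\|)$; continuity of $y \mapsto \delta f(g(y))^{-1}$ is then inherited from continuity of $g$ and of $\delta f$. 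The main obstacle is essentially bookkeeping the constants so that the hypothesis $2L\eta\sigma^2 < 1$ is used sharply in both the self-map and contraction steps; none of the individual estimates is deep, but the theorem's utility depends on obtaining the explicit ball radius $2\eta\sigma$, which is precisely what the two inequalities above deliver.
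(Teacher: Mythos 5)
The paper does not prove this theorem at all --- it is quoted as a known quantitative inverse function theorem with citations to the literature --- so there is no in-paper argument to compare against; your Banach fixed-point proof via the Newton-type map $T_y(x) = x - \delta f(x_0)^{-1}(f(x)-y)$ is the standard route and is correct: the self-map bound $2L\eta^2\sigma^3 + \eta\sigma \le 2\eta\sigma$ and the contraction constant $2L\eta\sigma^2 < 1$ are exactly right, and the differentiability argument via Neumann-series invertibility of $\delta f(x)$ on the ball is sound. The only cosmetic imprecision is the final strict inequality $\|x_0 - \bar{x}\| < 2\eta\sigma$: membership of both points in the closed ball only gives $\le$, but strictness follows immediately from your own self-map estimate, since $\|y_0\| < \eta$ and $2L\eta\sigma^2 < 1$ are strict, so $\|T_0(x) - x_0\| \le 2L\eta^2\sigma^3 + \sigma\|y_0\| < 2\eta\sigma$ for every $x$ in the ball, and in particular for the fixed point.
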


\begin{theorem}[Continuum Error]\label{contModelError}
Let $\lambda_\c^{\infty} := \bm{u}^{\infty}|_{\Gamma_{\rm core}}$. There exists $R_{\rm core}^* > 0$ such that for all $R_{\rm core} \geq R_{\rm core}^*$,  the variational problem
\begin{equation}\label{contModelEq}
%\begin{split}
\langle \delta\tilde{\calE}^\c(\bm{u}), \bm{v}^\c\rangle  =
%~&
0 \quad \forall \bm{v}^\c \in \bm{\calU}^\c_{h,0}  \quad\mbox{subject to}\quad
  \bm{u} =
  %~&
  \lambda_\c^{\infty} \quad \mbox{on} \quad \Gamma_{\rm core},
%\end{split}
\end{equation}
has  a solution $\bm{u}^{\rm con}$ such that
\begin{equation}\label{contModelEstimate}
\begin{split}
\|\nabla \bm{u}^{\rm con} - \nabla I \bm{u}^{\infty}\|_{L^2(\Omega_\c)} \lesssim~ R_{\rm core}^{-d/2-1} + R_{\c}^{-d/2}.
\end{split}
\end{equation}
Furthermore, there exists $\gamma'_\c$ such that
\begin{equation}\label{contStability}
\big< \delta^2 \tilde{\calE}^\c(\bm{u}^{\rm con})\bm{v}^\c, \bm{v}^\c \big> \geq \gamma'_\c\|\nabla \bm{v}^\c\|_{L^2(\Omega_\c)}^2.
\end{equation}
\end{theorem}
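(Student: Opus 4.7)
The plan is to apply the Inverse Function Theorem (Theorem~\ref{inverseFunctionTheorem}) to the map $F(\bm{u}) := \delta\tilde{\calE}^\c(\bm{u})$ on the affine subspace $\{\bm{u} \in \bm{\calU}^\c_h : \bm{u}|_{\Gamma_{\rm core}} = \lambda_\c^\infty\}$, with increments taken in $\bm{\calU}^\c_{h,0}$, using $\bm{u}_0 := \Pi_h \bm{u}^{\infty}$ as the initial guess. Since $\eta(x/r_\c) \equiv 1$ on $\Omega_{\rm core}$, the map $\Pi_h$ reduces to $I_h$ there (up to the subtracted constant), so $\Pi_h\bm{u}^{\infty}|_{\Gamma_{\rm core}} = \lambda_\c^\infty$ as an equivalence class in $\bm{\Lambda}^\c$, and $\bm{u}_0$ is a valid starting point.

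The main obstacle is the estimate of the consistency residual $\eta_0 := \|\delta\tilde{\calE}^\c(\bm{u}_0)\|_{(\bm{\calU}^\c_{h,0})^*}$. For $\bm{v}^\c \in \bm{\calU}^\c_{h,0}$, extend $\bm{v}^\c$ by its constant value on $\Gamma_\c$ to all of $\mathbb{R}^d$; the extension belongs to $\bm{\calU}_0$, so by~\eqref{ElAt} we have $\langle \delta\calE^\a(\bm{u}^{\infty}), \bm{v}^\c\rangle = 0$. Splitting
\[
\langle \delta\tilde{\calE}^\c(\Pi_h\bm{u}^{\infty}), \bm{v}^\c\rangle = \langle \delta\tilde{\calE}^\c(\Pi_h\bm{u}^{\infty}) - \delta\calE^\c(\tilde{I}\bm{u}^{\infty}), \bm{v}^\c\rangle + \bigl(\langle \delta\calE^\c(\tilde{I}\bm{u}^{\infty}), \bm{v}^\c\rangle - \langle \delta\calE^\a(\bm{u}^{\infty}), \bm{v}^\c\rangle\bigr),
\]
the first term is handled by Lipschitz continuity of $\delta W$ combined with Lemma~\ref{PiError}, producing the bound $R_{\rm core}^{-d/2-1} + R_\c^{-d/2}$. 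The second term is the standard Cauchy--Born modeling error between $\tilde{I}\bm{u}^{\infty}$ and $\bm{u}^{\infty}$ on $\Omega_\c$; a Taylor expansion of $V$ around a linear interpolant of $\tilde{I}\bm{u}^{\infty}$ bounds it by $\|\nabla^2 \tilde{I}\bm{u}^{\infty}\|_{L^2(\Omega_\c)}\|\nabla\bm{v}^\c\|_{L^2(\Omega_\c)}$, and Theorem~\ref{decayThm} together with an integration of $|x|^{-2d-2}$ over $|x| \geq r_{\rm core}$ yields $\|\nabla^2 \tilde{I}\bm{u}^{\infty}\|_{L^2(\Omega_\c)} \lesssim R_{\rm core}^{-d/2-1}$. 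Hence $\eta_0 \lesssim R_{\rm core}^{-d/2-1} + R_\c^{-d/2}$.

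Lemma~\ref{PiLemma} furnishes $\|\delta F(\bm{u}_0)^{-1}\| \leq 1/\gamma_\c =: \sigma$, and Lemma~\ref{LipCont} provides a Lipschitz constant $L$ for $\delta F = \delta^2\tilde{\calE}^\c$ that is independent of $R_{\rm core}$ and $R_\c$. For $R_{\rm core}$ sufficiently large the hypothesis $2L\eta_0\sigma^2 < 1$ of Theorem~\ref{inverseFunctionTheorem} is satisfied, so there exists a unique $\bm{u}^{\rm con}$ with $\delta\tilde{\calE}^\c(\bm{u}^{\rm con}) = 0$ and
\[
\|\nabla(\bm{u}^{\rm con} - \Pi_h\bm{u}^{\infty})\|_{L^2(\Omega_\c)} \leq 2\eta_0\sigma \lesssim R_{\rm core}^{-d/2-1} + R_\c^{-d/2}.
\]
Lemma~\ref{PiError} plus the analogous bound $\|\nabla\tilde{I}\bm{u}^{\infty} - \nabla I\bm{u}^{\infty}\|_{L^2(\Omega_\c)} \lesssim R_{\rm core}^{-d/2-1} + R_\c^{-d/2}$, which follows from the local estimate~\eqref{localEst} applied to the difference of interpolants together with Theorem~\ref{decayThm}, then yields~\eqref{contModelEstimate} by the triangle inequality.

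Finally, for the stability claim~\eqref{contStability}, I perturb off Lemma~\ref{PiLemma}: by the Lipschitz continuity of $\delta^2\tilde{\calE}^\c$,
\[
\langle \delta^2\tilde{\calE}^\c(\bm{u}^{\rm con})\bm{v}^\c,\bm{v}^\c\rangle \geq \bigl(\gamma_\c - L\|\nabla(\bm{u}^{\rm con} - \Pi_h\bm{u}^{\infty})\|_{L^2(\Omega_\c)}\bigr)\|\nabla\bm{v}^\c\|^2_{L^2(\Omega_\c)},
\]
and since the right-hand side exceeds $(\gamma_\c/2)\|\nabla\bm{v}^\c\|^2_{L^2(\Omega_\c)}$ once $R_{\rm core}^*$ is chosen large enough that the error bound above is smaller than $\gamma_\c/(2L)$, we obtain the inequality with $\gamma'_\c := \gamma_\c/2$.
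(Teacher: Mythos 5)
Your overall architecture coincides with the paper's: linearize $\delta\tilde{\calE}^\c$ about $\Pi_h\bm{u}^{\infty}$, invoke Lemma~\ref{PiLemma} for invertibility of the Hessian, Lemma~\ref{LipCont} for the Lipschitz bound, apply Theorem~\ref{inverseFunctionTheorem}, recover \eqref{contModelEstimate} by the triangle inequality with Lemma~\ref{PiError} and the bound $\|\nabla\tilde{I}\bm{u}^{\infty}-\nabla I\bm{u}^{\infty}\|_{L^2(\Omega_\c)}\lesssim\|\nabla^2\tilde{I}\bm{u}^{\infty}\|_{L^2(\Omega_\c)}$, and obtain \eqref{contStability} by perturbing Lemma~\ref{PiLemma}. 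All of those steps are fine and match the paper.

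The gap is in the residual estimate, which is where the technical weight of the proof sits. You split off the term $\langle\delta\calE^\c(\tilde{I}\bm{u}^{\infty}),\bm{v}^\c\rangle-\langle\delta\calE^\a(\bm{u}^{\infty}),\bm{v}^\c\rangle$ and assert it is bounded by $\|\nabla^2\tilde{I}\bm{u}^{\infty}\|_{L^2(\Omega_\c)}\|\nabla\bm{v}^\c\|_{L^2(\Omega_\c)}$ via ``a Taylor expansion of $V$.'' A Taylor expansion handles the replacement of $D\bm{u}^{\infty}(\xi)$ by $\nabla\tilde{I}\bm{u}^{\infty}(\xi)\calR$, but it does not bridge the structural mismatch between the two pairings: $\delta\calE^\a$ is a lattice sum against finite differences $D_\rho\bm{v}^\c(\xi)$ (line averages of $\nabla\bm{v}^\c$ along bonds), while $\delta\calE^\c$ is a volume integral against $\nabla\bm{v}^\c$. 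Converting one into the other is exactly what the atomistic stress representation \eqref{atStress} (via the quasi-interpolant $v^*=Iv*\zeta$) accomplishes in the paper, and because $\bm{v}^\c$ is piecewise linear on the coarse mesh $\calT_h$ rather than on $\calT_\a$, the paper must additionally insert the Scott--Zhang interpolant $S_\a\bm{v}^\c$, producing the term $E_2=\langle\delta\tilde{\calE}^\c(\tilde{I}\bm{u}^{\infty}),\bm{v}^\c-S_\a\bm{v}^\c\rangle$ --- which contributes at the leading order $R_{\rm core}^{-d/2-1}$ and is therefore not negligible. Your two-term split silently folds both the quadrature/interpolation mismatch and the genuine Cauchy--Born modeling error into one term with an unjustified bound. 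The saving grace is that the order you claim is correct (first-order consistency suffices here, whereas the paper's $E_3$ is actually $O(R_{\rm core}^{-d/2-2})$), so the exponents in \eqref{contModelEstimate} survive; but as written the key inequality is asserted rather than proved, and making it rigorous essentially forces you back to the stress-function and Scott--Zhang machinery of the paper's proof.
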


\begin{proof}
The proof uses ideas from~\cite{theil2012, blended2014}. We employ Theorem \ref{inverseFunctionTheorem} by linearizing $f = \delta \tilde{\calE}^\c(\cdot)$ about $x_0 = \Pi_h \bm{u}^{\infty}$.
Let $R_{\rm core}^*$ be as in Lemma~\ref{PiLemma}. Then
$\delta^2\tilde{\calE}^\c(\Pi_h \bm{u}^{\infty})^{-1}$ exists and is bounded by $\gamma_\c^{-1}$ for all $R_{\rm core} \geq R_{\rm core}^*$.
Moreover, $\delta^2\tilde{\calE}^\c$ is Lipschitz continuous by Lemma~\ref{LipCont}.  It remains to estimate the dual norm of the residual
\begin{equation}\label{residual}
\sup_{ \bm{v}^\c \in \bm{\calU}^\c_{h,0}, \bm{v}^\c \neq \bm{0}} \frac{\langle \delta \tilde{\calE}^\c(\Pi_h \bm{u}^{\infty}), \bm{v}^\c\rangle }{\|\bm{v}^\c\|_{L^2(\Omega_\c)}}.
\end{equation}
This task requires an atomistic version of the stress.
Following~\cite{theil2012}, let $\zeta(x)$ be the nodal basis function at the origin of the atomistic partition
 $\calT_\a$, i.e., $\zeta(0) = 1$ and $\zeta(\xi) = 0$ for $0 \neq \xi \in \mathbb{Z}^d$.
This allows us to write the interpolant of a lattice function $v$ as
$Iv(x) = \sum_{\xi \in \mathbb{Z}^d} v(\xi)\zeta(x-\xi)$.
Further define the ``quasi-interpolant,'' $v^*$, by
\[
v^*(x) := (Iv * \zeta)(x),
\]
and note that $v^* \in W^{3,\infty}_{\rm loc}$ \cite{theil2012, atInterpolant}. Letting $\chi_{\xi,\rho}(x) := \int_0^1\zeta(\xi + t\rho - x)\, dt$, the atomistic stress, $\mS^\a(\bm{u}, x)$, is then defined by
\begin{equation}\label{atStress}
% \int_{\mathbb{R}^d}\mS^\a(\bm{u}, x) : \nabla \pbb{\bm{v}^*}
% :=
%\langle \delta \calE^\a(\bm{u}), \bm{v}^*\rangle
%=~
%\int_{\mathbb{R}^d}\sum_{\xi \in \mathbb{Z}^d}\sum_{\rho \in \calR} \chi_{\xi, \rho} V_{\xi,\rho}(D\bm{u}) \otimes \rho : \nabla \pbb{\bm{v}^*},
\int_{\mathbb{R}^d} \mS^\a(\bm{u}, x) : \nabla I\bm{v} \, dx :=\left<\delta \calE^\a(\bm{u}), \bm{v}^*\right> =~ \int_{\mathbb{R}^d}\sum_{\xi \in \mathbb{Z}^d}\sum_{\rho \in \calR} \chi_{\xi, \rho} V_{\xi,\rho}(D\bm{u}) \otimes \rho : \nabla I\bm{v} \, dx.
\end{equation}
See \cite{theil2012, blended2014} for further details.

We now estimate the residual~\eqref{residual}.  Fix an element $\bm{v}^\c \in \bm{\calU}^\c_{h,0}$, and assume it has been extended to all of $\mathbb{R}^d$.
Let  $\bm{w}^{\c} = S_{\a}\bm{v}^\c$ where $S_{\a}$ is the Scott-Zhang interpolant onto $\calT_\a$. Note that $I\bm{w}^\c = IS_{\a}\bm{v}^\c = S_{\a}\bm{v}^\c$ for these choices.

We now subtract $0 = \langle \delta \calE^\a(\bm{u}^{\infty}), \bm{w}^{\c,*}\rangle $ from the numerator of \eqref{residual}:
\begin{displaymath} %\label{otter1}
\begin{split}
&
    \langle \delta \tilde{\calE}^\c(\Pi_h \bm{u}^{\infty}), \bm{v}^\c\rangle
\\ &=~
    \langle \delta \tilde{\calE}^\c(\Pi_h \bm{u}^{\infty}), \bm{v}^\c\rangle  - \langle \delta \calE^\a(\bm{u}^{\infty}), \bm{w}^{\c,*}\rangle
\\ &=~
    \langle \delta \tilde{\calE}^\c(\Pi_h \bm{u}^{\infty})-\delta \tilde{\calE}^\c(\tilde{I} \bm{u}^{\infty}), \bm{v}^\c\rangle
    +
    \langle \delta \tilde{\calE}^\c(\tilde{I} \bm{u}^{\infty}), \bm{v}^\c - S_\a \bm{v}^\c \rangle
    +
    (\langle \delta \tilde{\calE}^\c(\tilde{I} \bm{u}^{\infty}), S_\a\bm{v}^\c \rangle
    -
    \langle \delta \calE^\a(\bm{u}^{\infty}), \bm{w}^{\c,*}\rangle
    )
\\&=:~
    E_1 + E_2 + E_3.
\end{split}
\end{displaymath}
In the above, we have used the notation
$
\langle \delta \tilde{\calE}^\c(\Pi_h \bm{u}^{\infty}), w\rangle  := \int_{\Omega_\c}  W'(\nabla \Pi_h \bm{u}^{\infty}) : \nabla w
$
for an arbitrary $w\in H^1(\Omega_\c)$.

$E_{1}$ can be easily estimated:
\begin{displaymath} %\label{otter8}
\begin{split}
\langle \delta \tilde{\calE}^\c(\Pi_h \bm{u}^{\infty}) - \delta \tilde{\calE}^\c(\tilde{I}\bm{u}^{\infty}), \bm{v}^\c \rangle
&~\lesssim~ \|\nabla\Pi_h \bm{u}^{\infty} - \nabla \tilde{I}\bm{u}^{\infty}\|_{L^2(\Omega_\c)}\|\nabla \bm{v}^\c\|_{L^2(\Omega_\c)}
\\&~\lesssim~
(R_{\rm core}^{-d/2-1} + R_\c^{-d/2})\|\nabla \bm{v}^\c\|_{L^2(\Omega_\c)} \quad \mbox{by Lemma~\ref{PiError}.}
\end{split}
\end{displaymath}

We estimate $E_2$  by integrating by parts
\begin{displaymath} %\label{otter7}
\begin{split}
\langle \delta \tilde{\calE}^\c(\tilde{I}\bm{u}^{\infty}) , \bm{v}^\c - S_{\a}\bm{v}^\c\rangle
&\qquad~= \int_{\Omega_\c} W'(\nabla \tilde{I}\bm{u}^{\infty}): \nabla (\bm{v}^\c - S_{\a}\bm{v}^\c) \\
&\qquad~= \int_{\Omega_\c} \div{(W'(\nabla \tilde{I}\bm{u}^{\infty}))} \cdot (\bm{v}^\c - S_{\a}\bm{v}^\c) \\
&\qquad~\leq \|\div{(W'(\nabla \tilde{I}\bm{u}^{\infty}))} \|_{L^2(\Omega_\c)} \cdot \| \bm{v}^\c - S_{\a}\bm{v}^\c\|_{L^2(\Omega_\c)} \\
&\qquad~\lesssim \|    \nabla^2 \tilde{I}\bm{u}^{\infty} \|_{L^2(\Omega_\c)} \| \nabla \bm{v}^\c\|_{L^2(\Omega_\c)},
\\
&\qquad~\lesssim R_\core^{-d/2-1} \| \nabla \bm{v}^\c\|_{L^2(\Omega_\c)},
%\lesssim~& (\|\nabla^2 W'(\tilde{I}\bm{u}^{\infty})\|_{L^2(\Omega_\c)} + \|h\nabla^2 \tilde{I}\bm{u}^{\infty}\|_{L^2(\Omega_\c)})\|\nabla \bm{v}^\c\|_{L^2(\Omega_\c)} \\
%\lesssim~&  (\|\nabla^3\tilde{I}\bm{u}^{\infty}\|_{L^2(\Omega_\c)} + \|h\nabla^2 \tilde{I}\bm{u}^{\infty}\|_{L^2(\Omega_\c)})\|\nabla \bm{v}^\c\|_{L^2(\Omega_\c)}
\end{split}
\end{displaymath}
where we have used the chain rule, bounded the second derivatives of $\tilde{I}\bm{u}^{\infty}$ by $\|    \nabla^2 \tilde{I}\bm{u}^{\infty} \|_{L^2(\Omega_\c)}$, utilized the interpolation estimate {\bf{P.4}} for $S_\a$, and applied the decay rates of Theorem \ref{decayThm}.

We estimate $E_3$ by observing
\begin{align*}
    E_3
=~&
    \int_{\Omega_\c}W'(\nabla \tilde{I} \bm{u}^{\infty}): \nabla S_{\a}\bm{v}^\c  - \int_{\Omega_\c} \mS^\a(\bm{u}^{\infty}, x) :\nabla I\bm{w}^\c
\\=~&
    \int_{\Omega_\c}\big(W'\big(\nabla \tilde{I} \bm{u}^{\infty}\big)
    -
    \mS^\a(\bm{u}^{\infty}, x)\big): \nabla S_{\a}\bm{v}^\c.
\\\leq~&
    \|W'(\nabla \tilde{I} \bm{u}^{\infty})-\mS^\a(\bm{u}^{\infty}, x)\|_{L^2(\Omega_\c)}\|\nabla S_{\a}\bm{v}^\c\|_{L^2(\Omega_\c)}
\\\leq~&
    \|W'(\nabla \tilde{I} \bm{u}^{\infty})-\mS^\a(\bm{u}^{\infty}, x)\|_{L^2(\Omega_\c)}\|\bm{v}^\c\|_{L^2(\Omega_\c)},
\end{align*}
where in the last step we used the stability of the Scott-Zhang interpolant {\bf{P.3}}.
One may then modify the arguments in~\cite[Lemma 4.5, Equations (4.22)--(4.24)]{theil2012} to prove that\footnote{The difference is that our choice of $\tilde{I}u$ is not the same as the smooth interpolant used there.}
%\begin{equation}\label{stressError0}
\[
E_{3} \lesssim~  (\|\nabla^3 \tilde{I}\bm{u}^{\infty}\|_{L^2(\Omega_\c)} + \|\nabla^2 \tilde{I}\bm{u}^{\infty}\|_{L^4(\Omega_\c)}^2) \|\bm{v}^\c\|_{L^2(\Omega_\c)},
\]
%\end{equation}
%
and using the regularity theorem, Theorem~\ref{decayThm}, shows
$ %\label{stressError}
E_3 \lesssim~ R_{\rm core}^{-d/2-2} \|\bm{v}^\c\|_{L^2(\Omega_\c)}.
$

Combining the bounds on $E_1,E_2$, and $E_3$ yields the residual estimate
%\as
%
\begin{equation}\label{residualBeaver}
\begin{split}
\sup_{\bm{v}^\c \in \bm{\calU}^\c_h, \bm{v}^\c \neq \bm{0}} \frac{\langle \delta \tilde{\calE}^\c(\Pi_h \bm{u}^{\infty}), \bm{v}^\c\rangle }{\|\bm{v}^\c\|_{L^2(\Omega_\c)}} \lesssim~& R_{\rm core}^{-d/2-1} + R_{\c}^{-d/2}.
\end{split}
\end{equation}
The inverse function theorem then implies the existence of $\bm{u}^{\rm con}$ satisfying~\eqref{contModelEq} and
\begin{equation}\label{beaverLast}
\|\nabla \bm{u}^{\rm con} - \nabla \Pi_h \bm{u}^{\infty}\|_{L^2(\Omega_\c)} \lesssim~ R_{\rm core}^{-d/2-1} + R_{\c}^{-d/2}.
\end{equation}

To prove~\eqref{contModelEstimate}, observe that
\begin{align*}
\|\nabla \bm{u}^{\rm con} - \nabla I \bm{u}^{\infty}\|_{L^2(\Omega_\c)} \leq~&
 \|\nabla \bm{u}^{\rm con} - \nabla \Pi_h \bm{u}^{\infty}\|_{L^2(\Omega_\c)} + \|\nabla \Pi_h \bm{u}^{\infty} - \nabla \tilde{I} \bm{u}^{\infty} \|_{L^2(\Omega_\c)} + \| \nabla \tilde{I} \bm{u}^{\infty} - \nabla I \bm{u}^{\infty}\|_{L^2(\Omega_\c)}.
\end{align*}
Hence, combining~\eqref{beaverLast} and Lemma~\ref{PiError} yields
\begin{equation}\label{penguin1}
\|\nabla \bm{u}^{\rm con} - \nabla I \bm{u}^{\infty}\|_{L^2(\Omega_\c)} \lesssim~  R_{\rm core}^{-d/2-1} + R_{\c}^{-d/2} + \| \nabla \tilde{I} \bm{u}^{\infty} - \nabla I \bm{u}^{\infty}\|_{L^2(\Omega_\c)}.
\end{equation}
Since $\tilde{I}\bm{u}^{\infty}$ is in $H^2(\Omega_\c)$ and $I \bm{u}^{\infty} = I  (\tilde{I} \bm{u}^{\infty})$, standard finite element approximation theory and the decay estimates in Theorem~\ref{decayThm} give
\begin{equation}\label{penguin2}
\| \nabla \tilde{I} \bm{u}^{\infty} - \nabla I \bm{u}^{\infty}\|_{L^2(\Omega_\c)} = \| \nabla \tilde{I} \bm{u}^{\infty} - \nabla I (\tilde{I} \bm{u}^{\infty} )\|_{L^2(\Omega_\c)}\lesssim~ \|\nabla^2 \tilde{I} \bm{u}^{\infty}\|_{L^2(\Omega_\c)} \lesssim~ R_{\rm core}^{-d/2-1}.
\end{equation}
The last inequalities~\eqref{penguin1} and~\eqref{penguin2} imply the desired estimate~\eqref{contModelEstimate}.

To prove the inequality~\eqref{contStability}, note that
\begin{align*}
\big<\delta^2\tilde{\calE}(\bm{u}^{\rm con})\bm{v}^\c, \bm{v}^\c\big> =~& \big<\big(\delta^2\tilde{\calE}(\bm{u}^{\rm con}) - \delta^2\tilde{\calE}(\Pi_h\bm{u}^{\infty})\big)\bm{v}^\c, \bm{v}^\c\big> + \big<\delta^2\tilde{\calE}(\Pi_h\bm{u}^{\infty})\bm{v}^\c, \bm{v}^\c\big> \\
\gtrsim~&  -\|\nabla \bm{u}^{\rm con} -\nabla  \Pi_h\bm{u}^{\infty}\|_{L^2(\Omega_\c)}\|\nabla v^\c\|_{L^2(\Omega_\c)}^2 + \gamma_\c\|\nabla v^\c\|_{L^2(\Omega_\c)}^2 \\
\gtrsim~& (\gamma_\c - R_{\rm core}^{-d/2-1} + R_{\c}^{-d/2})\|\nabla v^\c\|_{L^2(\Omega_\c)}^2.
\end{align*}
Choosing an appropriate $R_{\rm core}^*$ and $\gamma_\c'$ completes the proof.
\end{proof}

\subsection{The AtC Coupled Problem}
%\commentdao{I removed the tilde from this section.  Is it still clear?}

We couple the restricted atomistic and continuum subproblems by minimizing their mismatch on the overlap region. In this paper, we measure the mismatch by the $H^1$ (semi-)norm of the difference between the continuum solution and the finite element interpolant of the atomistic solution. Thus, our
 AtC formulation seeks an optimal solution
$(u^\a,u^\c) \in \calU^\a \times \calU^\c_h$,
$(\lambda_\a,\lambda_\c)\in \Lambda^\a\times\Lambda^\c$ of the following constrained optimization problem:
\begin{equation}\label{atcOpt}
\begin{array}{c}
\displaystyle
\min_{\{u^\a,u^\c,\lambda^\a,\lambda^\c\} } \|\nabla Iu^\a - \nabla u^\c\|_{L^2(\Omega_\o)}
\quad \text{subject to}
\\[2ex]
\left\{
\begin{array}{rl}
\langle \delta\tilde{\calE}^\a(u^\a), v^\a\rangle=0 &\forall v^\a \in \calU^\a_0 \\[1ex]
u^\a = \lambda_\a & \mbox{on} ~\partial_\a\calL_\a
\end{array}
\right.;
\left\{
\begin{array}{c}
\langle\delta\tilde{\calE}^\c(u^\c), v^\c\rangle=0 \quad \forall v^\c \in \calU^\c_{h,0} \\[1ex]
u^\c = 0 \ \  \mbox{on} ~ \Gamma_{\c} \quad \mbox{and}\quad
u^\c = \lambda_\c \ \  \mbox{on} ~ \Gamma_{\rm core}
\end{array}
\right.;
\int_{\Omega_\o} \left(Iu^\a - u^\c\right) \, dx = 0.
\end{array}
\end{equation}

Alternatively, we may pose the AtC problem on quotient spaces:
\begin{equation}\label{atcOptEquiv}
\begin{array}{c}
\displaystyle
\min_{\{\bm{u}^\a,\bm{u}^\c,\lambda^\a,\lambda^\c\} }
\|\nabla I\bm{u}^\a - \nabla \bm{u}^\c\|_{L^2(\Omega_\o)}
\quad \text{subject to}
\\[2ex]
\left\{
\begin{array}{rl}
\langle \delta\tilde{\calE}^\a(\bm{u}^\a), \bm{v}^\a\rangle=0 &\forall \bm{v}^\a\in\bm{\calU}^\a_0 \\[1ex]
\bm{u}^\a = \lambda_\a & \mbox{on} ~\partial_\a\bm{\calL}_\a
\end{array}
\right.,\quad
\left\{
\begin{array}{rl}
\langle\delta\tilde{\calE}^\c(\bm{u}^\c), \bm{v}^\c\rangle=0 & \forall \bm{v}^\c\in\bm{\calU}^\c_{h,0} \\[1ex]
\bm{u}^\c = \lambda_\c & \mbox{on} \quad \Gamma_{\rm core}
\end{array}
\right.
\end{array}.
\end{equation}
It is easy to see that (\ref{atcOpt}) and (\ref{atcOptEquiv}) are equivalent in the sense that every minimizer,  $(u^\a, u^\c)$, of the former generates an equivalence class,  $(\bm{u}^\a, \bm{u}^\c)$, that is a minimizer of the latter and vice versa. Indeed, if $(u^\a, u^\c)$ solves (\ref{atcOpt}) then for all $(v^\a, v^\c) \in \calU^\a \times \calU^\c_h$,
$$
\|\nabla I\bm{u}^\a - \nabla \bm{u}^\c\|_{L^2(\Omega_\o)}
=
\|\nabla I u^\a - \nabla u^\c\|_{L^2(\Omega_\o)}
\le
\|\nabla I v^\a - \nabla v^\c\|_{L^2(\Omega_\o)}
=\|\nabla I\bm{v}^\a - \nabla \bm{v}^\c\|_{L^2(\Omega_\o)}.
$$
Thus, $(\bm{u}^\a, \bm{u}^\c)$ is a minimizer of (\ref{atcOptEquiv}). The reverse statement follows by an analogous argument.

Notwithstanding the equivalence of the two problems, \eqref{atcOptEquiv} is more convenient for the analysis and so we will study the existence of AtC solutions $(\bm{u}^{\rm atc}_\a, \bm{u}^{\rm atc}_\c)$ in quotient spaces. The formulation~\eqref{atcOpt} was previously used in a numerical implementation~\cite{olsonDev2013}.  Our main result is as follows.
\begin{theorem}[Existence and Error Estimate]\label{mainTheorem}
Let $\bm{u}^{\infty}_{\a} := \bm{u}^{\infty}|_{\calL_\a} $ and
$\bm{u}^{\infty}_{\c} := \bm{u}^{\infty}|_{\calL_\c}$.
There exists $R_{\rm core}^*$ such that for all $R_{\rm core} \geq R_{\rm core}^*$,
the minimization problem~\eqref{atcOptEquiv} has a solution $(\bm{u}^{\rm atc}_\a, \bm{u}^{\rm atc}_\c)$ and
\begin{equation}\label{atcErrorEstimate}
\begin{split}
&\|\nabla\left(I\bm{u}^{\rm atc}_\a-I\bm{u}^{\infty}_\a\right)\|_{L^2(\Omega_\a)}^2
+
\|\nabla\left(\bm{u}^{\rm atc}_\c-I\bm{u}^{\infty}_\c\right)\|_{L^2(\Omega_\c)}^2 \lesssim~ R_{\rm core}^{-d/2-1} + R_{\c}^{-d/2}.
%&~\lesssim~ \|h \nabla^2 \tilde{I}\bm{u}^{\infty}\|_{L^2(\Omega_\c)} + \|\nabla\tilde{I}\bm{u}^{\infty}\|_{L^2(\mathbb{R}^d\backslash B_{3r_\c/4}(0))} +  \|\nabla^3 \tilde{I}\bm{u}^{\infty}\|_{L^2(\Omega_\c)} + \|\nabla^2 \tilde{I}\bm{u}^{\infty}\|_{L^4(\Omega_\c)}.
\end{split}
\end{equation}
\end{theorem}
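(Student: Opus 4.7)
The plan is to combine a lifting of the exact solution to the restricted subproblems with the norm equivalence result announced in Section~\ref{sec:normEquiv}. The natural candidate trial pair is $(\bm{u}^{\infty}_\a, \bm{u}^{\rm con})$ corresponding to virtual controls $\lambda_\a^* := \bm{u}^{\infty}|_{\partial_\a\calL_\a}$ and $\lambda_\c^* := \bm{u}^{\infty}|_{\Gamma_{\rm core}}$: the first component satisfies the restricted atomistic Euler--Lagrange equation~\eqref{restrictedAtEL} by the lifting identity preceding~\eqref{atLiftingStable}, and the second satisfies~\eqref{restrictedContEL} by Theorem~\ref{contModelError}. The triangle inequality, the continuum error~\eqref{contModelEstimate}, and the bound $\|\nabla I \bm{u}^{\infty} - \nabla \tilde{I} \bm{u}^{\infty}\|_{L^2(\Omega_\c)} \lesssim R_{\rm core}^{-d/2-1}$ (from Theorem~\ref{decayThm} and standard finite element interpolation) then give
\begin{equation*}
\|\nabla I \bm{u}^{\infty}_\a - \nabla \bm{u}^{\rm con}\|_{L^2(\Omega_\o)} \lesssim R_{\rm core}^{-d/2-1} + R_{\c}^{-d/2},
\end{equation*}
which is already of the target order.

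Next I would establish existence of an AtC minimizer by eliminating the PDE constraints via the inverse function theorem. The stability bounds~\eqref{atLiftingStable} and~\eqref{contStability}, together with the Lipschitz estimates in Theorem~\ref{atLippy} and Lemma~\ref{LipCont}, let Theorem~\ref{inverseFunctionTheorem} produce continuously differentiable solution maps $\lambda_\a \mapsto \bm{u}^\a(\lambda_\a)$ and $\lambda_\c \mapsto \bm{u}^\c(\lambda_\c)$ on closed balls $\bar B_{\delta_\a}(\lambda_\a^*)\subset \bm{\Lambda}^\a$, $\bar B_{\delta_\c}(\lambda_\c^*)\subset \bm{\Lambda}^\c$. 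The mismatch objective then becomes a continuous function of $(\lambda_\a,\lambda_\c)$ on a finite-dimensional compact set and attains its minimum at some $(\lambda^{\rm atc}_\a, \lambda^{\rm atc}_\c)$; the candidate estimate above guarantees that the minimum value is of order $R_{\rm core}^{-d/2-1}+R_\c^{-d/2}$, and choosing $R_{\rm core}^*$ large enough keeps the minimizer in the interior of the balls, yielding a genuine solution $(\bm{u}^{\rm atc}_\a, \bm{u}^{\rm atc}_\c)$ of~\eqref{atcOptEquiv}.

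For the error estimate, set $\bm{e}^\a := \bm{u}^{\rm atc}_\a - \bm{u}^{\infty}_\a$ and $\bm{e}^\c := \bm{u}^{\rm atc}_\c - \bm{u}^{\rm con}$. Because both $\bm{u}^{\rm atc}$ and the candidate satisfy the nonlinear restricted Euler--Lagrange systems, $(\bm{e}^\a,\bm{e}^\c)$ satisfies the corresponding linearized equations up to a residual quadratic in $\|\nabla I \bm{e}^\a\|_{L^2(\Omega_\a)}$ and $\|\nabla \bm{e}^\c\|_{L^2(\Omega_\c)}$, by the Lipschitz continuity of the Hessians. Applying the norm equivalence of Section~\ref{sec:normEquiv} and absorbing the quadratic residual on the left (valid for $R_{\rm core}$ sufficiently large) yields
\begin{equation*}
\|\nabla I\bm{e}^\a\|_{L^2(\Omega_\a)}^2 + \|\nabla \bm{e}^\c\|_{L^2(\Omega_\c)}^2 \lesssim \|\nabla I\bm{e}^\a - \nabla \bm{e}^\c\|_{L^2(\Omega_\o)}^2,
\end{equation*}
and minimality of $(\bm{u}^{\rm atc}_\a,\bm{u}^{\rm atc}_\c)$ together with a triangle inequality bounds the right-hand side by the candidate mismatch computed above. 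Combining with~\eqref{contModelEstimate} on the continuum block then produces~\eqref{atcErrorEstimate}.

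The main obstacle is the interaction between the nonlinear nature of the constraints and the linear nature of the norm equivalence in the third step: absorbing the quadratic linearization residual requires $\|\nabla I \bm{e}^\a\|$ and $\|\nabla \bm{e}^\c\|$ to be small \emph{a priori}, whereas the only a priori smallness available comes from the IFT radius in the second step; closing this bootstrap is exactly what forces $R_{\rm core}^*$ to be taken sufficiently large. A secondary technical point is verifying that $(\lambda^{\rm atc}_\a,\lambda^{\rm atc}_\c)$ lies strictly inside the IFT balls so that the reduced minimizer is a genuine critical point of the original constrained problem~\eqref{atcOptEquiv}, rather than being pinned against the ball boundary.
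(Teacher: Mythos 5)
Your architecture is close to the paper's (reduced-space elimination of the states, the continuum error as the consistency term, the norm equivalence as the stability mechanism), but there is a genuine gap in how you obtain the quantitative control that both your existence step and your error step secretly rely on. In Step 2 you assert that because the minimum value of the mismatch is $O(\eta)$ with $\eta := R_{\rm core}^{-d/2-1}+R_\c^{-d/2}$, the minimizer over the compact product of balls lies in the interior for $R_{\rm core}^*$ large. Smallness of the minimum \emph{value} does not imply the minimizer is close to $(\lambda_\a^*,\lambda_\c^*)$: $J$ could remain small along a flat direction reaching the ball boundary. Interiority requires quantitative growth of $J$ away from the candidate controls, i.e.\ coercivity of $\delta^2 J$ in the $\|\cdot\|_{\rm op}$ norm near $(\lambda_\a^{\infty},\lambda_\c^{\infty})$ — this is exactly the paper's Theorem~\ref{stabilityTheorem}, which rests on Lemma~\ref{lem:aux} and the norm equivalence and is nowhere invoked in your existence argument. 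The same omission breaks Step 3: you correctly flag that absorbing the quadratic linearization residual needs \emph{a priori} smallness of $\|\nabla I\bm{e}^\a\|$ and $\|\nabla\bm{e}^\c\|$, but you propose to get it from ``the IFT radius in the second step.'' That radius ($\delta_\a$, $\delta_\c$ from Theorems~\ref{atRegularity}--\ref{contRegularity}) is independent of $R_{\rm core}$ and hence $O(1)$; it does not shrink as $R_{\rm core}\to\infty$ and cannot absorb a residual that is quadratic in an $O(1)$ quantity. The a priori smallness must instead be the bound $\|(\lambda_\a^{\rm atc},\lambda_\c^{\rm atc})-(\lambda_\a^{\infty},\lambda_\c^{\infty})\|_{\rm op}\lesssim\eta$, which again only follows from combining the consistency bound with the coercivity of $\delta^2 J$ (e.g.\ via a Taylor expansion of $J$, or, as the paper does in Theorem~\ref{thm:errorEstimate}, by applying the quantitative inverse function theorem directly to $f=\delta J$ at $(\lambda_\a^{\infty},\lambda_\c^{\infty})$). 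Once that single missing ingredient is supplied, both the interiority claim and the bootstrap close simultaneously, and your Step 3 reduction via \eqref{eq:norm_equiv_alt} and the minimality comparison with the trial pair $(\bm{u}^{\infty}_\a,\bm{u}^{\rm con})$ does yield \eqref{atcErrorEstimate}; as written, however, the argument is circular at its quantitative core.
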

We prove this result in the remainder of the paper.

%%%%
%%%%
\section{Error Analysis}\label{sec:error}
%%%%
%%%%
To carry out the error analysis of the AtC problem we switch to an equivalent reduced space formulation of \eqref{atcOptEquiv} and apply the inverse function theorem.

%%%%%%
%%%%%%
\subsection{Reduced space formulation of the AtC problem}
%%%%%%
%%%%%%

The restricted atomistic \eqref{restrictedAtMin} and continuum \eqref{restrictedContMin} problems  have solutions for any given  $\lambda_\a \in \bm{\Lambda^\a}$ and $\lambda_\c \in \bm{\Lambda^\c}$. These solutions define mappings $\bm{U}^\a:\bm{\Lambda}^\a \to \bm{\calU}^\a$, and  $\bm{U}^\c:\bm{\Lambda}^\c \to \bm{\calU}^\c_h$, respectively, which will be employed in Theorems~\ref{atRegularity} and~\ref{contRegularity}.
Using these mappings, we can eliminate the states from
\eqref{atcOptEquiv} and obtain an equivalent unconstrained minimization problem in terms of the virtual controls only:
\begin{equation}\label{eq:inf}
\left(\lambda_\a^{\rm atc}, \lambda_\c^{\rm atc}\right) = \argmin_{(\lambda_\a, \lambda_\c)\in\bm{\Lambda}^\a\times\bm{\Lambda}^\c}
J(\lambda_\a, \lambda_\c),
\end{equation}
where $J$ is defined as
\begin{equation*}\label{eq:Opt}
J\left(\lambda_\a, \lambda_\c\right) =~ \frac{1}{2}\|\nabla I\bm{U}^\a(\lambda_\a) - \nabla \bm{U}^\c(\lambda_\c)\|^2_{L^2(\Omega_\o)}.
\end{equation*}
The Euler-Lagrange equation of (\ref{eq:inf}) is given by
\begin{equation}\label{firstOrder}
\langle \delta J(\lambda_\a, \lambda_\c), (\mu_\a, \mu_\c) \rangle ~ =~ 0, \quad \forall (\mu_\a, \mu_\c) \in \bm{\Lambda}^\a \times \bm{\Lambda}^\c,
\end{equation}
and using $(\cdot,\cdot)_{L^2(\Omega_\o)}$ to denote the $L^2$ inner product, the first variation of $J$ is
\begin{equation*}\label{firstJvar}
\langle \delta J(\lambda_\a, \lambda_\c), (\mu_\a, \mu_\c)\rangle  =
\left(\nabla\left(I\bm{U}^\a(\lambda_\a) - \bm{U}^\c(\lambda_\c)\right), \nabla\left(I\delta \bm{U}^\a(\lambda_\a)[\mu_\a] - \delta \bm{U}^\c(\lambda_\c)[\mu_\c]\right)\right)_{L^2(\Omega_\o)}.
\end{equation*}
In terms of the reduced problem, the AtC error in~\eqref{atcErrorEstimate} assumes the form
\begin{equation}\label{virtualErrorEstimate}
\|\nabla\big(I\bm{U}^\a(\lambda_\a^{\rm atc})-I\bm{u}^{\infty}_\a \big)\|_{L^2(\Omega_\a)}^2
+
\|\nabla\big(\bm{U}^\c(\lambda_\c^{\rm atc})- I\bm{u}^{\infty}_{\c}\big)\|_{L^2(\Omega_\c)}^2.
\end{equation}
%Again, recalling the previously used notations
%\begin{align}
%\lambda^{\infty}_{\a} :=~& \bm{u}^{\infty}|_{\partial_a\calL_\a} \\
%\lambda^{\infty}_{\c} :=~& \bm{u}^{\infty}|_{\Gamma_{\rm core}},
%\end{align}
%the error we wish to estimate in~\eqref{atcErrorEstimate} can be written in this framework as
%

Analysis of (\ref{virtualErrorEstimate}) requires several problem-dependent norms, and solutions of linearized problems on $\Omega_\a$ and $\Omega_\c$ define these norms. Let $\delta \bm{U}^\a(\lambda^{\infty}_\a)[\cdot]: \bm{\Lambda^\a} \to \calU^\a$  be the solution to the linearized problem\footnote{We show subsequently that $\bm{U}^\a$ is differentiable, and $\delta \bm{U}^\a(\lambda^{\infty}_\a)[\cdot]$ is the Gateaux derivative of $\bm{U}^\a$ at $\lambda^{\infty}_\a$.}
\begin{equation}\label{EL:At}
\begin{split}
\big<\delta^2 \tilde{\calE}^\a(\bm{U}^\a(\lambda^{\infty}_\a))\delta \bm{U}^\a(\lambda^{\infty}_\a)[\mu_\a], \bm{v}^\a\big> =~& 0 \quad \forall \bm{v}^\a \, \in \bm{\calU}^\a_0, \\
\delta \bm{U}^\a(\lambda^{\infty}_\a)[\mu_\a] =~& \mu_\a \quad \mbox{on} ~ \partial_\a\calL_\a,
\end{split}
\end{equation}
and let $\delta \bm{U}^\c(\lambda^{\infty}_\c)[\cdot]: \bm{\Lambda^\c} \to \calU^\c$ be the solution to a similar continuum linearized problem
\begin{equation*}\label{EL:Con}
\begin{split}
\big<\delta^2 \tilde{\calE}^\c(\bm{u}^{\rm con} )\delta \bm{U}^\c(\lambda^{\infty}_\c)[\mu_\c], \bm{v}^\c\big> =~& 0 \quad \forall \bm{v}^\c \in \bm{\calU}^\c_{h,0}, \\
\delta \bm{U}^\c(\lambda^{\infty}_\c)[\mu_\c] =~& \mu_\c \quad \mbox{on} \quad \Gamma_{\rm core}.
\end{split}
\end{equation*}
It is easy to see that
$$
\|\mu_\a\|_{\bm{\Lambda}^\a} := \|\nabla I\delta \bm{U}^\a(\lambda^{\infty}_\a)[\mu_\a]\|_{L^2(\Omega_\a)} \quad{\mbox{and}}\quad
\|\mu_\c\|_{\bm{\Lambda}^\c} := \|\nabla \delta \bm{U}^\c(\lambda^{\infty}_\c)[\mu_\c]\|_{L^2(\Omega_\c)}
$$
define norms norms on $\bm{\Lambda}^\a$, and $\bm{\Lambda}^\c$, respectively, while their sum
\begin{equation}\label{errorNorm}
\|(\mu_\a, \mu_\c)\|_{\rm err}^2 :=~ \| \mu_\a \|_{\bm{\Lambda}^\a}^2 +\| \mu_\c \|_{\bm{\Lambda}^\c}^2,
\end{equation}
is a norm on  $\bm{\Lambda}^\a \times \bm{\Lambda}^\c$. In  Section~\ref{sec:normEquiv}
we shall prove
\begin{equation*}\label{optNorm}
\|(\mu_\a, \mu_\c)\|_{\rm op} := \|\nabla\left(I\delta \bm{U}^\a(\lambda_\a^{\infty})[\mu_\a] - \delta \bm{U}^\c(\lambda^{\infty}_\c)[\mu_\c]\right) \|_{L^2(\Omega_\o)}
\end{equation*}
is a norm equivalent to $\|\cdot\|_{\rm err}$ from~\eqref{errorNorm}. We state this result below for further reference within this section.
\begin{theorem}[Norm Equivalence]\label{normEquivTheorem}
There exists $R_{\rm core}^* > 0$ such that for all $R_{\rm core} \geq R_{\rm core}^*$,
\begin{equation}\label{eq:normEquiv}
\|\cdot \|_{\op} \lesssim~ \|\cdot\|_{\err} \lesssim~ \|\cdot \|_{\op}.
\end{equation}
%(Recall the implied constants may depend upon $R_{\rm core}^*$.)
\end{theorem}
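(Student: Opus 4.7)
The bound $\|\cdot\|_{\op}\lesssim\|\cdot\|_{\err}$ is immediate from the triangle inequality together with the inclusions $\Omega_\o\subset\Omega_\a$ and $\Omega_\o\subset\Omega_\c$:
\begin{equation*}
\|\nabla(I\delta \bm U^\a[\mu_\a]-\delta \bm U^\c[\mu_\c])\|_{L^2(\Omega_\o)}\leq\|\nabla I\delta \bm U^\a[\mu_\a]\|_{L^2(\Omega_\a)}+\|\nabla \delta \bm U^\c[\mu_\c]\|_{L^2(\Omega_\c)}.
\end{equation*}
For the reverse inequality I would argue by contradiction via a rescaling and compactness argument. This is the natural higher-dimensional analogue of the explicit one-dimensional construction of~\cite{olsonPro2013} and fits the setup of the rescaled Scott--Zhang operators $S_{\a,n}$ and $S_{h,n}$ on $\epsilon_n\calT_\a$ and $\epsilon_n\calT_h$ announced in Section~\ref{res:cont}.

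Suppose no uniform $R_{\rm core}^*$ works. Then extract $R_n:=R_{\rm core}^{(n)}\to\infty$ and virtual controls $(\mu_\a^n,\mu_\c^n)$ with $\|(\mu_\a^n,\mu_\c^n)\|_{\err}=1$ and $\|(\mu_\a^n,\mu_\c^n)\|_{\op}\to 0$, and set $u^{\a,n}:=\delta\bm U^\a(\lambda_\a^\infty)[\mu_\a^n]$, $u^{\c,n}:=\delta\bm U^\c(\lambda_\c^\infty)[\mu_\c^n]$. Rescaling $y=\epsilon_n x$ with $\epsilon_n:=1/r_{\rm core}^{(n)}$ sends $\Omega_\a$ to the fixed Lipschitz domain $\tilde\Omega_\a=\psi_\a\Omega_0$ and the extended overlap to the fixed annulus $\tilde\Omega_{\o,\ex}=2\psi_\a\Omega_0\setminus\Omega_0$, while $\Omega_\c$ becomes an exterior domain $\tilde\Omega_{\c,n}\subset\bbR^d\setminus\Omega_0$ whose outer radius grows like $r_{\rm core}^{\kappa-1}$; the rescaled atomistic mesh $\epsilon_n\calT_\a$ has vanishing mesh size, so its stencils approximate continuum gradients. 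Applying $S_{\a,n}$ and $S_{h,n}$ and using stability~{\bf P.3}, the rescaled fields are bounded in $\dot W^{1,2}$, so a subsequence converges weakly to $\tilde u^\a\in\bm W^{1,2}(\tilde\Omega_\a)$ and $\tilde u^\c\in\bm W^{1,2}(\bbR^d\setminus\Omega_0)$. Passing to the limit in the linearized equation~\eqref{EL:At} (rewriting finite differences as $D_\rho v(\xi)=\epsilon_n\int_0^1\nabla v(\xi+t\epsilon_n\rho)\cdot\rho\,dt$ and projecting test functions through $S_{\a,n}$) and in the continuum counterpart yields the same homogeneous linearized Cauchy--Born equation for both limits. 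The hypothesis $\|(\mu_\a^n,\mu_\c^n)\|_{\op}\to 0$ combined with compactness on the bounded annulus $\tilde\Omega_\o$ gives $\tilde u^\a=\tilde u^\c$ there, so gluing along the overlap produces $\tilde u\in\bm W^{1,2}(\bbR^d)$ solving the homogeneous Cauchy--Born equation on all of $\bbR^d$. Positivity of the homogeneous Hessian (\cite[Lemma 5.2]{theil2012} combined with Assumption~\ref{atCoercive}) then forces $\tilde u\equiv 0$ modulo constants.

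The main technical obstacle is upgrading weak to strong $\dot W^{1,2}$ convergence in order to contradict the normalization $\|(\mu_\a^n,\mu_\c^n)\|_{\err}=1$. The plan is to test each rescaled variational identity against the solution itself, apply the coercivity bounds~\eqref{atLiftingStable} and~\eqref{contStability} with the Lipschitz continuity of $\delta^2\tilde\calE^\a$ and $\delta^2\tilde\calE^\c$ (Theorem~\ref{atLippy} and Lemma~\ref{LipCont}) to reduce each squared error norm to a boundary-data pairing that either vanishes with $\|\cdot\|_{\op}$ or involves the (zero) weak limit; the quadratic excess terms are controlled by the energy. The added difficulty is that $\tilde\Omega_{\c,n}$ is unbounded so Rellich compactness is not directly available in the far field; this is handled by introducing far-field truncations analogous to the operator $T_R$ used in the proof of Theorem~\ref{contModelError}, exploiting scale invariance of the $L^2$ norm of the gradient so that only the fixed annulus $\tilde\Omega_{\o,\ex}$ contributes to the boundary pairing, and discarding the tail using the decay afforded by $\tilde u^\c\in\dot W^{1,2}$. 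The resulting contradiction determines the threshold $R_{\rm core}^*$, after which~\eqref{eq:normEquiv} holds with an absolute constant in the sense specified by our modified Vinogradov convention.
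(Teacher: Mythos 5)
Your overall architecture --- argue by contradiction, rescale by $\epsilon_n=1/R_{\core,n}$ so the overlap becomes a fixed annulus, extract weak limits, show they satisfy the homogeneous linearized Cauchy--Born system, glue across the overlap and invoke a Liouville-type argument --- is the same skeleton the paper uses in Section~\ref{sec:normEquiv}, and the left inequality is handled identically. The genuine gap is in your endgame. Having normalized $\|(\mu_\a^n,\mu_\c^n)\|_{\err}=1$ and assumed $\|(\mu_\a^n,\mu_\c^n)\|_{\op}\to 0$, you must upgrade the weak convergence of the rescaled gradients to zero into convergence of their \emph{norms} on the full domains in order to reach a contradiction. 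The mechanism you propose --- ``test each rescaled variational identity against the solution itself'' and reduce to a boundary pairing --- does not work: the solutions carry inhomogeneous Dirichlet data $\mu_\a^n$ on $\partial_\a\calL_\a$ and $\mu_\c^n$ on $\Gamma_{\rm core}$, so they are not admissible test functions in $\bm{\calU}^\a_0$ or $\bm{\calU}^\c_{h,0}$, and those data are precisely the controls, which converge only weakly. No equation is available in a neighborhood of the artificial boundaries from which to extract compactness of the gradients there, so the strong $L^2$ convergence up to $\partial_\a\calL_\a$ and $\Gamma_{\rm core}$ that your contradiction requires cannot be obtained this way.

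The paper avoids exactly this obstruction by restructuring the argument in three ways you omit. First, a separate a priori bound (Lemma~\ref{lem:norm_equiv:reduce_to_Omega_o}), proved with the Stein/Burenkov extension operators of Appendix~\ref{extensionAppendix} and the Scott--Zhang interpolant, gives $\|\nabla I\bm{w}^\a\|_{L^2(\Omega_\a)}\lesssim\|\nabla I\bm{w}^\a\|_{L^2(\Omega_\o)}$ and its continuum analogue, reducing everything to the overlap. Second, the right inequality is recast as a strengthened Cauchy--Schwarz inequality on $\Omega_\o$ with constant $c<1$ (Theorem~\ref{lem:norm_equiv:desired_result}); the contradiction hypothesis becomes that the normalized inner product tends to $1$, and the contradiction is obtained from the equality case of Cauchy--Schwarz together with weak lower semicontinuity of the norms --- so only the convergence of the \emph{cross} term $(\nabla I_n\bar{w}^\a_n,\nabla\bar{w}^\c_n)_{L^2(\tilde\Omega_\o)}$ is ever needed, never strong norm convergence of each sequence. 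Third, that cross term is handled by splitting $\tilde\Omega_\o=A_1\cup A_2$ and proving strong gradient convergence of the atomistic sequence only on the inner part $A_1$ (compactly away from $\partial_\a\calL_\a$) and of the continuum sequence only on the outer part $A_2$ (compactly away from $\Gamma_{\rm core}$), using cutoff functions so that the localized differences become admissible test functions for the respective interior equations; on each piece one factor is strong and the other weak. Unless you rebuild your argument around some equivalent of these devices, the passage from the weak limits to the final contradiction remains unproved.
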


%%%%%%
%%%%%%
\subsection{The Inverse Function Theorem framework}
%%%%%%
%%%%%%
We consider the first order optimality condition (\ref{firstOrder}) for~\eqref{eq:inf},
and apply the inverse function theorem, Theorem~\ref{inverseFunctionTheorem}, with $f = \delta J$ and  $X=\bm{\Lambda}^\a \times \bm{\Lambda}^\c$ equipped with the $\|\cdot \|_{\rm op}$ norm to show that~\eqref{firstOrder} has a solution.

To apply the theorem, we must prove there exist $L, \eta, \sigma$ such that
$$
\sup_{(\lambda_\a,\lambda_\c) \text{ near } (\lambda_\a^{\infty},\lambda_\c^{\infty} )}\|\delta^3 J(\lambda_\a,\lambda_\c)\|
\leq L\,,\quad
\|\delta J(\lambda_\a^{\infty},\lambda_\c^{\infty})\|
\leq \eta,\,\quad
\mbox{and}\quad
\|(\delta^2 J(\lambda_\a^{\infty},\lambda_\c^{\infty}))^{-1}\|
\leq \sigma.
$$

Each of these results requires differentiability of the functional, $J$, which in turn requires differentiability of the functions $\bm{U}^\a$ and $\bm{U}^\c$.  We prove the necessary differentiability results and boundedness of the third derivative of $J$ in Section~\ref{secReg}.  The second bound above is a consistency error estimate and is proven in Section~\ref{S:consistency} while the final estimate is a stability result proven in Section~\ref{S:stability}.

\subsubsection{Regularity}\label{secReg}

We use the following version of the implicit function theorem to obtain existence and regularity results for
 $\bm{U}^\a$ and $\bm{U}^\c$. The theorem may be obtained by adapting the proof of the implicit function theorem in~\cite{hubbard2009} to Banach spaces and by tracking the constants involved.
\begin{theorem}[Implicit Function Theorem]\label{implicitFunctionTheorem}
Let $X$, $Y$, and $Z$ be Banach spaces with $U \subset X \times Y$ an open set.  Let $f:X \times Y \to Z$ be continuously differentiable with $(x_0, y_0) \in U$ satisfying $f(x_0, y_0) = 0$.  Suppose that $\delta_yf(x_0,y_0):Y \to Z$ is a bounded, invertible linear transformation with $\big\|(\delta_yf(x_0,y_0))^{-1}\big\| =: \theta$.  Also set $\phi := \|\delta_xf(x_0,y_0)\|$ and
\[
\sigma := \max \left\{1 + \theta\phi, \theta\right\}.
\]
If there exists $\eta$ such that
\begin{enumerate}
\item $B_{2\eta \sigma}((x_0,y_0)) \subset U$,
\item $ \| \delta f(x_1,y_1) - \delta f(x_2,y_2)\| \leq \frac{1}{2\eta \sigma^2}\|(x_1,y_1) - (x_2, y_2)\|$  for all  $(x_1, y_1), (x_2, y_2) \in B_{2\eta \sigma}((x_0,y_0))$,
\end{enumerate}
then there is a unique continuously differentiable function $g:B_\eta(x_0) \to B_{2\eta \sigma}(y_0)$ such that $g(x_0) = y_0$ and $f(x, g(x)) = 0$ for all $x \in B_\eta(x_0)$.  The derivative of $g$ is
\[
\delta g(x) = -\left[\delta_yf(x,g(x))^{-1}\right]\left[\delta_xf(x,g(x))\right].
\]
Moreover, if $f$ is ${\rm{C}}^k$, then $g$ is ${\rm{C}}^k$, and derivatives of $g$ can be bounded in terms of derivatives of $f$ and $\delta_yf(x_0,g(x_0))^{-1}$.
\end{theorem}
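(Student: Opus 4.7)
I would prove this quantitative implicit function theorem by the classical contraction-mapping argument, being careful to track constants so that the specific bound $\sigma = \max\{1+\theta\phi,\theta\}$ actually suffices. Write $A := \delta_y f(x_0, y_0)$, so $\|A^{-1}\| = \theta$, and introduce the auxiliary map
\begin{equation*}
\Phi_x(y) := y - A^{-1} f(x, y),
\end{equation*}
whose fixed points in $y$ are exactly the zeros of $f(x,\cdot)$. The whole proof then amounts to showing that, for each $x$ in a ball around $x_0$, $\Phi_x$ is a contraction on a closed ball around $y_0$, followed by standard verification of regularity.

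The first step is to verify self-mapping: for each $x\in B_\eta(x_0)$, $\Phi_x$ sends $\overline{B_{2\eta\sigma}(y_0)}$ into itself. Using $f(x_0,y_0)=0$ together with the first-order Taylor expansion of $f$ about $(x_0,y_0)$, one rewrites
\begin{equation*}
\Phi_x(y) - y_0 \;=\; -A^{-1} \delta_x f(x_0,y_0)[x-x_0] \;-\; A^{-1} R(x,y),
\end{equation*}
where $R$ is the first-order Taylor remainder. Hypothesis (2) yields the quadratic bound $\|R(x,y)\| \le \tfrac{1}{2}\cdot\tfrac{1}{2\eta\sigma^2}\,\|(x-x_0,y-y_0)\|^2$, and combining this with $\|A^{-1}\delta_x f(x_0,y_0)\|\le\theta\phi$ produces $\|\Phi_x(y)-y_0\|\le 2\eta\sigma$, thanks to the elementary inequality $\theta(1+\phi)+1 \le 2\sigma$ that follows directly from the definition of $\sigma$.

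The second step is the contraction estimate. For $y_1, y_2 \in \overline{B_{2\eta\sigma}(y_0)}$ write
\begin{equation*}
\Phi_x(y_1) - \Phi_x(y_2) \;=\; A^{-1} \int_0^1 \bigl( A - \delta_y f(x, y_2 + t(y_1-y_2))\bigr)(y_1-y_2)\,dt,
\end{equation*}
and estimate the integrand via the Lipschitz hypothesis, which gives $\|A - \delta_y f(x,\cdot)\| \le (2\eta\sigma^2)^{-1}\cdot 2\eta\sigma = \sigma^{-1}$, hence contraction factor $\theta/\sigma \le 1$. A small shrinking of the ball (e.g., replace $2\eta\sigma$ by $(2-\varepsilon)\eta\sigma$ and observe that the self-mapping argument still works) upgrades this to a strict contraction, so that Banach's fixed point theorem produces a unique $g(x)\in B_{2\eta\sigma}(y_0)$ with $\Phi_x(g(x))=g(x)$, equivalently $f(x,g(x))=0$, and $g(x_0)=y_0$.

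The third step establishes continuity and then $C^1$-regularity of $g$. Continuity follows by comparing the defining identities at two nearby arguments and applying the contraction estimate. The derivative formula $\delta g(x) = -[\delta_y f(x,g(x))]^{-1}[\delta_x f(x,g(x))]$ is verified by checking that the Taylor remainder of $g$ is $o(\|x-x'\|)$; invertibility of $\delta_y f(x,g(x))$ for $x$ near $x_0$ follows from a Neumann series argument, since the Lipschitz hypothesis bounds $\|\delta_y f(x,g(x))-A\|$ by $\sigma^{-1}\le \theta^{-1}$. Higher $C^k$-regularity and bounds on derivatives of $g$ in terms of derivatives of $f$ and $\delta_yf(x_0,y_0)^{-1}$ then follow by repeated differentiation of the formula for $\delta g$ combined with induction on $k$.

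The main obstacle I expect is the delicate bookkeeping in step two: the Lipschitz constant $(2\eta\sigma^2)^{-1}$ on the ball of radius $2\eta\sigma$ yields the borderline contraction constant $\theta/\sigma\le 1$ rather than a strict contraction, and recovering a genuine $<1$ factor requires either passing to a strictly smaller ball or strengthening the inclusion in hypothesis (1) to a strict containment. Everything else in the statement—especially the specific choice of $\sigma$ and the form of the derivative—drops out naturally once this contraction bookkeeping is in place.
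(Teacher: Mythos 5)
The paper offers no proof of this theorem: it is stated with only the remark that it ``may be obtained by adapting the proof of the implicit function theorem in [Hubbard--Hubbard] to Banach spaces and by tracking the constants involved,'' and your contraction-mapping argument with $\Phi_x(y) = y - A^{-1}f(x,y)$ is precisely that standard route with the constants tracked, so there is nothing in the paper to diverge from. Your bookkeeping checks out: the self-map estimate reduces to $\theta(1+\phi)\le 2\sigma$, which follows from $2\sigma \ge \theta + (1+\theta\phi)$, and the contraction factor is $\theta/\sigma\le 1$. The borderline you flag is genuine --- with the stated non-strict hypotheses the contraction constant and the Neumann-series condition for invertibility of $\delta_y f(x,g(x))$ both degenerate to exactly $1$ when $\theta=\sigma$ --- but your fix of shrinking the target ball to radius $(2-\varepsilon)\eta\sigma$ does restore a strict contraction while preserving the self-mapping property (one needs $\varepsilon < 1/\sigma$, using $\theta(1+\phi)\le 2\sigma - 1$), and uniqueness in the open ball follows since the Lipschitz bound is strict off the boundary; in the paper's only application (Theorem \ref{atRegularity}) $\eta$ is chosen with room to spare, so the borderline case never arises.
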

\begin{theorem}[Regularity of $\bm{U}^\a$]\label{atRegularity}
Under Assumptions~\ref{siteassumption} and~\ref{atCoercive}, there
exists $R_{\rm core}^*>0$ such that for all $R_{\rm core} \geq R_{\rm core}^*$, there exists a mapping
$\bm{U}^\a:\bm{\Lambda^\a} \to \bm{\calU}^\a$ such that $\bm{U}^\a(\lambda_\a)$ solves~\eqref{restrictedAtMin} and which is ${\rm C}^3$ on an open ball $V$ centered at $\lambda_\a^{\infty}$ in $\bm{\Lambda}^\a$. The radius of $V$ is independent of $R_{\rm core}$,
and the derivatives of $\bm{U}^\a$ are also bounded uniformly in $R_{\rm core} \geq R_{\rm core}^*$.
\end{theorem}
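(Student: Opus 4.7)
The plan is to apply the Implicit Function Theorem (Theorem~\ref{implicitFunctionTheorem}) to the map $F:\bm{\Lambda}^\a\times\bm{\calU}^\a_0\to(\bm{\calU}^\a_0)^*$ obtained by homogenizing the boundary data. First I would fix a bounded extension operator $E:\bm{\Lambda}^\a\to\bm{\calU}^\a$ mapping a boundary datum $\lambda_\a$ to a lattice function with $E\lambda_\a|_{\partial_\a\calL_\a}=\lambda_\a$ and $\|\nabla I E\lambda_\a\|_{L^2(\Omega_\a)}\lesssim\|\lambda_\a\|_{\bm{\Lambda}^\a}$, with the operator norm of $E$ independent of $R_{\rm core}$ (for instance, a discrete harmonic or Scott--Zhang-type extension on the fixed geometry inherited from $\Omega_0$; the relevant property is that $E$ depends only on $\Omega_0$, not on $R_{\rm core}$). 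Then I define
\[
\bigl\langle F(\lambda_\a,\bm{w}^\a),\bm{v}^\a\bigr\rangle:=\bigl\langle\delta\tilde{\calE}^\a(\bm{w}^\a+E\lambda_\a),\bm{v}^\a\bigr\rangle,\qquad \bm{v}^\a\in\bm{\calU}^\a_0,
\]
and observe that $\bm{u}^\a=\bm{w}^\a+E\lambda_\a$ solves \eqref{restrictedAtEL} if and only if $F(\lambda_\a,\bm{w}^\a)=0$. Setting $\bm{w}^\a_0:=\bm{u}^\infty_\a-E\lambda_\a^\infty\in\bm{\calU}^\a_0$, the identity in~\eqref{atLiftingStable} gives $F(\lambda_\a^\infty,\bm{w}^\a_0)=0$.

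Next I would verify the hypotheses of Theorem~\ref{implicitFunctionTheorem} with uniformity in $R_{\rm core}$. The partial derivative $\delta_{\bm{w}^\a}F(\lambda_\a^\infty,\bm{w}^\a_0)=\delta^2\tilde{\calE}^\a(\bm{u}^\infty_\a)$ is invertible by the stability bound~\eqref{atLiftingStable}, with $\|\delta_{\bm{w}^\a}F(\lambda_\a^\infty,\bm{w}^\a_0)^{-1}\|\le\gamma_\a^{-1}$ independently of $R_{\rm core}$. The derivative $\delta_{\lambda_\a}F(\lambda_\a^\infty,\bm{w}^\a_0)$ factors as $\delta^2\tilde{\calE}^\a(\bm{u}^\infty_\a)\circ E$, so Theorem~\ref{atLippy} combined with the $R_{\rm core}$-independent bound on $E$ yields a uniform bound $\phi$ on its norm. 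Setting $\theta:=\gamma_\a^{-1}$ and $\sigma:=\max\{1+\theta\phi,\theta\}$ produces a uniform $\sigma$. The Lipschitz constant of $\delta F$ on a neighborhood is controlled by the Lipschitz constant of $\delta^2\tilde{\calE}^\a$ on $\bm{\calU}^\a$ (again via Theorem~\ref{atLippy}) and by $\|E\|$; both are uniform in $R_{\rm core}$, so one may pick a radius $\eta>0$, independent of $R_{\rm core}$, so that condition~(2) of Theorem~\ref{implicitFunctionTheorem} is satisfied on $B_{2\eta\sigma}((\lambda_\a^\infty,\bm{w}^\a_0))$.

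Theorem~\ref{implicitFunctionTheorem} then yields a continuously differentiable map $g:B_\eta(\lambda_\a^\infty)\to\bm{\calU}^\a_0$ with $F(\lambda_\a,g(\lambda_\a))=0$ and $g(\lambda_\a^\infty)=\bm{w}^\a_0$. Defining
\[
\bm{U}^\a(\lambda_\a):=g(\lambda_\a)+E\lambda_\a,
\]
gives the desired solution map on the ball $V:=B_\eta(\lambda_\a^\infty)$. The $C^3$ regularity of $\bm{U}^\a$ follows because Assumption~\ref{siteassumption} makes $\tilde{\calE}^\a$ four times Fr\'echet differentiable (Theorem~\ref{atLippy}), hence $F$ is $C^3$ in both arguments, and the closing assertion of Theorem~\ref{implicitFunctionTheorem} transfers $C^3$ regularity to $g$; the chain-rule formulas produced there express each derivative of $g$ as a polynomial expression in $\delta^k F$ and $\delta_{\bm{w}^\a}F^{-1}$, so the uniform bounds above give $R_{\rm core}$-independent bounds on the derivatives of $\bm{U}^\a$ up to order three.

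The main obstacle I anticipate is the uniform-in-$R_{\rm core}$ control: one must exhibit an extension operator $E$, and equivalently a norm on $\bm{\Lambda}^\a$, whose stability constants depend only on the fixed reference shape $\Omega_0$ (and on $\psi_\a$, $r_{\rm cut}$, etc.\ listed in the Vinogradov-constant convention), not on the scaling parameter $R_{\rm core}$. This is the only place where the nontrivial geometry of the atomistic subdomain enters, and it is what makes $V$, and the bounds on $\delta^k\bm{U}^\a$, truly independent of $R_{\rm core}$; everything else is a bookkeeping exercise using Theorem~\ref{atLippy} and Assumption~\ref{atCoercive}.
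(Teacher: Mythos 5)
Your overall strategy coincides with the paper's: homogenize the boundary datum by an affine lifting, apply the quantitative implicit function theorem (Theorem~\ref{implicitFunctionTheorem}) to $f=\delta\tilde{\calE}^\a$ composed with that lifting, use Assumption~\ref{atCoercive} via~\eqref{atLiftingStable} to invert $\delta_y f$ with $\theta=\gamma_\a^{-1}$, and use Theorem~\ref{atLippy} for the Lipschitz and higher-derivative bounds. The one place where your argument is not closed is exactly the point you flag yourself: you need an extension operator $E:\bm{\Lambda}^\a\to\bm{\calU}^\a$ with $\|\nabla I E\lambda_\a\|_{L^2(\Omega_\a)}\lesssim\|\lambda_\a\|_{\bm{\Lambda}^\a}$ uniformly in $R_{\rm core}$. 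For a generic (Stein/Scott--Zhang-type) extension this is a genuine claim requiring proof, because the norm $\|\mu_\a\|_{\bm{\Lambda}^\a}=\|\nabla I\delta\bm{U}^\a(\lambda_\a^\infty)[\mu_\a]\|_{L^2(\Omega_\a)}$ is (up to the constants $\gamma_\a$, $M_2$) the \emph{minimal}-energy extension norm: coercivity and boundedness of $\delta^2\tilde{\calE}^\a(\bm{u}^\infty_\a)$ give you $\|\mu_\a\|_{\bm{\Lambda}^\a}\lesssim\|\nabla I E\mu_\a\|_{L^2(\Omega_\a)}$ for free, but the reverse inequality is a quasi-optimality statement about your particular $E$ and would itself need an $R_{\rm core}$-uniform argument of the kind carried out in Section~\ref{sec:normEquiv}.

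The paper removes this obstacle by choosing the lifting to be the linearized solution operator itself: $h(\lambda_\a,\bm{v}^\a)=\bm{v}^\a+\bm{u}^\infty_\a+\delta\bm{U}^\a(\lambda_\a^\infty)[\lambda_\a-\lambda_\a^\infty]$, i.e.\ $E=\delta\bm{U}^\a(\lambda_\a^\infty)[\cdot]$ defined by~\eqref{EL:At}. With this choice the bound you need is an identity ($E$ is an isometry from $(\bm{\Lambda}^\a,\|\cdot\|_{\bm{\Lambda}^\a})$ into $\bm{\calU}^\a$ by the very definition of the $\bm{\Lambda}^\a$ norm), and you get a further simplification you did not anticipate: since $\delta^2\tilde{\calE}^\a(\bm{u}^\infty_\a)\,\delta\bm{U}^\a(\lambda_\a^\infty)[\mu_\a]$ vanishes against all test functions in $\bm{\calU}^\a_0$ by~\eqref{EL:At}, the cross-derivative $\delta_{\lambda_\a}f(x_0,y_0)$ is identically zero, so $\phi=0$ and $\sigma=\max\{1,\gamma_\a^{-1}\}$ with no extension constant entering at all. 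If you replace your generic $E$ by this operator, the rest of your write-up goes through verbatim and reproduces the paper's proof.
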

\begin{proof}
We apply Theorem \ref{implicitFunctionTheorem} with $X=\bm{\Lambda}^\a$,
$Y=\bm{\calU}^\a_0$, $Z=\left(\bm{\calU}^\a_0\right)^*$, $U = X \times Y$, and
\[
f\left(\lambda_\a, \bm{v}^\a\right) := \delta \tilde{\calE}^\a\left(h\left(\lambda_\a, \bm{v}^\a\right)\right),
\]
where  $h$ is an auxiliary function $X \times Y \to \bm{\calU}^\a$  defined by (recall $\delta \bm{U}^\a(\lambda_\a^{\infty})[\mu^\a]$ is defined to solve \eqref{EL:At})
\[
h\left(\lambda_\a, \bm{v}^\a\right) = \bm{v}^\a + \bm{u}^{\infty}_\a + \delta \bm{U}^\a(\lambda^{\infty}_\a)\left[\lambda_\a - \lambda_\a^{\infty}\right].
\]
Because $h$ is affine, $f$ is ${\rm C}^k$ provided that $\tilde{\calE}^\a$ is ${\rm C}^{k+1}$ on $\bm{\calU}^\a$.  Hence, Theorem~\ref{atLippy} implies $f$ is ${\rm C}^3$.  For the point $(x_0, y_0)$, we take the point $(\lambda_\a^{\infty}, \bm{0})$ so that $h\left(x_0, y_0\right) = \bm{u}^{\infty}_\a$.  The chain rule shows
\[
\delta_yf(x_0,y_0) =~ \delta^2\tilde{\calE}^\a\left(h\left(x_0, y_0\right)\right) \circ \delta_y h\left(x_0, y_0\right).
\]
In conjunction with $\delta_y h\left(x_0, y_0\right)[\bm{v}^\a] = \bm{v}^\a$, it follows that
$\delta_yf(x_0,y_0):Y \to Z$ is given by
\[
\langle \delta_yf(x_0,y_0)\bm{v}^\a, \bm{w}^\a\rangle  =~ \langle \delta^2\tilde{\calE}^\a\left(\bm{u}^{\infty}_\a\right)\bm{v}^\a, \bm{w}^\a\rangle .
\]

Since both $\bm{v}^\a$ and $\bm{w}^\a$ are elements of $\bm{\calU}^\a_0$ they can be extended by a constant to all of $\mathbb{Z}^d$ while keeping the gradient norms of $I\bm{v}^\a$ and $I\bm{w}^\a$ the same.
Then using Assumption~\ref{atCoercive}, we find
$$
\langle \delta_yf(x_0,y_0)\bm{v}^\a, \bm{v}^\a\rangle  =~ \langle \delta^2\tilde{\calE}^\a\left(\bm{u}^{\infty}_\a\right)\bm{v}^\a, \bm{v}^\a\rangle  =~ \langle \delta^2\calE^\a\left(\bm{u}^{\infty}\right)\bm{v}^\a, \bm{v}^\a\rangle
\geq~ \gamma_\a \| \nabla I \bm{v}^\a\|_{L^2(\mathbb{R}^d)}^2 = \gamma_\a \| \nabla I \bm{v}^\a\|_{L^2(\Omega_\a)}^2.
$$
This shows $\delta_yf(x_0,y_0)$ is coercive, and consequently that $\delta_yf(x_0,y_0)^{-1}$ exists with norm bounded  by $\theta := \gamma_\a^{-1}$.
Using again the chain rule, we obtain
\[
\delta_x f(x_0,y_0) = \delta^2\tilde{\calE}^\a\left(h\left(x_0, y_0\right)\right) \circ \delta_x h\left(x_0, y_0\right) = 0
\]
so that $\phi = \|\delta_xf(x_0,y_0)\| = 0$.

Next, observe that $h$ is Lipschitz on its entire domain with Lipschitz constant 1, and $\delta^2\tilde{\calE^\a}$ is Lipschitz with some Lipschitz constant $M$, as guaranteed by Theorem~\ref{atLippy}. As a result, $\delta f$ is Lipschitz with Lipschitz constant $M$. Now we may choose $\eta$ small enough so that $\frac{1}{2\eta\sigma^2} \leq M$, which means both conditions $(1)$ and $(2)$ in the statement of implicit function theorem are fulfilled. This allows us to deduce the existence of an implicit function $g:B_\eta(\lambda^{\infty}_\a) \to B_{2\eta\sigma}(\bm{0})$, which we use to define a mapping $\bm{U}^\a$ via
\[
\bm{U}^\a(\lambda_\a) = h\left(\lambda_\a, g(\lambda_\a)\right) = g(\lambda_\a) + \bm{u}^{\infty}_\a + \delta \bm{U}^\a(\lambda_\a^{\infty})\left[\lambda_\a - \lambda^{\infty}\right].
\]
Since $f$ is $\rm{C}^3$, the implicit function theorem ensures $g$ is also $\rm{C}^3$.  Thus $\bm{U}^\a$ is $\rm{C}^3$.  The radius of $V$ is $\eta$, which is clearly independent of $R_{\rm core}$, and the uniform bounds on the derivatives of $\bm{U}^\a$ follow by noting derivatives of $f$ correspond to derivatives of the restricted atomistic energy (which is uniformly bounded by Theorem~\ref{atLippy}) and using the final remark in the statement of the implicit function theorem.
\end{proof}

\begin{remark}
We note that the Gateaux derivative, $\delta \bm{U}^\a(\lambda_\a)[\mu_\a]$,  of $\bm{U}^\a$ at $\lambda_\a$ in the direction of $\mu_\a$ solves the problem
\begin{equation*}\label{EL:lin1}
\begin{split}
\langle \delta^2 \tilde{\calE}^\a(\bm{U}^\a(\lambda_\a))\delta \bm{U}^\a(\lambda_\a)[\mu_\a], \bm{v}^\a\rangle  =~& 0 \quad \forall \bm{v}^\a \in \bm{\calU}^\a_0, \\
\delta \bm{U}^\a(\lambda_\a)[\mu_\a] =~& \mu_\a \quad \mbox{on} \quad \partial_\a\calL_\a,
\end{split}
\end{equation*}
thus justifying our usage of notation in the proof.
\end{remark}

With only minor modifications, the proof of Theorem \ref{atRegularity} can be adapted to establish the regularity of $\bm{U}^\c$.
\begin{theorem}[Regularity of $\bm{U}^\c$]\label{contRegularity}
There exists $R_{\rm core}^*>0$ such that for all $R_{\rm core} \geq R_{\rm core}^*$, there exists a mapping
$\bm{U}^\c:\bm{\Lambda^\c} \to \bm{\calU}^\c$ such that $\bm{U}^\c(\lambda_\c)$ solves~\eqref{restrictedContMin} and which is ${\rm C}^3$ on an open ball $V$ centered at $\lambda_\c^{\infty}$ in $\bm{\Lambda}^\c$.  The derivatives of $\bm{U}^\c$ are bounded uniformly in $R_{\rm core}$, and the radius of $V$ is independent of $R_{\rm core}$.
\end{theorem}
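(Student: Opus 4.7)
The strategy is to adapt the proof of Theorem~\ref{atRegularity} to the continuum setting by applying Theorem~\ref{implicitFunctionTheorem} to a map built from $\delta \tilde{\calE}^\c$, linearizing now about the restricted continuum solution $\bm{u}^{\rm con}$ whose existence and stability were established in Theorem~\ref{contModelError}. Concretely, I would take $X = \bm{\Lambda}^\c$, $Y = \bm{\calU}^\c_{h,0}$, $Z = (\bm{\calU}^\c_{h,0})^*$, $U = X \times Y$, and define
\[
f(\lambda_\c, \bm{v}^\c) := \delta \tilde{\calE}^\c\bigl(h(\lambda_\c, \bm{v}^\c)\bigr), \qquad h(\lambda_\c, \bm{v}^\c) := \bm{v}^\c + \bm{u}^{\rm con} + \delta \bm{U}^\c(\lambda_\c^{\infty})[\lambda_\c - \lambda_\c^{\infty}].
\]
The auxiliary map $h$ is affine, satisfies $h(\lambda_\c^{\infty}, \bm{0}) = \bm{u}^{\rm con}$, and respects the Dirichlet trace on $\Gamma_{\rm core}$ because $\bm{v}^\c$ is constant there while $\delta \bm{U}^\c(\lambda_\c^{\infty})$ carries the boundary value $\lambda_\c - \lambda_\c^{\infty}$ onto $\Gamma_{\rm core}$; consequently any zero of $f(\lambda_\c, \cdot)$ yields a solution of the restricted continuum problem \eqref{restrictedContEL}.

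Next I would verify the hypotheses of Theorem~\ref{implicitFunctionTheorem} at the base point $(x_0, y_0) = (\lambda_\c^{\infty}, \bm{0})$. Lemma~\ref{LipCont} gives $\tilde{\calE}^\c \in \rm{C}^4$ with derivatives bounded uniformly in $R_\c$, whence $f \in \rm{C}^3$ and $\delta^2 \tilde{\calE}^\c$ is globally Lipschitz with some constant $M$ independent of $R_\c$. The chain rule gives
\[
\langle \delta_y f(x_0, y_0)\, \bm{v}^\c, \bm{w}^\c\rangle = \langle \delta^2 \tilde{\calE}^\c(\bm{u}^{\rm con})\bm{v}^\c, \bm{w}^\c\rangle,
\]
and the stability estimate \eqref{contStability} supplies coercivity with constant $\gamma_\c'$ provided $R_{\rm core} \geq R_{\rm core}^*$. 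Hence $\delta_y f(x_0, y_0)^{-1}$ exists with norm bounded by $\theta := (\gamma_\c')^{-1}$ uniformly in $R_{\rm core}$. The remaining partial derivative vanishes: since $\delta \bm{U}^\c(\lambda_\c^{\infty})[\mu_\c]$ is defined so that
\[
\langle \delta^2 \tilde{\calE}^\c(\bm{u}^{\rm con})\, \delta \bm{U}^\c(\lambda_\c^{\infty})[\mu_\c], \bm{w}^\c\rangle = 0 \quad \forall \bm{w}^\c \in \bm{\calU}^\c_{h,0},
\]
we have $\delta_x f(x_0, y_0) = \delta^2\tilde{\calE}^\c(\bm{u}^{\rm con}) \circ \delta \bm{U}^\c(\lambda_\c^{\infty}) = 0$ as an element of $\calL(X, Z)$, so $\phi = 0$ and $\sigma = \max\{1, \theta\}$.

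Finally, since $h$ is affine with unit Lipschitz constant, $\delta f$ is Lipschitz with constant $M$ on all of $X \times Y$, so choosing $\eta$ with $\eta \leq \frac{1}{2M\sigma^2}$ satisfies both conditions (1) and (2) in Theorem~\ref{implicitFunctionTheorem}; this $\eta$ depends only on $\gamma_\c'$ and $M$, hence is independent of $R_{\rm core}$. The theorem then supplies a $\rm{C}^3$ implicit map $g : B_\eta(\lambda_\c^{\infty}) \to Y$, and setting $\bm{U}^\c(\lambda_\c) := h(\lambda_\c, g(\lambda_\c))$ yields the desired mapping, with the uniform bounds on derivatives of $\bm{U}^\c$ following from those on $f$ and $\delta_y f(x_0, y_0)^{-1}$ via the final clause of Theorem~\ref{implicitFunctionTheorem}. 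The only genuinely nonroutine ingredient, and the main obstacle, is the uniform-in-$R_{\rm core}$ coercivity of $\delta^2 \tilde{\calE}^\c(\bm{u}^{\rm con})$; this is supplied by \eqref{contStability}, which was established precisely so that this reduction to the implicit function theorem would go through with $R_{\rm core}$-independent constants.
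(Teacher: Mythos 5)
Your proposal is correct and is exactly the argument the paper intends: the paper omits the proof of Theorem~\ref{contRegularity}, stating only that it follows from the proof of Theorem~\ref{atRegularity} with minor modifications, and your write-up carries out precisely that adaptation (same implicit-function-theorem setup, with $\bm{u}^{\rm con}$ in place of $\bm{u}^{\infty}_\a$ and the coercivity~\eqref{contStability} from Theorem~\ref{contModelError} in place of Assumption~\ref{atCoercive}).
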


The proof of Theorem~\ref{mainTheorem} relies on a stability result that enables the application of the inverse function theorem. This stability result requires the following auxiliary lemma.
\begin{lemma}\label{lem:aux}
There exists $R_{\rm core}^*$ such that for all $R_{\rm core} \geq R_{\rm core}^*$ and all $\mu_\a, \nu_\a \in \bm{\Lambda}^\a$ and all $\mu_\c, \nu_\c \in \bm{\Lambda}^\c$,
\begin{equation}\label{stabConj2}
\|\nabla \left(I\delta^2 \bm{U}^\a(\lambda^{\infty}_\a)[\mu_\a,\nu_\a] - \delta^2 \bm{U}^\c(\lambda^{\infty}_\c)[\mu_\c,\nu_\c]\right)\|_{L^2(\Omega_\o)}
	 \lesssim~ \|(\mu_\a, \mu_\c)\|_{\rm op} \cdot \|(\nu_\a, \nu_\c)\|_{\rm op}.
\end{equation}
\end{lemma}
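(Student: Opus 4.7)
The plan is to bound the two terms inside the $L^2(\Omega_\o)$ norm separately, on the larger domains $\Omega_\a$ and $\Omega_\c$ respectively, using the triangle inequality. The key observation driving the argument is that if one differentiates the restricted Euler--Lagrange equations twice, then $\delta^2 \bm{U}^\a$ and $\delta^2 \bm{U}^\c$ satisfy linear variational problems with \emph{homogeneous} Dirichlet data (since the original controls appear linearly as boundary data of the states), so they live in the test spaces $\bm{\calU}^\a_0$ and $\bm{\calU}^\c_{h,0}$ and can be used as their own test functions.

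Concretely, for the atomistic part, I would differentiate $\langle \delta\tilde{\calE}^\a(\bm{U}^\a(\lambda_\a)), \bm{v}^\a\rangle = 0$ twice in the directions $\mu_\a$ and $\nu_\a$ and evaluate at $\lambda_\a^{\infty}$, obtaining
\begin{equation*}
\langle \delta^2\tilde{\calE}^\a(\bm{u}^{\infty}_\a)\,\delta^2\bm{U}^\a(\lambda_\a^{\infty})[\mu_\a,\nu_\a], \bm{v}^\a\rangle
= -\langle \delta^3\tilde{\calE}^\a(\bm{u}^{\infty}_\a)[\delta\bm{U}^\a(\lambda_\a^{\infty})[\mu_\a], \delta\bm{U}^\a(\lambda_\a^{\infty})[\nu_\a]], \bm{v}^\a\rangle,
\end{equation*}
for all $\bm{v}^\a \in \bm{\calU}^\a_0$. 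Because $\bm{U}^\a(\lambda_\a) = \lambda_\a$ on $\partial_\a\calL_\a$ is affine in $\lambda_\a$, the second Gâteaux derivative vanishes on $\partial_\a\calL_\a$, hence $\delta^2\bm{U}^\a(\lambda_\a^{\infty})[\mu_\a,\nu_\a] \in \bm{\calU}^\a_0$. Choosing it as the test function and invoking the stability estimate~\eqref{atLiftingStable} on the left and the uniform trilinear bound for $\delta^3\tilde{\calE}^\a$ on the right (a direct consequence of Theorem~\ref{atLippy} together with~\eqref{equivNorm}) gives
\begin{equation*}
\|\nabla I \delta^2\bm{U}^\a(\lambda_\a^{\infty})[\mu_\a,\nu_\a]\|_{L^2(\Omega_\a)} \lesssim \|\mu_\a\|_{\bm{\Lambda}^\a}\,\|\nu_\a\|_{\bm{\Lambda}^\a},
\end{equation*}
after dividing through. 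The continuum counterpart is proved identically, using Lemma~\ref{LipCont} and the coercivity~\eqref{contStability} at $\bm{u}^{\rm con}$, to yield
\begin{equation*}
\|\nabla \delta^2\bm{U}^\c(\lambda_\c^{\infty})[\mu_\c,\nu_\c]\|_{L^2(\Omega_\c)} \lesssim \|\mu_\c\|_{\bm{\Lambda}^\c}\,\|\nu_\c\|_{\bm{\Lambda}^\c}.
\end{equation*}

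To finish, I would apply the triangle inequality on $\Omega_\o \subset \Omega_\a \cap \Omega_\c$, combine the two displayed bounds, recognize the sum of squares as $\|(\mu_\a,\mu_\c)\|_{\err}^2$ times $\|(\nu_\a,\nu_\c)\|_{\err}^2$ via Cauchy--Schwarz on $\bbR^2$, and finally convert to the $\|\cdot\|_{\op}$ norm using Theorem~\ref{normEquivTheorem} (valid for $R_{\rm core}$ large enough). The only nontrivial point, and the one I would take care with, is verifying that $\delta^3\tilde{\calE}^\a$ admits a trilinear bound in the $\|\nabla I\cdot\|_{L^2(\Omega_\a)}$ norm with constant independent of $R_{\rm core}$; this requires unpacking the site-energy representation and applying the $\ell^2$-to-$L^2$ norm equivalence~\eqref{equivNorm} in each slot, which is the same mechanism already used in the proof of Theorem~\ref{atLippy}.
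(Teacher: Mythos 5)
Your proof is correct, and its skeleton matches the paper's: triangle inequality to split the $L^2(\Omega_\o)$ norm over $\Omega_\a$ and $\Omega_\c$, bilinear bounds $\|\nabla I\delta^2\bm{U}^\a(\lambda_\a^{\infty})[\mu_\a,\nu_\a]\|_{L^2(\Omega_\a)}\lesssim\|\mu_\a\|_{\bm{\Lambda}^\a}\|\nu_\a\|_{\bm{\Lambda}^\a}$ (and its continuum twin), and finally Theorem~\ref{normEquivTheorem} to pass from $\|\cdot\|_{\err}$ to $\|\cdot\|_{\op}$. Where you diverge is in how the bilinear bounds are justified: the paper simply cites Theorems~\ref{atRegularity} and~\ref{contRegularity}, whose uniform bounds on $\delta^2\bm{U}^\a$ and $\delta^2\bm{U}^\c$ come packaged with the implicit function theorem machinery, whereas you re-derive them directly by differentiating the Euler--Lagrange constraints twice, observing that the affine dependence of the boundary data on the controls forces $\delta^2\bm{U}^\a(\lambda_\a^{\infty})[\mu_\a,\nu_\a]\in\bm{\calU}^\a_0$ and $\delta^2\bm{U}^\c(\lambda_\c^{\infty})[\mu_\c,\nu_\c]\in\bm{\calU}^\c_{h,0}$, testing with these functions, and invoking coercivity (\eqref{atLiftingStable}, \eqref{contStability}) against the trilinear bound on $\delta^3\tilde{\calE}$. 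Your route is more self-contained and makes the mechanism transparent (in particular it makes explicit that the constant is controlled by $\gamma_\a$, $\gamma_\c'$ and $M_3$), at the cost of having to verify the trilinear bounds: on the atomistic side this follows from $\ell^2\hookrightarrow\ell^\infty$ together with \eqref{equivNorm}, and on the continuum side one needs the inverse estimate $\|\nabla v^\c\|_{L^\infty(T)}\lesssim\|\nabla v^\c\|_{L^2(T)}$, which is uniform here only because the continuum elements are no smaller than the atomistic spacing --- a point worth stating, though it is exactly what Lemma~\ref{LipCont} asserts. The paper's citation-based route is shorter but leans on the (unproven in the text) uniformity claims of the regularity theorems. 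Both are valid; your Cauchy--Schwarz-on-$\bbR^2$ assembly at the end is also marginally cleaner than the paper's factorization $\left(\|\mu_\a\|_{\bm{\Lambda}^\a}+\|\mu_\c\|_{\bm{\Lambda}^\c}\right)\left(\|\nu_\a\|_{\bm{\Lambda}^\a}+\|\nu_\c\|_{\bm{\Lambda}^\c}\right)$.
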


\begin{proof}
The triangle inequality implies
\begin{equation}\label{eq:aux00}
\begin{split}
&\|\nabla \left(I\delta^2 \bm{U}^\a(\lambda_\a^{\infty})[\mu_\a,\nu_\a] - \delta^2 \bm{U}^\c(\lambda_\c^{\infty})[\mu_\c,\nu_\c]\right)\|_{L^2(\Omega_\o)} \\
&\qquad\qquad\qquad\qquad\qquad\quad
\leq~ \|\nabla I\delta^2 \bm{U}^\a(\lambda_\a^{\infty})[\mu_\a,\nu_\a]\|_{L^2(\Omega_\a)} + \|\nabla \delta^2 \bm{U}^\c(\lambda_\c^{\infty})[\mu_\c,\nu_\c]\|_{L^2(\Omega_\c)}.
\end{split}
\end{equation}
We then utilize Theorems~\ref{atRegularity} and~\ref{contRegularity} to obtain an upper bound on Hessian of the atomistic mapping:
\begin{align}\label{eq:aux0_a}
\| \delta^2 \bm{U}^\a(\lambda_\a^{\infty})[\mu_\a, \nu_\a]\|_{\bm{\calU}^\a} \lesssim~& \|\mu_\a\|_{\bm{\Lambda}^\a} \cdot \|\nu_\a\|_{\bm{\Lambda}^\a},
\end{align}
and a similar bound for the Hessian of the continuum mapping:
\begin{align}\label{eq:aux0_c}
\| \delta^2 \bm{U}^\c(\lambda_\c^{\infty})[\mu_\c, \nu_\c]\|_{\bm{\calU}^\c} \lesssim~& \|\mu_\c\|_{\bm{\Lambda}^\c} \cdot \|\nu_\c\|_{\bm{\Lambda}^\c}.
\end{align}
Inequalities~\eqref{eq:aux0_a}--\eqref{eq:aux0_c} may in turn be used to bound the right hand side of~\eqref{eq:aux00} and further applying the norm equivalence theorem, Theorem~\ref{normEquivTheorem}, yeilds
\begin{equation*}\label{eq:aux1}
\begin{split}
\|\nabla \left(I\delta^2 \bm{U}^\a(\lambda_\a^{\infty})[\mu_\a,\nu_\a] - \delta^2 \bm{U}^\c(\lambda_\c^{\infty})[\mu_\c,\nu_\c]\right)\|
&\quad\lesssim~\|\mu_\a\|_{\bm{\Lambda}^\a} \cdot \|\nu_\a\|_{\bm{\Lambda}^\a}  + \|\mu_\c\|_{\bm{\Lambda}^\c} \cdot \|\nu_\c\|_{\bm{\Lambda}^\c} \\
&\quad\leq~ \left(\|\mu_\a\|_{\bm{\Lambda}^\a} + \|\mu_\c\|_{\bm{\Lambda}^\c} \right) \left(\|\nu_\a\|_{\bm{\Lambda}^\a} + \|\nu_\c\|_{\bm{\Lambda}^\c}\right) \\
&\quad\lesssim~ \|(\mu_\a, \mu_\c)\|_{\rm op} \cdot \|(\mu_\a, \mu_\c)\|_{\rm op}.
\end{split}
\end{equation*}
\end{proof}

We proceed to establish regularity of the reduced space functional $J$.

\begin{theorem}[Regularity of $J$]\label{regJ}
Let $V^\a$ and $V^\c$ be the neighborhoods of $\lambda_\a^{\infty}$ and $\lambda_\c^{\infty}$ in $\bm{\Lambda}_\a$ and $\bm{\Lambda}_\c$ on which $\bm{U}^\a$ and $\bm{U}^\c$ are ${\rm C}^3$.  Then $J$ is ${\rm C}^3$ on $V^\a \times V^\c$ and its $\ell^{\rm th}$ derivatives can be bounded by derivatives of $\bm{U}^\a$ and $\bm{U}^\c$ of order at most $\ell$.
\end{theorem}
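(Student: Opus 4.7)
The plan is to write $J$ as a composition of the affine-in-value, $C^3$ map $F : V^\a \times V^\c \to \dot W^{1,2}(\Omega_\o)$ defined by
\[
F(\lambda_\a, \lambda_\c) := I\bm{U}^\a(\lambda_\a) - \bm{U}^\c(\lambda_\c),
\]
and the smooth quadratic functional $g : \dot W^{1,2}(\Omega_\o) \to \mathbb{R}$ defined by $g(w) := \tfrac{1}{2}\|\nabla w\|_{L^2(\Omega_\o)}^2$, so that $J = g \circ F$. First I would observe that $I$ is a bounded linear operator from $\bm{\calU}^\a$ to $\dot W^{1,2}$ restricted to $\Omega_\o$, and restriction from $\Omega_\c$ to $\Omega_\o$ is likewise bounded linear. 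Combined with Theorem~\ref{atRegularity} and Theorem~\ref{contRegularity}, this gives that $F$ is $C^3$ on $V^\a\times V^\c$ and that its $\ell$-th Fréchet derivative is given by
\[
\delta^\ell F(\lambda_\a,\lambda_\c)[(\mu_\a^{(1)},\mu_\c^{(1)}),\ldots,(\mu_\a^{(\ell)},\mu_\c^{(\ell)})]
= I\delta^\ell \bm{U}^\a(\lambda_\a)[\mu_\a^{(1)},\ldots,\mu_\a^{(\ell)}] - \delta^\ell \bm{U}^\c(\lambda_\c)[\mu_\c^{(1)},\ldots,\mu_\c^{(\ell)}],
\]
with operator norm controlled by the corresponding derivatives of $\bm{U}^\a$ and $\bm{U}^\c$.

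Next I would note that $g$ is a continuous quadratic form, hence $C^\infty$, with $\delta g(w)[v] = (\nabla w,\nabla v)_{L^2(\Omega_\o)}$, $\delta^2 g(w)[v_1,v_2] = (\nabla v_1,\nabla v_2)_{L^2(\Omega_\o)}$, and $\delta^k g \equiv 0$ for $k\geq 3$. All of these multilinear forms are bounded by products of $H^1$ (semi-)norms of their arguments, with constants independent of $w$.

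Applying the chain rule (Faà di Bruno in Banach spaces) to $J = g\circ F$ then yields that $J$ is $C^3$ on $V^\a\times V^\c$, and each $\delta^\ell J$ with $\ell \in \{1,2,3\}$ is a finite sum of terms of the form $\delta^k g(F)[\delta^{j_1}F(\cdot),\ldots,\delta^{j_k}F(\cdot)]$ with $j_1+\cdots+j_k = \ell$ and $k\le \ell$. Because $\delta^k g$ vanishes for $k\geq 3$, only $k=1,2$ contribute, and using the explicit formula for $\delta g$ and $\delta^2 g$ together with the identification of $\delta^jF$ above, every term is estimated by a product of $H^1$-norms of $I\delta^{j}\bm{U}^\a$ and $\delta^{j}\bm{U}^\c$ with $j\leq\ell$. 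This delivers the asserted bound of the $\ell$-th derivative of $J$ by derivatives of $\bm{U}^\a$ and $\bm{U}^\c$ of order at most $\ell$.

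Since the argument is really a bookkeeping exercise in the chain rule once the two factors $F$ and $g$ are identified, there is no substantive obstacle; the only point requiring care is to confirm that $I$ and restriction to $\Omega_\o$ are continuous in the norms used so that composition with the $C^3$ maps $\bm{U}^\a,\bm{U}^\c$ preserves $C^3$ regularity with explicit operator-norm bounds. This is routine from the definitions of $\bm{\calU}^\a$, $\bm{\calU}^\c_h$ and the fact that $\Omega_\o\subset\Omega_\a\cap\Omega_\c$.
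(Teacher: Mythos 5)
Your proposal is correct and takes essentially the same route as the paper: both arguments write $J$ as the composition of the smooth quadratic form $w \mapsto \tfrac12\|\nabla w\|_{L^2(\Omega_\o)}^2$ with the ${\rm C}^3$ maps $I\bm{U}^\a$ and $\bm{U}^\c$ furnished by Theorems~\ref{atRegularity} and~\ref{contRegularity}, using linearity of $I$ and of restriction to $\Omega_\o$. You simply make explicit the chain-rule bookkeeping (and the vanishing of $\delta^k g$ for $k\ge 3$) that the paper leaves implicit.
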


\begin{proof}
Theorems \ref{atRegularity}--\ref{contRegularity} guarantee that $\bm{U}^\a$ and $\bm{U}^\c$ are ${\rm C}^3$ on $V^\a$ and $V^\c$.  Moreover, the interpolant, $I$, is a linear operator so $\lambda^\a \mapsto I \bm{U}^\a(\lambda^\a)$ will also be ${\rm C}^3$ on $V^\a$.
The assertion of the theorem then follows from the fact that $J = \|\nabla I\bm{U}^\a(\lambda_\a) - \nabla \bm{U}^\c(\lambda_\c)\|^2_{L^2(\Omega_\o)}$ is a composition of a quadratic form and the ${\rm C}^3$ functions $ I \bm{U}^\a(\lambda^\a)$ and $ \bm{U}^\c(\lambda^\c)$.
\end{proof}

%%%%%%
\subsubsection{Consistency}\label{S:consistency}
%%%%%%
The consistency error measures the extent to which $\bm{u}^{\infty}$ fails to satisfy the approximate problem, which in this case is the reduced space formulation (\ref{eq:inf}). Thus, we seek an upper bound for

\begin{equation}\label{consistency}
\|\delta J(\lambda_\a^{\infty}, \lambda_\c^{\infty})\|_{\rm op} \\
~= \sup_{\|\left(\mu_\a, \mu_\c\right)\|_{\rm op} = 1}\left|\left(\nabla \left(I\bm{U}^\a(\lambda^{\infty}_\a) - \bm{U}^\c(\lambda^{\infty}_\c)\right), \nabla \left(I\delta \bm{U}^\a(\lambda^{\infty}_\a)[\mu_\a] - \delta \bm{U}^\c(\lambda^{\infty}_\c)[\mu_\c]\right)\right)_{L^2(\Omega_\o)}\right|.
\end{equation}
\begin{theorem}[Consistency Error]\label{consProp}
There exists $R_{\rm core}^* > 0$ such that for all $R_{\rm core} \geq R_{\rm core}^*$, we have
\begin{equation}\label{cons2}
\begin{split}
\|\delta J(\lambda^{\infty}_\a, \lambda^{\infty}_\c)\|_{\rm op} \lesssim~& R_{\rm core}^{-d/2-1} + R_{\c}^{-d/2}.
%\|h \nabla^2 \tilde{I}\bm{u}^{\infty}\|_{L^2(\Omega_\c)} + \|\nabla\tilde{I}\bm{u}^{\infty}\|_{L^2(\mathbb{R}^d\backslash B_{3r_\c/4}(0))} \\
%&\qquad+  \|\nabla^3 \tilde{I}\bm{u}^{\infty}\|_{L^2(\Omega_\c)} + \|\nabla^2 \tilde{I}\bm{u}^{\infty}\|_{L^4(\Omega_\c)}.
\end{split}
\end{equation}
\end{theorem}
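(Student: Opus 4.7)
The plan is to observe that the consistency residual is essentially an $L^2$ inner product on $\Omega_\o$ between (a) the mismatch of the reference ``state'' $I\bm{U}^\a(\lambda_\a^\infty)-\bm{U}^\c(\lambda_\c^\infty)$ and (b) the ``test variation'' $I\delta\bm{U}^\a(\lambda_\a^\infty)[\mu_\a]-\delta\bm{U}^\c(\lambda_\c^\infty)[\mu_\c]$. Applying the Cauchy--Schwarz inequality to the expression in \eqref{consistency} gives, for each admissible $(\mu_\a,\mu_\c)$ with $\|(\mu_\a,\mu_\c)\|_{\op}=1$, the bound
\begin{equation*}
\bigl|\bigl(\nabla(I\bm{U}^\a(\lambda^{\infty}_\a) - \bm{U}^\c(\lambda^{\infty}_\c)), \nabla(I\delta \bm{U}^\a(\lambda^{\infty}_\a)[\mu_\a] - \delta \bm{U}^\c(\lambda^{\infty}_\c)[\mu_\c])\bigr)_{L^2(\Omega_\o)}\bigr|
\leq
\|\nabla I\bm{U}^\a(\lambda_\a^\infty) - \nabla \bm{U}^\c(\lambda_\c^\infty)\|_{L^2(\Omega_\o)} \cdot \|(\mu_\a,\mu_\c)\|_{\op}.
\end{equation*}

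The second factor is exactly $1$ by the very definition of $\|\cdot\|_{\op}$, so it remains only to estimate the first factor. First I identify the two terms inside the norm: by Theorem~\ref{atRegularity}, $\bm{U}^\a(\lambda_\a^\infty)=\bm{u}^\infty_\a$, the restriction of the exact atomistic solution to $\calL_\a$ (this is precisely what was used to set up the restricted atomistic problem); and by Theorem~\ref{contModelError}, $\bm{U}^\c(\lambda_\c^\infty)=\bm{u}^{\rm con}$, the solution of the restricted continuum problem with boundary data $\lambda_\c^\infty=\bm{u}^\infty|_{\Gamma_{\rm core}}$. Hence the first factor is
\begin{equation*}
\|\nabla I\bm{u}^\infty_\a - \nabla \bm{u}^{\rm con}\|_{L^2(\Omega_\o)}.
\end{equation*}

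Since $\Omega_\o\subset\Omega_\c$, monotonicity of the $L^2$ norm in the domain gives
\begin{equation*}
\|\nabla I\bm{u}^\infty_\a - \nabla \bm{u}^{\rm con}\|_{L^2(\Omega_\o)}
\leq
\|\nabla I\bm{u}^\infty - \nabla \bm{u}^{\rm con}\|_{L^2(\Omega_\c)},
\end{equation*}
and the right-hand side is controlled by the continuum error estimate~\eqref{contModelEstimate} of Theorem~\ref{contModelError}, which already provides the bound $R_{\rm core}^{-d/2-1}+R_\c^{-d/2}$ for all $R_{\rm core}\geq R_{\rm core}^*$. Taking the supremum over unit $(\mu_\a,\mu_\c)$ yields~\eqref{cons2}.

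In summary, I expect no genuine obstacle here: the nontrivial analytic work (consistency of the Cauchy--Born continuum model against the atomistic stress, truncation of the infinite lattice, interpolation error, and a priori estimates for $\delta^2\tilde\calE^\c$) has already been absorbed into Theorem~\ref{contModelError}. The only thing that needs to be verified carefully is that the two pieces of the inner product are precisely the mismatch of the exact atomistic/continuum states at the ``infinite'' controls and the variation defining $\|\cdot\|_{\op}$; once this bookkeeping is done, Cauchy--Schwarz and restriction to $\Omega_\o$ close the argument.
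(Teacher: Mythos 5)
Your proposal is correct and follows essentially the same route as the paper: Cauchy--Schwarz on the residual, the observation that the test factor is exactly $\|(\mu_\a,\mu_\c)\|_{\op}=1$, identification of $\bm{U}^\a(\lambda_\a^{\infty})=\bm{u}^{\infty}_\a$ and $\bm{U}^\c(\lambda_\c^{\infty})=\bm{u}^{\rm con}$, and reduction to the continuum error estimate \eqref{contModelEstimate} of Theorem~\ref{contModelError}. The only cosmetic difference is that you make the domain inclusion $\Omega_\o\subset\Omega_\c$ explicit, which the paper leaves implicit.
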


\begin{proof}
Applying the Cauchy-Schwarz inequality to (\ref{consistency}) yeields
\begin{equation*}
\begin{array}{l}
\displaystyle
\|\delta J(\lambda_\a^{\infty}, \lambda_\c^{\infty})\|_{\rm op}
\\[2ex]
\displaystyle
\qquad
\leq~ \sup_{\|\left(\mu_\a, \mu_\c\right)\|_{\rm op} = 1}\|\nabla \left(I\bm{U}^\a(\lambda^{\infty}_\a) - \bm{U}^\c(\lambda^{\infty}_\c)\right)\|_{L^2(\Omega_\o)}
 \|  \nabla \left(I\delta \bm{U}^\a(\lambda^{\infty}_\a)[\mu_\a] - \delta \bm{U}^\c(\lambda^{\infty}_\c)[\mu_\c]\right)\|_{L^2(\Omega_\o)} \cdot
 \\[2ex]
\displaystyle
\qquad
=~  \|\nabla \left(I\bm{U}^\a(\lambda^{\infty}_\a) - \bm{U}^\c(\lambda^{\infty}_\c)\right)\|_{L^2(\Omega_\o)}.
\end{array}
\end{equation*}
Note that $\lambda^{\infty}_\a$ and $\lambda^{\infty}_\c$ are traces of the exact atomistic solution and so,
\[
\|\nabla \left(I\bm{U}^\a(\lambda^{\infty}_\a) - \bm{U}^\c(\lambda^{\infty}_\c)\right)\|_{L^2(\Omega_\o)} = \|\nabla I \bm{u}^{\infty}_\a - \nabla \bm{u}^{\rm con}\|_{L^2(\Omega_\o)},
\]
is the simply the continuum error made by replacing the atomistic model with the continuum model on $\Omega_\o$.  Thus, \eqref{cons2} follows directly from~\eqref{contModelEstimate} in Theorem~\ref{contModelError}.
\end{proof}

%%%%%%
%%%%%%
\subsubsection{Stability}\label{S:stability}
%%%%%%
%%%%%%
In this section we prove that the bilinear form
$
\langle  \delta^2 J(\lambda^{\infty}_\a, \lambda^{\infty}_\c)\, \cdot , \cdot \rangle
$
is coercive.
\begin{theorem}\label{stabilityTheorem}
There exists $R_{\rm core}^*$ such that for each $R_{\rm core} \geq R_{\rm core}^*$
\begin{equation*}\label{Jstab}
\langle  \delta^2 J(\lambda^{\infty}_\a, \lambda^{\infty}_\c)(\mu_\a, \mu_\c) , (\mu_\a, \mu_\c) \rangle  \geq~ {\textstyle\frac12} \|(\mu_\a, \mu_\c)\|_{\rm op}^2, \quad \forall (\mu_\a, \mu_\c) \in \bm{\Lambda}^\a \times \bm{\Lambda}^\c.
\end{equation*}
\end{theorem}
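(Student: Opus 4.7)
The plan is to compute $\delta^2 J$ explicitly by differentiating the first variation formula twice, and then show that the diagonal term is exactly $\|(\mu_\a,\mu_\c)\|_{\rm op}^2$, while the off-diagonal term involving second Gateaux derivatives of $\bm{U}^\a$ and $\bm{U}^\c$ is an asymptotically vanishing perturbation that can be absorbed when $R_{\rm core}$ is taken large enough.

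More precisely, starting from
\[
\langle\delta J(\lambda_\a,\lambda_\c),(\mu_\a,\mu_\c)\rangle = \bigl(\nabla(I\bm{U}^\a(\lambda_\a)-\bm{U}^\c(\lambda_\c)),\, \nabla(I\delta\bm{U}^\a(\lambda_\a)[\mu_\a]-\delta\bm{U}^\c(\lambda_\c)[\mu_\c])\bigr)_{L^2(\Omega_\o)},
\]
I would differentiate once more in the direction $(\mu_\a,\mu_\c)$ (using Theorem~\ref{regJ} to justify the chain rule) and evaluate at $(\lambda_\a^\infty,\lambda_\c^\infty)$. The product rule yields two contributions:
\[
\langle\delta^2 J(\lambda_\a^\infty,\lambda_\c^\infty)(\mu_\a,\mu_\c),(\mu_\a,\mu_\c)\rangle = \|(\mu_\a,\mu_\c)\|_{\rm op}^2 + \mathcal{R},
\]
where the first term is exactly the square of the $\|\cdot\|_{\rm op}$ norm by definition, and the remainder is
\[
\mathcal{R} = \bigl(\nabla(I\bm{U}^\a(\lambda_\a^\infty)-\bm{U}^\c(\lambda_\c^\infty)),\, \nabla(I\delta^2\bm{U}^\a(\lambda_\a^\infty)[\mu_\a,\mu_\a]-\delta^2\bm{U}^\c(\lambda_\c^\infty)[\mu_\c,\mu_\c])\bigr)_{L^2(\Omega_\o)}.
\]

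To bound $\mathcal{R}$ I would apply Cauchy--Schwarz and treat the two factors separately. The second factor is controlled by Lemma~\ref{lem:aux}, giving
\[
\|\nabla(I\delta^2\bm{U}^\a(\lambda_\a^\infty)[\mu_\a,\mu_\a]-\delta^2\bm{U}^\c(\lambda_\c^\infty)[\mu_\c,\mu_\c])\|_{L^2(\Omega_\o)} \lesssim \|(\mu_\a,\mu_\c)\|_{\rm op}^2.
\]
The first factor is precisely the continuum-vs-atomistic mismatch at the exact traces, which already appeared in the consistency analysis; namely $\bm{U}^\a(\lambda_\a^\infty) = \bm{u}^\infty_\a$ and $\bm{U}^\c(\lambda_\c^\infty) = \bm{u}^{\rm con}$, so Theorem~\ref{contModelError} (restricted to $\Omega_\o \subset \Omega_\c$) yields
\[
\|\nabla(I\bm{U}^\a(\lambda_\a^\infty)-\bm{U}^\c(\lambda_\c^\infty))\|_{L^2(\Omega_\o)} \lesssim R_{\rm core}^{-d/2-1} + R_\c^{-d/2}.
\]
Combining these two estimates gives $|\mathcal{R}| \leq C\bigl(R_{\rm core}^{-d/2-1} + R_\c^{-d/2}\bigr)\|(\mu_\a,\mu_\c)\|_{\rm op}^2$, and choosing $R_{\rm core}^*$ large enough that $C(R_{\rm core}^{-d/2-1} + R_\c^{-d/2}) \leq 1/2$ (which is possible since $R_\c \gg R_{\rm core}$) yields the claimed lower bound $\tfrac{1}{2}\|(\mu_\a,\mu_\c)\|_{\rm op}^2$.

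The main obstacle is the correct handling of the remainder term $\mathcal{R}$: one needs \emph{both} that the mixed $L^2$-type pairing is quadratic in $\|(\mu_\a,\mu_\c)\|_{\rm op}$ (this is the content of Lemma~\ref{lem:aux}, which itself implicitly relies on the norm equivalence of Theorem~\ref{normEquivTheorem}), \emph{and} that the residual mismatch between the restricted continuum solution and the true atomistic solution on the overlap tends to zero. Without the norm equivalence the second derivatives of $\bm{U}^\a, \bm{U}^\c$ could not be bounded by $\|\cdot\|_{\rm op}$, and without the continuum error estimate the prefactor would not be small; the proof hinges on marrying these two ingredients.
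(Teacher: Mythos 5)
Your proposal is correct and follows essentially the same route as the paper's proof: the same decomposition of $\delta^2 J$ into $\|(\mu_\a,\mu_\c)\|_{\rm op}^2$ plus the remainder involving second Gateaux derivatives, the same use of Lemma~\ref{lem:aux} together with Cauchy--Schwarz to bound the remainder by $C\,\|\nabla(I\bm{u}^{\infty}_\a-\bm{u}^{\rm con})\|_{L^2(\Omega_\o)}\,\|(\mu_\a,\mu_\c)\|_{\rm op}^2$, and the same appeal to the continuum error estimate of Theorem~\ref{contModelError} to make the prefactor at most $1/2$ for $R_{\rm core}$ large. No gaps.
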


\begin{proof}
The Hessian of $J$ is given by
\begin{equation*}\label{jHess}
\begin{split}
&\langle  \delta^2 J(\lambda^{\infty}_\a, \lambda^{\infty}_\c)(\mu_\a, \mu_\c) , (\mu_\a, \mu_\c) \rangle  =~ \| \nabla \left(I\delta \bm{U}^\a(\lambda^{\infty}_\a)[\mu_\a] -  \delta \bm{U}^\c(\lambda^{\infty}_\c)[\mu_\c]\right)\|_{L^2(\Omega_\o)}^2 \\
&\qquad+ \left( \nabla \left(I\bm{U}^\a(\lambda^{\infty}_\a) - \bm{U}^\c(\lambda^{\infty}_\c)\right),   \nabla \left(I\delta^2 \bm{U}^\a(\lambda^{\infty}_\a)[\mu_\a,\mu_\a] -  \delta^2 \bm{U}^\c(\lambda^{\infty}_\c)[\mu_\c,\mu_\c]\right)\right)_{L^2(\Omega_\o)}.
\end{split}
\end{equation*}

Using the definition of $\|\cdot\|_{\rm op}$, this is equivalent to
\begin{equation*}\label{hessEst}
\begin{split}
&\langle  \delta^2 J(\lambda^{\infty}_\a, \lambda^{\infty}_\c)(\mu_\a, \mu_\c) , (\mu_\a, \mu_\c) \rangle  =\\
&\qquad\qquad ~ \|(\mu_\a, \mu_\c)\|_{\rm op}^2 + \left( \nabla \left(I\bm{U}^\a(\lambda^{\infty}_\a) - \bm{U}^\c(\lambda^{\infty}_\c)\right),   \nabla \left(I\delta^2 \bm{U}^\a(\lambda^{\infty}_\a)[\mu_\a,\mu_\a] -  \delta^2 \bm{U}^\c(\lambda^{\infty}_\c)[\mu_\c,\mu_\c]\right)\right)_{L^2(\Omega_\o)}.
\end{split}
\end{equation*}
Lemma~\ref{lem:aux} implies the existence of $R_{\rm core}^{*,1}$ and $C_{\rm stab}$ such that for all $R_{\rm core} \geq R_{\rm core}^{*,1}$,
\begin{equation*}\label{stabConj8}
\|
	\nabla \left(I\delta^2 \bm{U}^\a(\lambda^{\infty}_\a)[\mu_\a,\mu_\a]
	-
	\delta^2 \bm{U}^\c(\lambda^{\infty}_\c)[\mu_\c,\mu_\c]\right)\|_{L^2(\Omega_\o)}
	 \leq~ C_{\rm stab}\|(\mu_\a, \mu_\c)\|_{\rm op}^2.
\end{equation*}
We then have that
\begin{align*}
&\left( \nabla \left(I\bm{U}^\a(\lambda^{\infty}_\a) - \bm{U}^\c(\lambda^{\infty}_\c)\right),   \nabla \left(I\delta^2 \bm{U}^\a(\lambda^{\infty}_\a)[\mu_\a,\mu_\a] -  \delta^2 \bm{U}^\c(\lambda^{\infty}_\c)[\mu_\c,\mu_\c]\right)\right)_{L^2(\Omega_\o)} \\
&\qquad \geq~ - \|\nabla \left(I\bm{U}^\a(\lambda^{\infty}_\a) - \bm{U}^\c(\lambda^{\infty}_\c)\right)\|_{L^2(\Omega_\o)} \cdot
 \|\nabla \left(I\delta^2 \bm{U}^\a(\lambda^{\infty}_\a)[\mu_\a,\mu_\a] -  \delta^2 \bm{U}^\c(\lambda^{\infty}_\c)[\mu_\c,\mu_\c]\right)\|_{L^2(\Omega_\o)} \\
&\qquad \geq~ - C_{\rm stab}\|\nabla \left(I\bm{U}^\a(\lambda^{\infty}_\a) - \bm{U}^\c(\lambda^{\infty}_\c)\right)\|_{L^2(\Omega_\o)}\cdot \|(\mu_\a, \mu_\c)\|_{\rm op}^2.
\end{align*}
This implies
\begin{align*}
&\langle  \delta^2 J(\lambda^{\infty}_\a, \lambda^{\infty}_\c)(\mu_\a, \mu_\c) , (\mu_\a, \mu_\c) \rangle
\geq~ \|(\mu_\a, \mu_\c)\|_{\rm op}^2 - C_{\rm stab} \|\nabla \left(I\bm{U}^\a(\lambda^{\infty}_\a) - \bm{U}^\c(\lambda^{\infty}_\c)\right)\|_{L^2(\Omega_\o)}\cdot \|(\mu_\a, \mu_\c)\|_{\rm op}^2 \\
&\qquad =~ \left(1- C_{\rm stab} \|\nabla \left(I\bm{U}^\a(\lambda^{\infty}_\a) - \bm{U}^\c(\lambda^{\infty}_\c)\right)\|_{L^2(\Omega_\o)}\right)\|(\mu_\a, \mu_\c)\|_{\rm op}^2,
\end{align*}
where we recall $\|\nabla \left(I\bm{U}^\a(\lambda^{\infty}_\a) - \bm{U}^\c(\lambda^{\infty}_\c)\right)\|_{L^2(\Omega_\o)}$ is the continuum error.  By Theorem~\ref{contModelError}, there exists $R_{\rm core}^{*,2}$ such that for all $R_{\rm core} \geq R_{\rm core}^{*,2}$,
\[
\left(1- C_{\rm stab} \|\nabla \left(I\bm{U}^\a(\lambda^{\infty}_\a) - \bm{U}^\c(\lambda^{\infty}_\c)\right)\|_{L^2(\Omega_\o)}\right) \geq 1/2.
\]
Taking $R_{\rm core}^* = \max\left\{R_{\rm core}^{*,1}, R_{\rm core}^{*,2}\right\}$ completes the proof.
\end{proof}

%%%%%%
%%%%%%
\subsubsection{Error Estimate}
%%%%%%
%%%%%%
Having proven regularity of $J$, a consistency estimate, and a stability result, we are now in a position to prove our main error result, Theorem~\ref{mainTheorem}.  This will be a consequence of following theorem providing important information about the AtC formulation.

\begin{theorem}\label{thm:errorEstimate}
There exists $R_{\rm core}^*>0$ such that for all $R_{\rm core} \geq R_{\rm core}^*$,
the reduced space problem \eqref{eq:inf} has a solution $(\lambda_\a^{\rm atc}, \lambda_\c^{\rm atc})$, such that
\begin{equation}\label{opEst}
\|(\lambda^{\infty}_\a, \lambda^{\infty}_\c)- (\lambda^{\rm atc}_\a, \lambda^{\rm atc}_\c)\|_{\rm op} \lesssim~ R_{\rm core}^{-d/2-1} + R_{\c}^{-d/2} .
\end{equation}
\end{theorem}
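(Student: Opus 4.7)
The plan is to obtain existence and the bound in one shot by applying the quantitative Inverse Function Theorem (Theorem \ref{inverseFunctionTheorem}) to the map $f := \delta J$ on the product space $X = \bm{\Lambda}^\a \times \bm{\Lambda}^\c$ equipped with the norm $\|\cdot\|_{\op}$, based at $x_0 := (\lambda_\a^{\infty}, \lambda_\c^{\infty})$. The three preceding subsections have been engineered to provide exactly the three ingredients required. Theorem \ref{regJ} gives $C^3$ regularity of $J$ on a neighborhood $V^\a \times V^\c$ of $x_0$ whose radius is independent of $R_{\rm core}$; combined with the uniform-in-$R_{\rm core}$ bounds on the derivatives of $\bm{U}^\a$ and $\bm{U}^\c$ from Theorems \ref{atRegularity} and \ref{contRegularity}, this yields a Lipschitz constant $L$ for $\delta^2 J$ on $V^\a \times V^\c$ that is independent of $R_{\rm core}$. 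Theorem \ref{consProp} supplies $\eta := C_{\rm cons}(R_{\rm core}^{-d/2-1} + R_{\c}^{-d/2})$ with $\|\delta J(x_0)\|_{\op} \leq \eta$. Theorem \ref{stabilityTheorem} shows that $\delta^2 J(x_0)$ is coercive with constant $1/2$, so it is invertible with $\sigma := \|(\delta^2 J(x_0))^{-1}\|_{\op \to \op} \leq 2$.

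With $L, \sigma$ independent of $R_{\rm core}$ and $\eta \to 0$ as $R_{\rm core}, R_{\c}\to\infty$, both the smallness condition $2 L \eta \sigma^2 < 1$ and the containment $B_{2\eta\sigma}(x_0)\subset V^\a\times V^\c$ are satisfied provided $R_{\rm core} \geq R_{\rm core}^*$ for a sufficiently large $R_{\rm core}^*$. Theorem \ref{inverseFunctionTheorem} then produces a point $(\lambda_\a^{\rm atc}, \lambda_\c^{\rm atc}) \in X$ with $\delta J(\lambda_\a^{\rm atc}, \lambda_\c^{\rm atc}) = 0$ and
\[
\|(\lambda_\a^{\infty}, \lambda_\c^{\infty}) - (\lambda_\a^{\rm atc}, \lambda_\c^{\rm atc})\|_{\op} \;\leq\; 2\eta\sigma \;\lesssim\; R_{\rm core}^{-d/2-1} + R_{\c}^{-d/2},
\]
which is precisely \eqref{opEst}.

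It remains to explain why this critical point is a genuine solution of the minimization problem \eqref{eq:inf}, not merely a saddle. Since $\delta^2 J$ is Lipschitz on $V^\a \times V^\c$ with $R_{\rm core}$-independent constant and coercive with constant $1/2$ at $x_0$, continuity forces $\delta^2 J(\lambda_\a^{\rm atc}, \lambda_\c^{\rm atc})$ to remain coercive with constant at least $1/4$ (say) once $R_{\rm core}^*$ is enlarged so that $2\eta\sigma$ times the Lipschitz constant is less than $1/4$; hence $(\lambda_\a^{\rm atc}, \lambda_\c^{\rm atc})$ is a strict local minimizer of $J$.

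I do not expect a serious obstacle: all the heavy lifting (the continuum error in Theorem \ref{contModelError}, the norm equivalence in Theorem \ref{normEquivTheorem}, the uniform boundedness of the derivatives of $\bm{U}^\a$ and $\bm{U}^\c$) has already been done in the previous subsections. The only delicate point is the bookkeeping required to confirm that the Lipschitz constant of $\delta^2 J$ and the radius of the regularity neighborhood of $J$ are genuinely independent of $R_{\rm core}$; this reduces via the chain rule to invoking the uniform-in-$R_{\rm core}$ derivative bounds on $\bm{U}^\a$ and $\bm{U}^\c$ from Theorems \ref{atRegularity}--\ref{contRegularity} together with the uniform Lipschitz bounds on $\delta^2 \tilde{\calE}^\a$ and $\delta^2 \tilde{\calE}^\c$ from Theorem \ref{atLippy} and Lemma \ref{LipCont}.
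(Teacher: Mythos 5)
Your proposal is correct and follows essentially the same route as the paper: the inverse function theorem applied to $f=\delta J$ at $(\lambda_\a^{\infty},\lambda_\c^{\infty})$ in the $\|\cdot\|_{\op}$ norm, with the Lipschitz bound on $\delta^2 J$ from Theorems~\ref{atRegularity}--\ref{contRegularity} and~\ref{regJ}, the consistency bound $\eta$ from Theorem~\ref{consProp}, and the coercivity constant $1/2$ from Theorem~\ref{stabilityTheorem} supplying $L$, $\eta$, $\sigma$ exactly as in the paper's argument. Your closing observation that Lipschitz continuity of $\delta^2 J$ propagates coercivity to the critical point, so that it is a genuine (strict local) minimizer rather than a mere critical point, is a detail the paper leaves implicit and is a worthwhile addition.
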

\begin{proof}
We apply the inverse function theorem, Theorem~\ref{inverseFunctionTheorem}, with $f = \delta J$,
$X=\bm{\Lambda}^\a \times \bm{\Lambda}^\c$ endowed with the norm $\|\cdot\|_{\rm op}$, $Y= \left(\bm{\Lambda}^\a \times \bm{\Lambda}^\c \right)^*$  endowed with the dual norm  $\|\cdot \|_{\rm op^*}$, and $x_0 = (\lambda^{\infty}_\a, \lambda^{\infty}_\c)$. Let $R_{\rm core}^*$ be the maximum of the $R_{\rm core}^*$ guaranteed to exist in Theorems~\ref{atRegularity},~\ref{contRegularity},~\ref{consProp} and,~\ref{stabilityTheorem}.  Noting that $\|f(x_0)\|_{\rm op^*}$ is the consistency error defined in Section~\ref{S:consistency}, Theorem~\ref{consProp}, implies the bound
\[
\|f(x_0)\|_{\rm op^*} \lesssim R_{\rm core}^{-d/2-1} + R_{\c}^{-d/2} =: \eta.
\]
Observe also that $\delta f(x_0) = \delta^2 J(\lambda^{\infty}_\a, \lambda^{\infty}_\c)$ and the existence of a coercivity constant, $\sigma := 1/2$, from Section~\ref{S:stability} implies $\|\delta f(x_0)^{-1}\| < \sigma^{-1} = 2$.

Furthermore, Theorems~\ref{atRegularity} and~\ref{contRegularity} provide constants  $\eta_\a$ and $\eta_\c$ such that
 $\bm{U}^\a$ and $\bm{U}^\c$ are ${\rm C}^3$ on $B_{\eta_\a}(\lambda^{\infty}_\a)$ and $B_{\eta_\c}(\lambda^{\infty}_\c)$ respectively. By Theorem~\ref{regJ}, $\delta^3 J$ is bounded by derivatives of $\bm{U}^\a$ and $\bm{U}^\c$ of order at most $3$.  Furthermore, Theorems~\ref{atRegularity} and~\ref{contRegularity} state that derivatives of $\bm{U}^\a$ and $\bm{U}^\c$ are uniformly bounded in $R_{\rm core}$.  We may therefore conclude that the third derivative of $J$ is also uniformly bounded in $R_{\rm core}$ $\geq R_{\rm core}^*$.
This implies $\delta f = \delta^2 J$ is Lipschitz on $B_{\eta_\a}(\lambda^{\infty}_\a) \times B_{\eta_\c}(\lambda^{\infty}_\c)$ with a Lipschitz constant that we denote by $L$.

The bound $2L\eta(2)^{2} < 1$ holds since the consistency error $\eta$ may be made small for $R_{\rm core}^*$ large enough.  Analogously, $B_{4\eta}(\lambda^{\infty}_\a, \lambda^{\infty}_\c) \subset B_{\eta_\a}(\lambda^{\infty}_\a) \times B_{\eta_\c}(\lambda^{\infty}_\c)$ for small enough $\eta$.  Theorem~\ref{inverseFunctionTheorem}, can now be invoked to deduce the existence of a minimizer, $(\lambda_\a^{\rm atc}, \lambda_\c^{\rm atc}) \in B_{4\eta}(\lambda^{\infty}_\a, \lambda^{\infty}_\c)$ of $J$, satisfying the stated bounds~\eqref{opEst}.
\end{proof}

We now provide a proof of Theorem~\ref{mainTheorem}, which is our main result.

\begin{proof}[Proof of Theorem~\ref{mainTheorem}]

Let $R_{\rm core}^*$ be the maximum of the $R_{\rm core}^*$ from Theorem~\ref{thm:errorEstimate} and Theorem~\ref{normEquivTheorem} so there exists $(\lambda_\a^{\rm atc}, \lambda_\c^{\rm atc})$ satisfying~\eqref{opEst}. Furthermore, $\left(\bm{U}^\a(\lambda_\a^{\rm atc}), \bm{U}^\c(\lambda_\c^{\rm atc})\right)$ solve the minimization problem~\eqref{atcOptEquiv}. Hence,
\begin{equation*}\label{error:analysisBroke}
\begin{split}
&
\|\nabla\left(I\bm{u}^{\infty}_\a - I\bm{u}^{\rm atc}_\a\right)\|_{L^2(\Omega_\a)}^2
+
\|\nabla\left(I\bm{u}^{\infty}_\c - \bm{u}^{\rm atc}_\c\right)\|_{L^2(\Omega_\c)}^2
\\
&=~ \| \nabla I\left(\bm{u}^{\infty} - \bm{U}^\a(\lambda_\a^{\rm atc})\right) \|_{L^2(\Omega_\a)}^2 + \| \nabla \left(I\bm{u}^{\infty} - \bm{U}^\c(\lambda_\c^{\rm atc}) \right) \|_{L^2(\Omega_\c)}^2 \\
%&=~ \| \nabla  \left(\bm{U}^\a(\lambda^{\infty}_\a) - \bm{U}^\a(\lambda_a^{\rm atc})\right) \|_{L^2(\Omega_\a)}^2
%+~ \| \nabla \left(I\bm{u}^{\infty} - \bm{U}^\c(\lambda^{\infty}_\c) + \bm{U}^\c(\lambda^{\infty}_\c) - \bm{U}^\c(\lambda_c^{\rm atc}) \right) \|_{L^2(\Omega_\c)}^2  \\
&\lesssim
\| \nabla I\left(\bm{U}^\a(\lambda^{\infty}_\a) \!-\! \bm{U}^\a(\lambda_\a^{\rm atc})\right) \|_{L^2(\Omega_\a)}^2 + \| \nabla \left(I\bm{u}^{\infty}\! - \!\bm{U}^\c(\lambda^{\infty}_\c)\right)\|_{L^2(\Omega_\c)}^2
+\|\nabla\left(\bm{U}^\c(\lambda^{\infty}_\c) \!-\! \bm{U}^\c(\lambda_\c^{\rm atc})\right) \|_{L^2(\Omega_\c)}^2.
\end{split}
\end{equation*}
The second term above is the continuum error. To handle the remaining terms we recall that $\bm{U}^\a$ and $\bm{U}^\c$ are Lipschitz  on $B_{\eta_\a}(\lambda^{\infty}_\a)$ and $B_{\eta_\c}(\lambda^{\infty}_\c)$ by virtue of $\delta \bm{U}^\a$ and $\delta \bm{U}^\c$  being uniformly bounded on these sets. Then, using norm-equivalence (\ref{eq:normEquiv}), Theorem \ref{contModelError} and Theorem \ref{thm:errorEstimate}  yields
\begin{equation*}\label{error:analysis}
\begin{split}
&\| \nabla \left(I\bm{u}^{\infty} - I\bm{u}^{\rm atc}_\a\right) \|_{L^2(\Omega_\a)}^2 + \| \nabla \left(I\bm{u}^{\infty} - \bm{u}^{\rm atc}_\c\right) \|_{L^2(\Omega_\c)}^2 \\
&~\quad\lesssim~ \| \lambda^{\infty}_\a - \lambda_\a^{\rm atc} \|_{\bm{\Lambda}^\a}^2 + \| \nabla \left(I\bm{u}^{\infty} - \bm{U}^\c(\lambda^{\infty}_\c)\right)\|_{L^2(\Omega_\c)}^2
+~  \|\lambda^{\infty}_\c - \lambda_\c^{\rm atc} \|_{\bm{\Lambda}^\c}^2 \\
&~\quad=~ \|(\lambda^{\infty}_\a, \lambda^{\infty}_\c) - (\lambda_\a^{\rm atc}, \lambda_\c^{\rm atc})\|_{\rm err}^2 + \| \nabla \left(I\bm{u}^{\infty} - \bm{U}^\c(\lambda^{\infty}_\c)\right)\|_{L^2(\Omega_\c)}^2 ~\lesssim~ R_{\rm core}^{-d-2} + R_{\c}^{-d}.
\end{split}
\end{equation*}
Taking square roots completes the proof.
\end{proof}

%%%%
%%%%
\section{Norm Equivalence}\label{sec:normEquiv}
%%%%
%%%%
The main result of this section is the norm equivalence result stated in Theorem~\ref{normEquivTheorem}. The proof of the left-hand inequality, $\|(\mu_\a,\mu_\c)\|_{\op} \lesssim \|(\mu_\a,\mu_\c)\|_{\err}$, is clear so we focus only on the right-hand inequality. We recall that the finite element mesh $\mathcal{T}_h$ is subject to a minimum angle condition for some $\beta>0$ and state a precise version of the right inequality in Theorem~\ref{normEquivTheorem}.
\begin{theorem}\label{th:norm_equiv}
There exists $C,R_{\rm core}^* > 0$ such that for all domains $\Omega_\a, \Omega_\c$ and meshes $\calT_h$ constructed according to the guidelines of Section~\ref{sec:approximation} (in particular $\psi_\a R_\core = R_\a$) with $R_{\rm core} \geq R_{\rm core}^*$, there holds
\begin{equation}\label{eq:norm_equiv}
\|(\mu_\a,\mu_\c)\|_{\err} \leq C \|(\mu_\a,\mu_\c)\|_{\op}
\quad\forall (\mu_\a,\mu_\c) \in \bm{\Lambda}^\a\times \bm{\Lambda}^\c .
\end{equation}
Equivalently, for all $(\bm{w}^\a, \bm{w}^\c) \in \bm{\mathcal{U}}^\a \times \bm{\mathcal{U}}^\c_h$ such that
\begin{align}
\label{eq:wan_def}
& \<\delta^2 \tilde{\mathcal{E}}^\a(\bm{u}^{\infty}_\a) \bm{w}^\a, \bm{v}^\a\> =~ 0 \quad\forall \bm{v}^\a \in \bm{\mathcal{U}}^\a_0
\qquad\text{ and}
\\ \label{eq:wcn_def}
& \<\delta^2 \tilde{\mathcal{E}}^\c(\bm{u}^{\rm con}) \bm{w}^\c, \bm{v}^\c\> =~ 0 \quad \forall \bm{v}^\c \in \bm{\mathcal{U}}^\c_{h,0}
\end{align}
we have
\begin{equation}\label{eq:norm_equiv_alt}
\|\nabla I\bm{w}^\a\|_{L^2(\Omega_\a)}^2 + \|\nabla \bm{w}^\c\|_{L^2(\Omega_\c)}^2
\leq C
\|\nabla\left(I\bm{w}^\a - \bm{w}^\c\right)\|_{L^2(\Omega_\o)}^2.
\end{equation}
\end{theorem}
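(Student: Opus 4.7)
My plan is to prove \eqref{eq:norm_equiv_alt} in two stages: an ``inverse Caccioppoli'' reduction of each subdomain norm to its overlap counterpart, followed by a coupled absorption argument driven by the mismatch. In the first stage, since $\bm{w}^\a$ satisfies \eqref{eq:wan_def}, any $\bm{\phi}_\a\in\bm{\calU}^\a$ agreeing with $\bm{w}^\a$ modulo constants on $\partial_\a\calL_\a$ yields a valid test function $\bm{w}^\a-\bm{\phi}_\a\in\bm{\calU}^\a_0$, so combining this with the coercivity \eqref{atLiftingStable} and the boundedness of $\delta^2\tilde{\calE}^\a$ from Theorem~\ref{atLippy} gives
\[
\gamma_\a\|\nabla I\bm{w}^\a\|_{L^2(\Omega_\a)}^2 \le \<\delta^2\tilde{\calE}^\a(\bm{u}^{\infty}_\a)\bm{w}^\a,\bm{\phi}_\a\> \lesssim \|\nabla I\bm{w}^\a\|_{L^2(\Omega_\a)}\,\|\nabla I\bm{\phi}_\a\|_{L^2(\Omega_\a)}.
\]
I would take $\bm{\phi}_\a:=\chi_\a\bm{w}^\a$ for a lattice cutoff $\chi_\a$ equal to $1$ on $\partial_\a\calL_\a$, vanishing on $\Omega_\core$, and satisfying $|\nabla\chi_\a|\lesssim R_\a^{-1}$ (feasible because $\psi_\a\ge 4$ makes $\Omega_\o$ an annulus of width $\sim R_\a$). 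Choosing a representative of $\bm{w}^\a$ with zero mean on $\Omega_\o$ so that Poincar\'e gives $\|\bm{w}^\a\|_{L^2(\Omega_\o)}\lesssim R_\a\|\nabla I\bm{w}^\a\|_{L^2(\Omega_\o)}$, I then obtain $\|\nabla I\bm{w}^\a\|_{L^2(\Omega_\a)}\lesssim\|\nabla I\bm{w}^\a\|_{L^2(\Omega_\o)}$. A parallel construction on the continuum side, using the coercivity \eqref{contStability} of Theorem~\ref{contModelError} and a cutoff $\chi_\c$ supported in $\Omega_{\o,\ex}$ (which extends beyond $\Omega_\a$ by another $R_\a$-width annulus, again giving $|\nabla\chi_\c|\lesssim R_\a^{-1}$), yields $\|\nabla\bm{w}^\c\|_{L^2(\Omega_\c)}\lesssim\|\nabla\bm{w}^\c\|_{L^2(\Omega_\o)}$.

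In the second stage, to avoid the na\"ive triangle inequality reintroducing the full-subdomain norms, I would refine the test function so its gradient is driven by the mismatch plus a term involving only the opposing subdomain's function. Using that $\calT_h$ is fully resolved on $\Omega_{\o,\ex}$ (Assumption~\ref{meshSize}), $\bm{w}^\c$ has unambiguous lattice values there; keeping $\bm{\phi}_\a:=\chi_\a I\bm{w}^\a$ but splitting $I\bm{w}^\a = \bm{w}^\c + (I\bm{w}^\a - \bm{w}^\c)$ on $\Omega_\o$ gives
\[
\nabla I\bm{\phi}_\a = \nabla(\chi_\a\bm{w}^\c) + \chi_\a\nabla(I\bm{w}^\a-\bm{w}^\c) + \nabla\chi_\a\cdot(I\bm{w}^\a-\bm{w}^\c),
\]
where the last two terms are controlled by $\|\nabla(I\bm{w}^\a-\bm{w}^\c)\|_{L^2(\Omega_\o)}$ (via Poincar\'e on the mismatch with a zero-mean representative) and the first is controlled by $\|\nabla\bm{w}^\c\|_{L^2(\Omega_\o)}$. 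A symmetric construction on the continuum side yields the analogous estimate with the roles of $\bm{w}^\a$ and $\bm{w}^\c$ swapped. Setting $A:=\|\nabla I\bm{w}^\a\|_{L^2(\Omega_\a)}^2$, $B:=\|\nabla\bm{w}^\c\|_{L^2(\Omega_\c)}^2$, $M:=\|\nabla(I\bm{w}^\a-\bm{w}^\c)\|_{L^2(\Omega_\o)}^2$, these form a coupled system $A\lesssim M+\alpha B$, $B\lesssim M+\alpha A$, which after absorption yields \eqref{eq:norm_equiv_alt} provided $\alpha<1$.

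The main obstacle is precisely showing that the contraction constant $\alpha$ above is strictly less than $1$ uniformly in $R_\core\ge R_\core^*$. This sharpens the bare inverse Caccioppoli of the first stage and requires exploiting how atomistic-harmonic and continuum-harmonic functions distribute their gradients between $\Omega_\o$ and the remaining parts of $\Omega_\a,\Omega_\c$, together with the Cauchy--Born consistency between the atomistic and continuum linearized operators becoming sharper for large $R_\core$ and the assumption $r_\c/r_\core = r_\core^\kappa$ making $\Omega_\c$ asymptotically much larger than $\Omega_\o$ (which gives the required strict fraction on the continuum side). In addition, the refined test-function construction relies on the Scott--Zhang quasi-interpolants $S_\a, S_{\a,n}, S_{h,n}$ introduced in Section~\ref{res:cont} to consistently transfer functions between the atomistic and continuum spaces on $\Omega_{\o,\ex}$ while preserving the boundary compatibility on $\partial_\a\calL_\a$ and $\Gamma_\core$ in the quotient sense.
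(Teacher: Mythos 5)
Your two-stage structure parallels the paper's: a reduction of the subdomain norms to the overlap region (the paper's Lemma~\ref{lem:norm_equiv:reduce_to_Omega_o}, which it proves by a rescaling/compactness contradiction using the Stein--Burenkov extension operator and Scott--Zhang interpolants, whereas you propose a direct Caccioppoli-type cutoff --- a legitimate and arguably more elementary alternative), followed by a step that must convert the mismatch on $\Omega_\o$ into control of both fields. One repairable issue first: the coercivity \eqref{atLiftingStable} is valid only for test functions in $\bm{\calU}^\a_0$, i.e.\ functions constant on $\partial_\a\calL_\a$, so you cannot write $\gamma_\a\|\nabla I\bm{w}^\a\|_{L^2(\Omega_\a)}^2\le\<\delta^2\tilde{\calE}^\a(\bm{u}^{\infty}_\a)\bm{w}^\a,\bm{\phi}_\a\>$ as stated; the Caccioppoli argument must be run on $\bm{v}:=\bm{w}^\a-\bm{\phi}_\a\in\bm{\calU}^\a_0$, yielding $\|\nabla I\bm{v}\|\lesssim\|\nabla I\bm{\phi}_\a\|$ and then the triangle inequality.

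The genuine gap is the contraction constant $\alpha<1$ in your coupled system $A\lesssim M+\alpha B$, $B\lesssim M+\alpha A$: this is precisely what you have not supplied, and it cannot come from the estimates you invoke. Every constant produced by the cutoff/coercivity/boundedness chain involves $\gamma_\a^{-1}$, $(\gamma_\c')^{-1}$, the bounds $M_2$ on $V''$, and Poincar\'e constants of the overlap annulus, none of which is below $1$, so the absorption fails. What is actually needed is that the cosine of the angle between $\nabla I\bm{w}^\a$ and $\nabla\bm{w}^\c$ in $L^2(\Omega_\o)$ is bounded by some $c<1$ uniformly in $R_{\rm core}$ --- the paper's Theorem~\ref{lem:norm_equiv:desired_result} --- which, combined with the overlap reduction, gives \eqref{eq:norm_equiv_alt} by expanding $\|\nabla(I\bm{w}^\a-\bm{w}^\c)\|_{L^2(\Omega_\o)}^2$. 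That strict inequality is a global rigidity statement, not a local one: if the two gradient fields were parallel on $\Omega_\o$, one could glue $\bm{w}^\a$ and $\bm{w}^\c$ into a single function on (a rescaled) $\mathbb{R}^d$ solving the homogeneous Cauchy--Born equation, which must be constant. The paper extracts this by rescaling with $\eps_n=1/R_{{\rm core},n}$, passing to weak limits, identifying the limiting linear Cauchy--Born equations on the scaled atomistic and continuum domains, proving convergence of the $L^2(\tilde{\Omega}_\o)$ inner products, and reaching a contradiction with this Liouville-type rigidity. Your appeals to ``Cauchy--Born consistency'' and to $r_\c/r_{\rm core}=r_{\rm core}^\kappa$ do not substitute for that argument, so as written the proposal does not close.
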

Equivalence of \eqref{eq:norm_equiv} and \eqref{eq:norm_equiv_alt} follows directly from definitions of $\|\cdot\|_{\err}$, $\|\cdot\|_{\op}$, $\bm{U}^\a$, and $\bm{U}^\c$.

In Section~\ref{reduction} we show that proving Theorem~\ref{th:norm_equiv}  reduces to proving the following result.

\begin{theorem}\label{lem:norm_equiv:desired_result}
There exists $0<c<1$ and $R_{\rm core}^* >0 $ such that for all domains $\Omega_\a, \Omega_\c$ and meshes $\calT_h$ satisfying the requirements of Section~\ref{sec:approximation} and $R_{\rm core} \geq R_{\rm core}^*$,
\begin{equation*}\label{eq:norm_equiv:desired_result}
\sup_{\bm{w}^\a, \bm{w}^\c\ne 0} \frac{\left(\nabla I \bm{w}^\a, \nabla \bm{w}^\c\right)}{\|\nabla (I \bm{w}^\a)\|_{L^2(\Omega_\o)} \|\nabla \bm{w}^\c\|_{L^2(\Omega_\o)}} \leq c,
\end{equation*}
for all $(\bm{w}^\a, \bm{w}^\c) \in \bm{\mathcal{U}}^\a \times \bm{\mathcal{U}}^\c_h$ such that
\begin{align*}
& \<\delta^2 \tilde{\mathcal{E}}^\a(\bm{u}^{\infty}_\a) \bm{w}^\a, \bm{v}^\a\> = 0 \quad\forall \bm{v}^\a \in \bm{\mathcal{U}}^\a_0,
\\
& \<\delta^2 \tilde{\mathcal{E}}^\c( \bm{u}^{\rm con}) \bm{w}^\c, \bm{v}^\c\> = 0 \quad \forall \bm{v}^\c \in \bm{\mathcal{U}}^\c_{h,0}.
\end{align*}
\end{theorem}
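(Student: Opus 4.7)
The plan is to prove Theorem~\ref{lem:norm_equiv:desired_result} by a rescaling-and-compactness argument in the spirit of the heterogeneous domain decomposition literature cited at the start of Section~\ref{sec:normEquiv}. Suppose the conclusion fails. Then there is a sequence of admissible configurations with $R_{\rm core}^{(n)} \to \infty$ and corresponding pairs $(\bm{w}^\a_n,\bm{w}^\c_n)$ satisfying \eqref{eq:wan_def}--\eqref{eq:wcn_def} for which, after normalizing the overlap gradient norms to $1$, the inner product $(\nabla I\bm{w}^\a_n,\nabla\bm{w}^\c_n)_{L^2(\Omega_\o^{(n)})}\to 1$. The parallelogram identity then gives $\|\nabla I\bm{w}^\a_n - \nabla\bm{w}^\c_n\|_{L^2(\Omega_\o^{(n)})}\to 0$, and the goal is to extract a contradiction.

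The first step is to rescale by $\eps_n=1/R_{\rm core}^{(n)}$, which sends $\Omega_{\rm core}^{(n)}$ to the fixed set $\Omega_0$, the atomistic lattice to spacing $\eps_n$, and the overlap to the fixed annular region $\tilde\Omega_\o=(\psi_\a\Omega_0)\setminus\Omega_0$; the rescaled continuum domain exhausts $\tilde\Omega_\c^\infty := \mathbb{R}^d\setminus\overline{\Omega_0}$ because $r_\c/r_{\rm core}=r_{\rm core}^{\kappa}\to\infty$. This is precisely the setup underlying the rescaled Scott--Zhang quasi-interpolants $S_{\a,n}$ and $S_{h,n}$ introduced in Section~\ref{reduction}. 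Define $\tilde w^\a_n(x):=\bm{w}^\a_n(x/\eps_n)$ and $\tilde w^\c_n(x):=\bm{w}^\c_n(x/\eps_n)$; the overlap normalization transfers verbatim, and the limiting $L^2$--strong-convergence $\nabla I\tilde w^\a_n-\nabla\tilde w^\c_n\to 0$ on $\tilde\Omega_\o$ is preserved.

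Next I would establish uniform $H^1$ estimates: a Caccioppoli-type estimate applied to the discrete problem~\eqref{eq:wan_def}, which uses Assumption~\ref{atCoercive} together with cutoff functions supported away from $\Gamma_{\rm core}$, upgrades the overlap bound on $\tilde w^\a_n$ to a uniform bound on $\tilde\Omega_\a=\psi_\a\Omega_0$; the analogous argument for~\eqref{eq:wcn_def} bounds $\tilde w^\c_n$ uniformly on compact subsets of $\tilde\Omega_\c^\infty$. Extracting weakly convergent subsequences $\tilde w^\a_n\weakto w^\a$ in $H^1(\tilde\Omega_\a)$ and $\tilde w^\c_n\weakto w^\c$ in $H^1_{\rm loc}(\tilde\Omega_\c^\infty)$ and passing to the limit in the equations: by Theorem~\ref{decayThm}, $\bm{u}^{\infty}$ decays algebraically away from the origin, so in rescaled coordinates the coefficients of $\delta^2\tilde{\calE}^\a(\bm{u}^{\infty}_\a)$ and $\delta^2\tilde{\calE}^\c(\bm{u}^{\rm con})$ converge pointwise to those of the homogeneous Cauchy--Born Hessian $\delta^2\calE^\c(0)$. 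Testing against smooth test functions (identified with lattice functions via Scott--Zhang interpolation onto the atomistic mesh) and invoking Cauchy--Born consistency for smoothly varying displacements, I would conclude that $w^\a$ and $w^\c$ are each weak solutions of the constant-coefficient strongly elliptic operator $\delta^2\calE^\c(0)$ on their respective domains, and that $\nabla w^\a=\nabla w^\c$ on $\tilde\Omega_\o$. Gluing the two via this identity yields a single function harmonic (with respect to the same elliptic operator) on $\mathbb{R}^d\setminus\{0\}$; the removable-singularity result for elliptic systems together with the Liouville-type property implied by the far-field boundary condition on $\bm{w}^\c$ forces this function to be constant. Thus $\nabla w^\a \equiv \nabla w^\c \equiv 0$ on $\tilde\Omega_\o$. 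Because the convergence of $\nabla\tilde w^\a_n-\nabla\tilde w^\c_n$ to zero on $\tilde\Omega_\o$ is strong in $L^2$, the normalization $\|\nabla\tilde w^\a_n\|_{L^2(\tilde\Omega_\o)}=1$ would then pass to the limit and yield $\|\nabla w^\a\|_{L^2(\tilde\Omega_\o)}=1$, a contradiction.

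The main obstacle is the passage to the continuum limit of the discrete atomistic bilinear form $\delta^2\tilde{\calE}^\a(\bm{u}^{\infty}_\a)$. Two issues must be reconciled: first, the linearization is around a spatially inhomogeneous state $\bm{u}^{\infty}_\a$, and one must show that the resulting deviations from the homogeneous Cauchy--Born Hessian vanish after rescaling; this relies quantitatively on the decay $|\nabla I\bm{u}^{\infty}|\lesssim|x|^{-d}$ of Theorem~\ref{decayThm}. Second, since the atomistic energy is a non-local sum of finite differences, a standard weak-$H^1$ limit procedure is not directly available; here the Scott--Zhang quasi-interpolants $S_{\a,n}$ and $S_{h,n}$ of Section~\ref{reduction}, whose stability \textbf{P.3} and approximation \textbf{P.4} properties survive rescaling, play a decisive role in rewriting the discrete equations in a form amenable to weak convergence and in controlling traces on the common interfaces $\Gamma_{\rm core}$ and $\partial_\a\calL_\a$, so that the weak limits may indeed be glued along the overlap rather than merely agreeing up to unrelated constants.
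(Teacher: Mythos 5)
Your overall strategy coincides with the paper's: argue by contradiction, rescale by $\eps_n=1/R_{\core,n}$, extract weak limits, show they solve the homogeneous linearized Cauchy--Born system, glue across the overlap, and invoke a Liouville-type argument. However, there is a genuine gap at the very last step, and it is precisely the step that carries most of the difficulty. You write that, because $\nabla I\tilde w^\a_n-\nabla\tilde w^\c_n\to 0$ strongly in $L^2(\tilde\Omega_\o)$, ``the normalization $\|\nabla\tilde w^\a_n\|_{L^2(\tilde\Omega_\o)}=1$ would then pass to the limit and yield $\|\nabla w^\a\|_{L^2(\tilde\Omega_\o)}=1$.'' This does not follow. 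Strong convergence of the \emph{difference} to zero only forces the two weak limits to coincide; it does not upgrade either individual sequence to strong convergence, and the $L^2$ norm is merely weakly lower semicontinuous. A pair of identical weakly-null sequences of unit norm satisfies all of your hypotheses, has difference identically zero, and has weak limit zero --- no contradiction is produced. In other words, your argument as written only shows that the common weak limit has vanishing gradient, which is perfectly consistent with Statement~\ref{false}.

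Closing this gap is exactly the content of the paper's Step~3 (Theorem~\ref{thm:inner_converge} together with Lemmas~\ref{lem1:inner_converge} and~\ref{lem2:inner_converge}): one must prove that the inner product $(\nabla I_n\bar w^\a_n,\nabla\bar w^\c_n)_{L^2(\tilde\Omega_\o)}$ converges to the inner product of the limits. Since both factors converge only weakly, this requires establishing \emph{strong} $L^2$ convergence of one factor wherever the other is paired against it; the paper does this by splitting $\tilde\Omega_\o$ into an inner part $A_1$ and an outer part $A_2$ and proving strong convergence of $\nabla\bar w^\c_n$ on $A_2$ and of $\nabla I_n\bar w^\a_n$ on $A_1$, via Caccioppoli-type energy arguments that exploit the variational equations, the coercivity of the Hessians, and the Rellich compactness already obtained in Step~1. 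Only after the inner product is known to converge to $(\nabla\bar w^\a_0,\nabla\bar w^\c_0)=1$ can one combine weak lower semicontinuity with equality in Cauchy--Schwarz to deduce that the limits are equal and of unit norm, which is what feeds the Liouville contradiction. Your Steps~1 and~2 are essentially sound in outline (the uniform bound from the overlap is Lemma~\ref{lem:norm_equiv:reduce_to_Omega_o}, proved by an extension-plus-coercivity argument rather than a literal Caccioppoli inequality, and the passage of the discrete bilinear form to the continuum limit requires the finite-difference weak-convergence Lemma~\ref{lem:norm_equiv:main_aux} and a capacity argument at the origin, which your removable-singularity reformulation can replace), but without the strong-convergence step your proof does not reach a contradiction.
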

We prove Theorem \ref{lem:norm_equiv:desired_result} in Section ~\ref{proofSection} by using extension results  from Theorems~\ref{stein}--\ref{buren}. The latter allow us to bound solutions to the atomistic and continuum subproblems in terms of the solution on $\Omega_\o$ only.

%%%%%%
%%%%%%
\subsection{Reduction}\label{reduction}
%%%%%%
%%%%%%
Before proving Theorem~\ref{lem:norm_equiv:desired_result} in Section~\ref{proofSection}, here we show that it does indeed imply the assertion of Theorem~\ref{th:norm_equiv}. The first step is to bound solutions of the atomistic and continuum problems in terms of their values over the overlap region. To this end as well as for the proof, of Theorem~\ref{th:norm_equiv}, we argue by contradiction. Our argument involves scaled versions of~\eqref{eq:wan_def} and~\eqref{eq:wcn_def}.
We distinguish objects in the scaled domain by using a tilde accent,
 i.e. $\tilde{\calL}_{\a,n} = \epsilon_n\calL_{\a,n}$.

In each proof, we will consider sequences $R_{\rm core}^{*,n} \to \infty$ and $R_{\c,n} \to \infty$ with $R_{\c,n}/R_{\rm core}^{*,n} \to \infty$ with corresponding domains $\Omega_{\a,n}, \Omega_{\c,n}$, etc. and lattices $\calL_{\a,n}, \calL_{\c,n}$, etc.  Given $\bm{w}^\a_n$ and $\bm{w}^\c_n$, we will then set $\eps_n=1/R_{\core,n}$, and scale by $\eps_n$ to obtain functions $\tilde{\bm{w}}^\c_n(\eps_n x) = \eps_n \bm{w}^\c_n(x)$ and $\tilde{\bm{w}}^\a_n(\eps_n x) = \eps_n \bm{w}^\a_n(x)$.  Thus, each $\tilde{\bm{w}}^\a_n$ is defined on $\tilde{\calL}_{\a,n} = \epsilon_n\calL_{\a,n}$.  Note also that the domains $\tilde{\Omega}_\core := \epsilon_n\Omega_{\core,n}$ and $\tilde{\Omega}_{\a}$ have fixed radii of $1$ and $\psi_\a$ respectively.
The domains in the sequence $\{\tilde{\Omega}_{\c,n}\}$ have fixed inner boundaries but their outer boundaries tend to infinity since $R_{\c,n}/R_{\rm core}^{*,n} \to \infty$.  Because each $\bm{w}^\c_n$ is constant on the outer boundary of $\Omega_{\c,n}$, we may extend each of them outside of this region to infinity to obtain scaled functions $\tilde{\bm{w}}^\c_n$ defined on $\tilde{\Omega}_\c := \mathbb{R}^n\backslash\tilde{\Omega}_\core$.  Using this notation, we also have $\tilde{\calL}_n := \epsilon_n\calL$.

The functions $\tilde{\bm{w}}^\a_n$ and $\tilde{\bm{w}}^\c_n$ now satisfy scaled versions of~\eqref{eq:wan_def} and~\eqref{eq:wcn_def} in which the displacement spaces are parametrized by $n$ in the obvious manner: $\tilde{\bm{\calU}}^\a_{n}, \tilde{\bm{\calU}}^\a_{0,n}, \tilde{\bm{\calU}}^\c_{h,n}$, and $\tilde{\bm{\calU}}^\c_{h,0,n}$.   For clarity, we introduce several new notations.
%Recall that the site potential is $V_\xi: (\mathbb{R}^d)^{\calR} \to \mathbb{R}$.
We use $V_{\xi,\rho}$ to denote the partial derivative of $V_\xi$ with respect to the finite difference $D_\rho u$ and $V_{\xi, \rho \tau}$ to denote second partial derivatives.  We further define scaled finite differences and finite difference stencils for $\xi \in \tilde{\calL}_{\a,n}$ and $\rho \in \calR$ by
$$
D_{\epsilon_n\rho}\tilde{\bm{u}}(\xi) = \frac{\tilde{\bm{u}}(\xi + \epsilon_n\rho) - \tilde{\bm{u}}(\xi)}{\epsilon_n}
\quad\mbox{and}\quad
D_{\epsilon_n}\tilde{\bm{u}}(\xi) = \left(D_{\epsilon_n\rho}\tilde{\bm{u}}(\xi)\right)_{\rho \in \calR}.
$$
The norm~\eqref{equivNorm} scales to
\begin{equation*}\label{equivNorm1}
\|D_{\epsilon_n}\tilde{\bm{v}}\|_{\ell^2_{\epsilon_n}(\tilde{\calL}^{\circ\circ}_{\a,n})}^2 = \sum_{\xi \in  \tilde{\calL}^{\circ\circ}_{\a,n}} \sup_{\rho \in \calR}|D_{\epsilon_n\rho}\tilde{\bm{v}}|^2\epsilon_n^d,
\end{equation*}
for which there continues to hold
\[
 \|D_{\epsilon_n}\tilde{\bm{v}}\|_{\ell^2_{\epsilon_n}(\tilde{\calL}^{\circ\circ}_{\a,n})} \lesssim \|\nabla I_n \tilde{\bm{v}}\|_{L^2(\tilde{\Omega}_{\a,n})}.
\]

The function $\tilde{\bm{w}}^\a_n$ satisfies the following scaled variational equation:
\begin{align}
& \sum_{\xi \in \tilde{\calL}_{\a,n}^{\circ\circ}} \sum_{\rho, \tau \in \calR } V_{\xi, \rho\tau}(D_{\epsilon_n}\tilde{\bm{u}}^{\infty}_{\a,n}(\xi)) \cdot D_{\epsilon_n\rho}\tilde{\bm{w}}^\a_n, D_{\epsilon_n\tau} \tilde{\bm{v}}^\a \epsilon^d =~ 0 \quad\forall \bm{v}^\a \in \tilde{\bm{\calU}}^\a_{0,n}, \nonumber \\
&~\equiv \sum_{\xi \in \tilde{\calL}_{\a,n}^{\circ\circ}} V_\xi''(D_{\epsilon_n}\tilde{\bm{u}}^{\infty}_{\a,n}(\xi)) \!:\! D_{\epsilon_n}\tilde{\bm{w}}^\a_n : D_{\epsilon_n}\tilde{\bm{v}}^\a \epsilon_n^d =~ 0 \quad\forall \tilde{\bm{v}}^\a \in \tilde{\bm{\mathcal{U}}}^\a_{0,n}. \label{eq:wan_def_scale}
\end{align}

It will be convenient to express (\ref{eq:wan_def_scale}) as an integral for those specific $\tilde{\bm{v}}^\a$ for which $D_{\epsilon_n}\tilde{\bm{v}}^\a$ vanishes on $\tilde{\calL}_{\a,n}\backslash\tilde{\calL}_{\a,n}^{\circ\circ}$ and where $V_\xi \neq V$.  This requires an additional tool.  The cell, $\varsigma_\xi$, based on $\xi \in \tilde{\calL}_{n}$ is
\[
\varsigma_\xi := \left\{ x \in \mathbb{R}^d:  0 \leq x_i-\xi_i < \epsilon_n, i=1,\ldots,d\right\}.
\]
Let $\bar{I}_n$ be a piecewise constant interpolation operator defined by
\[
\bar{I}_n f(x) :=  f(\xi) \quad \mbox{where} \, x \in \varsigma_\xi.
\]
Then for such a $\tilde{\bm{v}}^\a$,
\begin{equation}\label{intForm}
\begin{split}
&\sum_{\xi \in \tilde{\calL}_{\a,n}^{\circ\circ}} V_\xi''( D_{\epsilon_n}\tilde{\bm{u}}^{\infty}_{\a,n}(\xi)) \!:\! D_{\epsilon_n}\tilde{\bm{w}}^\a_n : D_{\epsilon_n}\tilde{\bm{v}}^\a \epsilon_n^d
=~ \sum_{\xi \in \tilde{\calL}_{\a,n}^{\circ\circ}} V_\xi''( D_{\epsilon_n}\tilde{\bm{u}}^{\infty}_{\a,n}(\xi)) \!:\! D_{\epsilon_n}\tilde{\bm{w}}^\a_n : D_{\epsilon_n}\tilde{\bm{v}}^\a \, {\vol}(\varsigma_\xi \cap \tilde{\Omega}_\a) \\
&\quad=~ \sum_{\xi \in \tilde{\calL}_{\a,n}} V_\xi''( D_{\epsilon_n}\tilde{\bm{u}}^{\infty}_{\a,n}(\xi)) \!:\! D_{\epsilon_n}\tilde{\bm{w}}^\a_n : D_{\epsilon_n}\tilde{\bm{v}}^\a \, {\vol}(\varsigma_\xi \cap \tilde{\Omega}_\a) \\
&\quad=~ \int_{\tilde{\Omega}_{\a}} \bar{I}_n  V''( D_{\epsilon_n}\tilde{\bm{u}}^{\infty}_{\a,n}) \!:\! \bar{I}_n D_{\epsilon_n}\tilde{\bm{w}}^\a_n : \bar{I}_nD_{\epsilon_n}\tilde{\bm{v}}^\a \, dx \\
&\quad=~ \int_{\tilde{\Omega}_{\a}} \bar{I}_n  V''( D_{\epsilon_n}\tilde{\bm{u}}^{\infty}_{\a,n}) \!:\! \bar{I}_n D_{\epsilon_n}\tilde{\bm{w}}^\a_n : \bar{I}_nD_{\epsilon_n}\tilde{\bm{v}}^\a \, dx.
\end{split}
\end{equation}
Observe that we have replaced $V_\xi''$ with $V''$ in the integral since $D_{\epsilon_n}\tilde{\bm{v}}^\a$ is assumed to vanish where $V \neq V_\xi$.

Similarly, $\tilde{\bm{w}}^\c_n$ satisfies an analogous scaled version of~\eqref{eq:wcn_def_scale}:
\begin{equation} \int_{\tilde{\Omega}_{\c,n}} \sum_{\rho, \tau \in \mathcal{R} } \langle V''_{,\rho\tau}\left(\nabla \tilde{\bm{u}}^{\rm con}_n \calR \right)\nabla_\rho \tilde{\bm{w}}^\c_n,\nabla_{\tau} \tilde{\bm{v}}^\c\rangle  \, dx
%=~ 0\quad \forall\tilde{\bm{v}}^\c \in \tilde{\bm{\mathcal{U}}}^\c_{h,0,n} \nonumber \\
\equiv \int_{\tilde{\Omega}_{\c,n}} W''(\nabla \tilde{\bm{u}}^{\rm con}_n) : \nabla \tilde{\bm{w}}^\c_n : \nabla \tilde{\bm{v}}^\c_n \, dx=~ 0 \quad \forall \tilde{\bm{v}}^\c \in \tilde{\bm{\mathcal{U}}}^\c_{h,0,n} . \label{eq:wcn_def_scale}
\end{equation}
%Further define the fourth order tensor, $\bbC$, by the duality pairings
%\begin{align*}
%&\bbC : \mG : \mF := \sum_{\rho, \tau \in \mathcal{R} } \langle V''_{,\rho\tau}( 0) \mG\rho,\mF\tau\rangle \\
%&\bbC : D_{\epsilon_n}v : D_{\epsilon_n}w :=  \sum_{\rho, \tau \in \mathcal{R} } \langle V''_{,\rho\tau}( 0) D_{\epsilon_n\rho}v,D_{\epsilon_n\tau}w\rangle.
%\end{align*}My version:
	Further define the fourth order tensor, $\bbC = W''(\bm{0})$ and note the relation
\[
	(\bbC : \mG) : \mF := \sum_{\rho, \tau \in \mathcal{R} } V''_{,\rho\tau}(\bm{0}) \mG\rho \cdot \mF\tau = (V''(\bm{0}):(\mF\calR)):(\mG\calR) \quad \forall \mG,\mF \in \mathbb{R}^{d \times d},
\]
where $\mF\calR = (\mF\rho)_{\rho \in \calR}$.

The next lemma bounds solutions of the atomistic and continuum problems in terms of their values over the overlap region.

\begin{lemma}\label{lem:norm_equiv:reduce_to_Omega_o}
Suppose that  $\bm{w}^\a$ and $\bm{w}^\c$  are such that equations~\eqref{eq:wan_def} and~\eqref{eq:wcn_def} hold.
Then, there exists $R_{\rm core}^* > 0$ such that
\begin{align}
\|\nabla I\bm{w}^\a\|_{L^2(\Omega_\a)}
\lesssim~&
\|\nabla I \bm{w}^\a\|_{L^2(\Omega_\o)}
\qquad\text{and} \label{camel1}
\\
\|\nabla \bm{w}^\c\|_{L^2(\Omega_\c)}
\lesssim~&
\|\nabla \bm{w}^\c\|_{L^2(\Omega_\o)}\label{camel2},
\end{align}
for all domains $\Omega_\a, \Omega_\c$ and continuum meshes $\calT_h$ constructed according to the guidelines of Section~\ref{sec:approximation} with $R_{\rm core} \geq R_{\rm core}^*$.
\end{lemma}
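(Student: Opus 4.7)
My plan is to prove both \eqref{camel1} and \eqref{camel2} via a unified decomposition strategy: write $\bm{w}^\a$ (resp.\ $\bm{w}^\c$) as the sum of a ``boundary-lifting'' piece supported in $\Omega_\o$ and an ``interior'' piece lying in $\bm{\calU}^\a_0$ (resp.\ $\bm{\calU}^\c_{h,0}$). The homogeneous variational equation then couples the two pieces; coercivity of the restricted Hessian controls the interior piece by the boundary piece; Poincar\'e on the annular overlap region (of width $\sim R_{\rm core}$) together with the bound $|\nabla\eta|\lesssim R_{\rm core}^{-1}$ on the cutoff controls the boundary piece by $\|\nabla\bm{w}\|_{L^2(\Omega_\o)}$.

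For \eqref{camel1}, I would choose a cutoff $\eta$ with $\eta\equiv 1$ on $\partial_\a\calL_\a$ together with an inward collar of width $\ge r_{\rm cut}$ (to accommodate the finite interaction range $\calR$), $\eta\equiv 0$ on $\Omega_{\rm core}$, $|\nabla\eta|\lesssim R_{\rm core}^{-1}$, and $\supp\nabla\eta\subset\Omega_\o$; the geometric condition $(\psi_\a-1)r_{\rm core}\ge 4r_{\rm cut}$ leaves enough room for such a construction. Letting $\bar{\bm{w}}^\a$ denote the average of $I\bm{w}^\a$ over $\Omega_\o$, set $\bm{w}^\a_{\rm bd}(\xi):=\eta(\xi)\,(\bm{w}^\a(\xi)-\bar{\bm{w}}^\a)$ and $\bm{w}^\a_{\rm in}:=\bm{w}^\a-\bm{w}^\a_{\rm bd}$, so that $\bm{w}^\a_{\rm in}|_{\partial_\a\calL_\a}\equiv\bar{\bm{w}}^\a$, hence $\bm{w}^\a_{\rm in}\in\bm{\calU}^\a_0$ is an admissible test function in \eqref{eq:wan_def}. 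Bilinearity then gives
\[
\langle \delta^2 \tilde{\calE}^\a(\bm{u}^{\infty}_\a)\bm{w}^\a_{\rm in},\bm{w}^\a_{\rm in}\rangle \;=\; -\langle \delta^2 \tilde{\calE}^\a(\bm{u}^{\infty}_\a)\bm{w}^\a_{\rm bd},\bm{w}^\a_{\rm in}\rangle.
\]
The coercivity \eqref{atLiftingStable} bounds the left-hand side below by $\gamma_\a\|\nabla I\bm{w}^\a_{\rm in}\|^2_{L^2(\Omega_\a)}$, while the bounded second variation (Theorem~\ref{atLippy}) and Cauchy--Schwarz bound the right-hand side by $C\|\nabla I\bm{w}^\a_{\rm bd}\|_{L^2(\Omega_\a)}\|\nabla I\bm{w}^\a_{\rm in}\|_{L^2(\Omega_\a)}$, yielding $\|\nabla I\bm{w}^\a_{\rm in}\|_{L^2(\Omega_\a)}\lesssim\|\nabla I\bm{w}^\a_{\rm bd}\|_{L^2(\Omega_\a)}$. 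A discrete Leibniz expansion $D_\rho(\eta\bm{w}^\a)(\xi)=\eta(\xi+\rho)D_\rho\bm{w}^\a(\xi)+D_\rho\eta(\xi)(\bm{w}^\a(\xi)-\bar{\bm{w}}^\a)$, together with $|\nabla\eta|\lesssim R_{\rm core}^{-1}$, the equivalence \eqref{equivNorm}, and Poincar\'e on the overlap annulus, then gives $\|\nabla I\bm{w}^\a_{\rm bd}\|_{L^2(\Omega_\a)}\lesssim\|\nabla I\bm{w}^\a\|_{L^2(\Omega_\o)}$, and the triangle inequality closes \eqref{camel1}.

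The continuum bound \eqref{camel2} follows by the mirror argument with inner/outer boundaries swapped: take $\eta\in\mathcal{P}^1(\calT_h)$ with $\eta\equiv 1$ at nodes on $\Gamma_{\rm core}$, $\eta\equiv 0$ at nodes in $\Omega_\c\setminus\Omega_\o$ (in particular on $\Gamma_\c$), and $|\nabla\eta|\lesssim R_{\rm core}^{-1}$; set $\bm{w}^\c_{\rm bd}:=I_h(\eta(\bm{w}^\c-\bar{\bm{w}}^\c))$ and $\bm{w}^\c_{\rm in}:=\bm{w}^\c-\bm{w}^\c_{\rm bd}$, and verify $\bm{w}^\c_{\rm in}\in\bm{\calU}^\c_{h,0}$ by nodal evaluation on $\Gamma_{\rm core}$ and $\Gamma_\c$. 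Testing \eqref{eq:wcn_def} with $\bm{w}^\c_{\rm in}$ and applying the continuum coercivity \eqref{contStability} in place of the atomistic one gives $\|\nabla\bm{w}^\c_{\rm in}\|_{L^2(\Omega_\c)}\lesssim\|\nabla\bm{w}^\c_{\rm bd}\|_{L^2(\Omega_\c)}$, and the product rule, Poincar\'e, and the interpolation estimates {\bf P.3}--{\bf P.4} (using the mesh-compatibility assumption {\bf O.1}) bound the right-hand side by $\|\nabla\bm{w}^\c\|_{L^2(\Omega_\o)}$. The main technical obstacle is the bookkeeping of these cross terms: in the atomistic case the discrete Leibniz rule entangles $\eta$ at shifted sites $\xi+\rho$, which the collar construction of $\eta$ is designed to absorb, and in the continuum case the interpolation error $\|I_h(\eta(\bm{w}^\c-\bar{\bm{w}}^\c))-\eta(\bm{w}^\c-\bar{\bm{w}}^\c)\|_{H^1}$ must be dominated by the leading coercivity terms, which is ensured by taking $R_{\rm core}\ge R_{\rm core}^*$ sufficiently large so that the remainders remain controlled by $\gamma_\a$ and $\gamma'_\c$.
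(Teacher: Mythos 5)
Your argument is correct, but it takes a genuinely different route from the paper's. The paper proves the lemma by contradiction on rescaled domains: it extends $I\bm{w}^\a|_{\Omega_\o}$ to all of $\tilde{\Omega}_\a$ with the seminorm-preserving extension operator of Theorem~\ref{buren}, pulls the extension back onto the atomistic (resp.\ continuum) mesh with a Scott--Zhang interpolant --- which preserves the trace on $\partial_\a\calL_\a$ because $I\bm{w}^\a$ is already piecewise linear there --- and observes that the difference lies in $\bm{\calU}^\a_0$, solves the linearized equation with data whose energy is bounded by $C(\tilde{\Omega}_\o)\|\nabla I\bm{w}^\a\|_{L^2(\tilde{\Omega}_\o)}$, and is therefore controlled by coercivity. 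Your cutoff decomposition replaces the extension-plus-quasi-interpolation step by the explicit lifting $\bm{w}^\a_{\rm bd}=\eta(\bm{w}^\a-\bar{\bm{w}}^\a)$, so the two proofs share the same skeleton (split off a boundary lifting with energy controlled by the overlap norm, test the equation with the remaining $\bm{\calU}^\a_0$ or $\bm{\calU}^\c_{h,0}$ piece, apply \eqref{atLiftingStable} resp.\ \eqref{contStability}), but yours is more elementary: it avoids the appendix extension theorems, the Scott--Zhang operator, and the contradiction/rescaling framing entirely. The price is the bookkeeping you already flag: the discrete Leibniz rule forces the inner collar of width $r_{\rm cut}$ so that no difference $D_\rho\bm{w}^\a(\xi)$ with $\xi\in\calL_{\rm core}$ survives, and the whole estimate hinges on the cancellation of $|\nabla\eta|\lesssim R_{\rm core}^{-1}$ against the Poincar\'e constant of the annulus, which scales like $R_{\rm core}$ precisely because the overlap width is proportional to $R_{\rm core}$ (guaranteed by $\Omega_\a=\psi_\a\Omega_{\rm core}$) and the annulus is connected for $d\geq 2$; the paper's extension operator hides this cancellation inside the scale invariance of the $H^1$ seminorm. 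Two details worth making explicit if you write this up: the bound $|\langle\delta^2\tilde{\calE}^\a(\bm{u}^{\infty}_\a)\bm{w}^\a_{\rm bd},\bm{w}^\a_{\rm in}\rangle|\lesssim\|\nabla I\bm{w}^\a_{\rm bd}\|_{L^2(\Omega_\a)}\|\nabla I\bm{w}^\a_{\rm in}\|_{L^2(\Omega_\a)}$ requires a local version of \eqref{equivNorm} valid for lattice functions that are \emph{not} constant on $\partial_\a\calL_\a$ (this holds, since each $D_\rho$ at $\xi\in\calL^{\circ\circ}_\a$ is controlled by $\nabla I$ on the simplices meeting the segment from $\xi$ to $\xi+\rho$), and only the continuum half actually needs the threshold $R_{\rm core}\geq R_{\rm core}^*$, through \eqref{contStability}.
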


\begin{proof}
%\commentdao{The next two paragraphs are two different variants of the same thing.  The first option is the literal negation of the statement leading to a sequence statement.  The second option goes directly to the sequences.  Please read these and let me know if you think the second is clear enough without the negation or if the first is more logical/clear.}

%Assume to the contrary that for all $R_{\rm core}^* > 0$, there exist domains $\Omega_\a, \Omega_\c$, a continuum mesh $\calT_h$ satisfying the requirements of Section~\ref{sec:approximation} such that $R_{\rm core} \geq R_{\rm core}^*$, and functions $\bm{w}^\a$ and $\bm{w}^\c$ satisfying~\eqref{eq:wan_def} and~\eqref{eq:wcn_def} but such that
%\begin{align*}
%\frac{\|\nabla I\bm{w}^\a\|_{L^2(\Omega_\a)}}{\|\nabla I \bm{w}^\a\|_{L^2(\Omega_\o)}} \geq~ n,
%\qquad
%\frac{\|\nabla \bm{w}^\c\|_{L^2(\Omega_\c)}}{\|\nabla \bm{w}^\c\|_{L^2(\Omega_\o)}} \geq~ n.
%\end{align*}
%In particular, by taking a sequence $R_{\rm core}^{*,n} \to \infty$, there exist sequences $R_{\rm core,n} \geq R_{\rm core}^{*,n}$, $R_{\c,n}$, $\Omega_{\a,n}, \Omega_{\c,n}$,  $\calT_{h,n}$, $\bm{w}^\c_n$ and $\bm{w}^\a_n$, such that $R_{\core,n}\to\infty$, $R_{\c,n} \to \infty$, $R_{\c,n}/R_{\rm core,n} = R_{\rm core,n}^\kappa \to \infty$ with
%\begin{align}
%\frac{\|\nabla I_n\bm{w}^\a_n\|_{L^2(\Omega_\a)}}{\|\nabla I_n \bm{w}^\a_n\|_{L^2(\Omega_{\o,n})}} \to~ \infty,
%\qquad
%\frac{\|\nabla \bm{w}^\c_n\|_{L^2(\Omega_\c)}}{\|\nabla \bm{w}^\c_n\|_{L^2(\Omega_{\o,n})}} \to~ \infty. \label{fish1}
%\end{align}

Assume that (\ref{camel1})--(\ref{camel2}) do not hold. Then, there exists a sequence $R_{\rm core}^{*,n} \to \infty$, with corresponding sequences $R_{\rm core,n} \geq R_{\rm core}^{*,n}$, $R_{\c,n}$, $\Omega_{\a,n}, \Omega_{\c,n}$,  $\calT_{h,n}$, $\bm{w}^\c_n$ and $\bm{w}^\a_n$, such that $R_{\core,n}\to\infty$, $R_{\c,n} \to \infty$, $R_{\c,n}/R_{\rm core,n} = R_{\rm core,n}^\kappa \to \infty$ with
\begin{align}
\frac{\|\nabla I_n\bm{w}^\a_n\|_{L^2(\Omega_\a,n)}}{\|\nabla I_n \bm{w}^\a_n\|_{L^2(\Omega_{\o,n})}} \to~ \infty,
\qquad
\frac{\|\nabla \bm{w}^\c_n\|_{L^2(\Omega_\c,n)}}{\|\nabla \bm{w}^\c_n\|_{L^2(\Omega_{\o,n})}} \to~ \infty. \label{fishy1}
\end{align}
After scaling the lattice, the domains, and the functions by $\epsilon_n := \frac{1}{R_{\rm core,n}}$ we find from (\ref{fishy1}) that
\begin{equation}\label{fish3}
\frac{\|\nabla I_n\tilde{\bm{w}}^\a_n\|_{L^2(\tilde{\Omega}_{\a})}}{\|\nabla I_n \tilde{\bm{w}}^\a_n\|_{L^2(\tilde{\Omega}_{\o})}} \to \infty.
\end{equation}
 Extend $I_n\tilde{\bm{w}}^\a_n|_{\tilde{\Omega}_\o}$ to $\mathbb{R}^d$ using the extension operator $R$ from Theorem~\ref{buren}.  Then we have
\[
\|\nabla (R(I_n\tilde{\bm{w}}^\a_n|_{\tilde{\Omega}_\o}))\|_{L^2{(\tilde{\Omega}_{\a})}} \leq~ C(\tilde{\Omega}_{\o}) \|\nabla I_n\tilde{\bm{w}}^\a_n\|_{L^2(\tilde{\Omega}_\o)}.
\]
Moreover, $R(I_n\tilde{\bm{w}}^\a_n|_{\tilde{\Omega}_\o})=I_n\tilde{\bm{w}}^\a_n$ on $\partial_\a\tilde{\mathcal{L}}_\a$. Let $S_{\a,n}$ be the Scott-Zhang interpolant operator from $H^1(\tilde{\Omega}_\a)$ to
\[
\left\{u \in {\rm C}(\tilde{\Omega}_\a) : u|_{\tau} \in \mathcal{P}_1(\tau) \quad \forall \tau \in \tilde{\mathcal{T}}_{\a,n}\right\}.
\]
Then $S_{\a,n} R(I_n\tilde{\bm{w}}^\a_n|_{\tilde{\Omega}_\o})$ defines an atomistic function in $\bm{\mathcal{U}}^\a_n$,  which is equal to $\tilde{\bm{w}}^\a_n$ on $\partial_\a \tilde{\calL}_{\a,n}$ since $R(I_n\tilde{\bm{w}}^\a_n|_{\tilde{\Omega}_\o})$ is piecewise linear on $\tilde{\Omega}_\o$ and due to the projection property of $S_{\a,n}$.  This implies that $\tilde{\bm{z}}^\a_n := S_{\a,n}R(I_n\tilde{\bm{w}}^\a_n|_{\tilde{\Omega}_\o})|_{\tilde{\Omega}_\a} - \tilde{\bm{w}}^\a_n \in \tilde{\bm{\mathcal{U}}}^\a_{0,n}$ and  that $\tilde{\bm{z}}^\a_n$ solves the problem
\[
\<\delta^2 \tilde{\mathcal{E}}^\a_n(\tilde{\bm{u}}^{\infty}_{\a,n}) \tilde{\bm{z}}^\a_n, \tilde{\bm{v}}^\a_n\> = \<\delta^2 \tilde{\mathcal{E}}^\a(\bm{u}^{\infty}_\a) S_{\a,n} R(I_n\tilde{\bm{w}}^\a_n|_{\tilde{\Omega}_\o})|_{\tilde{\Omega}_\a}, \tilde{\bm{v}}^\a_n\> \quad\forall \tilde{\bm{v}}^\a_n \in \tilde{\bm{\mathcal{U}}}^\a_{0,n}.
\]
Thus, taking $\tilde{\bm{v}}^\a_n = \tilde{\bm{z}}^\a_n$, using~\eqref{atLiftingStable}, and the stability of the Scott-Zhang interpolant (see {\bf P.3} or~\cite[Theorem $4.8.16$]{brenner2008}), we see that
$$
\|\nabla I_n\tilde{\bm{z}}^\a_n\|_{L^2(\tilde{\Omega}_\a)} \lesssim \|\nabla S_{\a,n} R(I_n\tilde{\bm{w}}^\a_n|_{\tilde{\Omega}_\o})|_{\tilde{\Omega}_\a} \|_{L^2(\tilde{\Omega}_\a)} \lesssim \|\nabla R(I_n\tilde{\bm{w}}^\a_n|_{\tilde{\Omega}_\o}) \|_{L^2(\tilde{\Omega}_\a)}
\leq C(\tilde{\Omega}_\o) \|\nabla I_n\tilde{\bm{w}}^\a_n\|_{L^2(\tilde{\Omega}_\o)}.
$$
This and the definition of $\bm{z}^\a_n$ imply
\[
\|\nabla S_{\a,n} R(I_n\tilde{\bm{w}}^\a_n|_{\tilde{\Omega}_\o})|_{\tilde{\Omega}_\a} - \nabla I_n\tilde{\bm{w}}^\a_n\|_{L^2(\tilde{\Omega}_\a)} \lesssim~  C(\tilde{\Omega}_\o) \|\nabla I_n\tilde{\bm{w}}^\a_n\|_{L^2(\tilde{\Omega}_\o)},
\]
which further leads to
\[
\|\nabla I_n\tilde{\bm{w}}^\a_n\|_{L^2(\tilde{\Omega}_\a)} \lesssim~ C(\tilde{\Omega}_\o) \|\nabla I_n\tilde{\bm{w}}^\a_n\|_{L^2(\tilde{\Omega}_\o)} + \|\nabla R(I_n\tilde{\bm{w}}^\a_n|_{\tilde{\Omega}_\o})\|_{L^2(\tilde{\Omega}_\a)} \leq~ 2C(\tilde{\Omega}_\o) \|\nabla I_n\tilde{\bm{w}}^\a_n\|_{L^2(\tilde{\Omega}_\o)},
\]
a contradiction to~\eqref{fish3}. This establishes~\eqref{camel1}.

A similar argument utilizing the Scott-Zhang interpolant on $\tilde{\Omega_\c}$ with mesh  $\tilde{\calT}_{h,n}$ yields~\eqref{camel2}.
\end{proof}

%\begin{remark}
%Note that the implied constants in the above proof only depend upon the domain $\Omega_\o$ through the Poincare constant and the constants involved in Stein's Extension Theorem.  We also require a shape regularity assumption on the domains $\Omega_\a$ and $\Omega_\c$ so that the constants $N,M$, and $\epsilon$ are invariant as the diameters of these domains increase.  In the proceeding, we will use a limiting process, but there will be a fixed domain $\tilde{\Omega}_\o$ in which these constants will come into play.  As the domain is fixed, these constants remain fixed.
%\end{remark}

Finally, we show that Theorem~\ref{th:norm_equiv} is a consequence of Theorem~\ref{lem:norm_equiv:desired_result}

\begin{proof}[Proof of Theorem~\ref{th:norm_equiv}]
According to Lemma~\ref{lem:norm_equiv:reduce_to_Omega_o}, if $\bm{w}^\a$ and $\bm{w}^\c$ satisfy equations~\eqref{eq:wan_def} and~\eqref{eq:wcn_def} then,
\[
\|\nabla (I \bm{w}^\a)\|_{L^2(\Omega_\a)}^2+\|\nabla \bm{w}^\c\|_{L^2(\Omega_\c)}^2 \lesssim~ \|\nabla (I \bm{w}^\a)\|_{L^2(\Omega_\o)}^2+\|\nabla \bm{w}^\c\|_{L^2(\Omega_\o)}^2.
\]
Consequently, to prove~\eqref{eq:norm_equiv_alt} in Theorem~\ref{th:norm_equiv} it suffices to show that
%\begin{equation}\label{reducedNorm}
$$
\|\nabla (I \bm{w}^\a)\|_{L^2(\Omega_\o)}^2+\|\nabla \bm{w}^\c\|_{L^2(\Omega_\o)}^2
\lesssim
\|\nabla(I \bm{w}^\a - \bm{w}^\c)\|_{L^2(\Omega_\o)}^2.
$$
%\end{equation}
This result is a direct consequence of Theorem~\ref{lem:norm_equiv:desired_result} since
%Lemma~\ref{lem:norm_equiv:reduce_to_Omega_o} immediately implies that inequality~\eqref{eq:norm_equiv_alt} of Theorem~\ref{th:norm_equiv} will be true provided
%\begin{equation}\label{reducedNorm}
%\|\nabla (I \bm{w}^\a)\|_{L^2(\Omega_\o)}^2+\|\nabla \bm{w}^\c\|_{L^2(\Omega_\o)}^2
%\lesssim
%\|\nabla(I \bm{w}^\a - \bm{w}^\c)\|_{L^2(\Omega_\o)}^2,
%\end{equation}
%for all $\bm{w}^\a$ and $\bm{w}^\c$ such that equations~\eqref{eq:wan_def} and~\eqref{eq:wcn_def} hold since lemma~\ref{lem:norm_equiv:reduce_to_Omega_o} shows
%\[
%\|\nabla (I \bm{w}^\a)\|_{L^2(\Omega_\a)}^2+\|\nabla \bm{w}^\c\|_{L^2(\Omega_\c)}^2 \lesssim~ \|\nabla (I \bm{w}^\a)\|_{L^2(\Omega_\o)}^2+\|\nabla \bm{w}^\c\|_{L^2(\Omega_\o)}^2.
%\]
\begin{align*}
&\|\nabla(I \bm{w}^\a - \bm{w}^\c)\|_{L^2(\Omega_\o)}^2
~=~ \|\nabla I \bm{w}^\a\|_{L^2(\Omega_\o)}^2 + \|\nabla \bm{w}^\c\|_{L^2(\Omega_\o)}^2 - 2 \left(\nabla I \bm{w}^\a,\nabla \bm{w}^\c \right)_{L^2(\Omega_\o)} \\
&~\geq~  \|\nabla I \bm{w}^\a\|_{L^2(\Omega_\o)}^2 + \|\nabla \bm{w}^\c\|_{L^2(\Omega_\o)}^2
- 2 c \|\nabla I \bm{w}^\a\|_{L^2(\Omega_\o)}\|\nabla \bm{w}^\c\|_{L^2(\Omega_\o)} \\
&~\geq~  \|\nabla I \bm{w}^\a\|_{L^2(\Omega_\o)}^2 + \|\nabla \bm{w}^\c\|_{L^2(\Omega_\o)}^2
- c\|\nabla I \bm{w}^\a\|_{L^2(\Omega_\o)}^2 -c\|\nabla \bm{w}^\c\|_{L^2(\Omega_\o)}^2 \\
&~=~ (1-c) \big(\|\nabla I \bm{w}^\a\|_{L^2(\Omega_\o)}^2 + \|\nabla \bm{w}^\c\|_{L^2(\Omega_\o)}^2\big).
\end{align*}
\end{proof}

It remains to prove Theorem~\ref{lem:norm_equiv:desired_result}, and for clarity we break the proof into several intermediate steps.

%%%%%%
%%%%%%
\subsection{Proof of Theorem~\ref{lem:norm_equiv:desired_result}}\label{proofSection}
%%%%%%
%%%%%%

%\begin{comment}
%For convenience, we restate Theorem~\ref{lem:norm_equiv:desired_result}:
%\begin{theorem}\label{prime}
%For $R_\a$ and $R_\c$ large enough with $\mathcal{T}_h$ a mesh satisfying minimum angle requirement,
%\begin{equation}\label{eq:norm_equiv:desired_result0}
%\inf_{\dot{w}^\a, \dot{w}^\c\ne 0} \frac{\left(\nabla I \dot{w}^\a, \nabla \dot{w}^\c\right)}{\|\nabla (I \dot{w}^\a)\|_{L^2(\Omega_\o)} \|\nabla \dot{w}^\c\|_{L^2(\Omega_\o)}} <1,
%\end{equation}
%for all $(\dot{w}^\a, \dot{w}^\c) \in \dot{\mathcal{U}}^\a \times \dot{\mathcal{U}}^\c_h$ such that
%\begin{align*}
%& \<\delta^2 \tilde{\mathcal{E}}^\a(\dot{u}^{\infty}_\a) \dot{w}^\a, \dot{v}^\a\> = 0 \quad\forall \dot{v}^\a \in \dot{\mathcal{U}}^\a_0,
%\\
%& \<\delta^2 \tilde{\mathcal{E}}^\c(\Pi_h \dot{u}^{\infty}) \dot{w}^\c, \dot{v}^\c\> = 0 \quad \forall \dot{v}^\c \in \dot{\mathcal{U}}^\c_{h,0}.
%\end{align*}
%\end{theorem}
%\end{comment}

%
The proof is by contradiction so we start with the following from which we aim to derive a contradiction.

\begin{statement}\label{false}
There exist sequences $R_{\rm core}^{*,n} \to \infty$, $R_{\core,n}\to\infty$, $R_{\c,n} \to \infty$, $R_{\c,n}/R_{\rm core,n} \to \infty$; a corresponding sequence of grids $\mathcal{T}_{h,n}$ with a minimum angle at least $\beta$; and corresponding sequences $\bm{w}^\c_n$, $\bm{w}^\a_n$ satisfying
\begin{align*}
& \<\delta^2 \tilde{\mathcal{E}}^\a(\bm{u}^{\infty}_\a) \bm{w}^\a, \bm{v}^\a\> = 0 \quad\forall \bm{v}^\a \in \bm{\mathcal{U}}^\a_0,
\\
& \<\delta^2 \tilde{\mathcal{E}}^\c( \bm{u}^{\rm con}) \bm{w}^\c, \bm{v}^\c\> = 0 \quad \forall \bm{v}^\c \in \bm{\mathcal{U}}^\c_{h,0},
\end{align*}
such that
\begin{equation}\label{contradiction}
\frac{\left(\nabla I \bm{w}^\a_n, \nabla \bm{w}^\c_n\right)}{\|\nabla (I \bm{w}^\a_n)\|_{L^2(\Omega_\o)} \|\nabla \bm{w}^\c_n\|_{L^2(\Omega_\o)}} \to 1.
\end{equation}
\end{statement}

We will show~\eqref{contradiction} yields a contradiction in four steps.  In the first step, we will again scale the lattice by $\eps_n=1/R_{\core,n}$ to define sequences of functions $\tilde{\bm{w}}^\a_n$ having a common domain of definition and $\tilde{\bm{w}}^\c_n$ having a common domain of definition.  This will allow us to extract weak limits of these sequences.  The second step will show these limits satisfy the homogeneous Cauchy-Born equation. In the third step, we show weak convergence, combined with satisfying atomistic and finite element equations, implies the limit and inner product commute. This will yield a contradiction in the final, fourth step of the proof.

\subsubsection*{Step 1:}
Recall that we use the tilde accent for objects on the scaled domains. Let $I_n$ be the piecewise interpolant onto the lattice $\tilde{\calL}_n$, and normalize $\tilde{\bm{w}}^\a_n$ and $\tilde{\bm{w}}^\c_n$ to functions $\bar{\bm{w}}^\a_n$ and $\bar{\bm{w}}^\c_n$ such that
\begin{equation*}\label{gradProperty}
\|\nabla (I_n \bar{\bm{w}}^\a_n)\|_{L^2(\tilde{\Omega}_\o)} = 1, \quad \mbox{and} \quad \|\nabla \bar{\bm{w}}^\c_n\|_{L^2(\tilde{\Omega}_\o)} = 1.
\end{equation*}
Due to this property and our hypothesis~\eqref{contradiction}, we have that
\begin{equation}\label{vergeResult}
\left(\nabla I_n\bar{\bm{w}}^\a_n,\nabla \bar{\bm{w}}^\c_n\right)_{L^2\left(\tilde{\Omega}_{\o}\right)} \to 1.
\end{equation}

Moreover, $\nabla I_n\bar{\bm{w}}^\a_n$ is a bounded sequence in $L^2(\tilde{\Omega}_\a)$ since
\[
\|\nabla I_n\bar{\bm{w}}^\a_n\|_{L^2(\tilde{\Omega}_\a)} = \|\nabla I_n\tilde{\bm{w}}^\a_n\|_{L^2(\tilde{\Omega}_\a)}/\|\nabla I_n\tilde{\bm{w}}^\a_n\|_{L^2(\tilde{\Omega}_\o)} \lesssim \|\nabla I_n\tilde{\bm{w}}^\a_n\|_{L^2(\tilde{\Omega}_\o)}/\|\nabla I_n\tilde{\bm{w}}^\a_n\|_{L^2(\tilde{\Omega}_\o)} = 1,
\]
after using a scaled version of Lemma~\ref{lem:norm_equiv:reduce_to_Omega_o}.  Similarly, $\nabla \bar{\bm{w}}^\c_n$ is bounded in $L^2(\tilde{\Omega}_\c)$.  Meanwhile, $\bar{\bm{w}}^\a_n$ and $\bar{\bm{w}}^\c_n$ will still satisfy the variational equalities~\eqref{eq:wan_def_scale} and~\eqref{eq:wcn_def_scale} by linearity.

For each $n$, we let $I_n\bar{w}^\a_n$ (without boldface) be the element in the equivalence class of $\bar{\bm{w}}^\a_n$ with mean value $0$ over $\tilde{\Omega}_\a$.
The resulting sequence is bounded in $H^1(\tilde{\Omega}_\a)$ and so it has a weakly convergent subsequence, which we denote again by $I_n\bar{w}_n^\a$. Let $\bar{w}^\a_0 \in H^1(\tilde{\Omega}_\a)$ be the weak limit. By the compactness of the embedding $H^1(\tilde{\Omega}_\a)\subset L^2(\tilde{\Omega}_\a)$ it follows that $I_n\bar{w}^\a_n\rightarrow \bar{w}^\a_0$ in $L^2(\tilde{\Omega}_\a)$.
Similarly, the functions $\bar{\bm{w}}_n^\c$ form a bounded sequence on the Hilbert space (cf.~\cite{suli2012}),
\begin{equation*}\label{ortSulii}
\bm{H}^1(\tilde{\Omega}_{\c}) := \left\{u^\c \in H^1_{\rm loc}(\tilde{\Omega}_\c) : \nabla u^\c \in L^2(\tilde{\Omega}_\c)\right\} / \mathbb{R}^d.
\end{equation*}
Thus, we can extract a weakly convergent subsequence, still denoted by $\bar{\bm{w}}_n^\c$, with limit $\bar{\bm{w}}^\c_0 \in \bm{H}^1(\tilde{\Omega}_\c)$, i.e, $\bar{\bm{w}}_n^\c \weakto \bar{\bm{w}}^\c_0$ in $\bm{H}^1(\tilde{\Omega}_\c)$.

Let $\bar{w}^\c_n$ and $\bar{w}^\c_0$ (without boldface) be equivalence class elements having zero mean over $\tilde{\Omega}_{\o, \ex}$.  Then $\bar{w}^\c_n$ is bounded in $H^1(\tilde{\Omega}_{\o, \ex})$ and converges weakly to some $\bar{w}^\c\in H^1(\tilde{\Omega}_{\o, \ex})$. But since $\bar{\bm{w}}_n^\c \weakto \bar{\bm{w}}^\c_0$ in $\bm{H}^1(\tilde{\Omega}_\c)$ we must have $\nabla \bar{w}^\c = \nabla \bar{w}^\c_0$ on $\tilde{\Omega}_{\o, \ex}$ so the two functions differ almost everywhere by a constant on $\tilde{\Omega}_{\o, \ex}$.  Since both $\bar{w}^\c_0$ and $\bar{w}^\c$ have mean value $0$ over $\tilde{\Omega}_{\o, \ex}$, the two functions are in fact equal on $\tilde{\Omega}_{\o, \ex}$.  Thus $\bar{w}^\c_n$ converges weakly to $\bar{w}^\c_0$ in $H^1(\tilde{\Omega}_{\o, \ex})$. The strong convergence $\bar{w}^\c_n\rightarrow\bar{w}^\c_0$ in $L^2(\tilde{\Omega}_{\o, \ex})$ then follows from the compactness of the embedding $H^1(\tilde{\Omega}_{\o, \ex})\hookrightarrow L^2(\tilde{\Omega}_{\o, \ex})$.

In summary, we have established the following result.
\begin{lemma}\label{convergeLemma}
There exist sequences $\bar{w}^\a_n \in H^1(\tilde{\Omega}_\a)$ and $\bar{w}^\c_n \in L^2_{\rm loc}(\tilde{\Omega}_\c)$ and with $\nabla \bar{w}^\c_n \in L^2(\tilde{\Omega}_\c)$ which satisfy the variational equalities~\eqref{eq:wan_def_scale} and~\eqref{eq:wcn_def_scale} and functions $\bar{w}^\a_0 \in H^1(\tilde{\Omega}_\a)$ and $\bar{w}^\c_0 \in \bm{H}^1(\tilde{\Omega}_\c)$ such that
\begin{align}
&I_n\bar{w}^\a_n \weakto~ \bar{w}^\a_0 \quad \mbox{in} \quad H^1(\tilde{\Omega}_\a), \qquad I_n\bar{w}^\a_n \to~ \bar{w}^\a_0 \quad \mbox{in} \quad L^2(\tilde{\Omega}_\a), \label{varyAt}  \\
&\bar{w}^\c_n \weakto~ \bar{w}^\c_0 \quad \mbox{in} \quad H^1(\tilde{\Omega}_{\o, \ex}), \qquad \bar{w}^\c_n \to~ \bar{w}^\c_0 \quad \mbox{in} \quad L^2(\tilde{\Omega}_{\o, \ex}) \label{varyCont}.
\end{align}
\end{lemma}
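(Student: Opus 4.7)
The proof is essentially a book-keeping exercise that packages the constructions described in the paragraphs leading to the statement. My plan is to start from the original sequences $\tilde{\bm w}^\a_n, \tilde{\bm w}^\c_n$ obtained by scaling, renormalize so that their gradient $L^2$ norms on $\tilde\Omega_\o$ equal one, and then promote the weak/strong convergence on the overlap (which follows from \eqref{vergeResult} and boundedness) to weak/strong convergence on $\tilde\Omega_\a$ and $\tilde\Omega_\c$ respectively by means of the scaled version of Lemma~\ref{lem:norm_equiv:reduce_to_Omega_o}.

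More precisely, first I would set
\[
\bar{\bm w}^\a_n := \tilde{\bm w}^\a_n / \|\nabla I_n \tilde{\bm w}^\a_n\|_{L^2(\tilde\Omega_\o)},
\qquad
\bar{\bm w}^\c_n := \tilde{\bm w}^\c_n / \|\nabla \tilde{\bm w}^\c_n\|_{L^2(\tilde\Omega_\o)}.
\]
Linearity of the second-variation bilinear forms guarantees that $\bar{\bm w}^\a_n$ and $\bar{\bm w}^\c_n$ still solve the scaled equations \eqref{eq:wan_def_scale} and \eqref{eq:wcn_def_scale}, and the normalization gives the equality statements $\|\nabla I_n \bar{\bm w}^\a_n\|_{L^2(\tilde\Omega_\o)}=\|\nabla \bar{\bm w}^\c_n\|_{L^2(\tilde\Omega_\o)}=1$, from which \eqref{vergeResult} is immediate after dividing \eqref{contradiction} through.

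Next, I would apply the scaled analogue of Lemma~\ref{lem:norm_equiv:reduce_to_Omega_o} (the statement there is dimensionally invariant under the rescaling $x\mapsto\eps_n x$ since the extension operator $R$ and the Scott--Zhang operator $S_{\a,n}$ used in its proof both act with constants depending only on the fixed rescaled overlap geometry) to obtain
\[
\|\nabla I_n \bar{\bm w}^\a_n\|_{L^2(\tilde\Omega_\a)} \lesssim 1
\qquad\text{and}\qquad
\|\nabla \bar{\bm w}^\c_n\|_{L^2(\tilde\Omega_\c)} \lesssim 1.
\]
For the atomistic sequence I would then pick the representative $I_n \bar w^\a_n$ of mean zero on $\tilde\Omega_\a$; Poincaré's inequality on the bounded Lipschitz domain $\tilde\Omega_\a$ gives a uniform $H^1(\tilde\Omega_\a)$ bound, so reflexivity extracts a weak subsequential limit $\bar w^\a_0$ in $H^1(\tilde\Omega_\a)$, and the Rellich--Kondrachov embedding promotes this to strong convergence in $L^2(\tilde\Omega_\a)$, giving \eqref{varyAt}.

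The continuum case is the only real subtlety because $\tilde\Omega_\c$ is unbounded (we extended by a constant to the whole exterior domain) and Poincaré is not directly available. I would therefore work first in the quotient Hilbert space $\bm H^1(\tilde\Omega_\c)$, where $\bar{\bm w}^\c_n$ is bounded; reflexivity yields a weakly convergent subsequence with limit $\bar{\bm w}^\c_0 \in \bm H^1(\tilde\Omega_\c)$. To obtain the stronger assertion \eqref{varyCont}, I would pass to the bounded set $\tilde\Omega_{\o,\ex}$ and select mean-zero representatives $\bar w^\c_n, \bar w^\c_0$ on that set; applying Poincaré on $\tilde\Omega_{\o,\ex}$ gives a uniform $H^1(\tilde\Omega_{\o,\ex})$ bound, and one further subsequence extraction together with Rellich--Kondrachov yields the desired weak and strong convergence on $\tilde\Omega_{\o,\ex}$. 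The identification of the two candidate limits (the one from quotient-space convergence and the mean-zero one) is done by observing that their gradients agree a.e.\ on $\tilde\Omega_{\o,\ex}$, so they differ by a constant, and the zero-mean normalization forces that constant to vanish. The main technical care is thus to keep the two representatives (equivalence-class and mean-zero) coherent; everything else is standard functional analysis.
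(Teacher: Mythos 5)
Your proposal is correct and follows essentially the same route as the paper: normalize on the overlap, use the scaled version of Lemma~\ref{lem:norm_equiv:reduce_to_Omega_o} for uniform bounds on $\tilde{\Omega}_\a$ and $\tilde{\Omega}_\c$, extract weak limits (mean-zero representative plus Poincar\'e and Rellich--Kondrachov on $\tilde{\Omega}_\a$; quotient-space weak compactness on $\tilde{\Omega}_\c$ followed by mean-zero representatives on $\tilde{\Omega}_{\o,\ex}$), and reconcile the two continuum representatives via the gradients agreeing a.e.\ and the zero-mean normalization. The paper's proof is the same argument in the same order.
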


\subsubsection*{Step 2:}
\begin{theorem}\label{thm:norm_equiv_limiting_equation1}
The functions $\bar{w}^\a_0$ and $\bar{\bm{w}}^\c_0$ satisfy the weak, linear homogeneous Cauchy-Born elasticity equations
\begin{align}
\int_{\tilde{\Omega}_\a} (\bbC : \nabla \bar{w}^\a_0) : \nabla v  =~& 0 \quad \forall v \in H^{1}_0(\tilde{\Omega}_\a) \label{eq:lim_equation_both1}, \\
\int_{\tilde{\Omega}_\c} (\bbC : \nabla \bar{\bm{w}}^\c_0) : \nabla v =~& 0 \quad \forall v \in H^{1}_0(\tilde{\Omega}_\c) \label{eq:lim_equation_both2}.
\end{align}
\end{theorem}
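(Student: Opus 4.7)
The proof will pass to the limit in the scaled variational identities \eqref{eq:wan_def_scale} and \eqref{eq:wcn_def_scale}, exploiting the weak convergences recorded in Lemma~\ref{convergeLemma}. I handle the continuum equation \eqref{eq:lim_equation_both2} first, since it is technically simpler, and treat the atomistic equation \eqref{eq:lim_equation_both1} second. By density of $C^\infty_c$ in $H^1_0$, in both cases it suffices to verify the limiting identity for smooth compactly supported test functions.

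For \eqref{eq:lim_equation_both2}, fix $v\in C^\infty_c(\tilde{\Omega}_\c)$ and take $v_n:=S_{h,n}v\in\tilde{\bm{\mathcal{U}}}^\c_{h,0,n}$; the full-resolution condition \textbf{O.1} forces the scaled mesh size on $\supp v$ to be $O(\eps_n)$, so properties \textbf{P.2}--\textbf{P.4} give $\nabla v_n\to\nabla v$ in $L^2$. Substituting $v_n$ into \eqref{eq:wcn_def_scale} and writing $W''(\nabla\tilde{\bm u}^{\rm con}_n)=\bbC+A_n$, the scaling $\nabla\tilde{\bm u}^{\rm con}_n(x)=\nabla\bm u^{\rm con}_n(x/\eps_n)$, Theorem~\ref{contModelError}, and the decay \eqref{decayEquation_no_tilde} together force $\|\nabla\tilde{\bm u}^{\rm con}_n\|_{L^2(\supp v)}\to 0$; Lipschitz continuity of $W''$ then transfers this to $A_n$ in every $L^p(\supp v)$ with $p<\infty$, using Assumption~\ref{siteassumption} and dominated convergence. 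Combining with the weak convergence $\nabla\bar{\bm w}^\c_n\weakto\nabla\bar{\bm w}^\c_0$ from \eqref{varyCont} and the strong convergence of $\nabla v_n$ yields \eqref{eq:lim_equation_both2}.

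For \eqref{eq:lim_equation_both1}, fix $v\in C^\infty_c(\tilde{\Omega}_\a)$ and define $\tilde{\bm v}^\a_n(\xi):=v(\xi)\in\tilde{\bm{\mathcal{U}}}^\a_{0,n}$ (valid for $n$ large). The contribution to \eqref{eq:wan_def_scale} from the $O(1)$ sites with $V_\xi\ne V$ is $O(\eps_n^{d/2})$ by Cauchy--Schwarz, the cardinality bound $\#\{\xi:V_\xi\ne V\}\lesssim 1$, and the uniform $L^2$-bound on $D_{\eps_n}\bar{\bm w}^\a_n$. The remaining homogeneous sum rewrites via \eqref{intForm} as an integral of $\bar I_n V''(D_{\eps_n}\tilde{\bm u}^\infty_{\a,n}):\bar I_n D_{\eps_n}\bar{\bm w}^\a_n:\bar I_n D_{\eps_n}\tilde{\bm v}^\a_n$; the scaling $D_{\eps_n\rho}\tilde{\bm u}^\infty_{\a,n}(\xi)=D_\rho\bm u^\infty(\xi/\eps_n)$ together with \eqref{decayEquation_no_tilde} give $\bar I_n V''(D_{\eps_n}\tilde{\bm u}^\infty_{\a,n})\to V''(0)$ on any set bounded away from the origin, while the contribution from $B_\delta$ is $O(\delta^{d/2})$ uniformly in $n$ (by the uniform $L^\infty$-bound $\|V''\|\leq M_2$ and Cauchy--Schwarz) and hence vanishes as $\delta\to 0$. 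Taylor expansion gives $\bar I_n D_{\eps_n\rho}\tilde{\bm v}^\a_n\to\nabla v\cdot\rho$ uniformly, and the key step $\bar I_n D_{\eps_n\rho}\bar{\bm w}^\a_n\weakto\nabla\bar w^\a_0\cdot\rho$ in $L^2(\tilde{\Omega}_\a)$ is established via a discrete summation-by-parts against smooth test functions, using the $L^2$-strong convergence $I_n\bar w^\a_n\to\bar w^\a_0$ in \eqref{varyAt}, the uniform $L^2$-bound on $\bar I_n D_{\eps_n}\bar{\bm w}^\a_n$, and density. Invoking the identity $(V''(0):(\nabla \bar w^\a_0\calR)):(\nabla v\calR)=(\bbC:\nabla \bar w^\a_0):\nabla v$ then delivers \eqref{eq:lim_equation_both1}.

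The principal obstacle is the weak $L^2$-convergence of the discrete finite differences $\bar I_n D_{\eps_n\rho}\bar{\bm w}^\a_n$ to the continuum derivative $\nabla\bar w^\a_0\cdot\rho$: these piecewise-constant objects live on the $\eps_n$-cell decomposition, while the gradient of the piecewise linear interpolant $I_n\bar w^\a_n$ lives on the simplicial partition $\tilde{\calT}_{\a,n}$, so the comparison is not direct. A secondary subtlety is decomposing the atomistic integrand into near- and far-from-defect regions, which is needed because test functions in $H^1_0(\tilde{\Omega}_\a)$ may be supported across the origin where $V_\xi$ is heterogeneous and $\bm u^\infty$ does not decay.
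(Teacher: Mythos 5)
Your proposal is correct in substance and shares the paper's overall skeleton (pass to the limit in the scaled variational identities, using uniform convergence of the coefficient $V''(D_{\eps_n}\tilde{\bm u}^{\infty}_{\a,n})\to V''(0)$ away from the origin together with the weak $L^2$-convergence of the piecewise-constant stencils $\bar I_n D_{\eps_n\rho}\bar{\bm w}^\a_n\weakto\nabla_\rho\bar w^\a_0$ — your ``principal obstacle'' is exactly the content of the paper's Lemma~\ref{lem:norm_equiv:main_aux}, proved there by the same summation-by-parts mechanism plus a diagonal approximation argument for nonzero limits). Where you genuinely diverge is the treatment of the origin in \eqref{eq:lim_equation_both1}. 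The paper first proves the limiting equation only for test functions supported away from the origin (Lemma~\ref{thm:norm_equiv_limiting_equation}) and then removes the puncture at the \emph{continuum} level by a capacity argument: cut-offs $\varphi_R$ with $\|\nabla\varphi_R\|_{L^2(B_{3R})}\lesssim R^{(d-2)/2}$, so the commutator term vanishes as $R\to 0$ for $d\ge 2$. You instead control the near-origin part of the \emph{discrete} sum uniformly in $n$: Cauchy--Schwarz with the uniform $\ell^2_{\eps_n}$-bound on $D_{\eps_n}\bar{\bm w}^\a_n$ gives $O(\delta^{d/2})$ for the sites in $B_\delta$ (and $O(\eps_n^{d/2})$ for the finitely many defective sites), after which $\delta\to 0$ closes the argument. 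Your route is more elementary — it avoids the mollifier/capacity computation entirely and does not need the intermediate punctured-domain lemma — at the cost of carrying the $\delta$-splitting through the $n\to\infty$ limit; the paper's route isolates the singularity removal as a clean, reusable PDE-level fact. Both are valid for $d\ge 2$.

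Two small caveats. First, your justification that $\nabla S_{h,n}v\to\nabla v$ via \textbf{O.1} is misattributed: full resolution is only assumed on $\Omega_{\o,\ex}$, whereas $v\in C^\infty_c(\tilde\Omega_\c)$ may be supported well outside the scaled overlap, where the mesh is coarse. What is actually needed is that the \emph{scaled} mesh size tends to zero on every fixed compact subset of $\tilde\Omega_\c$; this follows from the mesh construction (the paper uses it implicitly too), but not from \textbf{O.1}. Second, in the term $\int A_n:\nabla\bar{\bm w}^\c_n:\nabla v_n$ you should place $\nabla v_n$ in $L^\infty$ (available since $v$ is smooth) rather than relying on ``every $L^p$, $p<\infty$'' for $A_n$, since three $L^2$-type factors do not close under H\"older. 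Neither point affects the validity of the argument.
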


We break the proof into several lemmas.  We start with the atomistic case~\eqref{eq:lim_equation_both1} where special care must be exercised near the defect at the origin.
\begin{lemma}\label{thm:norm_equiv_limiting_equation}
Let $\tilde{N}$ be any neighborhood of the origin with $\tilde{N} \subset \tilde{\Omega}_\a$ and set $\tilde{\Omega}^{\prime} := \tilde{\Omega}_\a\backslash\tilde{N}$.  Then
$\bar{w}^\a_0$ satisfies
\begin{equation}\label{eq:lim_equation}
\int_{\tilde{\Omega}^{\prime}} (\bbC : \nabla \bar{w}^\a_0) : \nabla v = 0
\quad \forall v \in H^1_0\big(\tilde{\Omega}^{\prime}\big).
\end{equation}
\end{lemma}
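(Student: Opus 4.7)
The plan is to pass to the limit $n\to\infty$ in the scaled atomistic variational equation \eqref{eq:wan_def_scale}, expressed in the integral form \eqref{intForm}, against a sequence of test functions obtained by restricting a fixed smooth $v$ to the scaled lattice. By density of $C^\infty_c(\tilde{\Omega}^\prime)$ in $H^1_0(\tilde{\Omega}^\prime)$, it suffices to establish \eqref{eq:lim_equation} for $v \in C^\infty_c(\tilde{\Omega}^\prime)$. Fix such a $v$, and let $d_0 > 0$ denote the distance from $\supp(v)$ to the origin. For all $n$ sufficiently large, $\epsilon_n(M+r_{\rm cut}) \leq d_0/2$, which guarantees that (i) the defect region in the scaled lattice is well separated from $\supp(v)$, so $V_\xi \equiv V$ at every $\xi$ for which the stencil $D_{\epsilon_n}\,\cdot\,(\xi)$ meets $\supp(v)$; (ii) the nodal restriction $v_n := v|_{\tilde{\calL}_{\a,n}}$, extended by zero near $\partial_\a\tilde{\calL}_{\a,n}$, belongs to $\tilde{\bm{\calU}}^\a_{0,n}$.

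Substituting $v_n$ into the scaled version of \eqref{eq:wan_def_scale} and applying \eqref{intForm} yields
\[
\int_{\tilde{\Omega}_\a} \bar{I}_n V''\bigl(D_{\epsilon_n}\tilde{\bm{u}}^{\infty}_{\a,n}\bigr) : \bar{I}_n D_{\epsilon_n}\bar{w}^\a_n : \bar{I}_n D_{\epsilon_n} v_n \, dx = 0.
\]
I then pass to the limit factor by factor. The coefficient tensor converges uniformly on $\supp(v)$ to $V''(0)$: a direct rescaling shows $D_{\epsilon_n\rho}\tilde{\bm{u}}^{\infty}_{\a,n}(\xi) = D_\rho \bm{u}^{\infty}(\xi/\epsilon_n)$, and Theorem~\ref{decayThm} gives $|D_\rho\bm{u}^\infty(\eta)| \lesssim |\eta|^{-d}$, so the argument tends uniformly to $0$ on the set $\{|\xi|\geq d_0\}$; continuity of $V''$, together with the Cauchy-Born identity encoding $V''(0)$ as $\bbC$, closes the argument. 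Smoothness of $v$ implies $\bar{I}_n D_{\epsilon_n\rho} v_n \to \partial_\rho v$ uniformly on $\tilde{\Omega}_\a$. Finally, the convergences \eqref{varyAt} (weak in $H^1(\tilde{\Omega}_\a)$ and strong in $L^2(\tilde{\Omega}_\a)$) imply $\bar{I}_n D_{\epsilon_n\rho}\bar{w}^\a_n \weakto \partial_\rho \bar{w}^\a_0$ weakly in $L^2$ on every compact subset of $\tilde{\Omega}_\a$; this is the discrete analogue of $\partial_\rho$ commuting with weak limits, verified by a discrete summation-by-parts against any smooth test function $\phi$ using that $(\phi(x-\epsilon_n\rho)-\phi(x))/\epsilon_n \to -\partial_\rho\phi$ uniformly.

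Combining a uniformly convergent sequence (the coefficient and the discretized test function) with a weakly $L^2$-convergent sequence (the finite differences of $\bar{w}^\a_n$) allows passage to the limit in the integral, producing
\[
\int_{\tilde{\Omega}^{\prime}} \sum_{\rho,\tau \in \calR} V''_{,\rho\tau}(0)\, \partial_\rho \bar{w}^\a_0 \cdot \partial_\tau v \, dx = \int_{\tilde{\Omega}^{\prime}} (\bbC : \nabla \bar{w}^\a_0) : \nabla v \, dx = 0,
\]
which is \eqref{eq:lim_equation}. The main technical obstacle I anticipate is the clean justification of the weak $L^2$-convergence of the finite-difference stencil $D_{\epsilon_n}\bar{w}^\a_n$ to the classical gradient pattern $\nabla\bar{w}^\a_0\calR$ in the presence of the discrete-to-continuous interpolants $\bar{I}_n$; the remaining steps are a standard density reduction together with the combination of uniform and weak convergences in the resulting trilinear integral.
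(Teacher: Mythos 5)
Your proposal is correct and follows essentially the same route as the paper: density reduction to smooth compactly supported $v$, separation of $\supp(v)$ from the defect so that the integral form \eqref{intForm} applies with $V_\xi\equiv V$, uniform convergence of $V''(D_{\epsilon_n}\tilde{\bm{u}}^{\infty}_{\a,n})$ to $V''(\bm{0})$ via the decay estimate \eqref{decayEquation_no_tilde}, strong convergence of $\bar{I}_n D_{\epsilon_n r}v$ to $\nabla_r v$, and passage to the limit in the weak--strong pairing. The one step you flag as the main obstacle --- weak $L^2$ convergence of $\bar{I}_n D_{\epsilon_n r}\bar{w}^\a_n$ to $\nabla_r\bar{w}^\a_0$ on compactly contained subsets --- is precisely the paper's Lemma~\ref{lem:norm_equiv:main_aux}, and your summation-by-parts sketch (shifting the difference quotient onto the smooth test function and using Rellich compactness plus uniform $L^2$ boundedness and density) is the core of its proof.
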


The key result in proving Lemma~\ref{thm:norm_equiv_limiting_equation} is the auxiliary Lemma~\ref{lem:norm_equiv:main_aux}.  In the proof, we use the standard notation $\subsubset$ to denote compact subsets.

\begin{lemma}\label{lem:norm_equiv:main_aux}
Let $U$ be a bounded domain in $\mathbb{R}^d$ whose boundary is Lipschitz and a union of edges of $\calT_\a$.  Take a domain $U_1 \subsubset U$, and suppose $v_n$ is piecewise linear with respect to $\tilde{\calL}_n = \epsilon_n\calL$ and $v_n \weakto  v_0$ in $H^1(U)$ for some $v_0 \in H^1(U)$.
Then for $r \in \calR$, $\bar{I}_{n} D_{\eps_{n} r} v_{n} \weakto \nabla_r v_0$ in $L^2(U_1)$.
\end{lemma}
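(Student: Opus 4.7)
The plan is to reinterpret the discrete difference $\bar{I}_n D_{\eps_n r} v_n$ as a translation quotient of the piecewise-constant approximation $\bar{I}_n v_n$, then pass to the weak limit by duality against smooth test functions. The key identity is that for any $x\in\varsigma_\xi$, since $r\in\mathcal{R}\subset\mathbb{Z}^d$ makes $\eps_n r$ a lattice vector, one has $x+\eps_n r\in\varsigma_{\xi+\eps_n r}$, so $(\bar{I}_n v_n)(x+\eps_n r)=v_n(\xi+\eps_n r)$ and
$$
\bar{I}_n D_{\eps_n r} v_n(x) \;=\; T_n^r(\bar{I}_n v_n)(x),
\qquad T_n^r f(x):=\frac{f(x+\eps_n r)-f(x)}{\eps_n}.
$$

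Next I would establish two ingredients. First, $\bar{I}_n v_n\to v_0$ strongly in $L^2(U')$ for any $U_1\subsubset U'\subsubset U$: Rellich's compactness gives $v_n\to v_0$ in $L^2(U)$, and for piecewise linear $v_n$ on a shape-regular mesh of size $\eps_n$ one has the standard bound $\|\bar{I}_n v_n-v_n\|_{L^2(U')}\lesssim \eps_n\|\nabla v_n\|_{L^2(U')}$, which vanishes since $\|\nabla v_n\|_{L^2(U)}$ is bounded by weak convergence. Second, $\bar{I}_n D_{\eps_n r}v_n$ is bounded in $L^2(U_1)$: since $v_n$ is continuous and piecewise affine, it is absolutely continuous along the segment from $\xi$ to $\xi+\eps_n r$, giving
$$
D_{\eps_n r} v_n(\xi) = \int_0^1 r\cdot \nabla v_n(\xi+s\eps_n r)\,ds,
$$
and a Cauchy--Schwarz plus Fubini-type argument (completely analogous to the inequality $\|D_{\eps_n}\tilde{\bm{v}}\|_{\ell^2_{\eps_n}}\lesssim\|\nabla I_n\tilde{\bm{v}}\|_{L^2}$ already used in the excerpt) yields $\|\bar{I}_n D_{\eps_n r}v_n\|_{L^2(U_1)}\lesssim |r|\,\|\nabla v_n\|_{L^2(U_1')}$ for a slightly larger $U_1'\subsubset U$.

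To identify the weak limit, I would test against an arbitrary $\phi\in C_c^\infty(U_1)$; for $n$ large enough that $\mathrm{supp}(\phi)+\eps_n r\subset U$, the change of variable $y=x+\eps_n r$ gives
$$
\int_{U_1}\bar{I}_n D_{\eps_n r}v_n\,\phi\,dx
\;=\;\int_{U}(\bar{I}_n v_n)(y)\,\frac{\phi(y-\eps_n r)-\phi(y)}{\eps_n}\,dy.
$$
Because $\phi$ is smooth, $(\phi(\cdot-\eps_n r)-\phi)/\eps_n\to -\nabla_r\phi$ uniformly, while $\bar{I}_n v_n\to v_0$ strongly in $L^2$; hence the right-hand side converges to $-\int_U v_0\,\nabla_r\phi\,dy = \int_U \nabla_r v_0\,\phi\,dy$, where the last equality uses $v_0\in H^1(U)$ and the compact support of $\phi$ in $U_1$.

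Combining the uniform $L^2(U_1)$ bound of Step~2 with the pointwise convergence on the dense set $C_c^\infty(U_1)\subset L^2(U_1)$ yields $\bar{I}_n D_{\eps_n r} v_n\weakto \nabla_r v_0$ in $L^2(U_1)$, as claimed. The main (mild) technical care is ensuring all translates remain inside $U$, which is handled by taking $U_1\subsubset U$ and $n$ sufficiently large; no deeper obstruction arises because the identity $\bar{I}_n D_{\eps_n r}=T_n^r\bar{I}_n$ converts the discrete finite difference into a genuine translation quotient, at which point the argument becomes the classical difference-quotient characterization of weak derivatives.
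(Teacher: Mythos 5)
Your proof is correct, and it takes a genuinely different route from the paper's. The paper argues entirely at the level of lattice sums: it first treats the case $v_0=0$ by Taylor-expanding the test function $\varphi$ about the cell nodes, splitting into a lumped term and a remainder, performing a discrete summation by parts to shift $D_{\eps_n r}$ onto $\varphi$, and invoking the bound $\|\bar I_n v_n\|_{L^2(U_1)}\lesssim \|v_n\|_{L^2(U)}$ together with Rellich compactness; the general case $v_0\neq 0$ is then reduced to this one via a diagonalization over smooth approximants $v_{0,j}$ of $v_0$ and comparison functions $\hat v_n=I_n v_{0,J_n}$. Your argument replaces all of this with the exact identity $\bar I_n D_{\eps_n r} v_n = T_n^r(\bar I_n v_n)$, valid because $\eps_n r$ translates cells $\varsigma_\xi$ to cells $\varsigma_{\xi+\eps_n r}$, which turns the discrete difference into a genuine translation quotient of an $L^2$ function; a continuous change of variables then moves the quotient onto the test function, and the strong convergence $\bar I_n v_n\to v_0$ in $L^2_{\rm loc}$ identifies the limit as $\nabla_r v_0$ directly, for arbitrary $v_0$, by the classical difference-quotient characterization of weak derivatives. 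The ingredients you need (the local bound $|D_{\eps_n r}v_n(\xi)|\leq\int_0^1|\nabla_r v_n(\xi+s\eps_n r)|\,ds$ for the uniform $L^2(U_1)$ bound, and the interpolation estimate $\|\bar I_n v_n - v_n\|_{L^2}\lesssim\eps_n\|\nabla v_n\|_{L^2}$) are the same elementary facts the paper uses, but your organization eliminates the two-case structure and the diagonal argument entirely; it also yields the strong-convergence variant noted in the paper's Remark~\ref{afterLem37} with no extra work. The only points of care, which you correctly flag, are that the translated supports must remain in $U$ for large $n$ (guaranteed by $U_1\subsubset U$ and $\eps_n\to 0$) and that the interpolation estimate should be stated with the gradient norm over a slightly enlarged compactly contained subdomain.
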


\begin{proof}[Proof of Lemma~\ref{lem:norm_equiv:main_aux}]
We prove the lemma for $v_0=0$ and then reduce the case $v_0 \neq 0$ to this setting.

{\it Case 1 ($v_0=0$).}
Take $\varphi \in {\rm C}_0^{\infty}(U_1)$, and note since $v_n \weakto 0$ in $H^1(U)$, $ v_n \to 0$ strongly in $L^2(U)$.  For $n$ large enough, we may choose $\tilde{\calL}_{n,1} \subset \tilde{\calL}_n$ such that  $U_1 \subset \bigcup_{\xi \in \tilde{\calL}_{n,1}}\varsigma_\xi \subset U$.  Applying Taylor's Theorem with the notation ${\rm conv}(\xi,x)$ representing the convex hull of $\xi$ and $x$ produces
\begin{equation}
\begin{array}{l} \displaystyle
\limsup_{n\to\infty} \big|(\bar{I}_n D_{\eps_n r} v_n, \varphi)_{L^2(U_1)}\big|   \\ \displaystyle
\quad=~ \limsup_{n\to\infty}\bigg|\int_{U_1}\bar{I}_n D_{\eps_n r} v_n(x) \varphi(x)\, dx\bigg|
=~ \limsup_{n\to\infty}\bigg|\sum_{\xi \in \tilde{\calL}_{n,1}}\int_{\varsigma_\xi \cap U_1}\bar{I}_n D_{\eps_n r} v_n(x) \varphi(x)\, dx\bigg|   \\[1ex] \displaystyle
\quad=~ \limsup_{n\to\infty}\bigg|\sum_{\xi \in \tilde{\calL}_{n,1}}\int_{\varsigma_\xi \cap U_1} D_{\eps_n r} v_n(\xi) (\varphi(\xi) + \nabla \varphi(\tau_{\xi,x})(x-\xi))\, dx\bigg| \quad \mbox{for some $\tau_{\xi,x} \in {\rm conv}(\xi,x)$}
\\[1ex] \displaystyle \quad
	\leq~ \limsup_{n\to\infty}\bigg|\underbrace{\sum_{\xi \in \tilde{\calL}_{n,1}}\int_{\varsigma_\xi \cap U_1} D_{\eps_n r} v_n(\xi) \varphi(\xi)\, dx}_{T_1}\bigg|
	+
	\limsup_{n\to\infty} \bigg| \underbrace{\sum_{\xi \in \tilde{\calL}_{n,1}}\int_{\varsigma_\xi \cap U_1} D_{\eps_n r} v_n(\xi) \nabla \varphi(\tau_{\xi,x})(x-\xi)\, dx}_{T_2}\bigg| .
\end{array}\label{duck1}
\end{equation}

Since we are taking limits, we assume throughout that $\epsilon_n < {\rm dist}(U_1, \partial U)$ so that the expressions above are well defined. We first estimate $T_2$ by bounding $|x-\xi| \leq \eps_n$ and $|\varphi(\tau_{\xi,x})| \leq \|\nabla \varphi\|_{L^\infty(U_1)} \lesssim 1$:
\[
%\begin{array}{l}
%\displaystyle
T_2
	\lesssim \eps_n \sum_{\xi \in \tilde{\calL}_{n,1}}\int_{\varsigma_\xi \cap U_1} |D_{\eps_n r} v_n(\xi)| dx
%\\[1ex]\displaystyle \quad
	= \eps_n \sum_{\xi \in \tilde{\calL}_{n,1}} |D_{\eps_n r} v_n(\xi)| \vol(\varsigma_\xi\cap U_1)
%\\[1ex]\displaystyle \quad
	\leq \eps_n |r| \, \|\nabla v_n\|_{L^1(U)}
%\\[1ex]\displaystyle \quad
	\lesssim \eps_n \|\nabla v_n\|_{L^2(U)}
.
%\end{array}
\]
Note that here the bound $\sum_{\xi \in \tilde{\calL}_{n,1}} |D_{\eps_n r} v_n(\xi)| \vol(\varsigma_\xi\cap U_1) \leq |r| \, \|\nabla v_n\|_{L^1(U)}$ easily follows from a local bound $|D_{\eps_n r} v_n(\xi)| \leq \int_0^1 |\nabla_r v_n(\xi+\eps_n r t)| \, dt$ for sufficiently small $\eps_n$.
Since $\|\nabla v_n\|_{L^2(U)}$ are bounded (as a consequence of $v_n \weakto  v_0$ in $H^1$), we have that $T_2 \lesssim \eps_n \to 0$.

To estimate $T_1$, we shift the finite difference operator onto $\varphi(\xi)\vol\left(\varsigma_\xi \cap U_1\right)$, use the product rule for difference quotients (see~\eqref{shift}), and recall that $\varphi \in {\rm C}^\infty_0(U_1)$:
\begin{align}
T_1 &~ = \sum_{\xi \in \tilde{\calL}_{n,1}} D_{\eps_n r} v_n(\xi) \varphi(\xi)\vol\left(\varsigma_\xi \cap U_1\right)
= -\sum_{\xi \in \tilde{\calL}_{n,1}}  v_n(\xi) D_{-\eps_n r}(\varphi(\xi)\vol\left(\varsigma_\xi \cap U_1\right)) \nonumber \\
&~= -\sum_{\xi \in \tilde{\calL}_{n,1}}  v_n(\xi) (D_{-\eps_n r}(\varphi(\xi)){\vol}\left(\varsigma_\xi \cap U_1\right) + \varphi(\xi - \epsilon_nr) D_{-\eps_n r}\vol\left(\varsigma_\xi \cap U_1\right)) \nonumber  \\
&~= -\sum_{\xi \in \tilde{\calL}_{n,1}}  v_n(\xi) D_{-\eps_n r}(\varphi(\xi)){\vol}\left(\varsigma_\xi \cap U_1\right) \nonumber \\
&~\leq \Big(\sum_{\xi \in \tilde{\calL}_{n,1}} |v_n(\xi)|^2{\vol}\left(\varsigma_\xi \cap U_1\right)\Big)^{1/2}\Big( \sum_{\xi \in \tilde{\calL}_{n,1}}|D_{-\eps_n r}\varphi(\xi)|^2{\vol}\left(\varsigma_\xi \cap U_1\right)\Big)^{1/2} \nonumber \\[2ex]
&~\lesssim \|\bar{I}_nv_n\|_{L^2(U_1)} \|\nabla I_n \varphi\|_{L^2(U)}
\lesssim \|\bar{I}_nv_n\|_{L^2(U_1)}
,
\label{halfDuck}
\end{align}
where in the last step we used that the smoothness of $\varphi$ implies that $\|\nabla I_n \varphi\|_{L^2(U)}$ converges to $\|\nabla \varphi\|_{L^2(U)} \lesssim 1$.

We now wish to bound $\|\bar{I}_nv_n\|_{L^2(U_1)}$ by $\|v_n\|_{L^2(U)}$. Consider the cell $\varsigma_\xi$ and take $T$ to be a micro-simplex of $\tilde{\calT}_{\a,n} = \epsilon_n\calT_\a$ such that $\xi$ is a vertex of $T$ and $T \subset\varsigma_\xi$.  Further let $\mathcal{N}(T)$ be the nodes of $T$ and let $\hat{T}$ be a reference simplex with nodes $\mathcal{N}(\hat{T})$. If
$\hat{f}$ is the pullback of a function $f$ on $T$, then
\begin{align*}
\|\bar{I}_nv_n\|_{L^2(\varsigma_\xi)}= \epsilon_n^{d/2}\cdot|v_n(\xi)| \lesssim~ |T|^{1/2} \sup_{\zeta \in \mathcal{N}(T)} |v_n(\zeta)|
=|T|^{1/2} \sup_{\hat{\zeta}\in\mathcal{N}(\hat{T})} |\hat{v}_n(\hat{\zeta})| \lesssim~  |T|^{1/2}\|\hat{v}_n\|_{L^2(\hat{T})}
\lesssim~ \|v_n\|_{L^2(T)}.
\end{align*}
Summing over all $\xi \in \tilde{\calL}_{n,1}$ gives
\[
\|\bar{I}_nv_n\|_{L^2(U_1)} \leq \|v_n\|_{L^2(U)},
\]
Because $v_n$ converges weakly to $0$ in $H^1(U)$, $v_n$ converges strongly to $0$ in $L^2(U)$.
This shows that $T_1\to 0$ which, together with $T_2\to0$, yields
\[
\limsup_{n\to\infty} \big|(\bar{I}_n D_{\eps_n r} v_n, \varphi)_{L^2(U_1)}\big| = 0.
\]
We can use similar computations to those in our estimate of $T_2$, in particular, the local bound $|D_{\eps_n r} v_n(\xi)|^2 \leq \int_0^1 |\nabla_r v_n(\xi+\eps_n r t)|^2 dt$, to conclude that $\|\bar{I}_n D_{\eps_n r} v_n\|_{L^2(U_1)} \lesssim \|v_n\|_{L^2(U)}$ so that boundedness of $\bar{I}_n D_{\eps_n r} v_n$ and density of smooth functions in $L^2(U)$ imply $\bar{I}_n D_{\eps_n r} v_n$ converges weakly to $0$.

{\it Case 2 ($v_0 \ne 0$).}
We reduce this case to the previous one by using a diagonalizing argument to find a sequence of piecewise linear comparison functions which converge weakly to $v_0$ and then applying the previous case to the difference of the comparison sequence and original sequence.

The hypotheses on $U$ imply ${\rm C}^\infty(\bar{U})$ is dense in $H^1(U)$ so we may take $v_{0,j} \in {\rm C}^\infty(\bar{U})$ such that
%Let $\eta_R$ be a standard mollifier on a ball of radius $R$, and define
%\begin{align*}
%v_{0,R}(x) :=~& (\eta_R\ast v_0)(x) = \int_U \eta_R(x-y) v_0(y)\, dy,
%\end{align*}
%for $x$ in $U^R := \left\{x \in U : {\rm{dist}}(x, \partial U) > R\right\}$.  From standard properties of mollifiers, it follows that
\begin{equation}\label{al3}
\| v_{0,j} - v_0\|_{H^1(U)} \leq 1/j.
\end{equation}
Since $v_{0,j}$ is smooth, for any fixed $j$, $I_n v_{0,j} \to v_{0,j}$ in $H^1(U)$. Similarly, $D_{\epsilon_n r}v_{0,j} \to \nabla_r v_{0,j}$ uniformly in $x \in U_1$ as $\epsilon_n \to 0$, and hence $D_{\epsilon_n r}v_{0,j} \to \nabla_r v_{0,j}$ in $L^2(U_1)$.  Furthermore,
\begin{align*}
&\|\bar{I}_n D_{\epsilon_n r}v_{0,j} - D_{\epsilon_n r}v_{0,j}\|_{L^2(U_1)}^2
= \int_{U_1} |\bar{I}_n D_{\epsilon_n r}v_{0,j} - D_{\epsilon_n r}v_{0,j}|^2\, dx \\
&~= \sum_{\xi \in \tilde{\calL}_{n,1}} \int_{\varsigma_\xi \cap U_1} | D_{\epsilon_n r}v_{0,j}(\xi) - D_{\epsilon_n r}v_{0,j}(x)|^2\, dx \\
&~= \sum_{\xi \in \tilde{\calL}_{n,1}} \int_{\varsigma_\xi \cap U_1} | D_{\epsilon_n r}\nabla v_{0,j}(\tau_{\xi,x})(\xi-x) |^2\, dx \quad \mbox{for some $\tau_{\xi,x} \in {\rm conv}(\xi,x)$}  \\
&~\lesssim~    \epsilon_n^2\sum_{\xi \in \tilde{\calL}_{n,1}} \int_{\varsigma_\xi \cap U_1} |D_{\epsilon_n r}\nabla v_{0,j}(\tau_{\xi,x})|^2\, dx
\lesssim~ \epsilon_n^2\|\nabla^2 v_{0,j}\|_{L^2(U)}^2 \to 0 \quad \mbox{as $n \to \infty$.}
\end{align*}
Thus, as $n \to \infty$, we have that
\begin{equation}\label{al1}
\begin{split}
\|\bar{I}_n D_{\eps_n r} v_{0,j} - \nabla_r v_{0,j}\|_{L^2(U_1)} \leq \| \bar{I}_n D_{\eps_n r} v_{0,j} - D_{\eps_n r} v_{0,j}\|_{L^2(U_1)}
+ \|D_{\eps_n r} v_{0,j} - \nabla_r v_{0,j}\|_{L^2(U_1)} \to 0.
\end{split}
\end{equation}

This and $I_n v_{0,j} \to v_{0,j}$ as $n \to \infty$ in $H^1(U)$ imply that for any $j$ there exists $N_j$ (which can be chosen such that $N_j$ strictly increases to infinity as $j$ goes to $\infty$) such that
\begin{align}
\| I_n v_{0,j} -  v_{0,j}\|_{H^1(U)} \leq~& 1/j \quad \forall n \geq~ N_{j},  \label{bumble1} \\
\|\bar{I}_n D_{\eps_n r} v_{0,j} - \nabla_r v_{0,j}\|_{L^2(U_1)} \leq~& 1/j \quad \forall n \geq~ N_{j} \label{bumble2}.
\end{align}
Hence we choose a sequence $J_n$ by letting $J_n:=j$ whenever $N_j \leq n < N_{j+1}$ (and $J_n = 1$ for $n<N_1$).
It is easy to see that $J_n\to\infty$ as $n\to\infty$, hence
equations~\eqref{al3},~\eqref{bumble1}, and~\eqref{bumble2} give
\begin{align} \label{saw1}
\| I_n v_{0,J_n} -  v_{0}\|_{H^1(U)} \leq~& \|I_n v_{0,J_n} - v_{0,J_n}  \|_{H^1(U)} + \|v_{0,J_n} - v_0 \|_{H^1(U)} \leq 2/J_n \to 0, \\ \label{saw2}
\|\bar{I}_{n} D_{\eps_{n} r} v_{0,J_n} - \nabla_r v_{0}\|_{L^2(U_1)} \leq~&
\|\bar{I}_{n} D_{\eps_{n} r} v_{0,J_n} -  \nabla_r v_{0,J_n}\|_{L^2(U_1)}
 + \| \nabla_r v_{0,J_n}- \nabla_r v_{0}\|_{L^2(U_1)}
\lesssim~ 2/J_n \to 0.
\end{align}
The functions $\hat{v}_{n} := I_{n}v_{0,J_n}$ will serve as our comparison functions.  Observe $v_{n} - \hat{v}_{n}$ converges weakly to zero in $H^1(U)$ by~\eqref{saw1} and our hypothesis that $v_n$ converges weakly to $v_0$.  Case 1 then implies
\begin{equation}\label{saw3}
\bar{I}_{n} D_{\eps_{n} r}v_{n} - \bar{I}_{n} D_{\eps_{n} r} \hat{v}_{n} \weakto 0 \quad \mbox{in} \quad L^2(U_1).
\end{equation}
But a straightforward calculation shows
\[
\bar{I}_{n} D_{\eps_{n} r} \hat{v}_{n} = \bar{I}_{n} D_{\eps_{n} r} I_{n}v_{0,J_n} = \bar{I}_{n} D_{\eps_{n} r}v_{0,J_n},
\]
and~\eqref{saw2} states that $ \bar{I}_{n} D_{\eps_{n} r} v_{0,J_n}$ converges strongly, whence weakly, to $\nabla_r v_{0}$ in $L^2(U_1)$. This, along with~\eqref{saw3}, means
\[
\bar{I}_{n} D_{\eps_{n} r}v_{n}  \weakto \nabla_r v_{0} \quad \mbox{in} \quad L^2(U_1).
\]
\end{proof}

\begin{remark}\label{afterLem37}
With only minor modifications to the proof, the statement of the theorem remains true if weak convergence is replaced with strong convergence.  For the $v_0 = 0$ case, one only needs to replace $\varphi$ with $\bar{I}_{n} D_{\eps_{n} r}v_{n}$ and carry out simplified computations while the $v_0 \neq 0$ case can then be proven almost verbatim by replacing weak convergence with strong convergence.
\end{remark}

\begin{proof}[Proof of Lemma~\ref{thm:norm_equiv_limiting_equation}]
First, notice that it is enough to test \eqref{eq:lim_equation} with $v\in \C^{\infty}_0(\tilde{\Omega}_\a\setminus\tilde{N})$, i.e., for $\supp(v) \subsubset \tilde{\Omega}_\a$, $0\notin \supp(v)$. Take a domain $\Omega_1$ such that ${\rm supp}(v) \subset \Omega_1 \subsubset \tilde{\Omega}_\a$. Because $I_n\bar{w}^\a_n \weakto \bar{w}^\a_0$ on $H^1(\tilde{\Omega}_\a)$ by~\eqref{varyAt}, Lemma~\ref{lem:norm_equiv:main_aux} implies
\begin{equation}\label{tail1}
\bar{I}_n D_{\epsilon_n r} \bar{w}^\a_n \weakto \nabla_r \bar{w}^\a_0 \quad  \mbox{in $L^2(\Omega_1)$} \quad \mbox{for all} \quad r \in \calR.
\end{equation}

Since $v$ has compact support inside $\tilde{\Omega}_\a\setminus\tilde{N}$, $D_{\epsilon_n \rho}v(\xi)$ vanishes on $\tilde{\calL}_{\a,n}\backslash\tilde{\calL}_{\a,n}^{\circ\circ}$ for all $n$ large enough and $\rho \in \calR$.  We may therefore rewrite~\eqref{eq:wan_def_scale} with $\bar{w}^\a_n$ using the integral formulation introduced in~\eqref{intForm}
\begin{equation}\label{eq:wan_def_alt}
\begin{split}
0 = \int_{\tilde{\Omega}_{\a}} \bar{I}_n  V_\xi''( D_{\epsilon_n}\tilde{\bm{u}}^{\infty}_{\a,n}) \!:\! \bar{I}_nD_{\epsilon_n}\bar{w}^\a_n : \bar{I}_n D_{\epsilon_n}{v} \, dx.
\end{split}
\end{equation}

Because $v$ is smooth, a calculation analogous to~\eqref{al1} implies
\begin{equation}\label{tail2}
\bar{I}_n D_{\epsilon_n r}v \to \nabla_r  v \quad \mbox{in $L^2(\Omega_1)$ for all $r \in \calR$.}
\end{equation}

According to estimate~\eqref{decayEquation_no_tilde} of Theorem~\ref{decayThm}, the local minimum, $\bm{u}^{\infty}$, of $\calE^\a$ satisfies
\begin{equation*}
\left|\nabla I\bm{u}^{\infty}(x)\right| \lesssim~ |x|^{-d} \quad \mbox{for} \, x \notin \Omega_{\rm core}.
\end{equation*}
After scaling the lattice by $\epsilon_n$ we get a sequence of global solutions $\tilde{\bm{u}}^{\infty}_n(\xi) = \epsilon_n\bm{u}^{\infty}(\xi/\epsilon_n)$ for $\xi \in \tilde{\mathcal{L}}_n$.
Thus, for $x \neq 0$ and large enough $n$ there holds  $x \notin \epsilon_n\Omega_{\rm core} = \tilde{\Omega}_{{\rm core},n}$. Since $d > 1$ it follows that
\begin{equation*}\label{converge0}
\left|\nabla (I_n\tilde{\bm{u}}^{\infty}_n(x))\right| =~ \left|(\nabla I_n \bm{u}^{\infty}_n)(x/\epsilon_n)\right|
\lesssim \left|x/\epsilon_n\right|^{-d}
= \epsilon_n^{d} \left|x\right|^{-d} \to 0
\end{equation*}
uniformly in $x$ as $\epsilon_n \to 0$. This also implies
\[
| \bar{I}_n D_{\epsilon_n} \tilde{\bm{u}}^{\infty}_{\a,n}(x)| \to 0 \quad \quad \mbox{uniformly as $\epsilon_n \to 0$ on $\tilde{\Omega}_\a \backslash \tilde{N}$;}
\]
whence
\[
\bar{I}_n V''(D_{\epsilon_n} \tilde{\bm{u}}^{\infty}_{\a,n}(x)) = V''(\bar{I}_n D_{\epsilon_n}\tilde{\bm{u}}^{\infty}_{\a,n}(x)) \to V''(\bm{0})   \quad \mbox{uniformly as $\epsilon_n \to 0$ on $\tilde{\Omega}_\a \backslash \tilde{N}$}.
\]

Hence, taking the limit of \eqref{eq:wan_def_alt}, and using~\eqref{tail1},~\eqref{tail2}, and the fact that the ``dual pairing'' ($:$) of a weakly convergent and a strongly convergent sequence converges to the dual pairing of the limits, we obtain
\begin{align*}
0 =~& \lim_{n \to \infty} \int_{\tilde{\Omega}_\a} \bar{I}_n V''(D_{\epsilon_n} \tilde{\bm{u}}^{\infty}_\a) \!:\!  \bar{I}_nD_{\epsilon_n}\bar{w}^\a_n : \bar{I}_n D_{\epsilon_n} v \, dx \\
=~& \lim_{n \to \infty} \int_{\tilde{\Omega}_\a} \bar{I}_n V''(D_{\epsilon_n} \tilde{\bm{u}}^{\infty}_\a) \!:\!  \bar{I}_n D_{\epsilon_n} v: \bar{I}_nD_{\epsilon_n}\bar{w}^\a_n   \, dx
\\=~&
\int_{\tilde{\Omega}_\a} V''(\bm{0}) \!:\! \nabla_\calR \bar{w}^\a_0 : \nabla_\calR v \, dx
=
\int_{\tilde{\Omega}_\a} \bbC \!:\! \nabla \bar{w}^\a_0 : \nabla v \, dx,
\end{align*}
where $\nabla_\calR u = (\nabla u \, \rho)_{\rho \in \calR}$. \end{proof}

\begin{proof}[Proof of Theorem~\ref{thm:norm_equiv_limiting_equation1}]
We first prove \eqref{eq:lim_equation_both1}, followed by \eqref{eq:lim_equation_both2}.

\noindent
{\it Proof of \eqref{eq:lim_equation_both1}.}
By density, it suffices to prove the theorem for $v \in {\rm C}^\infty_0(\tilde{\Omega}_\a)$.
Let $\eta$ be a standard mollifier on a unit ball with $\eta_R(x) = \frac{1}{R^d}\eta(x/R)$ its extension to a ball of radius $R$. Let
\[
\chi_R = \begin{cases}
&1 \quad \mbox{if} \quad |x| < 2R, \\
&0 \quad \mbox{if} \quad |x| \geq 2R,
\end{cases}
\]
and define the smooth bump function
\[
\varphi_R(x) := (\eta_R * \chi_R)(x).
\]
Recall that $\varphi_R(x)$ is of class ${\rm C}^\infty$ and satisfies
$$
0 \leq \varphi_R(x) \leq 1,
\quad\mbox{and}\quad
\left\{
\begin{array}{rl}
\varphi_R(x) = 1 &~\mbox{for} \quad |x| < R, \\[1ex]
\varphi_R(x) = 0 &~  \mbox{for} \quad |x| \geq 3R.
\end{array}
\right.
$$
Thus, $v - \varphi_R v$ is smooth and vanishes on $B_R(0)$.  By Theorem~\ref{thm:norm_equiv_limiting_equation},
\begin{align*}
0 =~& \int_{\tilde{\Omega}_\a\backslash B_R(0)} \bbC : \nabla \bar{w}^\a_0 : \nabla (v - \varphi_R v) \, dx
= \int_{\tilde{\Omega}_\a} \bbC : \nabla \bar{w}^\a_0 : \nabla (v - \varphi_R v)\, dx \\
=~& \int_{\tilde{\Omega}_\a} \bbC : \nabla \bar{w}^\a_0 : \nabla v\, dx - \int_{\tilde{\Omega}_\a} \bbC :\nabla \bar{w}^\a_0 : \nabla (\varphi_R v)\, dx
= \int_{\tilde{\Omega}_\a} \bbC : \nabla \bar{w}^\a_0 : \nabla v\, dx - \int_{B_{3R}(0)} \bbC: \nabla \bar{w}^\a_0 : \nabla (\varphi_R v)\, dx.
\end{align*}
This implies
\begin{equation}\label{rockNeg}
\int_{\tilde{\Omega}_\a} \bbC : \nabla \bar{w}^\a_0 : \nabla v\, dx = \int_{B_{3R}(0)} \bbC : \nabla \bar{w}^\a_0 : \nabla (\varphi_R v)\, dx.
\end{equation}
Also note
\begin{equation}\label{rock0}
\Big|\int_{B_{3R}(0)} \bbC : \nabla \bar{w}^\a_0 : \nabla (\varphi_R v)\, dx \Big| \leq~ \|\bbC : \nabla \bar{w}^\a_0\|_{L^2(B_{3R}(0))} \|\nabla (\varphi_R v)\|_{L^2(B_{3R}(0))}.
\end{equation}
Moreover, letting $\mF^\transpose$ be the transpose of the matrix $\mF$,
\begin{equation}\label{rock1}
\begin{split}
\|\nabla (\varphi_R v)\|_{L^2(B_{3R}(0))} \leq~& \|\varphi_R \nabla v\|_{L^2(B_{3R}(0))} + \big\| v \nabla \varphi_R^\transpose  \big\|_{L^2(B_{3R}(0))} \\
\leq~& \|\nabla v\|_{L^2(B_{3R}(0))} +   \| v\|_{L^2(B_{3R}(0))} \|\nabla \varphi_R\|_{L^2(B_{3R}(0))}.
\end{split}
\end{equation}
Furthermore,
\begin{align*}
 &\|\nabla \varphi_R\|_{L^2(B_{3R}(0))}^2 =~ \sum_{i=1}^d \int_{L^2(B_{3R}(0))}{\textstyle{\big|\frac{\partial \varphi_R}{\partial x_i}\big|^2}} \, dx
=~ \sum_{i=1}^d \int_{L^2(B_{3R}(0))}\textstyle{\big|\frac{\partial \eta_R}{\partial x_i} * \chi_R\big|^2} \, dx \\
&\qquad=~ \sum_{i=1}^d {\textstyle{\big\|\frac{\partial \eta_R}{\partial x_i} * \chi_R\big\|^2_{L^2(B_{3R}(0))} }}
\leq~ \sum_{i=1}^d {\textstyle{\big\|\frac{\partial \eta_R}{\partial x_i}\big\|^2_{L^1(B_{3R}(0))} \|\chi_R\|^2_{L^2(B_{3R}(0))}}} \quad \mbox{by Young's Inequality}
\\
&\qquad=~  \sum_{i=1}^d \Big(\int_{B_{3R}(0)}{\textstyle{\big|\frac{\partial \eta_R}{\partial x_i}\, dx\big|}}\Big)^2 \cdot \Big(\int_{B_{3R}(0)}{\textstyle{ |\chi_R|^2 \, dx}}\Big)
\leq~  \sum_{i=1}^d \bigg(\int_{B_{3R}(0)}{\textstyle{\big|\frac{1}{R^{d+1}}\frac{\partial \eta}{\partial x_i}(x/R)\big| \, dx}}\bigg)^2\cdot \bigg( \int_{B_{3R}(0)} {\textstyle{1 \, dx}}\bigg)
\\
&\qquad
=~ \sum_{i=1}^d \bigg(\int_{B_{3}(0)}\textstyle{\big|\frac{1}{R}\frac{\partial \eta}{\partial x_i}(x)\big| \, dx}\bigg)^2 \cdot \bigg(\int_{B_{3R}(0)} 1 \, dx\bigg)
\lesssim R^{d-2}.
\end{align*}
Thus for $d \geq 3, \|\nabla \varphi_R\|_{L^2(B_{3R}(0))} \to 0$ and for $d = 2$, $\|\nabla \varphi_R\|_{L^2(B_{3R}(0))}$ is uniformly bounded in $R$.  Since $v$ is fixed, $\| v\|_{L^2(B_{3R}(0))}  \to 0$ as $R \to 0$ and taking $R \to 0$ in~\eqref{rock0} and using~\eqref{rockNeg} and~\eqref{rock1} shows
\begin{equation*}\label{rock2}
\begin{split}
&\Big|\int_{\tilde{\Omega}_\a} \bbC : \nabla \bar{w}^\a_0 : \nabla  v \Big|
 =~ \lim_{R \to 0} \Big| \int_{B_{3R}(0)} \bbC : \nabla \bar{w}^\a_0 : \nabla (\varphi_R v) \Big| \\
&\quad\leq~ \lim_{R \to 0} \|\bbC : \nabla \bar{w}^\a_0\|_{L^2(B_{3R}(0))} \left(\|\nabla v\|_{L^2(B_{3R}(0))} +   \| v\|_{L^2(B_{3R}(0))} \|\nabla \varphi_R\|_{L^2(B_{3R}(0))}\right) =~ 0
\end{split}
\end{equation*}
so long as $d \geq 2$, which proves~\eqref{eq:lim_equation_both1}. The $d =1$ is special since the atomistic region becomes disconnected when a neighborhood of the origin is deleted.  To remedy this, additional constraints for each connected overlap region are required so the above arguments need to be carried out twice.

\medskip\noindent
{\it Proof of \eqref{eq:lim_equation_both2}.}
We prove (\ref{eq:lim_equation_both2}) for  $v \in \rm C^\infty_0(\tilde{\Omega}_\c)$; the general case follows by density. Interpolation of $v$ on each finite element grid $\tilde{\calT}_{h,n} = \epsilon_n\calT_{h,n}$ yields a sequence, $v^\c_n$, of piecewise linear functions with respect to $\tilde{\calT}_{h,n}$.  Let $V \subsubset \tilde{\Omega}_\c$ be a bounded set such that the support of $v$ and all but finitely many $v^\c_n$ are compactly contained in $V$. Then for all but finitely many $n$,
\begin{equation*}\label{rock3}
\begin{split}
0 =  \int_{\tilde{\Omega}_{\c,n}} W''(\nabla \tilde{\bm{u}}^{\rm con}_n) : \nabla \bar{\bm{w}}^\c_n : \nabla v^\c_n \, dx
= \int_V W''(\nabla \tilde{\bm{u}}^{\rm con}_n) : \nabla \bar{\bm{w}}^\c_n : \nabla v^\c_n \, dx.
\end{split}
\end{equation*}
%Summarizing, $v^\c_{n}$ converges to $v$ strongly in \dao{$H^1(V)$} and $\bar{w}^\c_n \weakto \bar{w}^\c_0$.
%
Taking limits of both sides produces
\begin{equation}\label{rock4}
\begin{split}
0 =~& \lim_{n \to \infty} \int_V W''(\nabla \tilde{\bm{u}}^{\rm con}_n) : \nabla \bar{\bm{w}}_n^\c : \nabla v^\c_{n} \, dx  \\
	=~& \lim_{n \to \infty} \int_V \left(W''(\nabla \tilde{\bm{u}}^{\rm con}_n ) - W''( \nabla I_n\tilde{\bm{u}}^{\infty}_n)\right) : \nabla \bar{\bm{w}}_n^\c : \nabla v^\c_{n} \, dx + \lim_{n \to \infty} \int_V W''(  \nabla I_n\tilde{\bm{u}}^{\infty}_n)  : \nabla \bar{\bm{w}}_n^\c : \nabla v^\c_{n} \, dx.
\end{split}
\end{equation}
Observe
\begin{align*}
&\lim_{n \to \infty} \int_V \left(W''(\nabla \tilde{\bm{u}}^{\rm con}_n ) - W''( \nabla I_n\tilde{\bm{u}}^{\infty}_n)\right) : \nabla \bar{\bm{w}}_n^\c : \nabla v^\c_{n} \\
&\qquad\qquad\lesssim	\lim_{n \to \infty} \epsilon_n\|\nabla \tilde{\bm{u}}^{\rm con}_n - \nabla I_n\tilde{\bm{u}}^{\infty}_n\|_{L^2(V)} \|\nabla \bar{\bm{w}}_n^\c\|_{L^2(V)} \|\nabla v^\c_{n}\|_{L^2(V)} = 0,
\end{align*}
due to Theorem~\ref{contModelError} estimating the continuum error and boundedness of $\nabla \bar{\bm{w}}_n^\c$ and $\nabla v^\c_{n}$.  Hence,~\eqref{rock4} simplifies to
\[
0 = \lim_{n \to \infty} \int_V W''(  \nabla I_n\tilde{\bm{u}}^{\infty}_n)  : \nabla \bar{\bm{w}}_n^\c : \nabla v^\c_{n} \, dx.
\]
Reasoning as in the end of the proof of Lemma~\ref{thm:norm_equiv_limiting_equation}, $W''(  \nabla I_n\tilde{\bm{u}}^{\infty}_n)$ converges uniformly to $W''(\bm{0})$ on $V$ while $\nabla v^\c_{n}$ converges strongly to $\nabla v$ in $H^1(V)$. The functions $\bar{\bm{w}}^\c_n$ converge weakly to $\bar{w}^\c_0$ in $\bm{H}^1(\tilde{\Omega}_\c)$, and since the norms, $\|\nabla \bm{w}\|^2_{L^2(\tilde{\Omega}_\c)},  \int_{\tilde{\Omega}_\c} \bbC :\nabla \bm{w} :\nabla \bm{w}\, dx = \langle \delta^2\calE^\c(\bm{0})\bm{w}, \bm{w}\rangle$ are equivalent on $\bm{H}^1(\tilde{\Omega}_\c)$, the functions $\bar{\bm{w}}^\c_n$ converge weakly to $\bar{w}^\c_0$ with respect to the $\int_{\tilde{\Omega}_\c} \bbC :\nabla \bm{w} :\nabla \bm{u}\, dx$ inner product.  Thus,
\[
0 = \lim_{n \to \infty} \int_V W''(  \nabla I_n\tilde{\bm{u}}^{\infty}_n)  : \nabla \bar{\bm{w}}_n^\c : \nabla v^\c_{n} \, dx = \int_V \bbC  : \nabla \bar{\bm{w}}_0^\c : \nabla v \, dx = \int_{\tilde{\Omega}_{\c}} \bbC  : \nabla \bar{\bm{w}}_0^\c : \nabla v \, dx.
\]
\end{proof}

\subsubsection*{Step 3:}

With the convergence properties of Step 1 and limiting equations of Step 2, we shall prove
\begin{theorem}\label{thm:inner_converge}
Let $\bar{w}^\a_n$ and $\nabla \bar{w}^\c_n$ be as defined in Step 1.  Then
\begin{equation}\label{inner_converge}
\big(\nabla I_{n}\bar{w}^\a_{n},\nabla \bar{w}^\c_{n}\big)_{L^2\left(\tilde{\Omega}_{\o}\right)} \to \left(\nabla \bar{w}^\a_0,\nabla \bar{w}^\c_0\right)_{L^2\left(\tilde{\Omega}_{\o}\right)}.
\end{equation}
\end{theorem}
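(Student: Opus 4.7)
The plan is to upgrade the weak convergence $\bar{w}^\c_n \weakto \bar{w}^\c_0$ in $H^1(\tilde{\Omega}_{\o,\ex})$ furnished by Lemma~\ref{convergeLemma} to strong convergence of $\nabla \bar{w}^\c_n$ in $L^2(\tilde{\Omega}_\o)$; combined with the weak convergence of $\nabla I_n\bar{w}^\a_n$ in $L^2(\tilde{\Omega}_\o)$, this immediately yields the desired convergence of the inner product. Specifically, writing
\begin{equation*}
\big(\nabla I_n\bar{w}^\a_n, \nabla\bar{w}^\c_n\big)_{L^2(\tilde{\Omega}_\o)}
= \big(\nabla I_n\bar{w}^\a_n, \nabla\bar{w}^\c_0\big)_{L^2(\tilde{\Omega}_\o)}
+ \big(\nabla I_n\bar{w}^\a_n, \nabla(\bar{w}^\c_n - \bar{w}^\c_0)\big)_{L^2(\tilde{\Omega}_\o)},
\end{equation*}
the first summand converges to $(\nabla\bar{w}^\a_0, \nabla\bar{w}^\c_0)_{L^2(\tilde{\Omega}_\o)}$ by the weak convergence in~\eqref{varyAt}, and the second summand vanishes because $\nabla I_n\bar{w}^\a_n$ is bounded in $L^2(\tilde{\Omega}_\o)$ while its companion goes to zero strongly.

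For the strong convergence, I would adapt a Caccioppoli-type argument to the discrete continuum equation~\eqref{eq:wcn_def_scale}. Since $\tilde{\calT}_{h,n}$ is fully resolved on $\tilde{\Omega}_{\o,\ex}$, $\bar{w}^\c_n$ is piecewise linear there on the atomistic mesh of size $\epsilon_n$, and a scaled version of Theorem~\ref{contModelError} together with Theorem~\ref{decayThm} implies $\nabla \tilde{\bm{u}}^{\rm con}_n \to 0$ uniformly on $\tilde{\Omega}_{\o,\ex}$, so the coefficient $W''(\nabla\tilde{\bm{u}}^{\rm con}_n)$ converges uniformly there to $\bbC = W''(\bm{0})$, which is exactly the coefficient appearing in the limiting equation~\eqref{eq:lim_equation_both2}. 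Fix $\chi\in C^\infty_0(\tilde{\Omega}_{\o,\ex})$ with $\chi\equiv 1$ on $\tilde{\Omega}_\o$, and let $\Pi_n$ denote the Scott-Zhang projection onto $\tilde{\bm{\mathcal{U}}}^\c_{h,0,n}$. Testing~\eqref{eq:wcn_def_scale} with $\Pi_n\bigl[\chi^2(\bar{w}^\c_n-\bar{w}^\c_0)\bigr]$ and the limit equation~\eqref{eq:lim_equation_both2} with $\chi^2(\bar{w}^\c_n-\bar{w}^\c_0)$, then subtracting and expanding the product rule on $\nabla\bigl[\chi^2(\bar{w}^\c_n-\bar{w}^\c_0)\bigr]$, one should obtain
\begin{equation*}
\int \chi^2\, \bbC:\nabla(\bar{w}^\c_n-\bar{w}^\c_0):\nabla(\bar{w}^\c_n-\bar{w}^\c_0)\,dx = o(1),
\end{equation*}
where the $o(1)$ collects: (i) the cross-term $\int \chi\,\bbC:\nabla(\bar{w}^\c_n-\bar{w}^\c_0):(\bar{w}^\c_n-\bar{w}^\c_0)\otimes\nabla\chi$, controlled via Cauchy-Schwarz together with the strong $L^2(\tilde{\Omega}_{\o,\ex})$ convergence from Lemma~\ref{convergeLemma}; (ii) the coefficient error $\int(W''(\nabla\tilde{\bm{u}}^{\rm con}_n)-\bbC):\nabla\bar{w}^\c_n:\nabla(\cdot)$, small by uniform convergence of the coefficient and boundedness of the gradients; and (iii) the Scott-Zhang projection error, which is $O(\epsilon_n)$ by property \textbf{P.4}. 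Rewriting the left-hand side as $\int \bbC:\nabla(\chi v):\nabla(\chi v)$ plus further lower-order terms with $v:=\bar{w}^\c_n-\bar{w}^\c_0$, and invoking the coercivity $\int \bbC:\nabla u:\nabla u \geq \gamma_\a \|\nabla u\|_{L^2(\mathbb{R}^d)}^2$ valid for $u\in H^1_0(\mathbb{R}^d)$ (cf.\ \cite[Lemma~5.2]{theil2012}, as used in the proof of Lemma~\ref{PiLemma}), then gives $\|\nabla(\chi v)\|_{L^2}\to 0$, and since $\chi\equiv 1$ on $\tilde{\Omega}_\o$, the strong convergence $\|\nabla v\|_{L^2(\tilde{\Omega}_\o)}\to 0$ follows.

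The main obstacle is the coercivity step: the tensor $\bbC$ typically satisfies only the Legendre-Hadamard condition rather than pointwise coercivity, so $\int \chi^2\,\bbC:\nabla v:\nabla v$ cannot be bounded below pointwise by $\int\chi^2|\nabla v|^2$; one must instead rewrite the integrand in terms of the compactly supported function $\chi v$ and appeal to the global $H^1_0$ coercivity of~\cite[Lemma~5.2]{theil2012}, absorbing the resulting $v\otimes\nabla\chi$ cross-terms using strong $L^2$ convergence. A secondary technicality is ensuring that the Scott-Zhang projection error associated to the non-smooth test function $\chi^2(\bar{w}^\c_n-\bar{w}^\c_0)$ is genuinely $o(1)$ in the relevant dual norm, which requires careful use of \textbf{P.4} together with the uniform $H^1$ bound on $\bar{w}^\c_n$ on $\tilde{\Omega}_{\o,\ex}$.
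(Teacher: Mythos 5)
Your overall strategy (pair a strongly convergent gradient with a weakly convergent one) is the right one, and your opening decomposition of the inner product is fine. But the localization on which your whole argument rests is impossible: there is no $\chi \in {\rm C}^\infty_0(\tilde{\Omega}_{\o,\ex})$ with $\chi \equiv 1$ on $\tilde{\Omega}_\o$, because $\tilde{\Omega}_\o = \tilde{\Omega}_\a\backslash\tilde{\Omega}_{\rm core}$ and $\tilde{\Omega}_{\o,\ex} = (2\psi_\a\tilde{\Omega}_{\rm core})\backslash\tilde{\Omega}_{\rm core}$ share the inner boundary $\Gamma_{\rm core}$, so $\tilde{\Omega}_\o$ is not compactly contained in $\tilde{\Omega}_{\o,\ex}$. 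Equivalently, the test function $\chi^2(\bar{w}^\c_n-\bar{w}^\c_0)$ does not vanish on $\Gamma_{\rm core}$ and hence (even after Scott--Zhang projection) is not an admissible element of $\tilde{\bm{\mathcal{U}}}^\c_{h,0,n}$; the continuum equation~\eqref{eq:wcn_def_scale} only constrains $\bar{w}^\c_n$ against test functions that are constant on $\Gamma_{\rm core}$, so a Caccioppoli estimate cannot reach that boundary, and strong convergence of $\nabla\bar{w}^\c_n$ up to $\Gamma_{\rm core}$ is not to be expected: near $\Gamma_{\rm core}$ the function is determined by the virtual control $\lambda_\c$, over which we have only weak control.

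The paper resolves exactly this obstruction by splitting $\tilde{\Omega}_\o$ into an inner annulus $A_1$ adjacent to $\Gamma_{\rm core}$ and an outer part $A_2$. On $A_2$, which is at positive distance from $\Gamma_{\rm core}$, your Caccioppoli-type argument (with a cutoff supported in $\tilde{\Omega}_{\o,\ex}$ and equal to $1$ only on $A_2$) goes through essentially as you describe, including the device of rewriting the quadratic form in terms of the compactly supported function $\eta z_n$ to invoke Legendre--Hadamard coercivity; this is Lemma~\ref{lem1:inner_converge}. On $A_1$ one instead proves \emph{strong} convergence of the \emph{atomistic} gradients, $\|\nabla(I_n\bar{w}^\a_n - \bar{w}^\a_0)\|_{L^2(A_1)}\to 0$ (Lemma~\ref{lem2:inner_converge}), which is possible because $A_1$ is interior to $\tilde{\Omega}_\a$, and then pairs strong-atomistic with weak-continuum on $A_1$ and weak-atomistic with strong-continuum on $A_2$. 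Your proposal is missing the entire atomistic half of this argument, and without it the convergence of the inner product over the part of $\tilde{\Omega}_\o$ touching $\Gamma_{\rm core}$ is not established.
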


\begin{proof}[Proof of Theorem~\ref{thm:inner_converge}]
Split $\tilde{\Omega}_\o$ into an inner part, $A_1$, and an outer part, $A_2$ such that $\tilde{\Omega}_\o = A_1 \cup A_2$ and $A_1$ and $A_2$ have disjoint interiors as in Figure~\ref{overlapDecomp}. Specifically, let $\lfloor x\rfloor$ be the greatest integer less than or equal to $x$ and set
\begin{align*}
A_1 :=~&  (\lfloor \psi_\a/2\rfloor \tilde{\Omega}_{\rm core}) \backslash  \tilde{\Omega}_{\rm core}, \\
A_2 :=~&   \tilde{\Omega}_{\o} \backslash A_1.
\end{align*}
\begin{figure}[htp!]
\centering
\includegraphics[width=0.5\textwidth]{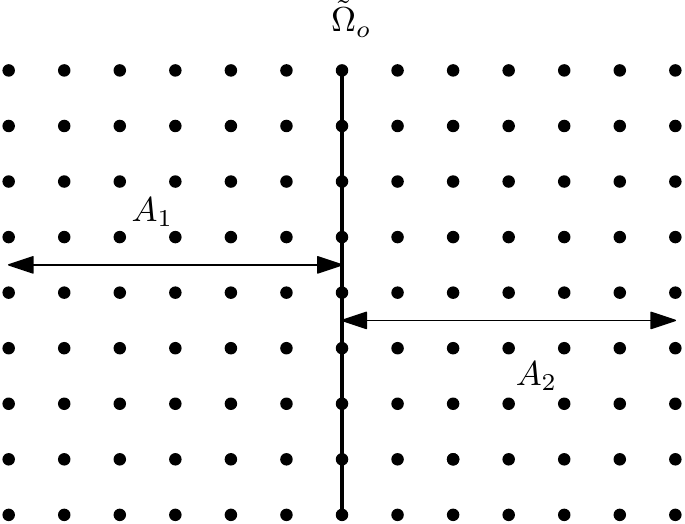}
\caption{An example decomposition of a portion of $\tilde{\Omega}_\o$ into $A_1$ and $A_2$.}
\label{overlapDecomp}
\end{figure}
We prove in Lemma~\ref{lem1:inner_converge} below that
\[
\|\nabla\left(\bar{w}^\c_n - \bar{w}^\c_0\right)\|_{L^2(A_2)} \to 0
\]
and in Lemma~\ref{lem2:inner_converge} that
\[
\big\|\nabla\big(I_{n}\bar{w}^\a_{n} - \bar{w}^\a_0\big)\big\|_{L^2(A_1)} \to 0.
\]
Using these two strong convergence results along with the weak convergence properties of Lemma~\ref{convergeLemma}---namely, $\bar{w}^\c_n \weakto \bar{w}^\c_0$ on $A_1$ and $\bar{w}^\a_n \weakto \bar{w}^\a_0$ on $A_2$---yields
\begin{equation}\label{water4}
\begin{split}
\big(\nabla I_{n}\bar{w}^\a_{n},\nabla \bar{w}^\c_{n}\big)_{L^2(\tilde{\Omega}_{\o})}  =~& \big(\nabla I_{n}\bar{w}^\a_{n},\nabla \bar{w}^\c_{n}\big)_{L^2(A_1)}  + \big(\nabla I_{n}\bar{w}^\a_{n},\nabla \bar{w}^\c_{n}\big)_{L^2(A_2)} \\
\to~&  (\nabla \bar{w}^\a_0,\nabla \bar{w}^\c_0)_{L^2(A_1)}  + \left(\nabla \bar{w}^\a_0,\nabla \bar{w}^\c_0\right)_{L^2(A_2)}
= (\nabla \bar{w}^\a_0,\nabla \bar{w}^\c_0)_{L^2(\tilde{\Omega}_{\o})}.
\end{split}
\end{equation}

\end{proof}

In the preceding theorem, we have made reference to the following lemma, which we now prove.
\begin{lemma}\label{lem1:inner_converge}
Let $\bar{w}^\c_n$ and $\bar{w}^\c_0$ be as defined in Lemma~\ref{convergeLemma}. Then
\begin{equation}\label{finalGoal1}
\|\nabla\left(\bar{w}^\c_n - \bar{w}^\c_0\right)\|_{L^2(A_2)} \to 0.
\end{equation}
\end{lemma}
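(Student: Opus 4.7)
The plan is an interior Caccioppoli-type estimate for the difference $e_n := \bar{w}^\c_n - \bar{w}^\c_0$, upgrading the weak $H^1(\tilde{\Omega}_{\o,\ex})$ convergence established in Lemma~\ref{convergeLemma} to strong $H^1(A_2)$ convergence. The geometry supports this: both the annulus $\lfloor \psi_\a/2\rfloor\tilde{\Omega}_{\rm core}\setminus\tilde{\Omega}_{\rm core}$ separating $A_2$ from the inner boundary $\Gamma_{\rm core}$ and the annulus $2\psi_\a\tilde{\Omega}_{\rm core}\setminus\psi_\a\tilde{\Omega}_{\rm core}$ separating $A_2$ from the outer boundary of $\tilde{\Omega}_{\o,\ex}$ have $n$-independent positive thickness, so a standard cutoff argument applies.

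Fix $\chi\in C^\infty_0(\tilde{\Omega}_{\o,\ex})$ with $\chi\equiv 1$ on $A_2$ and $\|\nabla\chi\|_{L^\infty}$ uniformly bounded. The sequence $\bar{w}^\c_n$ satisfies the scaled variational equation~\eqref{eq:wcn_def_scale}, while $\bar{w}^\c_0$ satisfies the limiting equation~\eqref{eq:lim_equation_both2}. Combined with the uniform convergence $W''(\nabla \tilde{\bm{u}}^{\rm con}_n)\to\bbC$ on $\tilde{\Omega}_{\o,\ex}$ already established in the proof of Theorem~\ref{thm:norm_equiv_limiting_equation1}, subtracting these yields
\[
\int_{\tilde{\Omega}_{\o,\ex}} \bbC : \nabla e_n : \nabla v_n\, dx = o(1)
\]
for any sequence of admissible FE test functions $v_n\in\bm{\calU}^\c_{h,0,n}$ with $\supp(v_n)\subset\tilde{\Omega}_{\o,\ex}$ and $\|\nabla v_n\|_{L^2}$ bounded uniformly in $n$. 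Because $\bar{w}^\c_0\in H^1$ is not itself FE-valued, $\chi^2 e_n$ is not admissible; I would overcome this by first smoothing $\bar{w}^\c_0$ to $\bar{w}^{\c,\delta}_0$ in $H^1(\tilde{\Omega}_{\o,\ex})$ (mimicking Case 2 of Lemma~\ref{lem:norm_equiv:main_aux}) and using $v_n := I_{h,n}\bigl(\chi^2(\bar{w}^\c_n-\bar{w}^{\c,\delta}_0)\bigr)$, which lies in $\bm{\calU}^\c_{h,0,n}$ thanks to the full mesh resolution on $\tilde{\Omega}_{\o,\ex}$ guaranteed by Assumption~\ref{meshSize} and because $\chi$ vanishes near both $\Gamma_{\rm core}$ and the outer boundary of $\tilde{\Omega}_{\o,\ex}$.

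A short direct calculation based on $\chi\nabla e_n=\nabla(\chi e_n)-e_n\otimes\nabla\chi$ shows that all cross terms cancel, reducing the testing identity to the clean Caccioppoli form
\[
\int_{\tilde{\Omega}_{\o,\ex}}\bbC:\nabla(\chi e_n):\nabla(\chi e_n)\,dx
=\int_{\tilde{\Omega}_{\o,\ex}}\bbC:(e_n\otimes\nabla\chi):(e_n\otimes\nabla\chi)\,dx+o(1).
\]
Because $\chi e_n\in H^1_0(\tilde{\Omega}_{\o,\ex})$, the G{\aa}rding-type lower bound $\int\bbC:\nabla\varphi:\nabla\varphi\ge\gamma_\a\|\nabla\varphi\|_{L^2}^2$, inherited from Assumption~\ref{atCoercive} via~\cite[Lemma 5.2]{theil2012} exactly as in the proof of Lemma~\ref{PiLemma}, applies on the left. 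The right-hand side is bounded by $C\|\nabla\chi\|_{L^\infty}^2\|e_n\|_{L^2(\tilde{\Omega}_{\o,\ex})}^2$, which vanishes by the strong $L^2$ convergence~\eqref{varyCont}. Hence $\|\nabla(\chi e_n)\|_{L^2}\to 0$, so $\|\chi\nabla e_n\|_{L^2}\to 0$ by the triangle inequality, and~\eqref{finalGoal1} follows since $\chi\equiv 1$ on $A_2$.

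The main obstacle is the mismatch between the continuous limit $\bar{w}^\c_0$ and the discrete FE test space: $\chi^2 e_n$ cannot serve as a test function directly. This is resolved by the diagonalized smoothing of $\bar{w}^\c_0$ combined with the exact mesh resolution on $\tilde{\Omega}_{\o,\ex}$, which make both the smoothing error (as $\delta\to 0$) and the finite element interpolation error (as $\epsilon_n\to 0$) negligible. A secondary subtlety is that $\bbC$ is coercive only in the Legendre--Hadamard sense, so the quadratic form must be applied to the compactly supported $\chi e_n$ rather than pointwise to $\nabla e_n$; this falls out automatically from the standard Caccioppoli choice $v=\chi^2 e_n$ and the cancellation of cross terms noted above.
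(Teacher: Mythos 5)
Your proposal is correct and follows essentially the same route as the paper's proof: a cutoff (Caccioppoli-type) argument on $\tilde{\Omega}_{\o,\ex}$, coercivity of the quadratic form on the compactly supported product $\chi e_n$, the discrete variational equation tested with a finite-element interpolant of the cutoff difference, and the strong $L^2$ convergence of $e_n$ to kill the lower-order term. The only substantive difference is in handling the non-FE-valued limit $\bar{w}^\c_0$ inside the test function: you mollify and diagonalize, whereas the paper invokes interior elliptic regularity ($\bar{w}^\c_0\in H^2_{\rm loc}$) together with a Bramble--Hilbert estimate and the elementwise vanishing of $\nabla^2\bar{w}^\c_n$ -- both resolutions are valid.
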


\begin{proof}
%Recall that each element of the continuum sequence satisfies a variational equality of the form
%\begin{equation}\label{contEquation}
%\int_{\tilde{\Omega}_{\c,n}} W''(\nabla \tilde{\bm{u}}^{\rm con}_n) : \nabla \bar{w}^\c_n : \nabla v^\c_n \, dx=~ 0 \quad \forall v^\c \in \tilde{\bm{\mathcal{U}}}^\c_{h,0,n} .
%\end{equation}
%According to Theorem~\ref{thm:norm_equiv_limiting_equation1}, the function $\bar{w}^\c_0$ satisfies a variational equality of the form
%\begin{equation*}\label{contLimitEquation}
%\int_{\tilde{\Omega}_{\c}} \bbC : \nabla \bar{w}_0^\c : \nabla v^\c_{0} \, dx = 0 \quad \forall v^\c_{0} \in H^1_0(\tilde{\Omega}_{\c}),
%\end{equation*}
%which corresponds to a linear elliptic system.  From elliptic regularity, $\bar{w}^\c_0$ belongs to $H^2_{\rm loc}(\tilde{\Omega}_\c)$~\cite{giaquinta1983}.  Recalling that the mesh is fully resolved on $\tilde{\Omega}_{\o, \ex}$, it follows that
%\begin{equation*}\label{limit1}
%\hat{w}^\c_n := I_n \bar{w}^\c_0 \to \bar{w}^\c_0 \quad \mbox{in \dao{$H^1(\tilde{\Omega}_{\o, \ex})$} and hence in $H^1(A_2).$}
%\end{equation*}
%\commentas{is it possible to completely avoid introducing $\hat{w}^\c_n$?}

%The goal is now to show
%\begin{equation}\label{goal1}
%\|\nabla(\bar{w}^\c_n - \hat{w}^\c_n)\|_{L^2(A_2)} \to 0,
%\end{equation}
%which will further imply~\eqref{finalGoal1}.
We let $\eta$ be a smooth bump function with compact support in $\tilde{\Omega}_{\o,\ex}$ and equal to $1$ on $A_2$. Our starting point in proving~\eqref{finalGoal1} will be to define $z_n := \bar{w}^\c_n - \bar{w}^\c_0$ and bound $\|\nabla z_n\|_{L^2(A_2)} \leq \|\nabla (\eta z_n)\|_{L^2(\tilde{\Omega}_{\o,\ex})}$.  Then we shall prove $\|\nabla (\eta z_n)\|_{L^2(\tilde{\Omega}_{\o,\ex})} \to 0$.

Note that $z_n \weakto 0$ in $H^1(\tilde{\Omega}_{\o, \ex})$ by the definition of $z_n$ and~\eqref{varyCont}.
As a simple corollary, $\eta z_n\weakto 0$ in $H^1(\tilde{\Omega}_{\o, \ex})$, and therefore a short calculation implies $\nabla (\eta z_n) \weakto 0$ in $L^2(\tilde{\Omega}_{\o, \ex})$.  Since $\eta z_n$ can be extended by $0$ to all of $\bbR^d$, coercivity of the continuum Hessian~\eqref{contStability} gives us
\begin{equation}\label{omlette1}
\begin{split}
&\|\nabla z_n\|_{L^2(A_2)}^2 \leq \|\nabla (\eta z_n)\|_{L^2(\tilde{\Omega}_{\o,\ex})}^2
\lesssim~
	\int_{\tilde{\Omega}_{\o,\ex}}W''(\nabla\tilde{\bm{u}}^{\rm con}_n):\nabla(\eta z_n):\nabla(\eta z_n)\, dx
\\&=
	\int_{\tilde{\Omega}_{\o,\ex}}W''(\nabla\tilde{\bm{u}}^{\rm con}_n):\nabla(\eta \bar{w}^\c_n):\nabla(\eta z_n)\, dx
	-
	\int_{\tilde{\Omega}_{\o,\ex}}W''(\nabla\tilde{\bm{u}}^{\rm con}_n):\nabla(\eta \bar{w}^\c_0):\nabla(\eta z_n)\, dx
%\\&=
%	\int_{\tilde{\Omega}_{\o,\ex}}W''(\nabla\tilde{\bm{u}}^{\rm con}_n):\nabla(\eta \bar{w}^\c_n):\nabla(\eta z_n)\, dx
%	-
%	o(1),
\end{split}
\end{equation}
Taking the limit of~\eqref{omlette1} and using that $\nabla(\eta z_n) \weakto 0$ weakly in $L^2(\tilde{\Omega}_{\o,\ex}) $ while $W''(\nabla\tilde{\bm{u}}^{\rm con}_n)\to W''(0)$ strongly in $L^\infty(\tilde{\Omega}_{\o,\ex})$ yields
\begin{equation*}
\lim_{n \to \infty} \|\nabla z_n\|_{L^2(A_2)}^2 \lesssim~ \lim_{n \to \infty} \int_{\tilde{\Omega}_{\o,\ex}}W''(\nabla\tilde{\bm{u}}^{\rm con}_n):\nabla(\eta \bar{w}^\c_n):\nabla(\eta z_n)\, dx.
\end{equation*}
We hence continue to estimate
\begin{equation*}\label{omlette2}
\begin{split}
&\lim_{n\to \infty}\|\nabla z_n\|_{L^2(A_2)}^2
\\\phantom{\mathstrut-\mathstrut}&\lesssim~
	\lim_{n\to \infty}\int_{\tilde{\Omega}_{\o, \ex}}W''(\nabla\tilde{\bm{u}}^{\rm con}_n):\nabla \bar{w}^\c_n:\eta\nabla(\eta z_n)\, dx
	\,\,\,\,+\,\,\,\,
	\lim_{n\to \infty}\int_{\tilde{\Omega}_{\o, \ex}}W''(\nabla\tilde{\bm{u}}^{\rm con}_n): \bar{w}^\c_n (\nabla\eta)^\transpose:\nabla(\eta z_n)\, dx
\\
&= \phantom{\mathstrut-\mathstrut}
	\lim_{n \to \infty} \int_{\tilde{\Omega}_{\o, \ex}}W''(\nabla\tilde{\bm{u}}^{\rm con}_n):\nabla \bar{w}^\c_n:\nabla(\eta^2 z_n)\, dx
\\&\phantom{\mathstrut=\mathstrut}-
	\lim_{n \to \infty} \int_{\tilde{\Omega}_{\o, \ex}}W''(\nabla\tilde{\bm{u}}^{\rm con}_n):\nabla \bar{w}^\c_n:\eta z_n(\nabla\eta)^\transpose\, dx
	+
	\lim_{n \to \infty} \int_{\tilde{\Omega}_{\o, \ex}}W''(\nabla\tilde{\bm{u}}^{\rm con}_n):\bar{w}^\c_n(\nabla\eta)^\transpose :\nabla(\eta z_n)\, dx,
\end{split}
\end{equation*}
where the second limit converges to zero thanks to $z_n \to 0$ in $L^2(\tilde{\Omega}_{\o, \rm ex})$ and $\nabla\bar{w}^\c_n \weakto \nabla\bar{w}^\c_0$ in $L^2(\tilde{\Omega}_{\o, \rm ex})$ and the third term converges to zero because $\bar{w}^\c_n \to \bar{w}^\c_0$ and $\nabla(\eta z_n) \weakto 0$ in $L^2(\tilde{\Omega}_{\o, \rm ex})$ (of course, both together with $W''(\nabla\tilde{\bm{u}}^{\rm con}_n)\to W''(0)$ in $L^\infty(\tilde{\Omega}_{\o, \rm ex})$).  Thus
\begin{equation*}
\lim_{n\to \infty}\|\nabla z_n\|_{L^2(A_2)}^2	\lesssim~ \lim_{n \to \infty} \int_{\tilde{\Omega}_{\o, \ex}}W''(\nabla\tilde{\bm{u}}^{\rm con}_n):\nabla \bar{w}^\c_n:\nabla(\eta^2 z_n)\, dx.
\end{equation*}

To estimate the this term, we recall each $\bar{w}^\c_n$ solves a variational equality of the form
\begin{equation*}
\int_{\tilde{\Omega}_{\c,n}} W''(\nabla \tilde{\bm{u}}^{\rm con}_n) : \nabla \bar{w}^\c_n : \nabla v^\c_n \, dx=~ 0 \quad \forall v^\c \in \tilde{\bm{\mathcal{U}}}^\c_{h,0,n} .
\end{equation*}
We use this equality with $v^\c_n = I_n\big(\eta^2 z_n \big)\in \tilde{\bm{\mathcal{U}}}_{h,0,n}$ to further estimate
\begin{equation}\label{omlette3}
\begin{split}
&\lim_{n \to \infty} \|\nabla z_n\|_{L^2(A_2)}^2 \\
&\lesssim~
\lim_{n \to \infty} \int_{\tilde{\Omega}_{\o, \ex}}W''(\nabla\tilde{\bm{u}}^{\rm con}_n):\nabla \bar{w}^\c_n:\nabla(\eta^2 z_n)\, dx.
	-
	\lim_{n \to \infty} \int_{\tilde{\Omega}_{\o, \ex}}W''(\nabla\tilde{\bm{u}}^{\rm con}_n):\nabla \bar{w}^\c_n:\nabla I_n(\eta^2 z_n)\, dx
\\
&=~
	\lim_{n \to \infty} \int_{\tilde{\Omega}_{\o, \ex}}W''(\nabla\tilde{\bm{u}}^{\rm con}_n):\nabla \bar{w}^\c_n:\nabla(\eta^2 z_n - I_n(\eta^2 z_n) )\, dx\\
&\lesssim~
	\lim_{n \to \infty} \big\|\nabla(\eta^2 z_n - I_n(\eta^2 z_n) )\big\|_{L^2(\tilde{\Omega}_{\o, \ex})}
.
\end{split}
\end{equation}

Next,
\[
\lim_{n \to \infty} \big\|\nabla(\eta^2 z_n - I_n(\eta^2 z_n) )\big\|_{L^2(\tilde{\Omega}_{\o, \ex})} \leq \lim_{n \to \infty}\big\|\nabla(\eta^2 \bar{w}^\c_n - I_n(\eta^2\bar{w}^\c_n))\big\|_{L^2(\tilde{\Omega}_{\o, \ex})} + \lim_{n \to \infty}\big\| \nabla(\eta^2 \bar{w}^\c_0 - I_n(\eta^2\bar{w}^\c_0))\big\|_{L^2(\tilde{\Omega}_{\o, \ex})}.
\]
According to Theorem~\ref{thm:norm_equiv_limiting_equation1}, the function $\bar{w}^\c_0$ satisfies a variational equality of the form
\begin{equation*}\label{contLimitEquation}
\int_{\tilde{\Omega}_{\c}} \bbC : \nabla \bar{w}_0^\c : \nabla v^\c_{0} \, dx = 0 \quad \forall v^\c_{0} \in H^1_0(\tilde{\Omega}_{\c}),
\end{equation*}
which corresponds to a linear elliptic system.  From elliptic regularity, $\bar{w}^\c_0$ belongs to $H^2_{\rm loc}(\tilde{\Omega}_\c)$~\cite{giaquinta1983,suli2012}.  Thus, standard finite element approximation theory implies
\[
\lim_{n \to \infty}\big\| \nabla(\eta^2 \bar{w}^\c_0 - I_n(\eta^2\bar{w}^\c_0))\big\|_{L^2(\tilde{\Omega}_{\o, \ex})} \lesssim \lim_{n \to \infty}\epsilon_n\big\|\nabla^2(\eta^2 \bar{w}^\c_0)\big\|_{L^2(\tilde{\Omega}_{\o, \ex})} = 0.
\]
Finally, to show
\begin{equation}\label{fact2}
\lim_{n \to \infty} \big\| \nabla\big(\eta^2 \bar{w}^\c_n - I_n(\eta^2 \bar{w}^\c_n\big)) \big\|_{L^2(\tilde{\Omega}_{\o, \ex})} = 0,
\end{equation}
observe that $\eta^2 \bar{w}^\c_n - I_n (\eta^2 \bar{w}^\c_n )$ vanishes outside a neighborhood $N_\delta \subsubset \tilde{\Omega}_{\o, \ex}$ of $\supp (\eta)$.  Then
$$
\begin{array}{l}
\displaystyle
\big\| \nabla\big(\eta^2 \bar{w}^\c_n - I_n(\eta^2 \bar{w}^\c_n )\big) \big\|_{L^2(\tilde{\Omega}_{\o, \ex})}^2 = \big\| \nabla\big(\eta^2 \bar{w}^\c_n - I_n(\eta^2 \bar{w}^\c_n)\big) \|_{L^2(N_\delta)}^2
= \int_{N_\delta} \big|\nabla\big(\eta^2 \bar{w}^\c_n - I_n(\eta^2 \bar{w}^\c_n)\big)\big|^2\, dx
\\[2ex]
\displaystyle\quad
\leq~ \sum_{\substack{T \in \tilde{\calT}_{\a,n} \\ T \cap N_\delta \neq \emptyset }}  \int_{T} \big|\nabla\big(\eta^2 \bar{w}^\c_n - I_n(\eta^2 \bar{w}^\c_n)\big)\big|^2\, dx
\lesssim~ \sum_{\substack{T \in \tilde{\calT}_{\a,n} \\ T \cap N_\delta \neq \emptyset }} |T|^2 \big\|\nabla^2(\eta^2 \bar{w}^\c_n)\big\|_{L^2(T)}^2 \lesssim~ \epsilon_n^{2d} \sum_{\substack{T \in \tilde{\calT}_{\a,n} \\ T \cap N_\delta \neq \emptyset }} \big\|\nabla^2(\eta^2 \bar{w}^\c_n)\big\|_{L^2(T)}^2
\end{array}
$$
where the last line follows from the Bramble-Hilbert lemma and scaling.
Because $\bar{w}^\c_n$ is piecewise linear its second derivatives vanish on all $T$. Using the uniform boundedness of
$\eta$ and its derivatives then yields
\begin{align*}
\big\|\nabla^2(\eta^2 \bar{w}^\c_n)\big\|_{L^2(T)}^2
=~&\int_{T} \big|\nabla^2(\eta^2 \bar{w}^\c_n)\big|^2 \, dx
 %\lesssim~& \int_{T} {\textstyle{\sup_{i,k}\big|\frac{\partial \eta}{\partial x_i \partial x_k}\big|^2 |\bar{w}^\c_n |^2\, dx}} + \int_{T} {\textstyle{\sup_{i}\big|\frac{\partial \eta}{x_i}\big|^2 | \nabla( \bar{w}^\c_n)|^2\, dx}} \\
\lesssim~  \int_{T} | \bar{w}^\c_n |^2\, dx + \int_{T} | \nabla \bar{w}^\c_n  |^2\, dx.
\end{align*}
Choose $N_\delta'$ such that $\bigcup_{\substack{T \in \tilde{\calT}_{\a,n} \\ T \cap N_\delta \neq \emptyset }} \subset N_\delta' \subsubset \tilde{\Omega}_{\o, \ex}$ for all but finitely many $n$.  Then for all such $n$,
\begin{align*}
\big\| \nabla\big(\eta^2 \bar{w}^\c_n - I_n(\eta^2 \bar{w}^\c_n)\big) \big\|_{L^2(N_\delta)}^2
\lesssim~&
\epsilon_n^{2d} \sum_{\substack{T \in \tilde{\calT}_{\a,n} \\ T \cap N_\delta \neq \emptyset }}\int_T| \bar{w}^\c_n |^2 + | \nabla \bar{w}^\c_n |^2\, dx \\
\lesssim~&    \epsilon_n^{2d} \big(\| \bar{w}^\c_n \|_{L^2(N_\delta')}^{2} + \|\nabla \bar{w}^\c_n\|_{L^2(N_\delta')}^{2}\big).
\end{align*}

Now note that $\| \bar{w}^\c_n \|_{L^2(N_\delta')} \to 0$ while $\|\nabla \bar{w}^\c_n\|_{L^2(N_\delta')}$ is bounded since $\bar{w}^\c_n$ is weakly convergent in $H^1(N_\delta')$.  As $\epsilon_n$ goes to $0$, we obtain~\eqref{fact2}.  Inserting~\eqref{fact2} into~\eqref{omlette3} proves the theorem.

\end{proof}

Our second task is to prove the atomistic version of Lemma~\ref{lem1:inner_converge} over $A_1$.
\begin{lemma}\label{lem2:inner_converge}
Let $\bar{w}^\a_n$ and $\bar{w}^\a_0$ be as defined in Lemma~\ref{convergeLemma}.  Then
\begin{equation}\label{goal2}
\big\|\nabla\big(I_{n}\bar{w}^\a_{n} - \bar{w}^\a_0\big)\big\|_{L^2(A_1)} \to 0.
\end{equation}
\end{lemma}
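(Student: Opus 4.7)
I would mirror the proof of Lemma~\ref{lem1:inner_converge} essentially line-by-line, replacing every continuum object by its scaled atomistic counterpart and every gradient by the finite-difference operator $D_{\epsilon_n}$. Choose a smooth cut-off $\eta$ with $\eta\equiv 1$ on $A_1$ and compact support in $\tilde{\Omega}_\a\setminus\tilde{N}$, where $\tilde{N}$ is a small neighborhood of the origin (to stay away from the defect). Set $z_n := I_n\bar{w}^\a_n - \bar{w}^\a_0$; by Lemma~\ref{convergeLemma}, $z_n\weakto 0$ in $H^1(\tilde{\Omega}_\a)$ and $z_n\to 0$ in $L^2(\tilde{\Omega}_\a)$. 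Since $\bar{w}^\a_0$ satisfies the constant-coefficient linear elliptic system~\eqref{eq:lim_equation_both1}, elliptic regularity yields $\bar{w}^\a_0\in H^2_{\rm loc}(\tilde{\Omega}_\a)$. The trivial estimate $\|\nabla z_n\|_{L^2(A_1)}\leq\|\nabla(\eta z_n)\|_{L^2(\tilde{\Omega}_\a)}$ then reduces the task to showing $\|\nabla(\eta z_n)\|_{L^2(\tilde{\Omega}_\a)}\to 0$.

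\textbf{Main estimate.} Let $\bm{v}^\a_n\in\bm{\calU}^\a_{0,n}$ be the lattice function obtained by sampling $\eta z_n$ at nodes of $\tilde{\calL}_{\a,n}$; this is admissible because $\eta$ vanishes near $\partial_\a\tilde{\calL}_{\a,n}$. A scaled version of the atomistic stability bound~\eqref{atLiftingStable}, combined with the integral representation~\eqref{intForm}, gives
\begin{equation*}
\|\nabla I_n\bm{v}^\a_n\|_{L^2(\tilde{\Omega}_\a)}^2 \lesssim \int_{\tilde{\Omega}_\a}\bar{I}_n V''\bigl(D_{\epsilon_n}\tilde{\bm{u}}^\infty_{\a,n}\bigr):D_{\epsilon_n}\bm{v}^\a_n:D_{\epsilon_n}\bm{v}^\a_n\,dx.
\end{equation*}
Following the continuum argument, I would split the second slot into contributions from $\bar{w}^\a_n$ and $\bar{w}^\a_0$. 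The $\bar{w}^\a_0$-contribution tends to zero because $\bar{I}_n V''(D_{\epsilon_n}\tilde{\bm{u}}^\infty_{\a,n})\to V''(\bm{0})$ uniformly on $\supp\eta$ (Theorem~\ref{decayThm}, exactly as in the proof of Lemma~\ref{thm:norm_equiv_limiting_equation}), while $\bar{I}_n D_{\epsilon_n}\bm{v}^\a_n$ converges weakly to $0$ in $L^2$ on $\supp\eta$ by Lemma~\ref{lem:norm_equiv:main_aux}. The $\bar{w}^\a_n$-contribution is then eliminated by testing~\eqref{eq:wan_def_scale} against the lattice interpolant of $\eta^2 z_n$, which is admissible for the same reason as $\bm{v}^\a_n$; this reduces matters to an approximation residual of the form $\|\bar I_n D_{\epsilon_n}(\eta^2 z_n) - \bar I_n D_{\epsilon_n} I_n[\eta^2 z_n]\|_{L^2}$, analogous to~\eqref{fact2}.

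\textbf{Main obstacle and its resolution.} The principal difficulty is the absence of an exact Leibniz rule for $D_{\epsilon_n}$: the continuum identity $\nabla(\eta z_n)=\eta\nabla z_n+z_n(\nabla\eta)^\top$ must be replaced by a discrete product rule generating commutator terms supported on stencils of width $|\rho|\epsilon_n$. These are controlled by the smoothness of $\eta$ and the uniform $H^1$-bound on $I_n\bar{w}^\a_n$, and they vanish in the limit because $z_n\to 0$ strongly in $L^2$ (a similar strong-convergence version of Lemma~\ref{lem:norm_equiv:main_aux} given in Remark~\ref{afterLem37} is used here). For the final approximation residual, the piece involving $\bar{w}^\a_0$ is handled by standard finite-element interpolation since $\eta^2\bar{w}^\a_0\in H^2_{\rm loc}(\tilde{\Omega}_\a)$, yielding an $O(\epsilon_n)$ bound; the piece involving $\bar{w}^\a_n$ is handled exactly as in the derivation of~\eqref{fact2}, exploiting that $\bar{w}^\a_n$ is already piecewise linear so that the error comes solely from the smooth factor $\eta^2$, and reduces to $\|z_n\|_{L^2(\supp\eta)}+\epsilon_n\|\nabla I_n\bar{w}^\a_n\|_{L^2}$, which tends to zero. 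Combining these pieces proves~\eqref{goal2}.
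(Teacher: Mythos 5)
Your proposal is correct and follows essentially the same route as the paper's proof: a cut-off $\eta$ supported away from both the defect and $\partial_\a\tilde{\calL}_{\a,n}$, coercivity of the atomistic Hessian, the integral formulation~\eqref{intForm}, the discrete product rule to pass to the test function $\eta^2 z_n$ and kill the $\bar{w}^\a_n$-contribution via~\eqref{eq:wan_def_scale}, and Lemma~\ref{lem:norm_equiv:main_aux} together with Remark~\ref{afterLem37} for the weak--strong pairings. The only (cosmetic) difference is that the paper replaces $\bar{w}^\a_0$ by its lattice interpolant $\hat{w}^\a_n=I_n\bar{w}^\a_0$ at the outset and works with $y_n=\bar{w}^\a_n-\hat{w}^\a_n$, whereas you keep $z_n=I_n\bar{w}^\a_n-\bar{w}^\a_0$ and absorb the $O(\epsilon_n)$ interpolation error of the $H^2_{\rm loc}$ limit at the end.
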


\begin{proof}
As in previous case, $\bar{w}^\a_0 \in H^2_{\rm loc}(\tilde{\Omega}_\a)$ so we again consider again a sequence $\hat{w}^\a_n := I_n\bar{w}^\a_0$, which converges in $H^1(A_1)$ to $\bar{w}^\a_0$. Set $X := (\lfloor \psi_\a/2\rfloor + 1) \tilde{\Omega}_{\rm core}$, and take $\eta$ to be a bump function equal to one on $A_1$, zero on a neighborhood of the origin, and $\rm supp(\eta) \subsubset X$, i.e. $\eta$ rapidly vanishes off $A_1$. Note that we still possess convergence of $\hat{w}^\a_n$ to $\bar{w}^\a_0$ in $H^1(X)$.  We also know $\bar{w}^\a_n \weakto \bar{w}^\a_0$ in $H^1(\tilde{\Omega}_\a)$ by Lemma~\ref{convergeLemma} so $y_n := \bar{w}^\a_n - \hat{w}^\a_n$ converges weakly to zero in $H^1(X)$.

We recall that the product rule for difference quotients involves a shift operator which we denote by $T_{r}$:
\begin{equation}\label{shift}
\begin{array}{r@{}l@{\hspace{3em}}l@{\hspace{3em}}r@{}l}
D_{\epsilon_n\rho}(uv)(\xi) =~& (D_{\epsilon_n\rho}u)v + (T_{\epsilon_n\rho}u)D_{\epsilon_n\rho}v,
&\text{where}&
T_{\epsilon_n\rho}v(\xi) :=~& v(\xi + \epsilon_n\rho),
\\
T_{\epsilon_n}v(\xi) :=~& (T_{\epsilon\rho}v(\xi))_{\rho \in \calR},
&\text{and}&
T_{\epsilon_n}u \, D_{\epsilon_n}v =~& \left(T_{\epsilon_n\rho}u \, D_{\epsilon_n\rho}v\right)_{\rho \in \calR},
\end{array}
\end{equation}
and choose a domain $\Omega_1 \subsubset X$ such that ${\rm supp}(T_{\epsilon_nr}\eta) \subsubset \Omega_1$ for all but finitely many $n$.  Because $y_n$ converges weakly to zero in $H^1(X)$, the conclusion of Lemma~\ref{lem:norm_equiv:main_aux} asserts that
\begin{equation}\label{talon}
\bar{I}_n D_{\epsilon_n} y_n \weakto 0 \quad \mbox{in} \quad L^2(\Omega_1).
\end{equation}
Since $\hat{w}^\a_{n}$ converges strongly to $\bar{w}^\a_0$ in $H^1(X)$, Remark~\ref{afterLem37} further implies
\begin{equation}\label{talon2}
\bar{I}_{n}D_{\epsilon_{n}r}\hat{w}^\a_{n} \to \nabla_r\bar{w}^\a_0 \quad \mbox{in} \quad L^2(\Omega_1).
\end{equation}
Furthermore, $I_n(\eta) \to \eta$ in $L^\infty(X)$ and thus $I_n(\eta\hat{w}^\a_n) = (I_n\eta)\hat{w}^\a_n$ converges strongly to $\eta \bar{w}^\a_0$ in $H^1(X)$ so that
\begin{equation}\label{talon3}
\bar{I}_{n}D_{\epsilon_{n}r}I_n(\eta\hat{w}^\a_n) \to \nabla_r (\eta \bar{w}^\a_0) \quad \mbox{in} \quad L^2(\Omega_1).
\end{equation}
Reasoning similarly, we have that $I_n(\eta y_n)$ converges weakly to $0$ on $H^1(X)$ which implies
\begin{equation}\label{talon4}
\bar{I}_{n}D_{\epsilon_{n}r}(I_n(\eta y_n)) \weakto 0 \quad \mbox{in} \quad L^2(\Omega_1).
\end{equation}
%This and the facts that $\bar{I}_n(T_{\epsilon_n}\eta) \to \eta$ in $L^\infty(\Omega_1)$, $\bar{I}_n(D_{\epsilon_n}\eta)
%\begin{equation}\label{talon3}
%\begin{split}
%\bar{I}_{n}D_{\epsilon_{n}r}(\eta\hat{w}^\a_{n}) =~& \bar{I}_n(T_{\epsilon_n}\eta D_{\epsilon_n}\hat{w}^\a_n) + \bar{I}_n(D_{\epsilon_n}\eta \hat{w}^\a_n) = \bar{I}_n(T_{\epsilon_n}\eta)\bar{I}_n(D_{\epsilon_n}\hat{w}^\a_n) + \bar{I}_n(D_{\epsilon_n}\eta)\bar{I}_n(\hat{w}^\a_n) \\
%=~& 1.
%\end{split}
%\end{equation}

These convergence properties and the fact that each $\bar{w}^\a_n$ solves
\begin{equation}\label{pirate0}
0 =~ \sum_{\xi \in \tilde{\calL}_{\a,n}^{\circ\circ}} V_\xi''(D_{\epsilon_n} \tilde{\bm{u}}^{\infty}_{\a,n}) \!:\!  D_{\epsilon_n}\bar{w}^\a_n : D_{\epsilon_n}v^\a \quad \forall v^\a \in \tilde{\bm{\calU}}^\a_{0,n}
\end{equation}
will be used later in the proof.

From coercivity of the atomistic Hessian in~\eqref{condition} and the product rule for difference quotients,
\begin{equation*}\label{pirate}
\begin{split}
	\|\nabla I_n y_n \|_{L^2(A_1)}^2
\lesssim~
	\|\nabla I_n (\eta y_n) \|_{L^2(A_1)}^2
\lesssim~& \langle \delta^2\tilde{\mathcal{E}}^\a(\tilde{\bm{u}}^{\infty}_{\a,n})(\eta y_n),(\eta y_n)\rangle \\
=~& \sum_{\xi \in \tilde{\calL}_{\a,n}^{\circ\circ}} V_\xi''(D_{\epsilon_n} \tilde{\bm{u}}^{\infty}_{\a,n})\!:\! D_{\epsilon_n}(\eta y_n):D_{\epsilon_n}(\eta y_n).
\end{split}
\end{equation*}
We now employ the integral formulation~\eqref{intForm}, which is valid since $\eta$ rapidly vanishes off $A_1$ and due to the choice of $A_1$, and take limits:
\begin{equation}\label{pirate1}
\begin{split}
&\lim_{n \to \infty} \|\nabla I_n y_n \|_{L^2(A_1)}^2 \\
&\lesssim~ \lim_{n \to \infty}\int_{\tilde{\Omega}_\a} \bar{I}_n V''(D_{\epsilon_n} \tilde{\bm{u}}^{\infty}_{\a,n}) \!:\! \bar{I}_nD_{\epsilon_n}(\eta y_n) : \bar{I}_n D_{\epsilon_n}(\eta y_n) \, dx \\
&=~  \lim_{n \to \infty}\int_{\tilde{\Omega}_\a} \bar{I}_n V''(D_{\epsilon_n} \tilde{\bm{u}}^{\infty}_{\a,n}) \!:\! \bar{I}_nD_{\epsilon_n}(\eta \bar{w}^\a_n) : \bar{I}_n D_{\epsilon_n}(\eta y_n) - \lim_{n \to \infty}\int_{\tilde{\Omega}_\a} \bar{I}_n V''(D_{\epsilon_n} \tilde{\bm{u}}^{\infty}_{\a,n}) \!:\! \bar{I}_nD_{\epsilon_n}(\eta \hat{w}^\a_n) : \bar{I}_n D_{\epsilon_n}(\eta y_n).
\end{split}
\end{equation}
%Using the product rule of~\eqref{shift} for difference quotients and taking the limit of the second integral yields
%\begin{equation}\label{pirate2}
%\begin{split}
%&\lim_{n \to \infty} \int_{\tilde{\Omega}_\a} \bar{I}_n V''(D_{\epsilon_n} \tilde{\bm{u}}^{\infty}_{\a,n}) \!:\! \bar{I}_nD_{\epsilon_n}(\eta \hat{w}^\a_n) : \bar{I}_n D_{\epsilon_n}(\eta y_n) \\
%&=~ \lim_{n \to \infty}\int_{\tilde{\Omega}_\a} \bar{I}_n V''(D_{\epsilon_n} \tilde{\bm{u}}^{\infty}_{\a,n}) \!:\! \bar{I}_nD_{\epsilon_n}(\eta \hat{w}^\a_n) : \bar{I}_n (T_{\epsilon_n} \eta D_{\epsilon_n} y_n) +  \lim_{n \to \infty}\int_{\tilde{\Omega}_\a} \bar{I}_n V''(D_{\epsilon_n} \tilde{\bm{u}}^{\infty}_{\a,n}) \!:\! \bar{I}_nD_{\epsilon_n}(\eta \hat{w}^\a_n) : \bar{I}_n (y_n D_{\epsilon_n} \eta) \\
%&=~ \lim_{n \to \infty}\int_{\Omega_1} \bar{I}_n\big(T_{\epsilon_n} \eta V''(D_{\epsilon_n} \tilde{\bm{u}}^{\infty}_{\a,n})\big) \!:\! \bar{I}_nD_{\epsilon_n}(I_n(\eta \hat{w}^\a_n)) : \bar{I}_n D_{\epsilon_n} y_n +  \lim_{n \to \infty}\int_{\Omega_1} \bar{I}_n \big(D_{\epsilon_n} \eta V''(D_{\epsilon_n} \tilde{\bm{u}}^{\infty}_{\a,n}) \big)\!:\! \bar{I}_nD_{\epsilon_n}(I_n(\eta \hat{w}^\a_n)) : \bar{I}_n y_n.
%\end{split}
%\end{equation}
The second limit is zero after noting we may write the integral over $\Omega_1$ (relying on how $\Omega_1$ was chosen) and then using~\eqref{talon3},~\eqref{talon4}, and that $\bar{I}_n \eta V''(D_{\epsilon_n} \tilde{\bm{u}}^{\infty}_{\a,n})$ converges to $V''(0)$ in $L^\infty(\Omega_1)$.

%Since $T_{\epsilon_nr}\eta$ converges to the uniformly continuous function $\eta$ in $L^\infty(\Omega_1)$, and likewise $D_{\epsilon_nr} \eta$ converges to the uniformly continuous $\nabla_r \eta$ in $L^\infty(\Omega_1)$, it follows that $\bar{I}_n\big(T_{\epsilon_n} \eta V''(D_{\epsilon_n} \tilde{\bm{u}}^{\infty}_{\a,n})\big)$ and $\bar{I}_n \big(D_{\epsilon_n} \eta V''(D_{\epsilon_n} \tilde{\bm{u}}^{\infty}_{\a,n}) \big)$ converges to $V''(0)$ in $L^\infty(\Omega_1)$.  Equation~\eqref{talon3} states
%$\bar{I}_{n}D_{\epsilon_{n}r}I_n(\eta\hat{w}^\a_n)$ converges strongly to $\nabla_r (\eta \bar{w}^\a_0)$ in $L^2(\Omega_1)$, while~\eqref{talon} shows $\bar{I}_n D_{\epsilon_n} y_n$ converges weakly to $0$ in $L^2(\Omega_1)$.  Thus, the first limit is zero.  The bound, $\|\bar{I}_n y_n\|_{L^2(\Omega_1)} \lesssim \|y_n\|_{L^2(X)} \to 0$, then implies the second limit is zero.

Returning to~\eqref{pirate1}
\begin{equation*}
\begin{split}
&\lim_{n \to \infty} \|\nabla I_n y_n \|_{L^2(A_1)}^2 \\
&\lesssim~ \lim_{n \to \infty} \int_{\tilde{\Omega}_\a} \bar{I}_n V''(D_{\epsilon_n} \tilde{\bm{u}}^{\infty}_{\a,n}) \!:\! \bar{I}_nD_{\epsilon_n}(\eta \bar{w}^\a_n) : \bar{I}_n D_{\epsilon_n}(\eta y_n)
\\ &=~
	\lim_{n \to \infty} \int_{\tilde{\Omega}_\a}
		\bar{I}_n V''(D_{\epsilon_n} \tilde{\bm{u}}^{\infty}_{\a,n})
		\!:\! (\bar{I}_n D_{\epsilon_n}\bar{w}^\a_n)
		(\bar{I}_nT_{\epsilon_n}\eta)
		:
		\bar{I}_n D_{\epsilon_n}(\eta y_n)
\\&\quad+
	\lim_{n \to \infty} \int_{\tilde{\Omega}_\a} \bar{I}_n V''(D_{\epsilon_n} \tilde{\bm{u}}^{\infty}_{\a,n}) \!:\! (\bar{I}_n\bar{w}^\a_n) (\bar{I}_nD_{\epsilon_n}\eta) : \bar{I}_n D_{\epsilon_n}(\eta y_n)
\\ &=~
	\lim_{n \to \infty} \int_{\tilde{\Omega}_\a} \bar{I}_n V''(D_{\epsilon_n} \tilde{\bm{u}}^{\infty}_{\a,n}) \!:\! \bar{I}_nD_{\epsilon_n}\bar{w}^\a_n : \bar{I}_n D_{\epsilon_n}(\eta^2 y_n) - \lim_{n \to \infty} \int_{\tilde{\Omega}_\a} \bar{I}_n V''(D_{\epsilon_n} \tilde{\bm{u}}^{\infty}_{\a,n}) \!:\! \bar{I}_nD_{\epsilon_n}\bar{w}^\a_n : \bar{I}_nD_{\epsilon_n}(\eta) \bar{I}_n (\eta y_n)
\\&\quad+
	\lim_{n \to \infty} \int_{\tilde{\Omega}_\a} \bar{I}_n V''(D_{\epsilon_n} \tilde{\bm{u}}^{\infty}_{\a,n}) \!:\! (\bar{I}_n\bar{w}^\a_n) (\bar{I}_nD_{\epsilon_n}\eta) : \bar{I}_n D_{\epsilon_n}(\eta y_n)
\end{split}
\end{equation*}
The first of these limits is zero due to~\eqref{pirate0}.  The second is also zero since Lemma~\ref{lem:norm_equiv:main_aux} implies $\bar{I}_nD_{\epsilon_n}\bar{w}^\a_n$ converges weakly to $\nabla \bar{w}^\a_0$, $\|\bar{I}_n(\eta y_n)\|_{L^2(\Omega_1)} \lesssim \|\bar{I}_n(y_n)\|_{L^2(\Omega_1)} \lesssim \|y_n\|_{L^2(X)} \to 0$, and since $D_{\epsilon_nr} \eta$ converges to the uniformly continuous $\nabla_r \eta$ in $L^\infty(\Omega_1)$, it follows that $\bar{I}_n\big(T_{\epsilon_n} \eta V''(D_{\epsilon_n} \tilde{\bm{u}}^{\infty}_{\a,n})\big)$ and $\bar{I}_n \big(D_{\epsilon_n} \eta V''(D_{\epsilon_n} \tilde{\bm{u}}^{\infty}_{\a,n}) \big)$ converges to $V''(0)$ in $L^\infty(\Omega_1)$.  Using this latter fact, the third limit is then zero due to~\eqref{talon4} and $\|\bar{I}_n\bar{w}^\a_n\|_{L^2(\Omega_1)} \lesssim \|I_n\bar{w}^\a_n\|_{L^2(X)} = \|\bar{w}^\a_n\|_{L^2(X)} \to 0$.

\end{proof}

\subsubsection*{Step 4:}

\begin{proof}[Conclusion of Proof of Theorem~\ref{lem:norm_equiv:desired_result}]
We assume the existence of a sequence satisfying~\eqref{contradiction}, which yields sequences of normalized functions $\bar{w}^\a_n$ and $\bar{w}^\c_n$ possessing properties~\eqref{varyAt}--\eqref{varyCont} of Lemma~\ref{convergeLemma}.  Combining~\eqref{inner_converge} of Theorem~\ref{thm:inner_converge} with~\eqref{vergeResult} resulting from Statement~\ref{false} shows
\begin{equation}\label{vergeResult2}
\left(\nabla \bar{w}^\a_0,\nabla \bar{w}^\c_0\right)_{L^2\left(\tilde{\Omega}_{\o}\right)} = 1.
\end{equation}
The weak convergence of $I_n \bar{w}^\a_n$ to $\bar{w}^\a_0$ implies that $\|\nabla \bar{w}^\a_0\|_{L^2(\tilde{\Omega}_\o)} \leq \limsup_{n\to\infty} \|\nabla \bar{w}^\a_n\|_{L^2(\tilde{\Omega}_\o)} = 1$, and likewise we have that $\|\nabla \bar{w}^\c_0\|_{L^2(\tilde{\Omega}_\o)} \leq 1$.
In view of \eqref{vergeResult2}, it is only possible if $\|\nabla \bar{w}^\a_0\|_{L^2(\tilde{\Omega}_\o)} = \|\nabla \bar{w}^\c_0\|_{L^2(\tilde{\Omega}_\o)} = 1$ and
\[
\left(\nabla \bar{w}^\a_0,\nabla \bar{w}^\c_0\right)_{L^2\left(\tilde{\Omega}_{\o}\right)} = \|\nabla \bar{w}^\a_0\|_{L^2\left(\tilde{\Omega}_{\o}\right)} \|\nabla \bar{w}^\c_0\|_{L^2\left(\tilde{\Omega}_{\o}\right)}.
\]
Hence $\nabla \bar{w}^\a_0 = \alpha \nabla \bar{w}^\c_0$ on $\tilde{\Omega}_\o$ for some real number $\alpha$ implying
\[
1 = \left(\alpha \nabla \bar{w}^\c_0, \nabla \bar{w}^\c_0\right)_{L^2\left(\tilde{\Omega}_{\o}\right)} = \alpha \|\nabla \bar{w}^\c_0\|_{L^2\left(\tilde{\Omega}_{\o}\right)}^2 = \alpha.
\]
Thus $\nabla \bar{w}^\a_0$ and $\nabla \bar{w}^\c_0$ are equal on $\tilde{\Omega}_\o$ so $\bar{w}^\a_0$ and $\bar{w}^\c_0$ differ by a constant on $\tilde{\Omega}_\o$.  Let $\hat{w}^\c_0$ be the element of the equivalence class $\bar{\bm{w}}^\c_0$ which is equal to $\bar{w}^\a_0$ on $\tilde{\Omega}_\o$.  We can then define a function
\[
\bar{w}_0 = \begin{cases} &\bar{w}^\a_0 \quad \mbox{on} \quad \tilde{\Omega}_\a \\
&\hat{w}^\c_0 \quad \mbox{on} \quad \tilde{\Omega}_\c
\end{cases},
\]
for which $\bar{w}_0 \in L^2_{\rm loc}(\mathbb{R}^d)$ and $\nabla \bar{w}_0 \in L^2(\mathbb{R}^d)$.  Consequently, $\bar{w}_0$ is a global solution to the linear homogeneous Cauchy-Born equation,
\[
\int_{\mathbb{R}^d} \bbC:\nabla \bar{w}_0: \nabla v = 0, \quad \forall v \in H^1_0(\mathbb{R}^d),
\]
so that $\nabla \bar{w}_0 = 0$. We conclude that $\left(\nabla \bar{w}^\a_0,\nabla \bar{w}^\c_0\right)_{L^2\left(\tilde{\Omega}_{\o}\right)} = 0$, which contradicts~\eqref{vergeResult2}.
\end{proof}

\section{Conclusion}

We have presented an \textit{a priori} error analysis of the optimization-based AtC method proposed in~\cite{olsonPro2013} for the case of a point defect in an infinite lattice in two and three dimensions.  This method is an extension of the virtual control technique for coupling PDEs~\cite{gervasio_2001,Lions_00_JAM,lions1998} and couples a nonlocal, potential-based atomistic model with a continuum finite element model by minimizing the $H^1$ (semi-)norm of solutions to restricted atomistic and continuum subproblems.  Our analysis shows a solution to the AtC method exists provided the atomistic solution is strongly stable and estimates an error between the true solution and AtC solution.  The key result in this analysis was a norm equivalence theorem proven in Section~\ref{sec:normEquiv}.

\appendix

\section{Extension Theorems}\label{extensionAppendix}
In this appendix, we recall Stein's extension theorem~\cite{stein1970} for domains with minimally smooth boundary and a modified extension operator that preserves the $H^1$ seminorm due to Burenkov~\cite{burenkov1985}.

\begin{theorem}[Stein's Extension Theorem]\label{stein}
Let $U$ be a connected, open set for which there exists $\epsilon > 0$, integers $N,M > 0$, and a sequence of open sets $U_1, U_2, \ldots$ satisfying
\begin{enumerate}
\item For each $x \in \partial U$, $B_\epsilon(x) \subset U_i$ for some $i$,
\item The intersection of more than $N$ of the sets $U_i$ is empty,
\item For each $U_i$, there exists a Lipschitz continuous function $\varphi_i$ and domains
\[
D_i = \left\{(x',y) \in \mathbb{R}^{n+1} : y > \varphi_i(x'), \left|\varphi_i(x'_1) - \varphi_i(x'_2)\right| \leq M\left|x'_1 - x'_2\right|\right\}
\]
such that
\[
U_i \cap U = U_i \cap D_i.
\]
\end{enumerate}
Then there exists a bounded linear extension operator $E:H^1(U) \to H^1(\mathbb{R}^d)$.  The bound of the extension depends upon the domain $U$ through $N,M$, and $\epsilon$.
\end{theorem}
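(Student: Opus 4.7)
The plan is to reduce the global extension problem to local ones on each boundary patch $U_i$, glue them via a partition of unity, and add the trivial interior piece. The core tool for each patch is Stein's reflection-type operator built from a regularized distance, because the boundary is only Lipschitz and a naive pointwise reflection fails to preserve $H^1$.

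The steps would be as follows. First, I would produce a partition of unity $\{\chi_i\}_{i \geq 0}$ with $\chi_0$ supported in the interior of $U$ away from $\partial U$ and $\chi_i$ ($i\geq 1$) subordinate to $U_i$. Hypothesis (1) ensures that the $\{U_i\}$ together cover an $\epsilon$-neighborhood of $\partial U$, and (2) gives the cover multiplicity at most $N$; this lets one arrange $\|\nabla \chi_i\|_{L^\infty} \lesssim \epsilon^{-1}$ with constants depending only on $N$ and $\epsilon$. Second, for each $i \geq 1$, I pick the local coordinates of (3) so that $U_i \cap U = U_i \cap D_i$ with $D_i = \{(x',y) : y > \varphi_i(x')\}$, and construct on $D_i^c$ Stein's regularized distance $\delta_i^*$ via a Whitney decomposition and mollification of $\text{dist}(\cdot, \partial D_i)$. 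With constants depending only on $M$, this function satisfies $\delta_i^* \asymp \text{dist}(\cdot, \partial D_i)$, is ${\rm C}^\infty$ on $D_i^c$, and obeys the scaling $|\nabla^k \delta_i^*| \lesssim (\delta_i^*)^{1-k}$. Fixing once and for all a function $\psi \in {\rm C}_c^\infty((1,\infty))$ normalized so that $\int \psi = 1$ and with further moment conditions sufficient for an $H^1$ extension, set
\[
(E_i f)(x',y) := \int_1^\infty f\bigl(x',\, y + \lambda\, \delta_i^*(x',y)\bigr)\,\psi(\lambda)\,d\lambda, \qquad y < \varphi_i(x'),
\]
which pushes each point deep into $D_i$ by a smoothly varying amount. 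A change of variables whose Jacobian is uniformly bounded above and below in terms of $M$ yields the local bound $\|E_i f\|_{H^1(U_i \setminus D_i)} \lesssim \|f\|_{H^1(U_i \cap D_i)}$, and the moment condition on $\psi$ forces $E_i f$ and $f$ to have matching trace and normal derivative across the graph so that the glued function lies in $H^1$ without a singular contribution. Third, I define
\[
E f := \chi_0 f + \sum_{i \geq 1} \chi_i \, \widetilde{E_i f},
\]
where $\widetilde{E_i f}$ equals $f$ on $U_i \cap U$ and $E_i f$ on $U_i \setminus U$ after transferring back to the original coordinates. Distributing $\nabla$, the terms $(\nabla \chi_i) \widetilde{E_i f}$ contribute $\lesssim \epsilon^{-1}\|f\|_{L^2(U_i)}$ and the terms $\chi_i \nabla \widetilde{E_i f}$ contribute $\lesssim \|f\|_{H^1(U_i)}$; the multiplicity bound from (2) sums these patches to $\|E f\|_{H^1(\mathbb{R}^d)} \leq C(N,M,\epsilon)\|f\|_{H^1(U)}$.

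The main obstacle is the merely Lipschitz regularity of $\varphi_i$. Ordinary reflection $y \mapsto 2\varphi_i(x') - y$ produces a change of variables whose Jacobian is only bounded-measurable, so composing an $H^1$ function with it can spoil the transverse derivative in $L^2$. The regularized distance provides a smooth "reflection direction" at the price of replacing a pointwise reflection by an averaging integral; producing $\delta_i^*$ with the sharp scaling $|\nabla^k \delta_i^*| \lesssim (\delta_i^*)^{1-k}$ via the Whitney covering is the genuinely technical step. Everything else is careful bookkeeping of how the constants $N$, $M$, and $\epsilon$ enter through, respectively, the overlap multiplicity of the cover, the local Jacobians and the bounds on $\delta_i^*$, and the derivatives of the partition of unity.
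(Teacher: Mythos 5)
The paper does not prove this statement at all: it is quoted verbatim (as Theorem~\ref{stein}) from Stein's 1970 monograph, and the appendix only records it for use in Theorem~\ref{buren} and Lemma~\ref{lem:norm_equiv:reduce_to_Omega_o}. Your sketch is a faithful outline of Stein's own argument --- partition of unity with multiplicity controlled by $N$, regularized distance built from a Whitney decomposition with the scaling $|\nabla^k\delta_i^*|\lesssim (\delta_i^*)^{1-k}$, the averaged push-forward operator $E_i$, and the bookkeeping of how $N$, $M$, $\epsilon$ enter the final constant --- so there is nothing to compare against in the paper itself; you have simply reconstructed the cited proof, correctly at the level of a sketch.

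One bookkeeping point deserves care. As written, $\widetilde{E_i f}$ applies $E_i$ to $f$ and then multiplies by $\chi_i$, but the formula $(E_i f)(x',y)=\int_1^\infty f\bigl(x',y+\lambda\delta_i^*(x',y)\bigr)\psi(\lambda)\,d\lambda$ requires $f$ at points of $D_i$ that need not lie in $U_i\cap D_i=U_i\cap U$, i.e.\ need not lie in the domain of $f$; the pushed point can exit $U_i$ when $\delta_i^*$ is not small. Stein's resolution is to localize first: extend $\chi_i f$ by zero to all of $D_i$ and apply $E_i$ to that, which makes the composition globally defined and costs nothing in the estimates. Also, for the $H^1$ case only trace matching (from $\int\psi=1$) is needed for the glued function to avoid a singular distributional gradient across the graph; matching of the normal derivative is neither needed nor generally true, so that clause should be dropped. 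Neither point affects the validity of the overall approach.
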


Theorem~\ref{stein} can be used to prove an extension theorem with preservation of seminorm due to Burenkov~\cite{burenkov1985}:

\begin{theorem}[Extension with preservation of seminorm]\label{buren}
Let $U$ be a connected, bounded open set for which there exists a bounded linear extension operator $E:H^1(U) \to H^1\left(\mathbb{R}^n\right)$ and a bounded projection operator $P$ from $H^1(U)$ onto the constants with the property that for all $f \in H^1(U)$,
\[
\|f - Pf\|_{L^2(U)} \lesssim c(U) \|f\|_{H^1(U)}.
\]
Then the operator defined by
\[
R = P + E({\rm id} - P)
\]
is a linear extension operator with the property that
\[
\|\nabla R f\|_{L^2(U)} \leq \|E\|\left(c(U) + 1\right)\|\nabla f\|_{L^2(U)}.
\]
\end{theorem}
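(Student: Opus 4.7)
\medskip

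The plan is to verify that $R$ is indeed an extension and then reduce the seminorm bound to an application of the boundedness of $E$ to the mean-subtracted function $f - Pf$. The first step is immediate: for $x \in U$, since $E$ is an extension operator, $E(f - Pf)(x) = (f - Pf)(x)$, so $Rf(x) = Pf(x) + (f(x) - Pf(x)) = f(x)$.

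For the seminorm estimate, the key observation is that $Pf$ is a constant (since $P$ projects onto the one-dimensional space of constants), so $\nabla(Pf) \equiv 0$ on $\mathbb{R}^n$. Consequently, $\nabla Rf = \nabla E(f - Pf)$ on all of $\mathbb{R}^n$, and applying the boundedness of $E$ as a map $H^1(U) \to H^1(\mathbb{R}^n)$ yields
\[
\|\nabla Rf\|_{L^2(\mathbb{R}^n)} \le \|E(f - Pf)\|_{H^1(\mathbb{R}^n)} \le \|E\| \cdot \|f - Pf\|_{H^1(U)}.
\]
Since $Pf$ is constant, $\|f - Pf\|_{H^1(U)}^2 = \|f - Pf\|_{L^2(U)}^2 + \|\nabla f\|_{L^2(U)}^2$.

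Finally, one invokes the hypothesized Poincar\'e-type bound $\|f - Pf\|_{L^2(U)} \lesssim c(U) \|\nabla f\|_{L^2(U)}$ to absorb the $L^2$-term and conclude
\[
\|f - Pf\|_{H^1(U)} \le \sqrt{1 + c(U)^2}\,\|\nabla f\|_{L^2(U)} \le (1 + c(U))\,\|\nabla f\|_{L^2(U)},
\]
which combined with the previous display gives the claim.

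There is no real obstacle here; the proof is essentially a two-line bookkeeping argument once the identity $\nabla Pf \equiv 0$ is exploited. The only subtle point worth flagging is the form of the Poincar\'e hypothesis: as literally written, the right-hand side uses $\|f\|_{H^1(U)}$ rather than $\|\nabla f\|_{L^2(U)}$, which would be too weak to yield the stated conclusion. Applying the hypothesis to $g := f - Pf$ (for which $Pg = 0$ by idempotence of $P$) and noting $\nabla g = \nabla f$ recovers the standard Poincar\'e--Wirtinger form $\|f - Pf\|_{L^2(U)} \lesssim c(U)\|\nabla f\|_{L^2(U)}$ provided the constant is absorbed appropriately; this is the only nontrivial reading of the hypothesis that makes the conclusion meaningful and is what I would use.
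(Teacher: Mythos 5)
The paper does not prove this statement: Theorem~\ref{buren} is recalled from Burenkov's work and stated with a citation only, so there is no in-paper argument to compare against. Your proof is the natural (and essentially the only) one --- verify $Rf=f$ on $U$, use $\nabla Pf\equiv 0$ to reduce to bounding $\|E(f-Pf)\|_{H^1(\mathbb{R}^n)}$, and control $\|f-Pf\|_{H^1(U)}$ by $(1+c(U))\|\nabla f\|_{L^2(U)}$ via the Poincar\'e--Wirtinger inequality --- and it is correct. You are also right that the hypothesis as literally printed, with $\|f\|_{H^1(U)}$ on the right, is too weak to give the stated conclusion; the Remark following the theorem in the paper (identifying $c(U)$ as the Poincar\'e constant) confirms that the intended hypothesis is $\|f-Pf\|_{L^2(U)}\lesssim c(U)\|\nabla f\|_{L^2(U)}$. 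One small caveat: your proposed repair via applying the printed hypothesis to $g=f-Pf$ gives $\|f-Pf\|_{L^2(U)}\lesssim c(U)\bigl(\|f-Pf\|_{L^2(U)}^2+\|\nabla f\|_{L^2(U)}^2\bigr)^{1/2}$, and absorbing the $\|f-Pf\|_{L^2(U)}$ term on the left requires the implied constant times $c(U)$ to be strictly less than $1$, which is not guaranteed for general $U$; so this workaround does not literally rescue the printed hypothesis, and the clean route is simply to read the hypothesis in its intended Poincar\'e form, as you ultimately do.
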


\begin{remark}
We can set $E$ to be Stein's extension operator and choose
\[
Pu = \frac{1}{m(U)}\int_{U} u(x)\, dx.
\]
In this case, $c(U)$ is the Poincare constant for the domain $U$.
\end{remark}

\section{Notation}\label{app:notation}
For the convenience of the readers, we summarize the key notation used throughout the paper.

\begin{itemize}
\item $\xi$ --- an element of $\mathbb{Z}^d$ or $\epsilon\mathbb{Z}^d$ for $\epsilon > 0$.

\item $|\cdot|$ --- meaning depends on context:  $|\cdot|$ is $\ell^2$ norm of a vector, matrix, or higher order tensor, $|T|$ is area or volume of element $T$ in a finite element partition, $|\alpha|$ is the order of a multiindex.

\item $\|\cdot\|_{\ell^2(A)}$ --- $\ell^2$ norm over a set $A$.  If $f:A \to \mathbb{R}^d$ is a vector-valued function, $\|f\|_{\ell^2(A)} = (\sum_{\alpha \in A}|f(\alpha)|^2)^{1/2}$.

\item $B_{r}(\bm{y}) = \{ \bm{x}\in \mathbb{R}^d : |\bm{y} - \bm{x}| \le r\}$ -
 Ball of radius $r$ in $\mathbb{R}^d$

\item $\bar{U}$ --- closure of a domain $U$.

\item $U^\circ$ --- interior of a domain $U$.

\item ${\rm supp}(f)$ --- support of a function $f$.

\item ${\rm Diam}(U)$ --- diameter of the set $U$ measured with the Euclidean norm.

\item ${\rm dist}(U,V)$ --- distance between the sets $U$ and $V$ measured with the Euclidean norm.

\item ${\rm conv}(x,y)$ --- convex hull of $x$ and $y$.

\item $(\mathbb{R}^d)^{\calR}$ --- direct product of vectors with $|\calR|$ terms.

\item $\mG$ --- a $d \times d$ matrix.

\item $e_i$ --- $i$th standard basis vector in $\mathbb{R}^d$.

\item $\mathstrut^\transpose$ --- transpose of a matrix.

\item $\otimes$ --- tensor product.

\item $\nabla^j$ --- $j$th Fr\'echet derivative of a function defined on $\mathbb{R}^d$.

\item $\partial^\alpha$ --- multiindex notation for derivatives.

\item $L^p(U)$ --- Standard Lebesgue spaces.

\item $(\cdot, \cdot)_{L^2(U)}$ --- $L^2$ inner product over $U$.

\item $W^{k,p}(U)$ --- Standard Sobolev spaces.

\item $W^{k,p}_{\rm loc}(U) = \left\{f:U \to \mathbb{R}^d | f \in W^{k,p}(V) \, \forall V \subsubset U  \right\}$.

\item $H^k(U) = W^{k,2}(U)$, $H^1_0(U) = \left\{f \in H^k(U) : \mbox{Trace}(f) = 0~\, \mbox{on} \,~ \partial U \right\}$.

\item ${\rm C}^{k,1}(\bar{U}) = \big\{f:U \to \mathbb{R}^d : \sum_{|\alpha| \leq k} \sup_{x \in \bar{U}}|\partial^\alpha f(x)| + \sum_{|\alpha| = k} \sup_{\substack{x,y \in \bar{U} \\ x \neq y}} \frac{|\partial^\alpha f(x) - \partial^\alpha f(y)|}{|x-y|}\big\}$. (Standard Lipschitz spaces).

\item $*$ --- used to denote convolution of functions

\item $\dashint_{U} f\, dx$ --- average value of $f$ over $U$.

\item $\calT$ --- a finite element discretization of triangles in $2D$ or tetrahedra in $3D$.

\item $\mathcal{P}^1(T)$ --- set of affine functions over a triangle or tetrahedron, $T$.
	
\item $\mathcal{P}^1(\calT)$ --- set of piecewise affine functions with respect to the discretization $\calT$.

\end{itemize}
%
%%-----------------------------
%%      your bibliography
%%-----------------------------
\bibliographystyle{plain}	% (uses file "plain.bst")
\bibliography{myrefs}		% expects file "myrefs.bib"

\end{document}